\newtheorem{theorem}{Theorem}[section]
\newtheorem{cor}[theorem]{Corollary}
\newtheorem{lemma}[theorem]{Lemma}
\newtheorem{prop}[theorem]{Proposition}
\theoremstyle{definition}
\newtheorem{define}[theorem]{Definition}
\newtheorem{remark}[theorem]{Remark}
\newtheorem{notation}[theorem]{Notation}
\newcommand{\id}{\mathrm{id}}
\newcommand{\ch}{\mathrm{ch}}
\newcommand{\cs}{\mathrm{cs}}
\newcommand{\ind}{\mathrm{ind}}
\newcommand{\field}[1]{\mathbb{#1}}
\newcommand{\C}{\field{C}}
\newcommand{\R}{\field{R}}
\newcommand{\Z}{\field{Z}}
\newcommand{\rz}{\field{R}/\field{Z}}
\begin{document}
\title[Realizing the analytic surgery group geometrically: Part II]{Realizing the analytic surgery group of Higson and Roe geometrically\\
Part II: Relative \Large$\eta$\normalsize-invariants}
\author{Robin J. Deeley, Magnus Goffeng}
\date{\today}

\begin{abstract}
We apply the geometric analog of the analytic surgery group of Higson and Roe to the relative $\eta$-invariant.  In particular, by solving a Baum-Douglas type index problem, we give a ``geometric" proof of a result of Keswani regarding the homotopy invariance of relative $\eta$-invariants. The starting point for this work is our previous constructions in \emph{``Realizing the analytic surgery group of Higson and Roe geometrically, Part I: The geometric model"}.
\end{abstract}

\maketitle

\section*{Introduction}
This is the second in a series of three papers on the construction and applications of a Baum-Douglas type model for the analytic surgery exact sequence of Higson and Roe \cite{HRSur1, HRSur2, HRSur3}. In the first paper \cite{paperI}, we defined the geometric cycles, proved that the associated group fits into the correct exact sequence, and constructed maps to various $K$-theory groups; these maps are the geometric counterpart and generalization of the map from the analytic surgery group to the real numbers constructed in \cite{HReta}.

The main goal of this paper is to relate the geometric group and the aforementioned maps to the relative $\eta$-invariant; the reader should notice a strong analogy between this paper and the constructions in \cite{HReta}. We also lay the foundation for constructions that will appear in the next paper in the series. The main topic of the next paper is the construction of an explicit isomorphism between the geometric group and Higson and Roe's analytic group; this map is defined from geometric to analytic cycles.

Furthermore, in this paper, we begin the process of applying ``Baum's approach to index theory" to vanishing results concerning the relative $\eta$-invariant. The vanishing results we are interested in are well studied. For example, Keswani \cite[Theorem 1.2]{Kes} has proved the following beautiful result (see \cite{Kes} for notation):

\begin{theorem}
\label{keswanithm}
Let M be a closed, smooth, oriented, odd-dimensional, Riemannian manifold. Suppose that $\Gamma$ is a torsion-free, discrete group and the Baum-Connes assembly map is an isomorphism for the full group $C^*$-algebra of $\Gamma$. Let $\sigma$ be a finite-dimensional unitary representation of $\pi_1(M)$ that factors through $\Gamma$. Then the relative $\eta$-invariant $\rho_{\sigma}(M)$ is an oriented homotopy invariant of M.
\end{theorem}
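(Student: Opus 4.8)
The plan is to deduce Keswani's theorem from two ingredients. The first is that for an oriented homotopy equivalence $h\colon M'\to M$ the difference $\rho_\sigma(M)-\rho_\sigma(M')$ equals the value of the geometric analogue of the Higson--Roe map to $\R$ of \cite{HReta} (built in Part~I) on a cycle manufactured from $h$; the second is that under the hypotheses on $\Gamma$ the geometric surgery group $\mathcal{A}$ vanishes, which kills that cycle.

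First I would equip $M$ and $M'$ with reference maps to $B\Gamma$ induced by $\pi_1(M)\to\Gamma$, with $M'$ mapping through $h$, and out of this data assemble a geometric cycle $\cycBun$ representing the structure invariant of $h$ in $\mathcal{A}$: here $W$ is a compact manifold with $\partial W=M\sqcup(-M')$ carrying $f$, the Hilbert module bundle $\mathcal{E}_{C^*(\Gamma)}$ is the flat $C^*(\Gamma)$-bundle determined by $f$, $E_{\C}$ is an honest complex vector bundle, and the isomorphism $\alpha$ over $\partial W$ is provided by $h$ through the homotopy invariance of symmetric signatures. This is the geometric incarnation of the surgery-theoretic structure class of $h$, following the template of \cite{HReta}.

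Then --- and this is the ``Baum--Douglas type index problem'' announced in the abstract, and the crux of the argument --- I would compute the image of $\cycKth$ under the geometric map to $\R$, in the version attached to the trace $\operatorname{tr}\circ\sigma-\dim(\sigma)\cdot\epsilon$ on $C^*(\Gamma)$, where $\epsilon$ is the augmentation; this trace vanishes on the unit, which is what makes the map well defined on $\mathcal{A}$. The claim is that this image equals $\rho_\sigma(M)-\rho_\sigma(M')$, and the proof should run through an Atiyah--Patodi--Singer computation: extend the data of the cycle over a suitable compact manifold, twist the signature operator simultaneously by the flat bundle $V_\sigma$ of $\sigma$ and by $\dim(\sigma)$ copies of the trivial bundle, and use that the resulting virtual twisting bundle has rank zero so that the interior $L$-class contributions cancel, leaving only the boundary $\eta$-terms, which recombine into $\rho_\sigma$ at each end of $W$. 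I expect this step to be the main obstacle: one has to control orientations, the passage between the $C^*(\Gamma)$-linear picture and the finite-dimensional picture cut out by $\sigma$, the independence of the outcome from the auxiliary geometric choices (metrics, the extending manifold, the bundle $E_{\C}$), and the bookkeeping that fuses the rank-$\dim(\sigma)$ twisted $\eta$-term and $\dim(\sigma)$ copies of the untwisted one into a single relative $\eta$-invariant.

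Finally I would invoke the exact sequence of Part~I. There $\mathcal{A}$ sits, with $K_*(B\Gamma)$ and $K_*(C^*(\Gamma))$, in a sequence isomorphic to the Higson--Roe analytic surgery sequence for the full group $C^*$-algebra, one of whose arrows is the Baum--Connes assembly map $\mu$. Since $\Gamma$ is torsion-free we may take $E\Gamma=B\Gamma$; since $\mu$ is an isomorphism, surjectivity of $\mu$ forces the arrow $K_*(C^*(\Gamma))\to\mathcal{A}$ to vanish and injectivity of $\mu$ (one degree lower) forces $\mathcal{A}\to K_{*-1}(B\Gamma)$ to vanish, so by exactness $\mathcal{A}=0$. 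Hence the cycle of the first step is zero in $\mathcal{A}$, and therefore, by the second step, $\rho_\sigma(M)-\rho_\sigma(M')=0$. As $h$ was an arbitrary oriented homotopy equivalence, $\rho_\sigma(M)$ is an oriented homotopy invariant of $M$.
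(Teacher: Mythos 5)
Your overall architecture is the right one and matches the paper's strategy in spirit: manufacture a cycle for $\mathcal{S}^{geo}_0(\Gamma,C^*_{\bf full})$ whose image under $\rho_{\sigma,k\varepsilon}\circ\Phi_{C^*_{\bf full}}$ is $\rho_\sigma(M)-\rho_\sigma(M')$, then kill the cycle because the group vanishes under the Baum--Connes hypothesis (the exact-sequence argument you give for $\mathcal{S}^{geo}_0(\Gamma,C^*_{\bf full})=0$ is exactly what the paper uses in Proposition~\ref{indexsolutionrhovanishes}). The gap is precisely in the step you flag as ``the main obstacle,'' and it is more serious than a bookkeeping difficulty.

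Concretely: what you are asking for is a solution to the \emph{unstable} $(\sigma_1,\sigma_2)$-relative index problem (Definition~\ref{sigmaproblem}) for the cycle $(M\dot\cup -M',\text{signature data},D_{sign}\dot\cup -D_{sign},0)$ --- a $(W,\xi,f)$ with $\Phi_{C^*_{\bf full}}(W,\xi,f)=[M\dot\cup -M',\dots,0]$, the final $0$ being the crux. The integer $n$ that $\Phi_\mathcal{A}$ attaches to a cycle (Definition~\ref{defOfPhi}) is a sum of differences of genuine Atiyah--Patodi--Singer \emph{indices}, not just the local integral. The cancellation of the interior $L$-class contributions you invoke shows that $n$ is a purely spectral/index integer, but it does not show $n=0$; indeed, the APS index formula \eqref{APSformula} is exactly the statement that $n$ and the $\eta$-difference add up to the local term, so once the local term cancels you have only traded $n$ for the $\eta$-difference. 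Since there is substantial freedom in the choice of $(W,\xi,f)$ and this freedom changes $n$, the conclusion $\rho_{\sigma,k\varepsilon}\circ\Phi(W,\xi,f)=0$ only gives $\rho_\sigma(M)-\rho_\sigma(M')\in\Z$ unless you pin down $n$, and pinning it down is exactly what is hard. The paper makes this explicit in Remark~\ref{obstructingtheindexproblem}: the unstable index problem does \emph{not} in general admit a solution even when the assembly class vanishes, as the examples in \cite[Section~15]{PS} show.

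Two further points. First, the existence of a bordism $W$ with $\partial W=M\dot\cup(-M')$ carrying all the required $K$-theoretic data is itself not automatic; the paper obtains a bounding manifold only after a vector bundle modification (the argument in Theorem~\ref{solvingindexproblem} works with $M^V$ and a spin$^c$ normal bordism, not $M$ itself). Second, the paper's actual route is to replace the unstable problem by the \emph{stable} $(\sigma_1,\sigma_2)$-relative index problem (Definition~\ref{stablesigmaproblem}), decorating cycles with spectral sections so that the boundary operators over $C^*_{\bf full}(\Gamma)$ become invertible and the offending integer is replaced by $(\sigma_1-\sigma_2)_*$ of a higher APS class, which vanishes by Remark~\ref{sigmaonetworem}; this yields $\rho^s_{\sigma,k\varepsilon}=0$ (Theorem~\ref{vanishingstablerho}). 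Even then, the passage from the vanishing of the \emph{stable} relative $\eta$-invariant back to Keswani's statement about $\rho_\sigma$ for the signature operator is delegated to the techniques of \cite{PS} (see the discussion surrounding Corollary~\ref{vanishingrhothm}). So your sketch, taken at face value, elides exactly the content that required Keswani's, Higson--Roe's, and Piazza--Schick's work to supply: controlling the integer and making the stable/unstable bridge. You should either carry out the stable reformulation or explain why, for the signature operator twisted by a homotopy equivalence, the integer can be normalized to zero by an explicit choice of $W$ and of a smoothing perturbation in the spirit of \cite{PS}.
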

\noindent
The work of Keswani was preceded by work of Mathai \cite{Mat}, Neumann \cite{Neu} and Weinberger \cite{Wei}. Following Keswani, similar results have been obtained by Benameur-Piazza \cite{BP}, Benameur-Roy \cite{BR, BRvonN}, Higson-Roe \cite{HReta}, Piazza-Schick \cite{PS}, Wahl \cite{WahlSurSet}, among others.

Although our main goal is a proof of Keswani's result (i.e., Theorem \ref{keswanithm}), the methods presented provide a template for proving similar results for variants of the relative $\eta$-invariant (e.g, Cheeger-Gromov's $\ell^2$-relative $\eta$-invariants); the reader can find a ``Higson and Roe" type proof of the main result of \cite{KesvN} in the recent paper of Benameur-Roy \cite{BRvonN}. We discuss this idea in a bit more detail in Remark \ref{finalremark513}. Furthermore, in the next paper in the series higher invariants will be considered in the context of our geometric cycles (see the work of Piazza-Schick \cite{PS} for more on the analytic context). In fact, a detailed study of the geometrically defined invariants constructed in \cite{paperI} is planned in future work. 

We begin with a short review of the ``Baum approach to index theory" in the context of geometric and analytic $K$-homology. The reader can find more details on this method in \cite{BD}; the introduction to \cite{BE} also discusses this approach to index theory in detail. Let $X$ be a finite CW-complex, $K^{ana}_*(X)$ be its analytic $K$-homology group, and $K^{geo}_*(X)$ be its geometric $K$-homology group; the reader can find the definitions of the cycles which determine these two groups in (for example) \cite{BD, BHS}. The starting point for this approach to index theory is the following commutative diagram:
\begin{equation}
\label{commdiagindprob}
\xymatrix{
 K^{{\rm geo}}_0(X) \ar[rdd]_{{\rm ind}_{{\rm top}}}  
 \ar[rr]^{\mu} 
 & & K^{{\rm ana}}_0(X)  \ar[ldd]^{{\rm ind}_{{\rm ana}}}  
 \\ \\
& \field{Z} &
}
\end{equation}
where the maps ${\rm ind}_{{\rm top}}$ and ${\rm ind}_{{\rm ana}}$ are respectively the topological and analytic index maps, and $\mu$ is the natural isomorphism (in even degree) between geometric and analytic $K$-homology (again the reader can find further details in \cite{BD, BHS}). We emphasize the direction of the map $\mu$ at the level of cycles: it takes geometric cycles to analytic cycles.

The ``index problem", stated in \cite[page $154$]{BD}, is defined as follows. As input, one takes an explicit analytic cycle yielding a class in $K^{ana}_*(X)$. The ``problem" is to find an explicit geometric cycle (yielding a class in $K^{geo}_*(X)$) with the property that it maps to the class of the analytic cycle (i.e., the input) under the map $\mu$. From such a geometric cycle (and the commutative diagram \eqref{commdiagindprob}), we obtain a topological formula for the index of the input; it is given by the image of the geometric cycle under the map ${\rm ind}_{{\rm top}}$. This method has been used in the context of the Atiyah-Singer index theorem in \cite{BD} and (more recently) in the case of an index theorem for hypoelliptic operators in \cite{BE}. To apply this method, it is required that one has a map from the geometric group to another abelian group; in most cases, the image group is an analytic group, but in this paper we will use a ``geometrically" defined group, which was introduced by Higson and Roe in \cite{HReta}. The more standard Baum approach to index theory will be applied to our situation in the next paper in the series.\\

The geometric model for analytic surgery was constructed in the first paper in this series \cite{paperI}; we recall it briefly and refer the reader to \cite{paperI} for further details. The input data for the geometric model of \cite{paperI} is a discrete group $\Gamma$, a Banach algebra completion $\mathcal{A}(\Gamma)$ of $\C[\Gamma]$, and the associated assembly map; we denote the group produced by our construction by $\mathcal{S}_*^{geo}(\Gamma,\mathcal{A})$. The prototypical cycle for $\mathcal{S}^{geo}_*(\Gamma,\mathcal{A})$ consists of a cycle for $K_{*-1}^{geo}(B\Gamma)$ whose image under the assembly map vanishes  in $K_{*-1}(\mathcal{A}(\Gamma))$ and a bordism implementing this vanishing. To be more precise, a ``nice" class of cycles are those of the form $(W,(\mathcal{E}_{\mathcal{A}(\Gamma)},E_\C,\alpha),f)$ where 
\begin{enumerate}
\item $W$ is a compact spin$^c$-manifold with boundary;
\item $f:\partial W\to B\Gamma$ is a continuous map;
\item $\mathcal{E}_{\mathcal{A}(\pi)}\to W$ is a locally trivial bundle of finitely generated projective $\mathcal{A}(\Gamma)$-modules;
\item $E_\C\to \partial W$ is a (Hermitian) vector bundle;
\item $\alpha$ implements an $\mathcal{A}(\Gamma)$-linear isomorphism of bundles 
$$\mathcal{E}_{\mathcal{A}(\Gamma)}|_{\partial W}\cong E_\C\otimes f^*\mathcal{L}_\mathcal{A},$$ 
where $\mathcal{L}_\mathcal{A}:=E\Gamma\times_\Gamma\mathcal{A}(\Gamma)\to B\Gamma$ is the Mischenko bundle. 
\end{enumerate}
Cycles of the form above are referred to as easy cycles. The general cycles for $\mathcal{S}_*^{geo}(\Gamma,\mathcal{A})$ are of the form $(W,\xi,f)$ where $\xi$ is a cocycle for a relative group for assembly on the level of $K$-theory. The details of the relative $K$-theory groups for assembly can be found in \cite[Section $1.3$]{paperI}. Let us briefly recall that a relative $K$-theory cocycle for assembly on $(W,\partial W,f)$ is a quintuple $\xi=(\mathcal{E}_{\mathcal{A}(\Gamma)},\mathcal{E}_{\mathcal{A}(\Gamma)}',E_\C,E_\C',\alpha)$ where $\mathcal{E}_{\mathcal{A}(\Gamma)}, \mathcal{E}_{\mathcal{A}(\Gamma)}'\to W$ and $E_\C, E_\C'\to \partial W$ are, as for the easy cycles, bundles for the Banach algebra indicated in the subscript and $\alpha$ is an $\mathcal{A}(\Gamma)$-linear isomorphism of bundles
\begin{equation}
\label{relktheocycle}
\alpha:\mathcal{E}_{\mathcal{A}(\Gamma)}|_{\partial W}\oplus (E_\C' \otimes_\C f^*\mathcal{L})\xrightarrow{\sim} \mathcal{E}_{\mathcal{A}(\Gamma)}'|_{\partial W}  \oplus(E_\field{C}\otimes_\field{C} f^*\mathcal{L}).
\end{equation}

The length of this paper is (more or less) explained by the computational complications caused by having to consider cycles of this more general form. It is therefore an interesting open question whether the geometric group can be defined using only the easy cycles discussed above. \\

The content of the paper is as follows. After reviewing some relevant aspects of the Atiyah-Patodi-Singer index theorem in Section \ref{relativeetasection}, we recall (in Subsection \ref{sectionorientedmodel}) the definition of an ``oriented" version of the geometric $K$-homology from \cite{guentnerkhom,HReta,Kescont}. From two finite-dimensional unitary representations $\sigma_1$ and $\sigma_2$ of $\Gamma$ of the same rank, a group $\mathcal{S}^{h}_1(\sigma_1,\sigma_2)$ consisting of equivalence classes of ``oriented" cycles relative to $(\sigma_1,\sigma_2)$ was constructed in \cite{HReta}. It provides a hybrid model (cycles contain both geometric and analytic data) that encode the data required to define relative $\eta$-invariants (with respect to the representations, $\sigma_1$ and $\sigma_2$). A cycle in this model consists of a quintuple $(M,S_{\C\ell},f,D,n)$ where $(M,S_{\C\ell},f)$ is a cycle for the \emph{oriented model} $K_1^h(B\Gamma)$ of $K$-homology, as such $M$ is oriented and $S_{\C\ell}\to M$ is a Clifford bundle, $D$ is a Dirac operator on $(M,S_{\C\ell},f)$ and $n$ is an integer; the dependence of this group on the choice of representations $\sigma_1$ and $\sigma_2$ only appears in the relations used to defined the group (see \cite[Remark 8.8]{HReta}). We review the definition of this group in Section \ref{higsonroesgroup} and extend the class of cycles to also encompass a spectral section; these cycles are better suited for the study of stable relative $\eta$-invariants. 

In Section \ref{mappingtohrrelgroup}, we construct a map $\Phi_\mathcal{A}$ from the geometric group $\mathcal{S}^{geo}_0(\Gamma,\mathcal{A})$ to $\mathcal{S}^{h}_1(\sigma_1, \sigma_2)$ under suitable assumptions on the Banach algebra $\mathcal{A}(\Gamma)$. To explain the idea of this mapping, we restrict to the easy cycles described above. The map, $\Phi_\mathcal{A}$, is defined (at the level of easy cycles) via
\begin{equation}
\label{phioneasy}
(W,(\mathcal{E}_{\mathcal{A}(\Gamma)},E_\C,\alpha),f)\mapsto (\partial W,S_{\partial W}\otimes E_\C,f,D,n)
\end{equation}
where $S_{\partial W}$ denotes the spin$^c$-structure on $\partial W$, $D$ is a choice of Dirac operator on $S_{\partial W}\otimes E$ and $n$ is the index of an Atiyah-Patodi-Singer type operator associated with the bundle data $(\mathcal{E}_{\mathcal{A}(\Gamma)},E_\C,\alpha)$ and the choice of Dirac operator $D$. We also provide an alternative definition $\tilde{\Phi}_{C^*_{{\bf full}}}$ of the map $\Phi_{C^*_{{\bf full}}}$ to $\mathcal{S}^{h}_1(\sigma_1,\sigma_2)$ in Subsection \ref{phiaforcstar} based on higher Atiyah-Patodi-Singer index theory by encoding a choice of spectral section in the data. As such, unlike the geometric model for analytic surgery, the map $\tilde{\Phi}_{C^*_{{\bf full}}}$ is only defined for $C^*$-algebra coefficients. We restrict our attention to the full (maximal) $C^*$-completion $C^*_{{\bf full}}(\Gamma)$ of $\C[\Gamma]$. 

We compare the map $\Phi_{C^*_{{\bf full}}}$, composed with the relative $\eta$-invariant that defines a map $\mathcal{S}^{h}_1(\sigma_1,\sigma_2)\to \R$, to the map to the real number from \cite{paperI} (mentioned above) in Section \ref{relatingtoindaleph}. The map from \cite{paperI} depends on a set of data $\aleph$ constructed from $\sigma_1$ and $\sigma_2$ and gives a map 
$$\ind_\aleph^\R:\mathcal{S}^{geo}_0(\Gamma,C^*_{{\bf full}})\to K_0(N) =\R.$$ 
Here $N$ denotes a II$_1$-factor. The two maps differ by a Chern-Simons term 
$$\cs_{\aleph}:\mathcal{S}^{geo}_0(\Gamma,C^*_{{\bf full}})\to K_0(N)=\R.$$ 
This situation is analogous to that of the various $\rz$-valued pairings considered in \cite{DeeRZ}. The Chern-Simons term compensates for the fact that the map $\ind_\aleph^\R$ depends on a choice of \emph{cocycle} representing the canonical class that $\sigma_1$ and $\sigma_2$ defines in $K^1(B\Gamma;\rz)$, see \cite{APS3,antazzska}. In summary, we have following result (which appears as Theorem \ref{indandrho} in the main body of the paper):

\begin{theorem}
\label{reallythm43}
Assume that $\sigma_1,\sigma_2:\Gamma\to U(k)$ are representations and let $\aleph$ and  $\aleph_0$ be the data chosen in the introduction of Section \ref{relatingtoindaleph} (cf. \cite[Section $5$]{paperI}). The $\aleph$-Chern-Simons invariant of cycles with connection from Definition \ref{csdef} is well defined and induces a map $\cs_{\aleph}:\mathcal{S}^{geo}_0(\Gamma,C^*_{{\bf full}})\to \R$ fitting into the commutative diagram:
\begin{center}
$$\xymatrix{
\mathcal{S}_0^{geo}(\Gamma,C^*_{{\bf full}})\ar[rr]^{\Phi_{C^*_{{\bf full}}}}\ar[ddr]_{(\tau_N)_*\circ \ind_\aleph^\R-\cs_{\aleph}}& &\mathcal{S}_1^h(\sigma_1,\sigma_2)\ar[ddl]^{\rho_{\sigma_1,\sigma_2}}
\\ \\ 
 &\R&  
}.$$
\end{center}
where
\begin{enumerate}
\item $\Phi_{C^*_{\bf full}}$ is the map in Definition \ref{defOfPhi};
\item $\rho_{\sigma_1, \sigma_2}$ is the map in Definition \ref{definitionofrho};
\item $\tau_N$, ${\rm ind}^{\field{R}}_{\aleph}$, and $cs_{\aleph}$ are the maps discussed in Section \ref{relatingtoindaleph}.
\end{enumerate}
\end{theorem}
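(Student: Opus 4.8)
The plan is to prove the identity $\rho_{\sigma_1,\sigma_2}\circ\Phi_{C^*_{\bf full}}=(\tau_N)_*\circ\ind^\R_\aleph-\cs_\aleph$ first on easy cycles equipped with auxiliary connections, and then to propagate it to a general cycle $(W,\xi,f)$ using that all three maps are group homomorphisms and that a relative $K$-theory cocycle for assembly is, after stabilisation and bordism, assembled out of easy-cycle data. The analytic engine is the Atiyah--Patodi--Singer index theorem, applied once to the ordinary twisted Dirac operator that produces the integer $n$ in \eqref{phioneasy} and once to the $N$-linear Dirac operator underlying $\ind^\R_\aleph$; matching the two resulting local index densities is then a Chern--Weil transgression computation.

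First I would settle the well-definedness of $\cs_\aleph$. On a cycle with connection the quantity of Definition \ref{csdef} is the integral over $W$ of a Chern--Simons transgression form built from the $\aleph$- and $\aleph_0$-connections together with the chosen connections on $\mathcal{E}_{C^*_{\bf full}(\Gamma)}$ and $E_\C$ and the flat connection on the Mishchenko bundle $\mathcal{L}$. Independence of the auxiliary connections is Stokes' theorem: varying a connection changes the integrand by an exact form, and the resulting boundary term over $\partial W$ is itself a transgression on the \emph{closed} manifold $\partial W$ --- the connection on $\mathcal{E}_{C^*_{\bf full}(\Gamma)}|_{\partial W}$ being pinned down through $\alpha$ by the flat connection on $\mathcal{L}$ and the connection on $E_\C$ --- hence integrates to zero. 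Descent to $\mathcal{S}^{geo}_0(\Gamma,C^*_{\bf full})$ (invariance under bordism, vector bundle modification, and disjoint union/direct sum) is again Stokes' theorem on the bordism, over which the transgression form extends, together with multiplicativity of the underlying characteristic forms; the hypothesis $\mathrm{rank}\,\sigma_1=\mathrm{rank}\,\sigma_2$ guarantees that the degree-zero parts cancel, so that no integrality ambiguity appears and $\cs_\aleph$ is honestly $\R$-valued.

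Next I would establish the identity on an easy cycle $\mathfrak{c}=(W,(\mathcal{E}_{C^*_{\bf full}(\Gamma)},E_\C,\alpha),f)$ with connections. By \eqref{phioneasy} and Definition \ref{defOfPhi}, $\Phi_{C^*_{\bf full}}(\mathfrak{c})=(\partial W,S_{\partial W}\otimes E_\C,f,D,n)$ with $n$ the index of an Atiyah--Patodi--Singer problem on $W$ built from $D$ and the bundle data, with boundary operator $D$; applying $\rho_{\sigma_1,\sigma_2}$ (Definition \ref{definitionofrho}) gives, up to normalisation, the relative $\eta$-invariant of $D$ twisted by the flat bundles over $B\Gamma$ determined by $\sigma_1$ and $\sigma_2$, corrected by a term linear in $n$, and the classical APS theorem rewrites $n$ --- hence this whole expression --- as $\int_W(\text{interior density})$ minus reduced $\eta$-boundary terms. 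On the other side $\ind^\R_\aleph(\mathfrak{c})\in K_0(N)$ is the index of the $N$-linear operator obtained by assembling $\mathfrak{c}$ and twisting by $\aleph$; applying $(\tau_N)_*$ and the $\mathrm{II}_1$-factor ($L^2$) version of the APS index theorem expresses $(\tau_N)_*\ind^\R_\aleph(\mathfrak{c})$ as $\int_W(\text{interior density}')$ minus $\tau_N$-$\eta$ boundary terms. The $\tau_N$-$\eta$ boundary terms decompose, through the defining structure of $\aleph$, into exactly the relative $\eta$-invariants occurring in $\rho_{\sigma_1,\sigma_2}\circ\Phi_{C^*_{\bf full}}$, while the two interior densities differ precisely by the transgression form interpolating between the flat structure that defines $\rho_{\sigma_1,\sigma_2}$ and the $\aleph$-connection; integrating that difference over $W$ is by definition $\cs_\aleph(\mathfrak{c})$, which yields the asserted identity for easy cycles.

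Finally I would treat general cycles $(W,\xi,f)$: one runs the previous argument with the four bundles and the isomorphism \eqref{relktheocycle} in place of $(\mathcal{E},E_\C,\alpha)$, or, using the homomorphism property, reduces $(W,\xi,f)$ through stabilisation and bordism to easy-cycle pieces. Making the three maps transform compatibly under these moves is cleanest after one enlarges the cycles by a spectral section, so that every $\eta$-invariant and every index in sight is simultaneously well defined and the higher APS index theorem applies --- this is the role of $\tilde{\Phi}_{C^*_{\bf full}}$ --- and then checks $\rho_{\sigma_1,\sigma_2}\circ\Phi_{C^*_{\bf full}}=\rho_{\sigma_1,\sigma_2}\circ\tilde{\Phi}_{C^*_{\bf full}}$, so that no generality is lost. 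The hard part, I expect, will be precisely this last step: controlling the relative $K$-theory bookkeeping --- tracking all four bundles and all transgression terms through the reduction to easy cycles --- and invoking the correct $\mathrm{II}_1$-factor Atiyah--Patodi--Singer index theorem, with its invertibility and spectral-section subtleties, so that $(\tau_N)_*\circ\ind^\R_\aleph$ genuinely decomposes into a local interior term plus the relative $\eta$-invariant. Once the set-up is fixed, the Chern--Weil identification of the remaining term with $\cs_\aleph$ is routine.
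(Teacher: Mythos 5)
There is a genuine gap, and it stems from a misreading of how $\ind^\R_\aleph$ is defined. You propose to apply a II$_1$-factor Atiyah--Patodi--Singer index theorem to the manifold with boundary $W$ and then decompose $(\tau_N)_*\circ\ind^\R_\aleph$ into a local interior density plus $\tau_N$-$\eta$ boundary terms, which you then claim match the relative $\eta$-invariants appearing in $\rho_{\sigma_1,\sigma_2}\circ\Phi_{C^*_{\bf full}}$. But $\ind^\R_\aleph(W,\xi,f)$ is by construction the $N$-linear index of a Dirac operator over the \emph{closed} manifold $Z=W\cup_{\partial W}\partial W\times[0,1]\cup_{\partial W}-W$; there is no boundary and hence no $\eta$-term on the $\ind^\R_\aleph$ side at all. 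Remark \ref{asforvnbun} computes $(\tau_N)_*\ind^\R_\aleph$ as $\int_Z\ch_\tau(\Sigma_\xi)\wedge Td(Z)$ by the Mishchenko--Fomenko version of the Atiyah--Singer theorem for closed manifolds. Invoking a von~Neumann APS theorem (with its invertibility and spectral-section caveats) is therefore not only unnecessary but also points the bookkeeping in the wrong direction: the identification of $\tau_N$-$\eta$ boundary contributions with the finite-dimensional relative $\eta$-invariants that you assert would require its own nontrivial argument and never arises in the actual proof.

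What actually makes the theorem go through is a cancellation you do not record: by Remark \ref{ncomputation}, the integer $n$ in $\Phi_{C^*_{\bf full}}(W,\xi,f)$, computed via the ordinary APS index theorem on $W$ and $\partial W\times[0,1]$, already carries $\eta$-terms of the boundary operators $D^{\partial W}_{E,i}$ and $D^{\partial W}_{E',i}$ with signs exactly opposite to the $\eta$-terms contributed by $\rho_{\sigma_1,\sigma_2}$ of the boundary cycle; after $\rho_{\sigma_1,\sigma_2}\circ\Phi_{C^*_{\bf full}}$ is assembled, all $\eta$-terms and kernel-dimension terms cancel, leaving only local Chern--Weil integrals over $W$ and $\partial W\times[0,1]$. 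The paper then constructs a specific connection $\nabla^\Sigma$ on $\Sigma_\xi\to Z$ (agreeing with $\hat{\alpha}_1^{-1}\tilde{\nabla}_1$ and $\hat{\alpha}_1^{-1}\tilde{\nabla}_2$ near the two ends of the cylinder) and splits $\int_Z\ch_\tau(\Sigma_\xi)\wedge Td(Z)$ into $\int_{W\sqcup -W}$ (giving the same local integrals) and $\int_{\partial W\times[0,1]}$ (giving $\cs_\aleph$ by definition). Thus the whole proof is a transgression computation on the closed $Z$, with no $\eta$-invariants surviving on either side, and no reduction to easy cycles is attempted or needed. Your outline of the well-definedness of $\cs_\aleph$ and the general shape of the Chern--Weil matching is reasonable, but the central mechanism is different from, and incompatible with, the one you describe.
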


The map $\ind_\aleph^\R-\cs_{\aleph}$ can be viewed as analogous to the topological index map and $\rho_{\sigma_1,\sigma_2}\circ \Phi_{C^*_{\bf full}}$ as analogous to the analytic index map in the  ``Baum approach to index theory" discussed above. The application of this approach to the rigidity of relative $\eta$-invariants is based on the following theorem describing how the mappings of Theorem \ref{reallythm43} fit together with the geometric version of Higson-Roe's analytic surgery exact sequence (see \cite[Theorem 3.8]{paperI}).

\begin{theorem}
\label{fittingtogether}
Assume that $\sigma_1,\sigma_2:\Gamma\to U(k)$ are representations. The following diagram has exact rows and commutes
\begin{center}
$$\xymatrix{
&K_*^{geo}(pt;C^*_{{\bf full}}(\Gamma))\ar[r]^{r}\ar[d]^{(\sigma_1-\sigma_2)_*}&\mathcal{S}_0^{geo}(\Gamma,C^*_{{\bf full}})\ar[d]^{\Phi_{C^*_{{\bf full}}}}\ar[r]^{\delta}& K_{*-1}^{geo}(B\Gamma)\ar@{=}[d]& \\
0\ar[r]&\Z\ar[r]\ar@{=}[d] &\mathcal{S}_1^h(\sigma_1,\sigma_2)\ar[d]^{\rho_{\sigma_1,\sigma_2}}\ar[r]&K_{*-1}^{geo}(B\Gamma)\ar[d]^{\bar{\rho}_{\sigma_1,\sigma_2}}\ar[r]&0\\ 
0\ar[r]&\Z\ar[r] &\R\ar[r]&\R/\Z\ar[r]&0  
}.$$
\end{center}
Here the mappings $r$ and $\delta$ are described in \cite[Section 3]{paperI}, $\Phi_{C^*_{\bf full}}$ is the map in Definition \ref{defOfPhi}, $\rho_{\sigma_1, \sigma_2}$ is the map in Definition \ref{definitionofrho} and $\bar{\rho}_{\sigma_1, \sigma_2}$ denotes the $\mod \Z$-reduction of $\rho_{\sigma_1, \sigma_2}$.
\end{theorem}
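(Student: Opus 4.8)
The plan is to take exactness of the three rows as input and to verify the commutativity of each of the four squares in turn. The top row is \cite[Theorem 3.8]{paperI}; the bottom row is the canonical sequence $0\to\Z\to\R\to\R/\Z\to0$; and the middle row is the exact sequence attached to Higson--Roe's hybrid group in \cite{HReta}, recalled (and extended with a spectral section) in Section \ref{higsonroesgroup}. Write $\iota\colon\Z\to\mathcal{S}_1^h(\sigma_1,\sigma_2)$ and $q\colon\mathcal{S}_1^h(\sigma_1,\sigma_2)\to K_{*-1}^{geo}(B\Gamma)$ for the two maps of the middle row, $q$ being the forgetful map $(M,S_{\C\ell},f,D,n)\mapsto(M,S_{\C\ell},f)$ and $\iota(n)$ the class of the cycle carrying the integer $n$ and no further ($K$-homology) content. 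Recall also that $\delta$ sends $(W,\xi,f)$ to the boundary cycle $[\partial W,S_{\partial W}\otimes(E_\C\ominus E_\C'),f]$, which on an easy cycle is $[\partial W,S_{\partial W}\otimes E_\C,f]$.

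I would first dispose of the two squares that are immediate from the definitions. For the top-right square, comparing Definition \ref{defOfPhi} (see \eqref{phioneasy} on easy cycles) with the definition of $\delta$ in \cite[Section 3]{paperI} shows that both $q\circ\Phi_{C^*_{{\bf full}}}$ and $\delta$ return $[\partial W,S_{\partial W}\otimes(E_\C\ominus E_\C'),f]$, since $\Phi_{C^*_{{\bf full}}}$ merely appends the analytic datum $(D,n)$ that $q$ then discards. For the bottom-left square, Definition \ref{definitionofrho} shows that the integer entry of a cycle contributes additively to $\rho_{\sigma_1,\sigma_2}$, so $\rho_{\sigma_1,\sigma_2}\circ\iota$ is the standard inclusion $\Z\hookrightarrow\R$; in particular $\iota$ is injective, which is one of the exactness statements in the middle row.

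The bottom-right square is then formal: exactness of the middle row gives $\ker q=\iota(\Z)$, and by the bottom-left square $\rho_{\sigma_1,\sigma_2}(\iota(\Z))=\Z$, so the composite of $\rho_{\sigma_1,\sigma_2}$ with the reduction $\R\to\R/\Z$ annihilates $\ker q$ and therefore factors uniquely through $q$; the factoring map is by definition $\bar\rho_{\sigma_1,\sigma_2}$, and this factorization \emph{is} the commutativity of the square (a by-product being the classical fact that the relative $\eta$-invariant modulo $\Z$ is a $K$-homology invariant). For the top-left square one first notes $q\circ\Phi_{C^*_{{\bf full}}}\circ r=\delta\circ r=0$, by the top-right square and exactness of the top row, so $\Phi_{C^*_{{\bf full}}}\circ r$ lands in $\ker q=\iota(\Z)$; since $\rho_{\sigma_1,\sigma_2}$ is injective on $\iota(\Z)$, it suffices to establish the numerical identity $\rho_{\sigma_1,\sigma_2}\bigl(\Phi_{C^*_{{\bf full}}}(r(x))\bigr)=(\sigma_1-\sigma_2)_*(x)$ in $\R$ for all $x\in K_*^{geo}(pt;C^*_{{\bf full}}(\Gamma))$.

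To do this I would represent $r(x)$, using the construction of $r$ in \cite[Section 3]{paperI}, by a cycle whose boundary classifying map $f$ is constant, so that $f^*\mathcal{L}$ is trivial; then the flat bundles attached to $\sigma_1$ and $\sigma_2$ over $\partial W$ are trivial, their $\eta$-contributions to $\rho_{\sigma_1,\sigma_2}$ cancel, and one is left with exactly the integer $n$ of Definition \ref{defOfPhi} — the index of the Atiyah--Patodi--Singer operator built from $\mathcal{E}_{C^*_{{\bf full}}(\Gamma)}\to W$, the Dirac operator $D$, and the identification $\alpha$. Identifying this index with $(\sigma_1-\sigma_2)_*(x)$ is the step I expect to be the main obstacle: it requires matching the very concrete APS index of Definition \ref{defOfPhi} with the $K$-theoretic pairing $(\sigma_1-\sigma_2)_*$, tracking orientations, the trace normalization implicit in $K_0(N)=\R$, and the fact that the index over $W$ a priori lives in $K_0(C^*_{{\bf full}}(\Gamma))$ and becomes an integer only after applying $\sigma_1-\sigma_2$. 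This can be carried out either by a direct relative index computation, or by invoking Theorem \ref{reallythm43} together with the evaluations of $\ind_\aleph^\R\circ r$ and $\cs_\aleph\circ r$ on $\mathrm{im}(r)$ coming from \cite{paperI}. The remaining points — that the vertical maps are well-defined homomorphisms, and exactness of the middle row at $\mathcal{S}_1^h(\sigma_1,\sigma_2)$ — are contained respectively in the definitions of $\Phi_{C^*_{{\bf full}}}$ and $\rho_{\sigma_1,\sigma_2}$ recalled earlier and in \cite{HReta} together with Section \ref{higsonroesgroup}.
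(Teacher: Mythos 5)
Your overall structure — verify exactness of the three rows and commutativity of the four squares — matches the paper's approach, which assembles Theorem \ref{fittingtogether} by stacking Proposition \ref{sesfortherelativegroups} (bottom two rows) on top of Theorem \ref{phiacommutingdiagram} (top two rows). Your treatments of the top-right, bottom-left and bottom-right squares are fine, and your observation that the bottom-right square is a formal consequence of the bottom-left one together with exactness of the middle row is a valid alternative to simply unwinding the definition of $\bar\rho_{\sigma_1,\sigma_2}$.

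However, your discussion of the top-left square overcomplicates a step that is in fact elementary, and in the process introduces two inaccuracies. First, cycles in the image of $r$ do not have boundary with constant classifying map — they have \emph{empty} boundary: in the geometric model of \cite[Section~3]{paperI}, $r(M,\mathcal{E})$ is the cycle $(M,(\mathcal{E},M\times0,\emptyset,\emptyset,\emptyset),\emptyset)$ with $M$ closed. Consequently, in Definition~\ref{defOfPhi} all boundary data is vacuous, the $\eta$-terms literally do not appear (there is nothing to cancel), and $\Phi_{C^*_{\bf full}}\circ r(M,\mathcal{E})=(\emptyset,\emptyset,\emptyset,\emptyset,n)$ with $n=\ind(M,\mathcal{E}\otimes_{\sigma_1}\C^k)-\ind(M,\mathcal{E}\otimes_{\sigma_2}\C^k)$ by the ordinary Atiyah--Singer index theorem — exactly $(\sigma_1-\sigma_2)_*(M,\mathcal{E})$. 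There is no "main obstacle" here. Second, the appeal to $K_0(N)=\R$, the trace normalization, or Theorem~\ref{reallythm43} is misplaced for this square: the II$_1$-factor $N$ plays no role in the diagram of Theorem~\ref{fittingtogether}; that machinery belongs to the comparison with $\ind_\aleph^\R$ in Section~\ref{relatingtoindaleph} and is logically downstream of (not an ingredient in) the present theorem. Your indirect argument via $\ker q=\iota(\Z)$ and injectivity of $\rho_{\sigma_1,\sigma_2}$ on $\iota(\Z)$ is not wrong, but it replaces a one-line computation by a detour and in your hands the detour ends in a misidentification of where the difficulty lies.
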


The two bottom rows of the diagram in Theorem \ref{fittingtogether} are described in Proposition \ref{sesfortherelativegroups}. The top two rows are described in Theorem \ref{phiacommutingdiagram}. With Theorem \ref{reallythm43} and Theorem \ref{fittingtogether} in hand, we proceed to apply ``Baum's approach to index theory" to our situation in Section \ref{sectionvanishing}. In particular, our proof of Theorem \ref{keswanithm} is based on solving such an index problem. Given $(M,S_{\C\ell},f,D)$, a cycle for $K_1^h(B\Gamma)$ with a choice of Dirac operator, the \emph{$(\sigma_1,\sigma_2)$-relative index problem} asks for a cycle $(W,\xi,f)$ for $\mathcal{S}_0^{geo}(\Gamma,\mathcal{A})$ such that 
$$\Phi_\mathcal{A}(W,\xi,f)=[M,S_{\C\ell},f,D,0]\quad\mbox{in}\quad \mathcal{S}^h_1(\sigma_1,\sigma_2).$$ 
Whenever the index problem for $(M,S_{\C\ell},f,D)$ admits a solution and $\Gamma$ is a torsion-free group for which $\mathcal{A}(\Gamma)$ has the Baum-Connes property, we have that $\mathcal{S}^{geo}_0(\Gamma,\mathcal{A})=0$; it follows that
\[\rho_{\sigma_1,\sigma_2}(M,S_{\C\ell},f,D)=\rho_{\sigma_1,\sigma_2}\circ \Phi_\mathcal{A}(W,\xi,f)=0.\]
Whenever $(M,S_{\C\ell},f,D)$ admits a solution to the index problem it follows that $\mu(M,S_{\C\ell},f)=0$ in $K_1(\mathcal{A}(\Gamma))$. This is unfortunately not a sufficient condition; the reader can deduce counterexamples from the results of \cite[Section 15]{PS}, see more in Remark \ref{obstructingtheindexproblem}. 

Following ideas in \cite{PS}, we avoid these obstructions for the case of $C^*$-algebra completions of $\C[\Gamma]$ in Subsection \ref{stabelsubsection} through a stable reformulation of the $(\sigma_1,\sigma_2)$-relative index problem using higher Atiyah-Patodi-Singer index theory. The stable $(\sigma_1,\sigma_2)$-relative index problem admits a solution whenever $\mu(M,S_{\C\ell},f)=0$, see Theorem \ref{solvingindexproblem}. It can be concluded that if $(M,S_{\C\ell},f,D)$ vanishes under assembly, $\Gamma$ is torsion-free and has the \emph{full} Baum-Connes property, the stable relative $\eta$-invariant $\rho^s_{\sigma_1,\sigma_2}(M,S_{\C\ell},f,D)=0$. Theorem \ref{keswanithm} follows from techniques in \cite{PS}. \\

\section{Preliminaries on relative $\eta$-invariants}
\label{relativeetasection}

In the seminal work of Atiyah-Patodi-Singer \cite{APS1,APS2,APS3}, a non-local boundary condition on Dirac operators (now known as Atiyah-Patodi-Singer boundary conditions) was studied. We recall this boundary condition below in \eqref{APScondition}. The index formula from \cite{APS1} for a Dirac operator on a compact manifold with boundary equipped with Atiyah-Patodi-Singer boundary conditions contains not only the ordinary local contributions but also a spectral term from the boundary, known as an $\eta$-invariant. It turns out that many interesting secondary invariants can be constructed from $\eta$-invariants. In this section, we briefly recall these invariants and some $C^*$-algebraic techniques introduced in \cite{PS} to study these invariants. We also recall the oriented model for geometric $K$-homology in Subsection \ref{sectionorientedmodel}.

Throughout the paper, $W$ denotes a Riemannian oriented manifold with boundary which often is equipped with a spin$^c$-structure. The bundle $S\to W$ (in this subsection) is assumed to be a Clifford bundle equipped with a Hermitean Clifford connection $\nabla_S$. We sometimes write $S_{\C\ell}$ to emphasize that $S$ is a Clifford bundle. For the purposes of this paper, we always assume that the metric on $W$ and the Clifford connection $\nabla_S$ are of product type near $\partial W$. The associated Dirac operator will be denoted by $D_S$ or $D_{\nabla_S}$.

\subsection{The Atiyah-Patodi-Singer index theorem}
\label{apssubsub}

Assume that the manifold with boundary, $W$, is even-dimensional. Since we have assumed $\nabla_S$ and the metric on $W$ to be of product type near $\partial W$, $D_S$ decomposes near the boundary as follows. We take a collar neighborhood $[0,1]\times \partial W$ of the boundary of $W$ on which all geometric structure are of product type. In particular, we can write $S|_{\partial W}\cong S_N\hat{\otimes}S_\partial$ where $S_N$ is the spin$^c$-structure of the normal bundle to $\partial W$ and $S_\partial$ is a Clifford bundle on $\partial W$. The Clifford bundle $S$ splits into its graded components $S_+\oplus S_-$ and $S_\partial$ can be identified with $S_+|_{\partial W}$. There is an odd self adjoint unitary $\sigma:S|_{\partial W}\to S|_{\partial W}$ acting as Clifford multiplication by $\mathrm{d} t$, where $t$ denotes the coordinate normal to $\partial W$. The Dirac operator $D$ is odd and decomposes into two operators $D^+$ and $D^-$ acting from $S_+\to S_-$ and $S_-\to S_+$ respectively. There is a Dirac operator $D_\partial $ on $S_\partial$ such that near $\partial W$:
\begin{equation}
\label{diracofproducttupe}
D^+=\sigma|_{S_+}\left(\frac{\partial}{\partial t}+D_\partial\right).
\end{equation}
The Dirac operator $D_\partial$ is a self-adjoint elliptic differential operator. Hence, the projection onto the non-negative spectrum of $D_\partial$, written as $P_\partial:=\chi_{[0,\infty)}(D_{\partial})$, is a well defined classical pseudo-differential operator of order $0$ on $\partial W$ acting on the vector bundle $S_\partial$. The Atiyah-Patodi-Singer condition on an element of the first Sobolev space $f\in H^1(W,S_+)$ is given by 
\begin{equation}
\label{APScondition}
P_\partial[f|_{\partial W}]=0.
\end{equation}
The operator $D_S^+$ equipped with the boundary condition \eqref{APScondition} will be denoted by $D_{APS}^+$. By \cite[Theorem $3.10$]{APS1}, the operator $D_{APS}^+$ is Fredholm. To describe its index, we need the $\eta$-function of $D_\partial$; it is defined (for $\mathrm{Re}(s)>\dim(W)-1$) by the expression
\[\eta_{D_\partial}(s):=\sum_{\lambda\in \mathrm{Spec}(D_\partial)\setminus\{0\}}\mathrm{sign}(\lambda)|\lambda|^{-s}.\]
This function extends meromorphically to the complex plane; by \cite[Theorem $3.10$]{APS1}, it is regular at $s=0$. Furthermore, the Atiyah-Patodi-Singer index formula computes the index of $D_{APS}^+$:
\begin{equation}
\label{APSformula}
\ind(D_{APS}^+)=\int_W\alpha_D-\frac{1}{2}\left(\eta_{D_\partial}(0)+\dim\ker D_\partial\right),
\end{equation}
where $\alpha_D$ is a density coming from a local expression. For instance, when $S$ is of the form $S_W\otimes E$, for a spin$^c$-structure $S_W\to W$ with Clifford connection $\nabla_W$ and a hermitean vector bundle $E\to W$ with hermitean connection $\nabla_E$, and $D$ is constructed from $\nabla_S\otimes \nabla_E$ then $\alpha_D=\ch[\nabla_E]\wedge Td(\nabla_W)$. We often abuse the notation by letting $\ind_{APS}(D)$ denote $\ind(D_{APS}^+)$.

In fact, whenever $P$ is an order $0$ pseudo-differential projection on $L^2(\partial W,S_\partial)$, with $P-P_\partial$ being a smoothing operator, the boundary condition \eqref{APScondition} leads to a well-defined Fredholm operator, $D_P^+$. The index of $D_P^+$ is given by 
\begin{equation}
\label{apsdifferentp}
\ind (D_P^+)=\ind (D_{APS}^+)+\ind(P,P_\partial),
\end{equation}
where $\ind(P,P_\partial)$ is the index of the Fredholm pair of projections $(P,P_\partial)$. We sometimes write $\ind_{APS}(D,P)$ for $\ind(D^+_P)$. See more in \cite[Section I.10]{BossB}. A highly useful fact in regards to ``Atiyah-Patodi-Singer indices" is that they have well understood gluing properties. More precisely, we have the following proposition (whose proof follows from the Atiyah-Patodi-Singer index formula \eqref{APSformula}).
\begin{prop}
\label{gluap}
Assume that $Z$ is a compact manifold with boundary, $S\to Z$ is a Clifford bundle and $D^Z$ is a Dirac operator on $S$. Let $Y\subseteq Z^\circ$ be a closed hyper surface such that $Z=W_1\cup_Y W_2$ where $W_1$ and $W_2$ are compact manifolds with boundaries $\partial W_i=(-1)^iY\dot{\cup} M_i$, for a closed manifold $M_i$. Then,
\[\ind(D^Z_{APS})=\ind (D^{W_1}_{APS}) +\ind(D^{W_2}_{APS})+\dim \ker D^Y,\]
where $D^{W_i}$ is the Dirac operator on $S|_{W_i}$ given by restricting $D^Z$ to $W_i$ and $D^Y$ is the boundary operator on $Y$.
\end{prop}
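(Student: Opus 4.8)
The plan is to apply the Atiyah--Patodi--Singer index formula \eqref{APSformula} to each of the three manifolds $Z$, $W_1$, $W_2$ and check that the local and spectral terms match up. First I would fix a product-type collar of $Y$ inside $Z^\circ$ and, after possibly deforming the metric and Clifford connection on $Z$ so that they are of product type near $Y$ as well as near $M_1$ and $M_2$ (this does not change any of the three indices, since they are homotopy invariant under product-type deformations by \eqref{apsdifferentp} and the continuity of the $\eta$-invariant), arrange that the decomposition $Z = W_1 \cup_Y W_2$ is compatible with all geometric structures. Then each of $Z$, $W_1$, $W_2$ is a compact manifold with boundary carrying a Clifford bundle with a Clifford connection of product type near the boundary, so \eqref{APSformula} applies to all three.

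The key computation is then purely bookkeeping. Writing $\alpha_D$ for the local density on $Z$ (and noting it restricts to the corresponding local densities on $W_1$ and $W_2$, since it is built from the connection and metric which agree on the pieces), the integral term splits additively:
\begin{equation*}
\int_Z \alpha_D = \int_{W_1}\alpha_D + \int_{W_2}\alpha_D.
\end{equation*}
For the boundary contributions, observe that $\partial W_1 = -Y \,\dot\cup\, M_1$ and $\partial W_2 = Y \,\dot\cup\, M_2$. The boundary Dirac operator on a disjoint union is the direct sum of the boundary Dirac operators, so $\eta$-invariants and kernel dimensions add over components; moreover reversing the orientation of $Y$ sends $D^Y$ to $-D^Y$, hence $\eta_{D^Y}(0) \mapsto -\eta_{D^Y}(0)$ while $\dim\ker D^Y$ is unchanged. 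Adding the formulas \eqref{APSformula} for $W_1$ and $W_2$, the $\eta_{D^Y}(0)$ terms cancel, the $M_i$ terms reassemble into the full boundary $\eta$-term for $Z$ (the boundary of $Z$ being $M_1 \,\dot\cup\, M_2$), and one is left with an excess of
\begin{equation*}
-\tfrac12\bigl(\dim\ker D^Y\bigr) - \tfrac12\bigl(\dim\ker D^Y\bigr) = -\dim\ker D^Y
\end{equation*}
relative to the formula for $\ind(D^Z_{APS})$ — wait, this needs to come out with the opposite sign, so one rearranges: $\ind(D^Z_{APS}) = \int_Z\alpha_D - \tfrac12(\eta_{D^{M_1\dot\cup M_2}}(0) + \dim\ker D^{M_1\dot\cup M_2})$, while $\ind(D^{W_1}_{APS}) + \ind(D^{W_2}_{APS})$ equals the same expression \emph{minus} $\dim\ker D^Y$ (from the two half-contributions of the $Y$-kernel that do not cancel). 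Solving gives $\ind(D^Z_{APS}) = \ind(D^{W_1}_{APS}) + \ind(D^{W_2}_{APS}) + \dim\ker D^Y$, as claimed.

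The step I expect to require the most care is the sign accounting around the hypersurface $Y$: one must be scrupulous about the orientation conventions in the decomposition $\partial W_i = (-1)^i Y \,\dot\cup\, M_i$, about how Clifford multiplication by the normal coordinate $\mathrm{d}t$ in \eqref{diracofproducttupe} interacts with which side of $Y$ one is on, and about the resulting sign of the boundary operator $D^Y$ seen from $W_1$ versus $W_2$. Because $\eta_{D^Y}(0)$ is odd under $D^Y \mapsto -D^Y$ but $\dim\ker D^Y$ is even (i.e.\ invariant), the cancellation of the $\eta$-terms and the survival of exactly one copy of $\dim\ker D^Y$ per piece is what produces the stated gluing term; getting this exactly right — rather than, say, $+2\dim\ker D^Y$ or $0$ — is the only genuine subtlety. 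Everything else is a direct consequence of \eqref{APSformula}.
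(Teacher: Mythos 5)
Your proof is correct and follows exactly the route the paper indicates: apply the APS index formula \eqref{APSformula} to each of $Z$, $W_1$, $W_2$, note that the local density integral is additive, use $\eta_{-D^Y}(0)=-\eta_{D^Y}(0)$ while $\dim\ker(-D^Y)=\dim\ker D^Y$ so the $\eta$-contributions from the two copies of $Y$ cancel but the kernel contributions reinforce, and collect terms. The mid-paragraph ``wait, this needs to come out with the opposite sign'' should be cleaned up in a final write-up, but the arithmetic resolves to $\ind(D^{W_1}_{APS})+\ind(D^{W_2}_{APS})=\ind(D^Z_{APS})-\dim\ker D^Y$, which is the claim; also note that the paper's standing hypothesis that all geometric data are of product type near boundaries already forces product structure near $Y$ (else $D^{W_i}$ would not literally restrict to an operator of the required form), so the preliminary deformation step you describe can be dispensed with.
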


An invariant closely related to the Atiyah-Patodi-Singer index theorem is the relative $\eta$-invariant. They were first studied in \cite{APS2}. We now assume that $M$ is a closed Riemannan oriented manifold, that $S\to M$ is a Clifford bundle equipped with a Hermitean Clifford connection $\nabla_S$ and that $F_1,F_2\to M$ are two flat Hermitean vector bundles of the same rank. Equipping $F_1$ and $F_2$ with their flat connections we can associate Dirac operators $D_{S,1}$ and $D_{S,2}$ on $S\otimes F_1$ respectively $S\otimes F_2$. The relative $\eta$-invariant, associated with this data, is defined to be the real number
\begin{equation}
\label{firstreletadef}
\rho(M,S,\nabla_S,F_1,F_2):=\frac{1}{2}\bigg(\eta_{D_{S,1}}(0)+\dim \ker D_{S,1}-\eta_{D_{S,2}}(0)-\dim \ker D_{S,2}\bigg).
\end{equation}
By \cite[Theorem $3.3$]{APS2}, the invariant $\rho(M,S,\nabla_S,F_1,F_2)\!\!\!\mod\! \Z$ is a bordism invariant. This fact is a direct consequence of \eqref{APSformula}; the relative $\eta$-invariant can jump only by the integer coming from an Atiyah-Patodi-Singer index of the bordism. We will revisit this result below in Proposition \ref{sesfortherelativegroups}. 

The relative invariant, without reducing modulo $\Z$, does not behave as well under bordism. However, \cite[Proposition $6.6$]{HReta} implies that relative $\eta$-invariant respects vector bundle modification (see Definition \ref{vectorbundlemodificationofrelativecycles} below or \cite[Definition 3.9]{HReta}). We use the notation $1_\field{R}\to M$ for the trivial real line bundle.

\begin{prop} (see \cite[Proposition 6.6]{HReta}) \\
Assume that $M$ is an oriented closed Riemannian manifold, $S\to M$ is a Clifford bundle equipped with a Hermitean Clifford connection $\nabla_S$, that $F_1,F_2\to M$ are two flat Hermitean vector bundles of the same rank and that $V\to M$ is an oriented even-dimensional vector bundle. We define $\pi_V:M^V:=S(V\oplus 1_\field{R})\to M$, the sphere bundle of $V\oplus 1_\field{R}$, and let $S^V\to M^V$ denote the vector bundle modification of $S$ along $V$ (see \cite[Definition $3.9$]{HReta} or below in Definition \ref{vectorbundlemodificationofrelativecycles}) equipped with its canonically associated Clifford connection $\nabla_{S^V}$. It holds that  
\[\rho(M,S,\nabla_S,F_1,F_2)=\rho(M^V,S^V,\nabla_{S^V},\pi_V^*F_1,\pi_V^*F_2).\]
\end{prop}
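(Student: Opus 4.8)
The plan is to reduce the statement to the Atiyah-Patodi-Singer index formula \eqref{APSformula} and the bordism invariance of the relative $\eta$-invariant mod $\Z$, combined with a direct geometric argument that the vector bundle modification extends over a bordism carrying the \emph{same} local contribution. First I would recall that a vector bundle modification $(M^V, S^V, \pi_V)$ bounds together with $(M, S, \mathrm{id})$ in a very concrete way: one takes the disc bundle $Z := D(V\oplus 1_\R) \to M$, whose boundary is $M^V$, and attaches to it (along $M^V$, or equivalently, keeps track of it as a manifold with boundary $M^V \dot\cup (-M)$ after removing a collar) so that the pulled-back bundles $\pi_V^* F_i$ extend flatly over $Z$ as $p^* F_i$ where $p : Z \to M$ is the disc-bundle projection. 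Here the flatness is crucial: since $F_i$ is flat on $M$, its pullback $p^* F_i$ is flat on $Z$, and restricting to the boundary sphere bundle recovers $\pi_V^* F_i$ with its flat connection. The Clifford bundle $S$ admits a corresponding extension $\hat S \to Z$ (the modification data itself provides this, as $S^V$ is built from $S_{D(V\oplus 1_\R)}\hat\otimes p^*S$ and one simply does not restrict to the boundary), with a product-type Clifford connection near $M^V$.

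Next, I would apply the bordism-invariance statement. Strictly speaking the clean statement available is that $\rho \bmod \Z$ is a bordism invariant (from \cite[Theorem 3.3]{APS2}, recalled above), so running the argument through $Z$ as above gives
\[
\rho(M^V, S^V, \nabla_{S^V}, \pi_V^* F_1, \pi_V^* F_2) \equiv \rho(M, S, \nabla_S, F_1, F_2) \pmod{\Z}.
\]
To upgrade this to an honest equality of real numbers, I would invoke \eqref{apsdifferentp} and the APS formula \eqref{APSformula} applied to the two Dirac operators $D_{\hat S, i}$ on $\hat S \otimes p^* F_i$ over $Z$ (with APS boundary conditions at $M^V \dot\cup (-M)$, noting the orientation sign on the $M$-piece). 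The integer jump is precisely
\[
\ind(D_{\hat S,1,APS}) - \ind(D_{\hat S,2,APS}) + \tfrac12\big(\text{boundary } \eta\text{-terms for the } M\text{-end}\big) - \tfrac12\big(\cdots \text{ for } M^V\text{-end}\big),
\]
and the key cancellation is that the local densities $\alpha_{D_{\hat S,1}}$ and $\alpha_{D_{\hat S,2}}$ over $Z$ differ only by a factor of $\mathrm{ch}[\nabla_{F_1}] - \mathrm{ch}[\nabla_{F_2}]$, which vanishes as a \emph{form} because $F_1, F_2$ are flat of the same rank: $\mathrm{ch}[\nabla_{F_i}] = \mathrm{rank}(F_i)$ as a de Rham form. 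Hence $\int_Z \alpha_{D_{\hat S,1}} - \int_Z \alpha_{D_{\hat S,2}} = 0$, and the difference of the two APS indices over $Z$ equals exactly the difference of the boundary $\eta$-corrections, forcing the integer jump to be $0$.

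The main obstacle I anticipate is bookkeeping the boundary orientations and the splitting $\partial Z = M^V \dot\cup (-M)$ correctly, so that the $\eta$-terms assemble into $\rho(M^V, \ldots) - \rho(M, \ldots)$ with the right signs; one must be careful that the relative $\eta$-invariant \eqref{firstreletadef} is defined with a fixed sign convention and that the induced boundary operator on the $M$-end of $Z$ is $\pm D_{S,i}$ with the sign consistent across $i = 1, 2$. A secondary technical point is ensuring that all metrics and Clifford connections can be taken of product type near \emph{both} boundary components of $Z$ simultaneously while still restricting to the given data on $M$ and to the canonically associated $\nabla_{S^V}$ on $M^V$; this is standard but should be stated, and it is exactly the hypothesis under which \eqref{APSformula} and Proposition \ref{gluap} apply. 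Once signs are pinned down, the equality is immediate from the vanishing of $\int_Z(\alpha_{D_{\hat S,1}} - \alpha_{D_{\hat S,2}})$; alternatively, if one prefers to cite \cite[Proposition 6.6]{HReta} as a black box the whole argument collapses, but the self-contained version above is the route I would write out.
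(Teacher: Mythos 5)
Your proposed bordism does not exist, and this is fatal to the argument. You want a compact manifold $Z$ with $\partial Z = M^V\,\dot\cup\,(-M)$, but if $V$ has rank $2k\ge 2$ then $\dim M^V = \dim M + 2k \ne \dim M$, so $M^V$ and $M$ have \emph{different dimensions} and cannot be the two ends of a bordism. The disc bundle $D(V\oplus 1_{\field{R}})$ has boundary $M^V$ only; the zero section $M \subset D(V\oplus 1_{\field{R}})$ sits in codimension $2k+1$ in the interior and is not a boundary component, and there is no collar one can remove to produce $M$ as a boundary face. Consequently you cannot invoke the mod $\Z$ bordism invariance of $\rho$ (from \cite[Theorem 3.3]{APS2}) at all, since that statement only compares cycles of the same dimension joined by a manifold of one higher dimension. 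The further claim that the integer jump is controlled by $\int_Z(\alpha_{D_{\hat S,1}}-\alpha_{D_{\hat S,2}})=0$ inherits this problem: the density $\alpha_D$ lives on a nonexistent $Z$.

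The actual mechanism behind \cite[Proposition 6.6]{HReta} is spectral, not bordism-theoretic. Using the identification of Equation \eqref{ltwoonprincipalbundle}, the Dirac operator $D^{M^V}$ on $S^V\otimes\pi_V^*F_i$ decomposes with respect to $\ker D_\theta \oplus (\ker D_\theta)^\perp$: on the kernel part (a copy of $L^2(M,S\otimes F_i)$) it restricts to $D_i$, and on the perpendicular part there is a symmetry (an odd unitary anticommuting with the restricted operator, coming from the Bott operator on the fiber sphere) forcing the spectrum to be symmetric about $0$. This kills the perpendicular contribution to the $\eta$-function identically for each flat twist, so $\eta(D^{M^V}_i)(s)=\eta(D_i)(s)$ as functions — an exact real-number equality, not one deduced modulo $\Z$ from a bordism. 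The same splitting handles $\dim\ker$. In the present paper the analogous $L^2$-splitting is worked out explicitly in the lemma following Definition \ref{vectorbundlemodificationofrelativecycles}, which is a good model for how the decomposition argument should go; you should reorganize your proof around that, rather than around a bordism.
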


\subsection{Higher Atiyah-Patodi-Singer index theory}
\label{subsectionwithhigheraps}
To fully understand relative $\eta$-invariants, we will make use of higher Atiyah-Patodi-Singer theory. This theory was initiated in \cite{melrosepiazza} where an Atiyah-Patodi-Singer index theorem for families was proved. For families, it is not clear how to choose boundary conditions since the spectral projections $\chi_{[0,\infty)}(D_\partial)$ can vary in a non-continuous way in the family. A choice has to be made and it is clear from Equation \eqref{apsdifferentp} that this choice (in general) affects the end result. 

\begin{define}
\label{cliffordbundlesdefinition}
Assume that $C$ is a unital $C^*$-algebra and that $W$ is an oriented Riemannian manifold with boundary. We say that a smooth locally trivial bundle of finitely generated projective $C$-modules $\mathfrak{S}_{\C\ell\otimes C}\to W$ is a $C$-Clifford bundle if there is a Clifford bundle $S_{\C\ell}\to W$ and a smooth locally trivial bundle of finitely generated projective $C$-modules $\mathcal{E}_C\to W$ such that 
\[\mathfrak{S}_{\C\ell\otimes C}=S_{\C\ell}\otimes \mathcal{E}_C.\]
\end{define}

The assumption on $\mathfrak{S}_{\C\ell\otimes C}\to W$ to be smooth can be lifted using \cite[Theorem $3.14$]{Sch}. We note that it is also possible to equip any $C$-Clifford bundle with a $C$-valued hermitean inner product. A Clifford connection is a connection $$\nabla_\mathfrak{S}=\nabla_S\otimes \nabla_\mathcal{E}:=\nabla_S \otimes I + I\otimes \nabla_{\mathcal{E}}$$ 
on $\mathfrak{S}_{\C\ell\otimes C}$ such that $\nabla_S$ is a Clifford connection on $S_{\C\ell}$ and $\nabla_\mathcal{E}$ a connection on $\mathcal{E}_C$. After choosing a Clifford connection, we can construct a Dirac operator $D_\mathfrak{S}$ on $\mathfrak{S}_{\C\ell\otimes C}$. The construction of suitable Atiyah-Patodi-Singer boundary conditions on $D_\mathfrak{S}$ requires a bit of analysis. We recall the construction from \cite{LP} in the even-dimensional case. 

Assuming the Clifford connection $\nabla_\mathfrak{S}$ to be of product type near $\partial W$, we can as above find a $C$-Clifford bundle $\mathfrak{S}_\partial \to \partial W$ with a Dirac operator $D_{\mathfrak{S},\partial}$ and in a collar neighborhood near the boundary write 
\[D_\mathfrak{S}^+=\sigma\left(\frac{\partial}{\partial t}+D_{\mathfrak{S},\partial}\right),\]
for a unitary bundle isomorphism $\sigma:\mathfrak{S}^+|_{\partial W}\to \mathfrak{S}^-|_{\partial W}$. The Dirac operator $D_{\mathfrak{S},\partial}$ is an elliptic self-adjoint operator in the Mischenko-Fomenko pseudo-differential calculus of $C$-linear operators, see \cite{mischenkofomenko}; hence $D_{\mathfrak{S},\partial}$ forms a self-adjoint regular $C$-linear operator with $C$-compact resolvent. 

Recall the following terminology from \cite{LP}. A spectral cut (see \cite[Definition $2$]{LP}) is a function $\chi\in C^\infty(\R,[0,1])$ such that for some $a<b$ it holds that $\chi(t)=0$ for $t<a$ and $\chi(t)=1$ for $t>b$. We can in general consider a $C$-linear elliptic self-adjoint pseudo-differential operator $D$ in the Mischenko-Fomenko calculus on a smooth locally trivial bundle $\mathcal{E}\to M$ of finitely generated projective $C$-modules on a closed manifold $M$. A spectral section for $D$ (see \cite[Definition $3$]{LP} or \cite{wuwho}) is a projection $P\in \mathrm{End}_C^*(L^2(M,\mathcal{E}))$ such that there are two spectral cuts $\chi_1\leq \chi_2$ satisfying that $\mathrm{im}\,\chi_1(D)\subseteq \mathrm{im} \,P\subseteq \mathrm{im} \,\chi_2(D)$. We recall the following important Theorem from \cite{LP}:

\begin{theorem}[Theorem $3$ of \cite{LP}] 
\label{leichpiazzass}
There exists a spectral section for $D$ if and only if $\ind_C(D)=0\in K_1(C)$.
\end{theorem}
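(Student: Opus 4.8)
The plan is to argue at the level of the Hilbert $C$-module $L^2(M,\mathcal{E})$, using the standard description of $\ind_C(D)$ as the class of an odd ``index unitary''. Write $\mathcal{K}$ for the algebra of $C$-compact operators on $L^2(M,\mathcal{E})$; since $D$ has $C$-compact resolvent, continuous functional calculus of $D$ by functions in $C_0(\R)$ takes values in $\mathcal{K}$. For any spectral cut $\chi$ the operator $\chi(D)$ is self-adjoint and adjointable, has spectrum in $[0,1]$, and equals $0$, resp.\ the identity, on the spectral subspaces of $D$ lying below, resp.\ above, a bounded window; hence $\exp(2\pi i\,\chi(D))-\id=g(D)$ for some $g\in C_0(\R)$, so $\exp(2\pi i\,\chi(D))\in\id+\mathcal{K}$. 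As two spectral cuts differ only inside a bounded window, the class $[\exp(2\pi i\,\chi(D))]$ does not depend on $\chi$, and under the canonical identification $K_1(\mathcal{K})\cong K_1(C)$ it equals $\ind_C(D)$. This is the representative I would use in both directions.

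For necessity, let $P$ be a spectral section with spectral cuts $\chi_1\le\chi_2$, and fix a spectral cut $\chi$ with $\chi_1\le\chi\le\chi_2$. The sandwich condition $\mathrm{ran}\,\chi_1(D)\subseteq\mathrm{ran}\,P\subseteq\mathrm{ran}\,\chi_2(D)$ forces $P$ and $\chi(D)$ to agree, and to equal $0$, resp.\ the identity, off a bounded spectral window of $D$; in particular $P-\chi(D)\in\mathcal{K}$. Then the affine path $A_t:=tP+(1-t)\chi(D)$, $t\in[0,1]$, consists of self-adjoint adjointable operators with spectrum in $[0,1]$ which all agree off that window, so $u_t:=\exp(2\pi i\,A_t)$ is a norm-continuous path in the unitary group of $\id+\mathcal{K}$. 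Since $u_0=\exp(2\pi i\,\chi(D))$ while $u_1=\exp(2\pi i\,P)=\id$ (the spectrum of the projection $P$ being contained in $\{0,1\}$), we obtain $\ind_C(D)=[u_0]=[u_1]=0$.

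For sufficiency, I would run this in reverse. Assuming $\ind_C(D)=0$, the unitary $u:=\exp(2\pi i\,\chi(D))\in\id+\mathcal{K}$ is joined to $\id$ by a norm-continuous path $(u_s)_{s\in[0,1]}$ of unitaries in $\id+\mathcal{K}$ with $u_0=u$. The key step is to lift $(u_s)$ through $b\mapsto\exp(2\pi i\,b)$ to a path $(b_s)$ of self-adjoint adjointable operators with $b_0=\chi(D)$ and $b_s-\chi(D)\in\mathcal{K}$ for all $s$; this is done by a local-branch-of-logarithm and Duhamel argument, the delicate points being to keep the lift inside $\mathcal{K}$-perturbations of $\chi(D)$ and to deal with the borderline spectrum $[0,1]$ of $\chi(D)$. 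The endpoint satisfies $\exp(2\pi i\,b_1)=\id$, so $\mathrm{Spec}(b_1)\subseteq\Z$, and since $b_1$ is a $C$-compact perturbation of an operator with spectrum in $[0,1]$, the part of $b_1$ with spectrum outside $\{0,1\}$ is carried by a $C$-compact spectral subspace, while $b_1$ equals $0$, resp.\ the identity, on the spectral subspaces of $D$ far below, resp.\ far above, a bounded window. I would then take $P:=\chi_{[1/2,\infty)}(b_1)$, the spectral projection of $b_1$ for $[1/2,\infty)$: it is adjointable because $b_1$ has integral spectrum and hence a gap at $1/2$; it differs from $\chi(D)$ by an element of $\mathcal{K}$; and it equals $0$, resp.\ the identity, off a bounded spectral window of $D$. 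Choosing a spectral cut $\chi_1$ supported far to the right of that window and a spectral cut $\chi_2$ attaining the value $1$ already far to its left gives $\chi_1\le\chi_2$ with $\mathrm{ran}\,\chi_1(D)\subseteq\mathrm{ran}\,P\subseteq\mathrm{ran}\,\chi_2(D)$, so $P$ is a spectral section.

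I expect the main obstacle to be exactly the lifting step in the sufficiency direction: upgrading the purely $K$-theoretic null-homotopy of $\exp(2\pi i\,\chi(D))$ to a path of honest self-adjoint operators with controlled $C$-compact deviation, and in particular forcing the endpoint to have spectrum a $C$-compact perturbation of $\{0,1\}$. One can also organise the sufficiency direction by first reducing to the production of an adjointable self-adjoint $K\in\mathcal{K}$ with $D+K$ invertible --- given such a $K$, the projection $\chi_{[0,\infty)}(D+K)$ is adjointable because of the gap at $0$, and a direct estimate in terms of $\norm{K}$ shows it to be a spectral section for $D$ --- but producing such a $K$ from $\ind_C(D)=0$ has the same homotopy-lifting as its core. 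This is the content of Theorem~3 of \cite{LP}.
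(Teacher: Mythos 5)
The paper does not actually prove this theorem; it cites it from Leichtnam--Piazza (Theorem 3 of \cite{LP}), who in turn attribute the existence direction to Wu's preprint \cite{wuwho}. Your identification of $\ind_C(D)$ with $[\exp(2\pi i\chi(D))]\in K_1(\mathcal{K}(L^2(M,\mathcal{E})))\cong K_1(C)$ is correct, and your necessity argument is sound: a spectral section $P$ coincides with $0$, resp.\ $\id$, on the spectral subspaces of $D$ far below, resp.\ far above, a bounded window, so $P-\chi(D)$ is $C$-compact (the spectral projection of $D$ onto that window is $C$-compact because the resolvent is), and the affine homotopy then exponentiates to a norm-continuous path in $U(\id+\mathcal{K})$ from $\exp(2\pi i\chi(D))$ to $\id$.

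The sufficiency direction is where the content of the theorem lives, and your sketch has a genuine gap there which you flag but do not close. First, $\ind_C(D)=0$ a priori gives only a \emph{stable} null-homotopy of $\exp(2\pi i\chi(D))$; to get an unstabilized path $(u_s)$ inside $U(\id+\mathcal{K}(L^2(M,\mathcal{E})))$ you need $\mathcal{K}(L^2(M,\mathcal{E}))$ to be stable (equivalently, $L^2(M,\mathcal{E})\cong H_C$), which you have not justified. Second, and more seriously, lifting $(u_s)$ through $b\mapsto\exp(2\pi i b)$ to a self-adjoint path with $b_0=\chi(D)$ and $b_s-\chi(D)\in\mathcal{K}$ is not a routine fibration-lifting problem: $\exp(2\pi i\,\cdot)$ is $2\pi$-periodic and the spectrum of $\chi(D)$ hits both endpoints $0$ and $1$ of the branch cut, so there is no continuous local logarithm near $\chi(D)$, and it is not clear that a Duhamel/subdivision chase produces an endpoint $b_1$ with a spectral gap at $1/2$ \emph{and} with $\chi_{[1/2,\infty)}(b_1)$ sandwiched between spectral cuts of $D$. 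Your proposed reduction --- produce a self-adjoint $C$-compact $K$ with $D+K$ invertible and set $P=\chi_{[0,\infty)}(D+K)$ --- is indeed how the argument in \cite{LP} and \cite{wuwho} proceeds, but the claim that producing $K$ "has the same homotopy-lifting as its core" inverts their logic: the whole point of that proof is that the perturbation $K$ (equivalently the interpolating projection) is manufactured by a direct $K$-theoretic construction on the finitely generated projective spectral subranges $\chi_{[-R,R]}(D)L^2$, using the vanishing of the $K_1$-class to balance the positive and negative parts on a sufficiently wide window, thereby \emph{avoiding} the homotopy-lifting difficulty rather than encountering it.
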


\begin{remark}
\label{reallysmoothremark}
Let $\Psi^{-\infty}_C(M,\mathcal{E})$ denote the algebra of smoothing operators in the Mischenko-Fomenko calculus on the $C$-bundle $\mathcal{E}$.  By \cite[Proposition $2.10$]{LPGAFA}, we can choose $P$ such that there is a self-adjoint smoothing operator $A\in \Psi^{-\infty}_C(M,\mathcal{E})$ with $D+A$ is invertible and
$$P=\chi_{[0,\infty)}(D+A)=\frac{1}{2}\left(\frac{D+A}{|D+A|}+1\right).$$
\end{remark}

Returning to the situation of a $C$-Clifford bundle $\mathfrak{S}\to W$ on an oriented manifold with boundary, the boundary operator $D_{\mathfrak{S},\partial}$ defines a class $\ind_C(D_{\mathfrak{S},\partial})\in K_{1}(C)$ which by (for example) \cite[Theorem $6.2$]{hilsumbordism} vanishes (i.e., $\ind_C(D_{\mathfrak{S},\partial})=0$). We summarize these conclusions in the following Lemma; we will make use of this result throughout the paper.

\begin{lemma}
Assume that $W$ is an even-dimensional oriented Riemannian manifold, $C$ is a unital $C^*$-algebra, $\mathfrak{S}\to W$ a $C$-Clifford bundle with Clifford connection and all of these geometric structures are of product type near $\partial W$. Then there is a smoothing operator $A\in \Psi^{-\infty}(\partial W,\mathfrak{S}_\partial)$ such that $D_{\mathfrak{S},\partial}+A$ is invertible and the projection $P_A:=\chi_{[0,\infty)}(D+A)$ defines a boundary condition making $D_\mathfrak{S}$ into a $C$-Fredholm operator with a well defined index class
\[\ind_{APS}(D_\mathfrak{S},A)\in K_{0}(C).\]
\end{lemma}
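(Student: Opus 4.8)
The statement is essentially an assembly of the ingredients recalled just above, so the plan is to chain them together. First I would invoke Theorem \ref{leichpiazzass}, applied with $C$ as given and $D=D_{\mathfrak{S},\partial}$: the hypothesis needed is $\ind_C(D_{\mathfrak{S},\partial})=0\in K_1(C)$, and this is exactly the cobordism-invariance statement quoted from \cite[Theorem $6.2$]{hilsumbordism} --- the boundary operator of a $C$-Clifford Dirac operator on a manifold with boundary has vanishing index class. (One should remark here that $D_{\mathfrak{S},\partial}$ is a self-adjoint elliptic operator in the Mischenko-Fomenko calculus on the closed manifold $\partial W$, acting on the finitely generated projective $C$-bundle $\mathfrak{S}_\partial$, so that Theorem \ref{leichpiazzass} indeed applies; this uses that all structures are of product type near $\partial W$, so the collar decomposition $D_\mathfrak{S}^+=\sigma(\partial_t+D_{\mathfrak{S},\partial})$ is available.)

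\textbf{Producing the smoothing perturbation.} Once a spectral section exists, Remark \ref{reallysmoothremark} gives a self-adjoint smoothing operator $A\in\Psi^{-\infty}_C(\partial W,\mathfrak{S}_\partial)$ with $D_{\mathfrak{S},\partial}+A$ invertible and $P_A:=\chi_{[0,\infty)}(D_{\mathfrak{S},\partial}+A)=\tfrac{1}{2}\bigl(\tfrac{D_{\mathfrak{S},\partial}+A}{|D_{\mathfrak{S},\partial}+A|}+1\bigr)$ an order-zero pseudodifferential projection. The point is that $P_A$ differs from the (only densely defined, hence a priori unsuitable) $C$-analogue of the APS projection by a smoothing operator, so it is an admissible boundary condition in the sense needed for the Mischenko-Fomenko analogue of the APS boundary value problem. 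This is the higher/$C^*$-algebraic analogue of \eqref{APScondition}; the relevant Fredholm theory is precisely that developed in \cite{LP} (and, in the product case, \cite{LPGAFA}).

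\textbf{Fredholmness and the index class.} With the boundary condition $P_A[f|_{\partial W}]=0$ imposed on $f\in H^1(W,\mathfrak{S}^+)$, the resulting operator $(D_\mathfrak{S})^+_{P_A}$ is $C$-Fredholm by the higher APS theory of \cite{LP} applied to the invertible boundary operator $D_{\mathfrak{S},\partial}+A$; invertibility of the perturbed boundary operator is what makes the index class unambiguous (no $\ker D_{\mathfrak{S},\partial}$ correction term). We thus obtain a well-defined index class $\ind_{APS}(D_\mathfrak{S},A)\in K_0(C)$, namely the $K$-theory class of the kernel-minus-cokernel of the $C$-Fredholm operator $(D_\mathfrak{S})^+_{P_A}$. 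Note the statement only asserts existence of such an $A$ and well-definedness of the index for that $A$; it makes no claim of independence of $A$, so no further work is required on that front.

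\textbf{Main obstacle.} The only nontrivial input is the vanishing $\ind_C(D_{\mathfrak{S},\partial})=0\in K_1(C)$; everything else is bookkeeping with results already in hand. This vanishing is the cobordism invariance of the boundary index class, which I would simply cite from \cite[Theorem $6.2$]{hilsumbordism} --- the operator $D_{\mathfrak{S},\partial}$ bounds, being the boundary operator of $D_\mathfrak{S}$ on $W$, and $D_{\mathfrak{S},\partial}=D_{S,\partial}\otimes\nabla_\mathcal{E}$ is of the twisted Dirac form to which that theorem applies. If one wanted a self-contained argument, the mild subtlety is checking that the $C^*$-algebraic cobordism-invariance genuinely covers $C$-Clifford bundles with the Clifford connection $\nabla_S\otimes\nabla_\mathcal{E}$ of product type (as opposed to only bundles of modules over the completed group algebra), but this is exactly the generality in which \cite{hilsumbordism,LP} are stated.
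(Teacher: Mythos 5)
Your proposal chains together exactly the same ingredients the paper uses: cobordism invariance \cite[Theorem 6.2]{hilsumbordism} to get $\ind_C(D_{\mathfrak{S},\partial})=0$, Theorem \ref{leichpiazzass} to obtain a spectral section, Remark \ref{reallysmoothremark} to realize it as $\chi_{[0,\infty)}(D_{\mathfrak{S},\partial}+A)$ with $A$ smoothing, and \cite[Section 3]{LP} for the resulting $C$-Fredholmness and index class. This matches the paper's argument essentially verbatim.
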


The proof of the final part of the Lemma can be found in \cite[Section 3]{LP}. The reader should recall from \eqref{apsdifferentp} that already when $C=\mathbb{C}$ the class $\ind_{APS}(D_\mathfrak{S},A)$ depends on the choice of $A$.

Higher Atiyah-Patodi-Singer theory comes with good functoriality properties. Assume that $\phi:C\to C'$ is a unital $*$-homomorphism; we also take data $(W,\mathfrak{S},\nabla_{\mathfrak{S}}, A)$ as above. The data can be pushed forward along $\phi$ to data $(W,\phi_*\mathfrak{S},\phi_*\nabla_{\mathfrak{S}}, \phi_*A)$ using the internal tensor product over $C$. By \cite[Corollary $C.2$]{PS}, $D_{\phi_*\mathfrak{S},\partial}+\phi_*A$ is invertible. Thus the class $\ind_{APS}(D_{\phi_*\mathfrak{S}},\phi_*A)\in K_{0}(C')$ is well defined. Moreover,
\begin{equation}
\label{functorialityofapsclasses}
\phi_* \ind_{APS}(D_\mathfrak{S},A)=\ind_{APS}(D_{\phi_*\mathfrak{S}},\phi_*A).
\end{equation}

We will need to express relative $\eta$-invariants through higher Atiyah-Patodi-Singer theory. Let $\Gamma$ be a finitely generated discrete group and assume that $M$ is a closed oriented Riemannian manifold equipped with a continuous map $f:M\to B\Gamma$. We let $\mathcal{L}:=E\Gamma\times_\Gamma C^*_{{\bf full}}(\Gamma)\to B\Gamma$ denote the Mischenko bundle for the full $C^*$-algebra of $\Gamma$ on $B\Gamma$. Whenever $\sigma_1,\sigma_2:\Gamma\to U(k)$ are two representations of $\Gamma$ we can define the vector bundles
\[E_i:=E\Gamma\times_{\sigma_i} \C^k=\mathcal{L}\times_{\sigma_i}\C^k\to B\Gamma,\quad i=1,2.\]
The vector bundles $F_i:=f^*E_i\to M$ are flat Hermitean vector bundles. For any Clifford bundle $S\to M$ with a Dirac operator $D_S$ associated with a Clifford connection $\nabla_S$, the relative $\eta$-invariant is defined as in \eqref{firstreletadef}:
\[\rho_{\sigma_1,\sigma_2}(M,S,f,D_S):=\frac{1}{2}\bigg(\eta_{D_{S,1}}(0)+\dim \ker D_{S,1}-\eta_{D_{S,2}}(0)-\dim \ker D_{S,2}\bigg),\]
where $D_{S,i}$ denotes the Dirac operator on $S\otimes F_i$ constructed from $D_S$ and the flat connection on $F_i$. 

\begin{notation}
\label{signarho}
When $M$ is an oriented manifold, $f:M\to B\Gamma$ is a continuous map, $S=\wedge^*_\C T^*M$, $D_{sign}$ is the signature operator (see \cite{APS1}) and $\sigma:\Gamma\to U(k)$ is a representation, the invariant 
\[\rho_\sigma(M,f):=\rho_{\sigma,k\varepsilon}(M,S,f,D_{sign}),\]
where $\varepsilon$ denotes the trivial representation, is referred to as the relative $\eta$-invariant for the signature operator.
\end{notation}

A \emph{stable relative $\eta$-invariant} can also be constructed (see \cite{PS} for more details). This invariant can only be defined if we make the following assumptions. We can lift $D_S$ to a Dirac operator $D_{S,\mathcal{L}}$ on $S\otimes f^*\mathcal{L}$ via the flat connection on $f^*\mathcal{L}$. The operator $D_{S,\mathcal{L}}$ is an elliptic $C^*_{{\bf full}}(\Gamma)$-linear self-adjoint operator and there is a well defined index class $\ind_{C^*_{{\bf full}}(\Gamma)}(D_{S,\mathcal{L}})\in K_{1}(C^*_{{\bf full}}(\Gamma))$. Assume that $\ind_{C^*_{{\bf full}}(\Gamma)}(D_{S,\mathcal{L}})=0$. By Theorem \ref{leichpiazzass} and \cite[Corollary $C.2$]{PS}, there exists an $A\in \Psi^{-\infty}_{C^*_{\bf full}(\Gamma)}(M,S\otimes f^*\mathcal{L})$ such that $D_{S,\mathcal{L}}+A$ is invertible, in particular $D_{S,i}+(\sigma_i)_*A$ is invertible for $i=1,2$. We define the stable relative $\eta$-invariant as
\[\rho_{\sigma_1,\sigma_2}^s(M,S,f,D_S):=\frac{1}{2}\bigg(\eta_{D_{S,1}+(\sigma_1)_*A}(0)-\eta_{D_{S,2}+(\sigma_2)_*A}(0)\bigg).\]

\begin{lemma}
\label{stablerhoinvariantlemma}
Assume that 
\begin{enumerate}
\item The full assembly for free actions\footnote{For definitions see below in Subsection \ref{sectionorientedmodel}.} $\mu:K_*(B\Gamma)\to K_*(C^*_{{\bf full}}(\Gamma))$ is surjective.
\item For the data $(M,S,f,D_S)$, we have that $\ind_{C^*_{{\bf full}}(\Gamma)}(D_{S,\mathcal{L}})= \mu(M,S,f)=0$. 
\end{enumerate}
Then the stable relative $\eta$-invariant $\rho_{\sigma_1,\sigma_2}^s(M,S,f,D_S)\in \R$ is well defined and does not depend on the choice of $A$.
\end{lemma}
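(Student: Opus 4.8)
The plan is to show that the difference of $\eta$-values defining $\rho^s_{\sigma_1,\sigma_2}(M,S,f,D_S)$ is independent of the choice of $A$ by expressing it as a combination of Atiyah--Patodi--Singer indices over a bordism and invoking the functoriality \eqref{functorialityofapsclasses} together with the vanishing hypotheses. First I would fix a second smoothing operator $A'\in\Psi^{-\infty}_{C^*_{\bf full}(\Gamma)}(M,S\otimes f^*\mathcal{L})$ with $D_{S,\mathcal{L}}+A'$ invertible, and build the cylinder $W=[0,1]\times M$ with the product $C^*_{\bf full}(\Gamma)$-Clifford bundle $\mathfrak S=S\otimes f^*\mathcal L$ pulled back from $M$; near $\{0\}\times M$ use the boundary operator perturbed by $A$, and near $\{1\}\times M$ the one perturbed by $A'$. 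Since $D_{S,\mathcal L}+tA+(1-t)A'$ need not be invertible for intermediate $t$, I would instead interpolate through invertibility by a path of smoothing perturbations (possible because $\ind_{C^*_{\bf full}(\Gamma)}(D_{S,\mathcal L})=0$ and the invertible perturbations form, by Theorem \ref{leichpiazzass} and Remark \ref{reallysmoothremark}, a space that is connected in the relevant sense — or, more robustly, adjoin to the cylinder a bordism realizing the relation and use Proposition \ref{gluap}).

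The key computation is then: apply the $C^*_{\bf full}(\Gamma)$-valued APS index formula to $D_{\mathfrak S}$ on $W$ with these two boundary conditions. The local term $\int_W\alpha_{D_{\mathfrak S}}$ vanishes because $W$ is a cylinder and everything is of product type, so
\[
\ind_{APS}(D_{\mathfrak S},A\sqcup A')=-\tfrac12\big[\,(\eta\text{-contribution at }0)+(\eta\text{-contribution at }1)\,\big]
\]
as a class in $K_0(C^*_{\bf full}(\Gamma))$, where the boundary contributions are the (higher) $\eta$-classes of $D_{S,\mathcal L}+A$ and $D_{S,\mathcal L}+A'$ with appropriate signs coming from the orientation of the two boundary components. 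Now push forward along $\sigma_i:C^*_{\bf full}(\Gamma)\to M_k(\C)$ and apply the trace $K_0(M_k(\C))=\Z\to\R$; by \eqref{functorialityofapsclasses} the pushed-forward index equals $\ind_{APS}(D_{S,i},(\sigma_i)_*A\sqcup(\sigma_i)_*A')$, which by the scalar APS formula \eqref{APSformula} on the cylinder is $-\tfrac12(\eta_{D_{S,i}+(\sigma_i)_*A}(0)-\eta_{D_{S,i}+(\sigma_i)_*A'}(0))$ (the $\dim\ker$ terms are absent since the perturbed operators are invertible). Subtracting the $i=1$ and $i=2$ identities, the left-hand sides combine into $(\sigma_1-\sigma_2)_*\ind_{APS}(D_{\mathfrak S},A\sqcup A')$ applied to the trace, and the right-hand sides give exactly $\rho^s_{\sigma_1,\sigma_2}$ computed with $A$ minus the same with $A'$.

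It remains to see that the left-hand side vanishes, and this is where hypothesis (1) and the second half of (2) enter. The class $\ind_{APS}(D_{\mathfrak S},A\sqcup A')\in K_0(C^*_{\bf full}(\Gamma))$ on the cylinder measures precisely the spectral flow/relative index of the two invertible perturbations; under the hypothesis $\mu(M,S,f)=0$ and surjectivity of assembly, together with the fact that both ends use the \emph{same} underlying operator $D_{S,\mathcal L}$, this class lies in the image of $\Z=K_0(\C)\hookrightarrow K_0(C^*_{\bf full}(\Gamma))$ (coming from the scalar index difference of the two perturbations on $M$), and the trace $(\tau_N)_*$ composed with $(\sigma_1-\sigma_2)_*$ kills this subgroup since $\sigma_1$ and $\sigma_2$ have the same rank $k$. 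Thus $\rho^s_{\sigma_1,\sigma_2}(M,S,f,D_S)$ with $A$ equals the same with $A'$, proving well-definedness.

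\textbf{Main obstacle.} The delicate point is the interpolation argument: justifying that the two invertible smoothing perturbations of $D_{S,\mathcal L}$ can be connected within a framework where the cylinder APS index is computable, and pinning down exactly why the resulting higher index class maps to $0$ under $(\tau_N)_*\circ(\sigma_1-\sigma_2)_*$. This is really a statement about the variation of APS indices under change of spectral section (Equation \eqref{apsdifferentp} in the scalar case, its higher analogue from \cite{LP} in general), and the rank condition on $\sigma_1,\sigma_2$ is what makes the two ``$\ind(P,P_\partial)$''-type corrections cancel after pushing forward; getting the signs and the orientation conventions on the two boundary components consistent is the part most likely to require care.
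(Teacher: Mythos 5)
The paper does not prove this lemma; it explicitly warns that the argument ``is by no means trivial'' and defers entirely to \cite[Lemma 4.6]{PS}. So there is no in-paper proof to compare against, and what you have written is a sketch of an independent argument. Your cylinder-plus-higher-APS scheme is a reasonable skeleton and in the spirit of the Piazza--Schick approach: reduce $\rho^s_{\sigma_1,\sigma_2}$ computed with $A$ minus the same computed with $A'$ to the integer $(\sigma_1-\sigma_2)_*\ind_{APS}(D_{\mathfrak S},A\sqcup A')$ using functoriality \eqref{functorialityofapsclasses}, then argue this integer vanishes.

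However, the vanishing step as you have written it has a genuine gap. You assert that the class $\ind_{APS}(D_{\mathfrak S},A\sqcup A')\in K_0(C^*_{\bf full}(\Gamma))$ lies in the image of $\Z=K_0(\C)\hookrightarrow K_0(C^*_{\bf full}(\Gamma))$, ``coming from the scalar index difference of the two perturbations on $M$''. This is not true and the hypotheses do not imply it. The cylinder index is precisely the Leichtnam--Piazza difference class of the two spectral sections $\chi_{[0,\infty)}(D_{S,\mathcal L}+A)$ and $\chi_{[0,\infty)}(D_{S,\mathcal L}+A')$, and by \cite{LP} the set of spectral sections for $D_{S,\mathcal L}$ is a torsor over the \emph{full} group $K_0(C^*_{\bf full}(\Gamma))$: the difference class can realize an arbitrary element, and nothing forces it into the $\Z$-subgroup. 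That both boundary components carry the same underlying operator is exactly what places the class in $K_0(C^*_{\bf full}(\Gamma))$; it does not constrain it further. So as written, the conclusion $(\sigma_1-\sigma_2)_*\ind_{APS}(D_{\mathfrak S},A\sqcup A')=0$ does not follow.

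The correct way to make hypothesis (1) bite is the observation recorded in Remark \ref{sigmaonetworem}: Atiyah's $L^2$-index theorem for coverings together with the Atiyah--Singer index theorem imply that $(\sigma_1-\sigma_2)_*$ annihilates the image of the assembly map inside $K_0(C^*_{\bf full}(\Gamma))$, and surjectivity of $\mu$ then forces $(\sigma_1-\sigma_2)_*=0$ on \emph{all} of $K_0(C^*_{\bf full}(\Gamma))$. That is what your argument needs to invoke in place of the $\Z$-image claim; with it, $(\sigma_1-\sigma_2)_*\ind_{APS}(D_{\mathfrak S},A\sqcup A')=0$ holds regardless of where the difference class sits, and your reduction closes the proof. (A small notational point: the trace $\tau_N$ you write belongs to the II$_1$-factor $N$ of Section \ref{relatingtoindaleph} and plays no role here; you simply want the standard identification $(\sigma_i)_*\colon K_0(C^*_{\bf full}(\Gamma))\to K_0(M_k(\C))=\Z$.)
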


The proof of Lemma \ref{stablerhoinvariantlemma} is by no means trivial; the reader is referred to \cite[Lemma $4.6$]{PS}. In \cite{PS}, under the stronger assumption of $\mu$ being an isomorphism, Piazza and Schick prove that $\rho_{\sigma_1,\sigma_2}^s(M,S,f,D_S)=0$ for any $A$. A geometric proof of this fact using the ``Baum approach to index theory" is given in Theorem  \ref{vanishingstablerho}.

\subsection{Baum-Douglas models}
\label{sectionorientedmodel}

The geometric model of \cite{paperI} is defined using spin$^c$-manifolds. However, as we saw in the previous section (also see \cite{HReta, Kes}), relative $\eta$-invariants are interesting in the larger generality of oriented manifolds. As such, the Baum-Douglas type geometric group constructed in \cite{HReta} encoding relative $\eta$-invariants uses oriented manifolds and Clifford bundles. This development is very much based on the papers \cite{guentnerkhom} and \cite{Kescont}. 

 In this section, the definitions and main techniques of ``oriented" Baum-Douglas type models from \cite{Kescont} are recalled and adapted to the case of coefficients in a Banach algebra. 

\begin{define}
\label{orientedcyclesdefinition}
Let $X$ be a locally compact Hausdorff space and $B$ a unital Banach algebra. An oriented $K$-cycle with coefficients in $B$ is a triple $(M,\mathfrak{S}_{\C\ell\otimes B},f)$ where:
\begin{enumerate}
\item $M$ is an oriented smooth manifold.
\item $\mathfrak{S}_{\C\ell\otimes B}\to M$ is a $B$-Clifford bundle (see Definition \ref{cliffordbundlesdefinition}).
\item $f:M\to X$ is a continuous map.
\end{enumerate}
\end{define}

The assumptions on $\mathfrak{S}_{\C\ell\otimes B}$ to be smooth can be lifted using \cite[Theorem $3.14$]{Sch}. Two oriented $K$-cycles with coefficients in $B$, $(M,\mathfrak{S}_{\C\ell\otimes B},f)$ and $(M',\mathfrak{S}'_{\C\ell\otimes B},f')$ are said to be isomorphic if there is a diffeomorphism $\kappa:M\to M'$ such that $f=f'\circ \kappa$ and $\mathfrak{S}_{\C\ell\otimes B}\cong \kappa^*\mathfrak{S}_{\C\ell\otimes B}'$ as bundles of Clifford $B$-bundles. The inverse of a cycle $(M,\mathfrak{S}_{\C\ell\otimes B},f)$ is defined as
\begin{equation}
\label{inversecycle}
-(M,\mathfrak{S}_{\C\ell\otimes B},f):=
\begin{cases}
(-M,\mathfrak{S}_{\C\ell\otimes B},f)\quad\mbox{if}\;\, M\;\,\mbox{is even-dimensional},\\
(-M,-\mathfrak{S}_{\C\ell\otimes B},f)\quad\mbox{if}\;\, M\;\,\mbox{is odd-dimensional}
\end{cases}
\end{equation}
Here $-M$ denotes the manifold $M$ with reversed orientation and $-\mathfrak{S}_{\C\ell\otimes B}$ the $B$-Clifford bundle $\mathfrak{S}_{\C\ell\otimes B}$ equipped with the opposite Clifford multiplication; the reader can find more details in \cite[Subsection $2.2$]{Kescont}.

We now turn to a construction related to the vector bundle modification of oriented cycles. Assume that $\pi:V\to M$ is an oriented vector bundle equipped with a metric. We let $1_\field{R}\to M$ denote the trivial real line bundle and form the sphere bundle 
\begin{equation}
\label{vecmodifiedmanifold}
\pi_V:M^V:=S(V\oplus 1_\field{R})\to M,
\end{equation}
whose fibers are spheres of the same dimension as the rank of $V$. There is an inclusion $V\hookrightarrow S(V\oplus 1_\field{R})$ such that the projection $\pi_V$ induces a diffeomorphism $S(V\oplus 1_\field{R})\setminus V\cong M$. 

The sphere $S(V\oplus 1_\field{R})$ can be realized as $S(V\oplus 1_\field{R})=S_+\cup_{S(V)}S_-$, where $S_\pm\to M$ are bundles of open balls (each of which is diffeomorphic to $V$). The metric on $V$ induces a splitting $TS_\pm \cong \pi_V^*V\oplus \pi_V^*TM$ which glues together (globally) to give a decomposition $TS(V\oplus 1_\field{R})\cong T_{ver}\oplus T_{hor}$; it also follows that $T_{ver}|_V\cong \pi_V^*V$. We define the graded $T_{ver}$-Clifford bundle $\hat{V}\to S(V\oplus 1_\field{R})$ as 
\[\hat{V}:=\ker(\epsilon_{dR}\epsilon_{H}- 1)\subseteq \wedge^*_\C T_{ver},\]
where $\epsilon_{dR}$ is the ordinary grading by form degree on $\wedge^*_\C T_{ver}$ and $\epsilon_{H}$ is the grading constructed from the Hodge $*$ on $\wedge^*_\C T_{ver}$; the grading on $\hat{V}$ is given by $\epsilon_{dR}$.

\begin{define}
\label{thombundledef}
The vector bundle modification of an oriented cycle $(M,\mathfrak{S}_{\C\ell \otimes B},f)$ for $K_*^h(X;B)$ along an even-dimensional oriented vector bundle $V\to M$ is the cycle
\[(M,\mathfrak{S}_{\C\ell \otimes B},f)^V:=(M^V,\pi^*_V\mathfrak{S}_{\C\ell\otimes B}\hat{\otimes} \hat{V},f\circ \pi_V).\]
\end{define}

The next Proposition guarantees that vector bundle modification in the oriented model is compatible with the spin$^c$-version. For a proof, see \cite[Page $61$]{Kescont}. 

\begin{prop}
\label{tauvversusbott}
Whenever $V\to Z$ admits a spin$^c$-structure $S_V\to Z$, there is a natural isomorphism of $T^*_{ver}$-Clifford bundles $\hat{V}\cong \pi_V^*S_V\otimes (\pi_V^*S_V)^{+*}$.
\end{prop}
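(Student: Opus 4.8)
The plan is to reduce the statement to an algebraic identity over a model fibre and then globalise it by the associated-bundle construction. Writing $2n=\mathrm{rank}\,V$, the bundles involved are all naturally associated to frame bundles of $V\to Z$: the vertical tangent bundle $T_{ver}$ and its sub-bundle $\hat V=\ker(\epsilon_{dR}\epsilon_H-1)\subseteq\wedge^*_\C T_{ver}$ are associated to the oriented orthonormal frame bundle of $V$ via the $SO(2n)$-representations $TS^{2n}$ and $\ker(\epsilon_{dR}\epsilon_H-1)\subseteq\wedge^*_\C TS^{2n}$, where $S^{2n}=S(\R^{2n}\oplus\R)$ carries the tautological $SO(2n)$-action fixing the poles; while $\pi_V^*S_V$ and $(\pi_V^*S_V)^+=\pi_V^*(S_V^+)$ are associated to the $\mathrm{Spin}^c(2n)$-frame bundle via the (half-)spinor representations of $\mathrm{Cl}(\R^{2n})\otimes\C$, pulled back constantly to $S^{2n}$. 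Hence it suffices to construct, over the standard sphere $S^{2n}$, a $\mathrm{Spin}^c(2n)$-equivariant isomorphism of $\epsilon_{dR}$-graded $\mathrm{Cl}(TS^{2n})$-modules from $\ker(\epsilon_{dR}\epsilon_H-1)$ to the constant bundle $S^{2n}\times(\Delta\otimes(\Delta^+)^*)$, with $\Delta=\Delta^+\oplus\Delta^-$ the complex spinor module and the Clifford action on the $\Delta$-factor; the case of general $Z$, and naturality in $Z$, then follow formally.

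For the model statement I would first recall the algebraic decomposition of the exterior algebra of an even-dimensional Euclidean space $U$. Under the Clifford action $v\cdot x=v\wedge x-\iota_v x$ one has isomorphisms of left $\mathrm{Cl}(U)$-modules $\wedge^*_\C U\cong\mathrm{Cl}(U)\otimes\C\cong\mathrm{End}(\Delta_U)\cong\Delta_U\otimes\Delta_U^*$ (Clifford multiplication on the $\Delta_U$-factor), and under these identifications the parity grading $\epsilon_{dR}$ is conjugation by the normalised volume element $\omega_U$, i.e. $L_{\omega_U}R_{\omega_U}$, while the Hodge grading $\epsilon_H$ is $L_{\omega_U}$ (the standard identification of the Hodge $*$-operator with Clifford multiplication by the volume element, after the $i^n$-normalisation making it an involution). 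Since $\omega_U^2=1$ it follows that $\epsilon_{dR}\epsilon_H=L_{\omega_U}R_{\omega_U}L_{\omega_U}=R_{\omega_U}$, whose $+1$-eigenspace is $\{x\in\mathrm{End}(\Delta_U):x\circ\omega_U=x\}=\mathrm{Hom}(\Delta_U^+,\Delta_U)\cong\Delta_U\otimes(\Delta_U^+)^*$, carrying residual $\epsilon_{dR}$-grading $L_{\omega_U}$, i.e. the chirality grading on the $\Delta_U$-factor. Taking $U=T_vS^{2n}$, this already yields $\hat V_v\cong\Delta_{T_vS^{2n}}\otimes(\Delta_{T_vS^{2n}}^+)^*$ as graded Clifford modules.

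It remains to identify the pointwise vertical spinor module $\Delta_{T_vS^{2n}}$ with the fixed module $\Delta$ appearing in $\pi_V^*S_V$, compatibly with the half-spinor gradings and the $\mathrm{Spin}^c(2n)$-action; this is the heart of the matter, since the exterior-algebra construction produces the spinor module of the \emph{vertical tangent bundle}, whereas the right-hand side of the proposition is expressed through the spinor module of $V$ itself, the two being related only by the stabilisation $T_vS^{2n}\oplus\R v\cong V_x\oplus\R$. Concretely, I would extend $\Delta$ to a $\mathrm{Cl}(\R^{2n}\oplus\R)\otimes\C$-module by letting the extra generator act as $\pm i$ times the chirality; then for $v\in S^{2n}$ and $u\in T_vS^{2n}=v^\perp$ the rule $u\mapsto c(u)c(v)^{-1}$ is a Clifford representation of $T_vS^{2n}$ on $\Delta$ (one checks $(c(u)c(v)^{-1})^2=-|u|^2$ from $c(u)c(v)=-c(v)c(u)$ and $c(v)^2=-1$), giving a smooth isomorphism $\Delta_{T_vS^{2n}}\cong(\Delta,c(\cdot)c(v)^{-1})$; a short computation of the volume element of this representation, together with using the invertible operator $c(v)$ itself as an intertwiner, then reconciles the resulting $v$-dependent chirality with the fixed splitting $\Delta=\Delta^+\oplus\Delta^-$. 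This last bookkeeping — keeping track of precisely which of the two copies of $\wedge^*_\C T_vS^{2n}$ inside $\wedge^*_\C(T_vS^{2n}\oplus\R)$ the condition $\epsilon_{dR}\epsilon_H=1$ selects, and of the signs and $i$-factors in the volume-element computation — is where the real work lies, and is exactly the content of the computation on page $61$ of \cite{Kescont}. Once it is done, $\mathrm{Spin}^c(2n)$-equivariance is automatic (every ingredient — the embedding $S^{2n}\subset\R^{2n+1}$, the chirality $\omega$, the operator $c(v)$, the volume element — is built equivariantly from the inner product and orientation), and applying the associated-bundle functor produces the natural isomorphism over $Z$.
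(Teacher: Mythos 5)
Your reduction to the model sphere (Step 1) and the algebraic decomposition in Step 2 --- that under $\wedge^*_\C U\cong\mathrm{Cl}(U)\otimes\C\cong\mathrm{End}(\Delta_U)$ one has $\epsilon_{dR}\epsilon_H=R_{\omega_U}$, hence $\ker(\epsilon_{dR}\epsilon_H-1)\cong\mathrm{Hom}(\Delta_U^+,\Delta_U)\cong\Delta_U\otimes(\Delta_U^+)^*$ --- are correct and are the algebraic heart of Keswani's computation; applied fibrewise they give $\hat V_v\cong\Delta_{T_vS^{2n}}\otimes(\Delta_{T_vS^{2n}}^+)^*$ equivariantly.

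The gap is in Step 3, where you seek an isomorphism onto the \emph{constant} bundle $S^{2n}\times(\Delta\otimes(\Delta^+)^*)$, with $\Delta^+$ the fixed half-spinor module of $\R^{2n}$. That target cannot be hit, and the obstruction is topological rather than a matter of signs and $i$-factors: the multiplicity bundle $v\mapsto\Delta_{T_vS^{2n}}^+$, i.e.\ the $+1$-eigenbundle of the vertical chirality $\omega_{T_vS^{2n}}$ (which in your model is $\mp ic(v)$ acting on the fixed $\Delta$), is the positive half-spinor bundle of $TS^{2n}$, and it is nontrivial over $S^{2n}$; for $n=1$ it is a Hopf line bundle. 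Concretely, for $V=\R^2$ over a point one finds $\hat V=\C(1+iu_1\wedge u_2)\oplus\C(u_1-iu_2)\cong\C\oplus\mathcal{O}(\pm2)$ over $S^2$, whereas the constant $\Delta\otimes(\Delta^+)^*$ is $\C^2$; these have different first Chern classes, so no isomorphism of vector bundles, let alone of Clifford bundles, exists. The proposed intertwiner $c(v)$ cannot rescue this: $c(v)$ \emph{commutes} with $\omega_{T_vS^{2n}}=\mp ic(v)$ and so preserves rather than exchanges its eigenspaces, and in any case no pointwise operator can trivialize a bundle with nonzero Chern class. The conclusion of your Step 2 is in fact the actual content of the Proposition: the half-spinor grading on $\pi_V^*S_V$ must be taken with respect to the chirality of $T_{ver}$ induced via the stabilisation $T_{ver}\oplus 1_\R\cong\pi_V^*(V\oplus 1_\R)$ (so that $(\pi_V^*S_V)^+=S^+_{T_{ver}}$), not with respect to the pulled-back $\pi_V^*V$-chirality; this is also what identifies $(\pi_V^*S_V)^{+*}$ with the Baum--Douglas Bott bundle, which is exactly the role the Proposition plays in comparing vector bundle modification in the oriented and spin$^c$ models.
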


The set of isomorphism classes of oriented $K$-cycles with coefficients in $B$ can be equipped with an equivalence relation generated by the usual relations disjoint union/direct sum, bordism and vector bundle modification. These notions are defined for $B=\C$ in \cite[Subsection $2.2$]{Kescont} and generalizes mutatis mutandis to the general case. We let $K_*^h(X;B)$ denote the quotient of the set of isomorphism classes of oriented $K$-cycles with coefficients in $B$ by this relation. The set $K_*^h(X;B)$ forms an abelian group under disjoint union while the inverse of a cycle defined in Equation \eqref{inversecycle} forms an inverse under disjoint union in this set.

\begin{remark}
In the case that $B=\C$, we use the notation $K_*^h(X):=K_*^h(X;\C)$. It is obvious from the definitions that this group coincides with the group defined in \cite{Kescont}.
\end{remark}

In \cite{paperI}, $K_*^{geo}(X;B)$ was defined using cycles that are triples $(M,\mathcal{E}_B,f)$ where $M$ is spin$^c$ and $\mathcal{E}_B\to M$ is a smooth locally trivial bundle of finitely generated projective $B$-modules; we refer to these cycles as $K$-cycles with coefficients in $B$. In analogy with \cite[Lemma $2.8$]{Kescont}, we have the following result:

\begin{lemma}
\label{khomorientedlem}
The map from the set of $K$-cycles with coefficients in $B$ to the set of oriented $K$-cycles with coefficients in $B$ given by 
\[(M,\mathcal{E}_B,f)\mapsto (M,S_M\otimes \mathcal{E}_B,f),\]
where $S_M\to M$ is the spin$^c$-structure of $M$, defines an isomorphism
\[K_*^{geo}(X;B)\cong K_*^h(X;B).\]
In particular, there is a natural isomorphism $K_*^h(pt;B)\cong K_*(B)$.
\end{lemma}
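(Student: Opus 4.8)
The plan is to mimic the proof of \cite[Lemma $2.8$]{Kescont} (the $B=\C$ case), checking that each ingredient survives the passage to Banach-algebra coefficients. First I would verify that the assignment $(M,\mathcal{E}_B,f)\mapsto(M,S_M\otimes\mathcal{E}_B,f)$ is well defined as a map of \emph{cycles}: since $M$ is spin$^c$ with spin$^c$-structure $S_M\to M$, the tensor product $S_M\otimes\mathcal{E}_B$ is a $B$-Clifford bundle in the sense of Definition \ref{cliffordbundlesdefinition} (take the Clifford bundle to be $S_M$ and the $B$-module bundle to be $\mathcal{E}_B$), so the target is a genuine oriented $K$-cycle with coefficients in $B$. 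Next I would check that the map descends to the quotient groups, i.e.\ that it respects the three defining relations: isomorphism/direct sum is immediate from functoriality of $\otimes$; for bordism one uses that a spin$^c$-bordism $(W,\mathcal{E}_B^W,F)$ between cycles produces an oriented bordism $(W,S_W\otimes\mathcal{E}_B^W,F)$ of the images, since the spin$^c$-structure on $W$ restricts to the one on each boundary component; and for vector bundle modification one invokes Proposition \ref{tauvversusbott}, which gives $\hat V\cong\pi_V^*S_V\otimes(\pi_V^*S_V)^{+*}$ when $V$ admits a spin$^c$-structure $S_V$ — combined with the fact that $S_{M^V}\cong\pi_V^*S_M\otimes\pi_V^*S_V$ (the spin$^c$-structure of the modified manifold) this matches the oriented vector bundle modification of Definition \ref{thombundledef} with the spin$^c$ one, up to the usual identification absorbing $(\pi_V^*S_V)^{+*}$ into the bundle; here one must also note that every $V$ arising in vector bundle modification over a spin$^c$-manifold $M$, together with the stable tangent bundle, carries a spin$^c$-structure since $V\oplus TM$ does and $M$ is spin$^c$.

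For surjectivity I would argue that any oriented $K$-cycle $(M,\mathfrak{S}_{\C\ell\otimes B},f)$ with coefficients in $B$ is, after a suitable vector bundle modification, isomorphic to one in the image. The point is that for a single oriented manifold $M$ one can (exactly as in \cite{Kescont} for $B=\C$) find an even-dimensional oriented vector bundle $V\to M$ such that $V\oplus TM$ is trivial, hence in particular spin$^c$, so that $M^V$ is spin$^c$; then one rewrites $\mathfrak{S}_{\C\ell\otimes B}=S_{\C\ell}\otimes\mathcal{E}_B$ and uses Proposition \ref{tauvversusbott} to absorb the Clifford factor into the spin$^c$-structure of $M^V$, exhibiting the modified cycle as $(M^V,S_{M^V}\otimes\mathcal{E}_B',f\circ\pi_V)$ for an appropriate $B$-module bundle $\mathcal{E}_B'$ (twisting $\pi_V^*\mathcal{E}_B$ by the auxiliary line bundle $(\pi_V^*S_V)^{+*}$, which is an honest Hermitian vector bundle). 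For injectivity I would show that if $(M,S_M\otimes\mathcal{E}_B,f)$ is null in $K_*^h(X;B)$, then $(M,\mathcal{E}_B,f)$ is null in $K_*^{geo}(X;B)$: this follows by running the same kind of argument on an oriented null-bordism, modifying it so that it becomes spin$^c$ relative to its boundary and then stripping off the Clifford factor via Proposition \ref{tauvversusbott}, which is precisely the structure of the $B=\C$ proof in \cite{Kescont}.

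The last sentence, $K_*^h(pt;B)\cong K_*(B)$, then follows by combining the isomorphism just established with the identification $K_*^{geo}(pt;B)\cong K_*(B)$, which holds because over a point a $B$-module bundle is just a finitely generated projective $B$-module (graded in the even case) and the Baum--Douglas relations reduce to the usual description of $K_0(B)$ and $K_1(B)$; this is recorded in \cite{paperI} and only needs to be cited.

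The main obstacle I expect is bookkeeping around vector bundle modification with coefficients: one must make sure that Proposition \ref{tauvversusbott}, the normal-bundle triviality argument, and the identification of the spin$^c$-structure on $M^V$ all interact correctly when an extra $B$-module bundle factor is present, and in particular that the auxiliary twist $(\pi_V^*S_V)^{+*}$ is kept track of consistently on both sides of each relation. All of these points are purely geometric and formally identical to the $B=\C$ case treated in \cite{Kescont} — the $B$-coefficients ride along passively through the internal tensor products — so the content is really a careful ``mutatis mutandis'' verification rather than a new argument, which is why the lemma can be stated with a reference to \cite[Lemma $2.8$]{Kescont}.
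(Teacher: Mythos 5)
The paper gives no proof of Lemma~\ref{khomorientedlem}: it states the result ``in analogy with'' \cite[Lemma~2.8]{Kescont} and leaves the verification implicit. Your proposal is therefore exactly what the paper intends — adapt Keswani's argument, with the $B$-module bundle riding along passively through the tensor products — and your treatment of well-definedness (compatibility with isomorphism/direct sum, with bordism by restricting the spin$^c$-structure of the bordism to its boundary, and with vector bundle modification via Proposition~\ref{tauvversusbott}) is the right use of that proposition.

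One caution about how you phrased the surjectivity step. Choosing $V\to M$ even-dimensional and oriented with $V\oplus TM$ trivial does make $M^V$ stably parallelizable, hence spin$^c$; but $V$ itself need \emph{not} admit a spin$^c$-structure when $M$ is merely oriented, since $W_3(V)=W_3(TM)$, which is nonzero whenever $M$ is not already spin$^c$. Thus Proposition~\ref{tauvversusbott} cannot be invoked to write $\hat V\cong \pi_V^*S_V\otimes(\pi_V^*S_V)^{+*}$ at this point — there is no $S_V$ to speak of. The step that actually absorbs the Clifford factor into the spinor bundle of $M^V$ is the general structure theorem for Clifford modules over a spin$^c$-manifold: since $M^V$ carries a spin$^c$-structure $S_{M^V}$, the $TM^V$-Clifford bundle $\pi_V^*S_{\C\ell}\hat\otimes\hat V$ is canonically isomorphic to $S_{M^V}\otimes\mathrm{Hom}_{\C\ell}(S_{M^V},\,\pi_V^*S_{\C\ell}\hat\otimes\hat V)$, with the $\mathrm{Hom}$-bundle an honest Hermitian vector bundle; the $B$-factor $\pi_V^*\mathcal{E}_B$ then tensors in unchanged. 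Proposition~\ref{tauvversusbott} is used only in the opposite direction (in your well-definedness check, where the $V$ appearing in the spin$^c$-model modification already carries $S_V$ by hypothesis). The same correction applies to your sketch of injectivity on an oriented null-bordism. With that replacement, your argument is the $B$-coefficient analogue of \cite[Lemma~2.8]{Kescont}, in agreement with the paper's intent.
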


\begin{define}
\label{orientedassemblydefinition}
Assume that $B$ is a unital Banach algebra, $X$ a locally compact Hausdorff space and $\mathcal{L}_B\to X$ a locally trivial bundle of finitely generated projective $B$-modules. The oriented assembly map $\mu_\mathcal{L}^h:K_*^h(X)\to K_*^h(pt;B)$ along $\mathcal{L}_B$ is given on cycles by
\[(M,S_{\C\ell},f)\mapsto (M,S_{\C\ell}\otimes f^*\mathcal{L}_B).\]
\end{define}

\begin{remark}
In \cite[Definition $1.3$]{paperI}, the assembly map in the spin$^c$-model is defined at the level of cycles via $\mu_\mathcal{L}:(M,E_\C,f)\mapsto(M,E_{\C}\otimes f^*\mathcal{L}_B)$. It is clear from the definitions involved (i.e.,  \ref{orientedassemblydefinition} and \cite[Definition $1.3$]{paperI}) that the following diagram commutes 
\begin{center}
$\begin{CD}
K_*^{geo}(X)@>\mu_\mathcal{L}>> K_*^{geo}(pt;B)   \\
 @VVV  @VVV  \\
K_*^h(X)@>\mu_\mathcal{L}^h>>K_*^h(pt;B)
\end{CD}$
\end{center}
where the vertical arrows are the natural isomorphisms discussed in Lemma \ref{khomorientedlem}.
\end{remark}

The prototypical example of assembly (as defined in Definition \ref{orientedassemblydefinition}) is the assembly map for free actions; this assembly map plays a key role in this paper. Let $\Gamma$ be is a finitely generated discrete group, $E\Gamma\to B\Gamma$ be the universal $\Gamma$-bundle over the classifying space, $B\Gamma$, of $\Gamma$, and $\mathcal{A}(\Gamma)$ be a Banach algebra completion of $\C[\Gamma]$. The completion most relevant in this paper is the full $C^*$-completion. However, other completions are possible (e.g., the reduced $C^*$-completion $C^*_{\bf red}(\Gamma)$ or $\ell^1(\Gamma)$). The Mischenko bundle is given by
\[\mathcal{L}_\mathcal{A}=E\Gamma\times_\Gamma \mathcal{A}(\Gamma)\to B\Gamma.\]
If $\Gamma$ is torsion-free, $\mathcal{A}(\Gamma)$ is said to have the Baum-Connes property if $\mu_{\mathcal{L}_\mathcal{A}}$ is an isomorphism. If $C^*_{\bf full}(\Gamma)$ has the Baum-Connes property, we say that $\Gamma$ has the full Baum-Connes property.

The assembly along $\mathcal{L}_{C^*_{\bf red}}$ defines a map $\mu_{\bf red}:K_*(B\Gamma)\to K_*(C^*_{\bf red}(\Gamma))$ that is often referred to as the reduced assembly for free actions. If $\Gamma$ is torsion-free, then $\mu_{\bf red}$ is equivalent to the Baum-Connes assembly map; the well known conjecture of Baum and Connes predicts that any torsion-free discrete group has the reduced Baum-Connes property, i.e., $\mu_{\bf red}$ is an isomorphism. We refer the reader to the discussion in \cite[Section $1$]{paperI} for more details.

\section{The relative group of Higson-Roe}
\label{higsonroesgroup}

We now turn to the relation between relative $\eta$-invariants and geometric $K$-homology. As in Subsection \ref{subsectionwithhigheraps}, we let $\sigma_1$ and $\sigma_2$ denote two rank $k$ unitary representations of a discrete group $\Gamma$. In \cite{HReta}, a group $\mathcal{S}^{h}_1(\sigma_1,\sigma_2)$ was constructed as a natural domain for the relative $\eta$-invariant. We will recall the construction of this group below. Before recalling the geometric group $\mathcal{S}^{h}_1(\sigma_1,\sigma_2)$ of Higson and Roe, we introduce further notation regarding Dirac operators. 

\begin{notation}[Notation for Dirac operators]
Let $M$ be a closed oriented manifold and $f:M\to B\Gamma$ a continuous map. If $S_{\C\ell}\to M$ is a Clifford bundle with Clifford connection $\nabla_{S_{\C\ell}}$, we let $D^M_{\nabla_{S_{\C\ell}}}$ denote the associated Dirac operator. We also let $-S_{\C\ell}$ denote the Clifford bundle $S_{\C\ell}$ equipped with the opposite Clifford multiplication; its Dirac operator equals $-D^M_{\nabla_{S_{\C\ell}}}$. When the choice of data is understood from the context, we denote Dirac operators by $D^M_{S_{\C\ell}}$, $D_S$ or $D^M$ or sometimes simply by $D$. If $S=S_M\otimes E$ for a vector bundle $E$ and a spin$^c$-structure $S_M$ we sometimes also write $D_E$. Letting  $\sigma_1$ and $\sigma_2$  be as above, 
$$E_i:=E\Gamma\times_{\sigma_i} \C^k\to B\Gamma.$$
If $D$ is any Dirac type operator on $M$ acting on a Clifford bundle $S_{\C\ell}$, we let $D_i$, or sometimes $D^M_{S_{\C\ell}\otimes E_i}$ if the Dirac operator is constructed as above, denote the Dirac operator on $S_{\C\ell}\otimes f^*E_i$ given by twisting $D$ by the flat connection on $f^*E_i$. 

Whenever no index is specified, $\mathcal{L}$ denotes the Mischenko bundle for the full $C^*$-completion of $\C[\Gamma]$. We will reserve $A$ for smoothing operators $A\in \Psi^{-\infty}_{C^*_{{\bf full}}(\Gamma)}(M,S_{\C\ell}\otimes f^*\mathcal{L})$ in the Mischenko-Fomenko calculus. We use the notation $A_i:=(\sigma_i)_*(A)\in \Psi^{-\infty}(M;S_{\C\ell}\otimes f^*E_i)$.
\end{notation}

\subsection{The relative group of Higson-Roe}

We recall the relative group of Higson-Roe as constructed in \cite[Section 8]{HReta}, but ``decorate" cycles with a smoothing operator on the Mischenko bundle for the full $C^*$-completion. This does not alter the cycles considerably after imposing the bordism relation; however, it makes the study of the stable relative $\eta$-invariants fit more naturally into the framework of \cite{HReta}.

\begin{define}
A collection $(M,S_{\C\ell},f,D,A,n)$ is called a decorated odd oriented $(\sigma_1,\sigma_2)$-cycle if
\begin{enumerate}
\item $(M,S_{\C\ell},f)$ is a cycle for $K_1^h(B\Gamma)$;
\item $D$ is a specific choice of Dirac operator for $(M,S_{\C\ell},f)$;
\item $A\in \Psi^{-\infty}_{C^*_{\bf full}(\Gamma)}(M, S_{\C\ell}\otimes f^*\mathcal{L})$ is a self-adjoint smoothing operator;
\item $n$ is an integer;
\end{enumerate}  
The inverse of an odd oriented $(\sigma_1,\sigma_2)$-cycle $(M,S_{\C\ell},f,D,A,n)$ as
\begin{equation}
\label{inversionofrelcycles}
-(M,S_{\C\ell},f,D,A,n):=(-M,-S_{\C\ell},f,-D,-A,d_2(D,A)-d_1(D,A)-n),
\end{equation}
where 
\begin{equation}
\label{dinot}
d_i(D,A):=\dim\ker (D_i+A_i). 
\end{equation}
A collection of the form $(M,S_{\C\ell},f,D,n)$ such that $(M,S_{\C\ell},f,D,0,n)$ is a decorated odd oriented $(\sigma_1,\sigma_2)$-cycle is called an odd oriented $(\sigma_1,\sigma_2)$-cycle.
\end{define}

The notion of bordism for (decorated) odd oriented $(\sigma_1,\sigma_2)$-cycles is slightly involved. First we introduce the notion of a $(\sigma_1,\sigma_2)$-cycle with boundary.

\begin{define}
\label{apicyclewithboundary}
A decorated $(\sigma_1,\sigma_2)$-cycle with boundary is a quintuple $(W,S_{\C\ell}^W,g,Q,A)$ where 
\begin{enumerate}
\item $(W,S_{\C\ell}^W,g)$ is an even oriented $K$-cycle with boundary of product type $S_{\C\ell}^W\cong S_{\C\ell}^{\partial W}\hat{\otimes} S_N$ near the boundary.
\item $Q$ is a choice of Dirac operator for $(W,S_{\C\ell}^W,g)$ that is of product type $Q=\sigma(\frac{\partial}{\partial t}+Q_\partial)$ near the boundary (cf. Equation \eqref{diracofproducttupe} on page \pageref{diracofproducttupe}).
\item $A\in \Psi^{-\infty}_{C^*_{\bf full}(\Gamma)}(\partial W,S_{\C\ell}^{\partial W}\otimes g^*\mathcal{L})$ is self-adjoint.
\end{enumerate}
If $A=0$, we say that the quadruple $(W,S_{\C\ell}^W,g,Q)$ is a $(\sigma_1,\sigma_2)$-cycle with boundary.
\end{define}

With a $(\sigma_1,\sigma_2)$-cycle with boundary $(W,S_{\C\ell}^W,g,Q,A)$ there are two associated Atiyah-Patodi-Singer index problems; one is associated to $(Q_1,\chi_{[0,\infty)}(Q_\partial+A_1))$ and the other is associated to $(Q_2,\chi_{[0,\infty)}(Q_\partial+A_2))$.  We will use the notation $\ind_{APS}(Q,A)_i:=\ind_{APS}(Q_i,\chi_{[0,\infty)}(Q_{\partial,i}+A_i))$. It follows from Lemma \ref{stablerhoinvariantlemma} that 
\begin{align}
\nonumber
\frac{1}{2}\bigg(\eta_{Q_{\partial,1}+A_1}(0)+d_1(Q_{\partial},A)\bigg)-\frac{1}{2}&\bigg(\eta_{Q_{\partial,2}+A_2}(0)+d_2(Q_{\partial},A)\bigg)=\\
&\ind_{APS}(Q,A)_2-\ind_{APS}(Q,A)_1\in \Z.
\label{apsdifferenceformula}
\end{align}
Morevoer, if $Q_{\partial,\mathcal{L}}+A$ is invertible, Equation \eqref{functorialityofapsclasses} on page \pageref{functorialityofapsclasses}, implies that
$$\ind_{APS}(Q,A)_i=(\sigma_i)_*\ind_{APS}(Q_\mathcal{L},A).$$

\begin{define}
\label{bordismofdecorated}
The boundary of $(W,S_{\C\ell}^W,g,Q,A)$ is
\[\partial(W,S_{\C\ell}^W,g,Q,A)=(\partial W,S_{\C\ell}^{\partial W},g|_{\partial W},Q_\partial,A,n),\]
where 
\[n:=\ind_{APS}(Q,A)_1-\ind_{APS}(Q,A)_2.\]
We say that two decorated odd oriented $(\sigma_1,\sigma_2)$-cycles $(M,S_{\C\ell},f,D,A,n)$ and $(M',S'_{\C\ell},f',D',A',n')$ are decoratedly bordant if 
$$(M,S_{\C\ell},f,D,A,n)\dot{\cup}-(M',S'_{\C\ell},f',D',A',n')$$ 
is the boundary of a decorated $(\sigma_1,\sigma_2)$-cycle with boundary. In the same way a $(\sigma_1,\sigma_2)$-cycle with boundary defines the notion of bordism on odd oriented $(\sigma_1,\sigma_2)$-cycles (see \cite[Definitions 8.9 and 8.13(b)]{HReta}).
\end{define}

If $(M,S_{\C\ell},f,D,A,n)$ is an odd oriented $(\sigma_1,\sigma_2)$-cycle we define
\begin{align}
\label{rhodefinition}
\nonumber
\rho_{\sigma_1,\sigma_2}&(M,S_{\C\ell},f,D,A):=\\
&\frac{1}{2}\bigg(\eta_{D_1+A_1}(0)+d_1(D,A)\bigg)-\frac{1}{2}\bigg(\eta_{D_2+A_2}(0)+d_2(D,A)\bigg)\in \R
\end{align}
On the right hand side, all of the data in $(M,S_{\C\ell},f,D,A)$ is used even though the notation supresses some of the data.

\begin{prop}
\label{bordingoutsmoothing}
There is a bordism 
$$(M,S_{\C\ell},f,D,A,n)\sim_{bor} (M,S_{\C\ell},f,D,0,n+m)$$ 
where 
\begin{align*}
m=\rho_{\sigma_1,\sigma_2}(M,S_{\C\ell},f,D,A) -\rho_{\sigma_1,\sigma_2}(M,S_{\C\ell},f,D,0).
\end{align*}
\end{prop}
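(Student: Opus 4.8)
The plan is to exhibit the asserted bordism using the cylinder $W:=M\times[0,1]$. Equip $W$ with the Clifford bundle $S^W_{\C\ell}:=S^{\partial W}_{\C\ell}\,\hat{\otimes}\,S_N$ pulled back from $M$ (with $S_N$ the spin$^c$-structure of the interval direction), with the map $g:=f\circ\mathrm{pr}_M:W\to B\Gamma$, and with a Dirac operator $Q$ which is the translation-invariant lift of $D$, so that $Q$ is of product type $Q=\sigma(\partial_t+Q_\partial)$ near each boundary face. Since $\dim M$ is odd, $\dim W$ is even, so $(W,S^W_{\C\ell},g,Q)$ is an even oriented $K$-cycle with boundary of product type. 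Choosing the orientation of the interval appropriately, the boundary face on which $Q_\partial=D$ carries the un-reversed data $(M,S_{\C\ell},f,D)$ while the other face carries the reversed data $(-M,-S_{\C\ell},f,-D)$ — recall from the notation for Dirac operators that passing to the opposite Clifford multiplication negates the Dirac operator. Finally I decorate $W$ by the self-adjoint smoothing operator $\tilde A\in\Psi^{-\infty}_{C^*_{\bf full}(\Gamma)}(\partial W,S^{\partial W}_{\C\ell}\otimes g^*\mathcal L)$ equal to $A$ on the un-reversed face and to $0$ on the reversed face. Then $(W,S^W_{\C\ell},g,Q,\tilde A)$ is a decorated $(\sigma_1,\sigma_2)$-cycle with boundary in the sense of Definition \ref{apicyclewithboundary}.

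The next step is to compute $\partial(W,S^W_{\C\ell},g,Q,\tilde A)$ via Definition \ref{bordismofdecorated}: its underlying decorated cycle is $(\partial W,S^{\partial W}_{\C\ell},g|_{\partial W},Q_\partial,\tilde A,n_0)$ with $n_0=\ind_{APS}(Q,\tilde A)_1-\ind_{APS}(Q,\tilde A)_2$. I evaluate $n_0$ using Equation \eqref{apsdifferenceformula} for this cycle with boundary: its right-hand side is precisely $-n_0$, while its left-hand side splits, by additivity of $\eta$-functions and kernel dimensions over the disjoint union $\partial W$, into the sum of the two boundary contributions. The un-reversed face contributes $\rho_{\sigma_1,\sigma_2}(M,S_{\C\ell},f,D,A)$ directly from \eqref{rhodefinition}; the reversed face, using $\eta_{-D_i}(0)=-\eta_{D_i}(0)$ and $\dim\ker(-D_i)=\dim\ker D_i=d_i(D,0)$, contributes $-\rho_{\sigma_1,\sigma_2}(M,S_{\C\ell},f,D,0)+d_1(D,0)-d_2(D,0)$. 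Hence $n_0=d_2(D,0)-d_1(D,0)-m$ with $m=\rho_{\sigma_1,\sigma_2}(M,S_{\C\ell},f,D,A)-\rho_{\sigma_1,\sigma_2}(M,S_{\C\ell},f,D,0)$, as in the statement.

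It remains to identify, for every integer $n$, the decorated cycle $\partial(W,S^W_{\C\ell},g,Q,\tilde A)$ with $(M,S_{\C\ell},f,D,A,n)\,\dot{\cup}\,-(M,S_{\C\ell},f,D,0,n+m)$. Expanding the inverse by Equation \eqref{inversionofrelcycles}, one has $-(M,S_{\C\ell},f,D,0,n+m)=(-M,-S_{\C\ell},f,-D,0,d_2(D,0)-d_1(D,0)-n-m)$, so this disjoint union has exactly the geometry of $\partial W$ (the $A$-decorated un-reversed face together with the $0$-decorated reversed face) and total attached integer $n+\big(d_2(D,0)-d_1(D,0)-n-m\big)=d_2(D,0)-d_1(D,0)-m=n_0$; since the integer attached to a disjoint union of decorated cycles is the sum of the attached integers, this matches $\partial(W,S^W_{\C\ell},g,Q,\tilde A)$. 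Therefore $(W,S^W_{\C\ell},g,Q,\tilde A)$ witnesses $(M,S_{\C\ell},f,D,A,n)\sim_{bor}(M,S_{\C\ell},f,D,0,n+m)$ for every $n$, which is the claim.

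The index-theoretic content is entirely absorbed into Equation \eqref{apsdifferenceformula} (where the vanishing of the relevant local index densities, due to flatness and equal rank of the bundles associated with $\sigma_1,\sigma_2$, has already been used), so the genuine work is the orientation and Clifford-sign bookkeeping: one must verify that the product decomposition $S^W_{\C\ell}\cong S^{\partial W}_{\C\ell}\,\hat{\otimes}\,S_N$ really yields $-S_{\C\ell}$ and boundary operator $-D$ on one face, so that this face is genuinely $-(M,S_{\C\ell},f,D,0,\cdot)$ and the integer shift built into \eqref{inversionofrelcycles} cancels the $d_1(D,0)-d_2(D,0)$ discrepancy coming from the sign change of $D$ in the $\rho$-computation. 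One should also record the routine points that $\tilde A$ is a legitimate self-adjoint smoothing decoration and that $Q$ may be chosen of product type lifting $D$ at both faces at once.
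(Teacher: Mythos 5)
Your proof is correct and follows essentially the same route as the paper's: the cylinder $W=M\times[0,1]$ with translation-invariant Dirac operator $Q=\sigma(\partial_t+D)$, the decoration $\tilde A$ equal to $A$ on one face and $0$ on the other, and Equation \eqref{apsdifferenceformula} to identify $m$ and verify it is an integer. You have additionally spelled out the sign and orientation bookkeeping (the $\eta_{-D_i}=-\eta_{D_i}$ flip, and how the shift $d_2(D,0)-d_1(D,0)$ built into the inversion formula \eqref{inversionofrelcycles} cancels it) that the paper leaves to the reader, which is a welcome expansion but not a different argument.
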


\begin{proof}
Define $W:=M\times [0,1]$, $g:=f\circ \pi_M$ and $Q=\sigma\left(\frac{\partial}{\partial t}+D\right)$, where $t$ denotes the coordinate in $[0,1]$ and $\sigma$ is the bundle automorphism given by Clifford multiplication by $\mathrm{d} t$ (see more in Subsection \ref{apssubsub} and Equation \eqref{diracofproducttupe}). Define $\tilde{A}$ as $A$ on $M\times \{1\}$ and $0$ on $M\times \{0\}$. It follows from \eqref{apsdifferenceformula} that $m\in \Z$ and that $(M,S_{\C\ell},f,D,A,n)\dot{\cup}-(M,S_{\C\ell},f,D,0,n+m)$ is the boundary of $(W,S_{\C\ell}^W,g,Q,\tilde{A})$.
\end{proof}

\begin{remark}
\label{bordismremarks}
A consequence of Proposition \ref{bordingoutsmoothing} is that any odd oriented $(\sigma_1,\sigma_2)$-cycle $(M,S_{\C\ell},f,D,A_0,n)$ is bordant to a cycle of the form $(M,S_{\C\ell},f,D,0,n'')$. If $\mu_{C^*_{\bf full}}(M,S_{\C\ell},f)=0$, the cycle is bordant to a cycle of the form $(M,S_{\C\ell},f,D,A,n')$ where $D_\mathcal{L}+A$ is invertible. To simplify notation, we denote a cycle of the form, $(M,S_{\C\ell},f,D,0,n)$, by $(M,S_{\C\ell},f,D,n)$.
\end{remark}

A $2k$-dimensional oriented vector bundle $V\to M$ is uniquely determined by its principal $SO(2k)$-bundle $P_V\to M$ of oriented frames by $V=P_V\times_{SO(2k)}\field{R}^{2k}$. The sphere bundle $M^V=S(V\oplus 1_\field{R})\to M$ can be reconstructed from $P_V$ by $S(V\oplus 1_\field{R})=P_V\times_{SO(2k)}S^{2k}$. If a principal $SO(2k)$-bundle $P$ is given, we let $\pi_P:M^P:=P\times_{SO(2k)}S^{2k}\to M$. We follow the construction of vector bundle modification from \cite{HReta}. Let $D_\theta$ denote a $SO(2k)$-equivariant Dirac operator on $S^{2k}$ whose kernel is a one-dimensional copy of the trivial representation (see \cite[Lemma $6.7$]{HReta}). Whenever $P\to M$ is a principal $SO(2k)$-bundle, it is possible to identify
\begin{equation}
\label{ltwoonprincipalbundle}
L^2(M^P;\pi_P^*S\hat{\otimes}\hat{V})\cong L^2(P\times S^{2k};\pi^*S\hat{\boxtimes} \hat{V}_{S^{2k}})^{SO(2k)},
\end{equation}
which is the $SO(2k)$-invariant direct summand in $L^2(P\times S^{2k},\pi^*S\hat{\boxtimes} \hat{V}_{S^{2k}})$. Furthermore, whenever $D$ is an operator on $M$, it lifts to an equivariant operator on $P$ that we will denote by $\pi_P^*(D)$. The following definition is a minor variation of \cite[Definition $8.13$]{HReta} (also see \cite[Section 4.1]{DeeMapCone}).

\begin{define}
\label{vectorbundlemodificationofrelativecycles}
Assume that $(M,S_{\C\ell},f,D,A,n)$ is a decorated odd oriented $(\sigma_1,\sigma_2)$-cycle and that $P\to M$ is a smooth principal $SO(2k)$-bundle. We define the data
\begin{enumerate}
\item $D^P$ is defined as the restriction of the sum of graded tensor products $\pi_P^*(D)\hat{\otimes} \epsilon_{dR} +1_{L^2(P,\pi_P^*S)}\hat{\otimes} D_\theta$ to its maximal domain in $L^2(M^P;\pi_P^*S\hat{\otimes}\hat{V})\subseteq L^2(P\times S^{2k};\pi^*S\hat{\boxtimes} \hat{V}_{S^{2k}})$, here $\epsilon_{dR}$ denotes the grading operator $L^2(S^{2k},\hat{V}_{S^{2k}})$ defined from form degree.
\item The smoothing operator $A^P$ is defined as the restriction of $\pi_P^*(A)\hat{\otimes} e_\theta$ to $L^2(M^P,\pi_P^*S\otimes f^*\mathcal{L}\hat{\otimes}\hat{V}_P)$, where $e_\theta$ is the even smoothing operator on $S^{2k}$ giving the projection onto the kernel of $D_\theta$.
\item $\hat{V}_P$ is the Thom bundle constructed from the oriented vector bundle $P\times_{SO(2k)} \field{R}^{2k}\to M$, defined as in Definition \ref{thombundledef}.
\end{enumerate}
The vector bundle modification of $(M,S_{\C\ell},f,D,A,n)$ along $P$ is defined to be the odd oriented $(\sigma_1,\sigma_2)$-cycle
\[(M,S_{\C\ell},f,D,A,n)^P:=(M^P,\pi_P^*S_{\C\ell}\otimes \hat{V}_P,f\circ \pi_P,D^P,A^P,n).\]
\end{define}

\begin{prop}
The vector bundle modification of decorated odd oriented $(\sigma_1,\sigma_2)$-cycles in Definition \ref{vectorbundlemodificationofrelativecycles} is well defined.
\end{prop}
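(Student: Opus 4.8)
The goal is to show that the construction in Definition~\ref{vectorbundlemodificationofrelativecycles} actually produces a legitimate decorated odd oriented $(\sigma_1,\sigma_2)$-cycle; there is no statement about invariance (under bordism or otherwise) to prove here, only well-definedness of the output datum. The plan is to verify the four conditions defining a decorated odd oriented $(\sigma_1,\sigma_2)$-cycle for the tuple $(M^P,\pi_P^*S_{\C\ell}\otimes\hat V_P,f\circ\pi_P,D^P,A^P,n)$, together with checking that the construction does not depend on the auxiliary choices implicit in it (a trivialization of the principal bundle, the identification \eqref{ltwoonprincipalbundle}, the operator $D_\theta$).

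First I would check that $(M^P,\pi_P^*S_{\C\ell}\otimes\hat V_P,f\circ\pi_P)$ is a cycle for $K_1^h(B\Gamma)$. The manifold $M^P=S(V\oplus 1_\R)$ is a closed oriented manifold of dimension $\dim M+2k$, which is again odd; $f\circ\pi_P$ is continuous; and $\pi_P^*S_{\C\ell}\hat\otimes\hat V_P$ is a Clifford bundle since $\hat V_P$ is a $T_{ver}$-Clifford bundle by the discussion preceding Definition~\ref{thombundledef} and $\pi_P^*S_{\C\ell}$ is a $T_{hor}$-Clifford bundle pulled back along $\pi_P$, so their graded tensor product is a Clifford bundle for $TM^P\cong T_{ver}\oplus T_{hor}$. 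This is precisely the content of vector bundle modification of oriented cycles in Definition~\ref{thombundledef}, so there is essentially nothing new here. Next, I would argue that $D^P$ is a genuine Dirac operator for this cycle: away from $V\subseteq M^P$ it is built from $\pi_P^*(D)$ and the $SO(2k)$-equivariant Dirac operator $D_\theta$, and one checks via the identification \eqref{ltwoonprincipalbundle} that the resulting operator on the $SO(2k)$-invariant summand is of Dirac type for the Clifford structure just described; here one invokes \cite[Lemma~6.7]{HReta} to know $D_\theta$ exists with one-dimensional kernel, which is what makes the restriction to the invariant summand have the correct symbol. Then $A^P=\pi_P^*(A)\hat\otimes e_\theta$ restricted to the invariant summand is self-adjoint (since $A$ and $e_\theta$ are) and smoothing in the Mischenko--Fomenko calculus on $\pi_P^*S_{\C\ell}\otimes f^*\mathcal L\hat\otimes\hat V_P$, because $e_\theta$ is a finite-rank (hence smoothing) operator and the graded tensor product of smoothing operators is smoothing; finally $n$ is unchanged, so it is still an integer. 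That the integer $n$ is kept the same (rather than shifted) is consistent with the fact that vector bundle modification should not change the underlying $K$-homology class nor the relative $\eta$-invariant of the cycle, but no such identity needs to be proved for well-definedness.

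The remaining point is independence of choices. The principal-bundle description $M^P=P\times_{SO(2k)}S^{2k}$ and the identification \eqref{ltwoonprincipalbundle} depend on $P$ only up to isomorphism of principal $SO(2k)$-bundles, and an isomorphism of principal bundles induces a diffeomorphism of $M^P$ intertwining all the structures, so the output is well defined up to isomorphism of decorated cycles. The operator $D_\theta$ is unique up to the $SO(2k)$-equivariant choices in \cite[Lemma~6.7]{HReta}; I would note that two such choices are joined by a path, which does not affect the cycle (a Dirac operator is part of the data but the isomorphism class of the decorated cycle is insensitive to deforming $D$ and $A$ within the allowed class, as is implicit in Definition~\ref{bordismofdecorated} — indeed Proposition~\ref{bordingoutsmoothing} shows even changing $A$ only bords the cycle). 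I expect the only mildly delicate step to be the verification that $D^P$, a priori defined on $M^P$ via a formula involving $\epsilon_{dR}$ and the graded tensor product, restricts to the maximal domain in the invariant summand as an operator that is genuinely of product type near $\partial$ (vacuous here since $M^P$ is closed) and of Dirac type — this is exactly the computation carried out in \cite{HReta} in the case $\mathcal A=\C$ and it goes through verbatim with coefficients, using that all the extra bundle data $\mathcal E_C$ enters only through tensoring and is locally trivial. Hence the proof is essentially a matter of assembling Definition~\ref{thombundledef}, \cite[Lemma~6.7]{HReta}, and the observation that $e_\theta$ is smoothing, with the bookkeeping of which grading operator acts where.
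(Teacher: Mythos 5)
Your plan — check the conditions on the output tuple, note that everything except $A^P$ is handled by the constructions in \cite{HReta}, and then argue that $A^P$ is a well-defined smoothing operator — matches the structure of the paper's (very short) proof, which reduces the claim exactly to the smoothness of $A^P$. However, your key step has a flaw in its stated justification. You claim that $A^P$ is smoothing ``because $e_\theta$ is a finite-rank (hence smoothing) operator and the graded tensor product of smoothing operators is smoothing,'' with the tensor factors being $\pi_P^*(A)$ and $e_\theta$. But $\pi_P^*(A)$ is the $SO(2k)$-equivariant lift of $A$ to the total space $P$, and such a lift is \emph{not} a smoothing operator on $P$: its Schwartz kernel on $P\times P$ is concentrated (distributionally) along the fiber directions of $\pi_P\colon P\to M$, essentially because the lift looks like $A\otimes\mathrm{id}_{L^2(SO(2k))}$. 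So the lemma ``tensor product of smoothing operators is smoothing'' does not apply to $\pi_P^*(A)\hat\otimes e_\theta$ on $P\times S^{2k}$.

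The fix, and the route the paper actually takes, is to describe the Schwartz kernel of $A^P$ directly on $M^P\times M^P$ rather than on $P\times S^{2k}$: the kernel of $A^P$ is the pullback of the kernel of $A$ along $\pi_P\times\pi_P\colon M^P\times M^P\to M\times M$, tensored with the (smooth, rank-one) fiberwise kernel of the projection $e_\theta$ onto $\ker D_\theta$, then restricted to $L^2(M^P)$. That composite is manifestly a smooth kernel, so $A^P$ is smoothing. Your underlying intuition — that finiteness/smoothness of $e_\theta$ is what saves the day — is exactly right, but you should phrase the argument at the level of the kernel on $M^P$ rather than via a tensor-product-of-smoothing-operators lemma on $P\times S^{2k}$, where one of the factors is not smoothing.

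Two smaller remarks: the paper does not attempt to establish independence of the choice of $D_\theta$ or of the principal-bundle presentation in this proposition, deferring all of that to \cite{HReta}, so that portion of your argument is supererogatory; and your parenthetical claim that ``the isomorphism class of the decorated cycle is insensitive to deforming $D$ and $A$'' conflates isomorphism with bordism — deformation of the operator data yields a decoratedly \emph{bordant} cycle (as Proposition~\ref{bordingoutsmoothing} shows for $A$), not an isomorphic one, which is a distinction that matters if one is asking for well-definedness at the cycle level as opposed to at the level of classes.
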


\begin{proof}
Using the constructions of \cite{HReta}, it remains to prove that $A^P$ is a well defined smoothing operator. This is clear since the integral kernel of $A^P$ is restriction of the exterior product of the pullback of the integral kernel of $A$ along $M^P\times M^P\to M\times M$ with the projection onto the kernel of $D_\theta$ to $L^2(M^P)$.
\end{proof}

The next Lemma shows that our notion of vector bundle modification is well behaved  with respect to higher Atiyah-Patodi-Singer theory (compare with \cite[Section 4.1]{DeeMapCone}).

\begin{lemma}
Let $(M,S_{\C\ell},f,D,A,n)$ be a decorated odd oriented $(\sigma_1,\sigma_2)$-cycle such that the $C^*_{\bf full}(\Gamma)$-linear operator $D_\mathcal{L}+A$ is invertible and let $P\to M$ be a principal $SO(2k)$-bundle. Then the $C^*_{\bf full}(\Gamma)$-linear operator $D^P_\mathcal{L}+A^P$ is invertible.
\end{lemma}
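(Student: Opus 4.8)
The plan is to reduce the invertibility of $D^P_\mathcal{L}+A^P$ to the invertibility of $D_\mathcal{L}+A$ by exploiting the tensor-product structure of both operators on $L^2(M^P;\pi_P^*(S_{\C\ell}\otimes f^*\mathcal{L})\hat{\otimes}\hat V_P)$, realized as the $SO(2k)$-invariant summand of $L^2(P\times S^{2k};\pi^*(S_{\C\ell}\otimes f^*\mathcal{L})\hat{\boxtimes}\hat V_{S^{2k}})$. On the ambient space, $D^P_\mathcal{L}+A^P$ is the restriction of
\[
\big(\pi_P^*(D_\mathcal{L})+\pi_P^*(A)\big)\hat{\otimes}\,\epsilon_{dR}\;+\;1\,\hat{\otimes}\,D_\theta\;+\;\big(\text{lower order/correction terms}\big),
\]
but the cleanest route is to compute its square. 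First I would verify that $\pi_P^*(D_\mathcal{L})\hat{\otimes}\epsilon_{dR}$ anticommutes (on the invariant summand) with $1\hat{\otimes}D_\theta$, which follows from $\epsilon_{dR}$ being the grading operator that $D_\theta$ is odd with respect to, and that $\pi_P^*(A)\hat{\otimes}\epsilon_{dR}$ likewise anticommutes with $1\hat{\otimes}D_\theta$ while commuting appropriately with $\pi_P^*(D_\mathcal{L})\hat{\otimes}\epsilon_{dR}$ up to the structure already handled in \cite{HReta}. Consequently $(D^P_\mathcal{L}+A^P)^2$ decomposes as a sum of squares
\[
(D^P_\mathcal{L}+A^P)^2 = \big(\pi_P^*(D_\mathcal{L}+A)\big)^2\hat{\otimes} 1 \;+\; 1\hat{\otimes} D_\theta^2,
\]
again restricted to the invariant summand, with all cross terms vanishing.

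The key point is then a spectral-gap argument on the $S^{2k}$-factor. Since $D_\theta$ is an $SO(2k)$-equivariant Dirac operator on $S^{2k}$ whose kernel is the one-dimensional trivial representation (\cite[Lemma 6.7]{HReta}), $D_\theta^2$ has a strictly positive lowest nonzero eigenvalue $\lambda_0>0$, and $e_\theta$ is the projection onto $\ker D_\theta$. Split $L^2(P\times S^{2k};\cdots)^{SO(2k)}$ as $\mathcal{H}_0\oplus\mathcal{H}_0^\perp$ according to whether the $S^{2k}$-component lies in $\ker D_\theta$ or its orthogonal complement; both summands are preserved by $(D^P_\mathcal{L}+A^P)^2$ by the decomposition above. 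On $\mathcal{H}_0^\perp$ we have $(D^P_\mathcal{L}+A^P)^2 \geq 1\hat{\otimes}D_\theta^2 \geq \lambda_0$, so the operator is bounded below there, uniformly, hence invertible on that summand (in the $C^*_{\bf full}(\Gamma)$-module sense: $D^P_\mathcal{L}+A^P$ restricted to $\mathcal{H}_0^\perp$ has a bounded $C^*$-linear inverse, since regular self-adjoint operators bounded away from zero are invertible). On $\mathcal{H}_0$, the identification via $e_\theta$ gives $\mathcal{H}_0\cong L^2(M;S_{\C\ell}\otimes f^*\mathcal{L})$ (using that $\ker D_\theta$ is the trivial representation, so the $SO(2k)$-invariant part is exactly the pullback from $M$), and under this identification $(D^P_\mathcal{L}+A^P)^2$ restricted to $\mathcal{H}_0$ is unitarily equivalent to $(D_\mathcal{L}+A)^2$ — here the $A^P = \pi_P^*(A)\hat{\otimes}e_\theta$ contributes exactly $A$ on $M$ and the $D_\theta^2$ term contributes $0$. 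By hypothesis $D_\mathcal{L}+A$ is invertible, so $(D_\mathcal{L}+A)^2$ is bounded below, hence so is $(D^P_\mathcal{L}+A^P)^2$ on $\mathcal{H}_0$.

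Combining the two summands, $(D^P_\mathcal{L}+A^P)^2$ is bounded below on all of $L^2(M^P;\cdots)$ by $\min(\lambda_0,\;\text{lower bound for }(D_\mathcal{L}+A)^2)>0$; since $D^P_\mathcal{L}+A^P$ is a self-adjoint regular $C^*_{\bf full}(\Gamma)$-linear operator (its leading part is a Dirac operator in the Mischenko--Fomenko calculus and $A^P$ is a self-adjoint smoothing perturbation), being bounded away from zero forces invertibility in $\mathrm{End}^*_{C^*_{\bf full}(\Gamma)}$, i.e.\ $D^P_\mathcal{L}+A^P$ has a bounded adjointable inverse. I expect the main obstacle to be the bookkeeping in establishing the clean squared decomposition — one must check carefully that passing to the $SO(2k)$-invariant summand is compatible with the gradings and with the Clifford multiplication conventions on $\hat V_P$, and that the only place the $\hat V$-Clifford structure interacts with the pulled-back operator $\pi_P^*(D_\mathcal{L})$ is through $\epsilon_{dR}$ (so that no unexpected cross terms survive); this is essentially the content already verified in \cite{HReta} for the undecorated case, and the decorating smoothing operator $A^P=\pi_P^*(A)\hat{\otimes}e_\theta$ is designed precisely so that it commutes with the $D_\theta$-factor through $e_\theta$ and so contributes no new cross terms. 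Once that structural identity is in place, the spectral-gap conclusion is immediate.
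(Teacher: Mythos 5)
Your proof is essentially the same decomposition the paper uses: split $L^2(M^P;\pi_P^*S\otimes f^*\mathcal{L}\hat{\otimes}\hat V_P)$ along the projection $1\hat{\otimes}e_\theta$ into $\mathcal{H}_0\cong L^2(M,S\otimes f^*\mathcal{L})$ and its complement, observe that the operator restricts to $D_\mathcal{L}+A$ on $\mathcal{H}_0$, and invoke the spectral gap of $D_\theta^2$ on the complement. The structure, the use of $e_\theta$ as the splitting projection, and the positivity estimate all match.

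One computational slip to flag: your displayed square
\[
(D^P_\mathcal{L}+A^P)^2 = \big(\pi_P^*(D_\mathcal{L}+A)\big)^2\hat{\otimes} 1 + 1\hat{\otimes} D_\theta^2
\]
is not correct, and it traces back to writing $\pi_P^*(A)\hat{\otimes}\epsilon_{dR}$ for the decoration when in fact $A^P=\pi_P^*(A)\hat{\otimes}e_\theta$ (Definition \ref{vectorbundlemodificationofrelativecycles}); $\epsilon_{dR}$ is the grading while $e_\theta$ is the projection onto $\ker D_\theta$, and these are different bounded operators on $L^2(S^{2k},\hat V_{S^{2k}})$. With the correct $A^P$, the smoothing term does not anticommute with $1\hat{\otimes}D_\theta$ in the odd-operator sense but rather annihilates it on both sides (since $e_\theta D_\theta=D_\theta e_\theta=0$), and the $A$-contributions to the square appear tensored with $e_\theta$ rather than with $1$. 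Concretely, on $\mathcal{H}_0^\perp$ the operator $A^P$ vanishes identically, so $(D^P_\mathcal{L}+A^P)^2|_{\mathcal{H}_0^\perp}=\pi_P^*(D_\mathcal{L})^2\hat{\otimes}(\id-e_\theta)+1\hat{\otimes}D_\theta^2|_{\mathcal{H}_0^\perp}$ with no $A$-term at all — cleaner than your formula suggests. This does not undermine your argument, since on $\mathcal{H}_0$ your formula agrees with the correct one (as $1\cdot e_\theta=e_\theta$) and on $\mathcal{H}_0^\perp$ the extra non-negative term you carry only strengthens the lower bound, but the intermediate identity as stated is false and would be worth correcting before submission.
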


\begin{proof}
On the complemented $C^*_{\bf full}(\Gamma)$-submodule 
\small
$$L^2(M,S\otimes f^*\mathcal{L})=(1_{L^2(M,S\otimes f^*\mathcal{L})}\otimes e_\theta)\cdot L^2(M^P,\pi_P^*S\otimes f^*\mathcal{L}\hat{\otimes}\hat{V}_P)\subseteq L^2(M^P,\pi_P^*S\otimes f^*\mathcal{L}\hat{\otimes}\hat{V}_P),$$ 
\normalsize
it holds that 
$$(D^P_\mathcal{L}+A^P)|_{L^2(M,S\otimes f^*\mathcal{L})}=D_\mathcal{L}+A.$$
Hence the restriction of $D^P_\mathcal{L}+A^P$ to the complemented $C^*_{\bf full}(\Gamma)$-submodule $L^2(M,S\otimes f^*\mathcal{L})$ is invertible. On the complement of this $C^*_{\bf full}(\Gamma)$-submodule; 
\small
\begin{align*}
L^2(M,&\,S\otimes f^*\mathcal{L})^\perp\\
&=(\id-1_{L^2(M,S\otimes f^*\mathcal{L})}\otimes e_\theta)\cdot L^2(M^P,\pi_P^*S\otimes f^*\mathcal{L}\hat{\otimes}\hat{V}_P)\subseteq L^2(M^P,\pi_P^*S\otimes f^*\mathcal{L}\hat{\otimes}\hat{V}_P),
\end{align*}
\normalsize
or rather in the core 
$$(\id-1_{L^2(M,S\otimes f^*\mathcal{L})}\otimes e_\theta)\cdot C^\infty(M^P,\pi_P^*S\otimes f^*\mathcal{L}\hat{\otimes}\hat{V}_P)\subseteq L^2(M,\,S\otimes f^*\mathcal{L})^\perp,$$ 
it holds that
\begin{align*}
\big((D^P_\mathcal{L}+A^P)&|_{L^2(M,S\otimes f^*\mathcal{L})^\perp}\big)^2\\
&=\left(\pi_P^*(D_\mathcal{L})^2\hat{\otimes} (\id-e_\theta) +1_{L^2(P,\pi_P^*S\otimes f^*\mathcal{L})}\hat{\otimes} D_\theta\right)|_{L^2(M,S\otimes f^*\mathcal{L})^\perp}.
\end{align*}
Since $D_\theta^2$ is strictly positive on the image of $\id-e_\theta$, and $\pi_P^*(D_\mathcal{L})^2\hat{\otimes} (\id-e_\theta)\geq 0$, it follows that $(D^P_\mathcal{L}+A^P)|_{L^2(M,S\otimes f^*\mathcal{L})^\perp}$ is invertible.
\end{proof}

\begin{define}
\label{srelgroup}
We define $\tilde{\mathcal{S}}_1^h(\sigma_1,\sigma_2)$ to be the set of equivalence classes of decorated odd oriented $(\sigma_1,\sigma_2)$-cycles under the relation generated by decorated bordism, vector bundle modification and disjoint union/direct sum:
$$(M,S_{\C\ell},f,D,A,n)\dot{\cup}(M,S_{\C\ell}',f,D',A',n')\sim (M,S_{\C\ell}\oplus S_{\C\ell}',f,D\oplus D',A\oplus A',n+n').$$
We define $\mathcal{S}_1^h(\sigma_1,\sigma_2)$ to be the set of equivalence classes of odd oriented $(\sigma_1,\sigma_2)$-cycles under the relation generated by bordism, vector bundle modification and disjoint union/direct sum:
$$(M,S_{\C\ell},f,D,n)\dot{\cup}(M,S_{\C\ell}',f,D',n')\sim (M,S_{\C\ell}\oplus S_{\C\ell}',f,D\oplus D',n+n').$$
\end{define}

\begin{remark}
The group defined in \cite[Definition 8.13]{HReta} (which was denoted by $\mathcal{S}_1^{geom}(\sigma_1,\sigma_2)$ in \cite{HReta}) coincides with our group $\mathcal{S}_1^h(\sigma_1,\sigma_2)$ by construction. That the group also coincides with $\tilde{\mathcal{S}}_1^h(\sigma_1,\sigma_2)$ is a bit more subtle.
\end{remark}

\begin{lemma}
\label{relwwotilde}
The map defined at the level of cycles via
\[\mathcal{S}_1^h(\sigma_1,\sigma_2)\to \tilde{\mathcal{S}}_1^h(\sigma_1,\sigma_2), \quad (M,S_{\C\ell},f,D,n)\mapsto  (M,S_{\C\ell},f,D,0,n),\]
induces an isomorphism of groups.
\end{lemma}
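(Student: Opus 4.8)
The plan is to construct an explicit inverse map $\tilde{\mathcal{S}}_1^h(\sigma_1,\sigma_2)\to \mathcal{S}_1^h(\sigma_1,\sigma_2)$ and verify that the two composites are the identity. The candidate inverse is the map that "undecorates" a cycle by bording away the smoothing operator: on a representative $(M,S_{\C\ell},f,D,A,n)$ it sends the class to $[M,S_{\C\ell},f,D,n+m]$ where $m=\rho_{\sigma_1,\sigma_2}(M,S_{\C\ell},f,D,A)-\rho_{\sigma_1,\sigma_2}(M,S_{\C\ell},f,D,0)\in\Z$, exactly the integer appearing in Proposition~\ref{bordingoutsmoothing}. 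By that proposition, $m$ is an integer, so this is a well-formed recipe on cycles; the real content is that it descends to equivalence classes.

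First I would check the forward map of Lemma~\ref{relwwotilde} is a group homomorphism: this is immediate from the definitions of disjoint union/direct sum in Definition~\ref{srelgroup}, since sending $n\mapsto n$ and $A=0\mapsto A=0$ is compatible with $\oplus$. Next I would check the proposed inverse is well defined on $\tilde{\mathcal{S}}_1^h$. There are three relations to handle. For disjoint union/direct sum this is routine because $\rho_{\sigma_1,\sigma_2}$ and the dimensions $d_i$ are additive under $\oplus$, so $m$ is additive. For vector bundle modification along a principal $SO(2k)$-bundle $P$, I would use that $\rho_{\sigma_1,\sigma_2}$ is unchanged under vector bundle modification (this is the oriented-cycle analogue of \cite[Proposition~6.6]{HReta} recalled in the excerpt, together with the fact that $e_\theta$ projects onto a one-dimensional trivial-representation kernel so the twisted kernels and $\eta$-invariants are unaffected by passing to $D^P$, $A^P$); hence the integer $m$ is preserved, and $(M,S_{\C\ell},f,D,n+m)^P$ differs from $(M^P,\dots,D^P,n+m)$ only by the vbm relation in $\mathcal{S}_1^h$. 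For bordism, suppose $(M,S_{\C\ell},f,D,A,n)\dot\cup -(M',S'_{\C\ell},f',D',A',n')$ bounds a decorated $(\sigma_1,\sigma_2)$-cycle with boundary $(W,S^W_{\C\ell},g,Q,\mathcal{A})$; I would use Proposition~\ref{bordingoutsmoothing} on each piece to first reduce to the undecorated cycles $(M,S_{\C\ell},f,D,0,n+m)$ and $(M',\dots,0,n'+m')$, and then observe that the resulting two undecorated cycles are bordant via $(W,S^W_{\C\ell},g,Q,0)$ after adjusting the integer label by the difference of APS indices, using \eqref{apsdifferenceformula} and the boundary formula in Definition~\ref{bordismofdecorated} — the point being that $\ind_{APS}(Q,\mathcal{A})_i-\ind_{APS}(Q,0)_i$ accounts exactly for the change $m-m'$. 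So the class in $\mathcal{S}_1^h(\sigma_1,\sigma_2)$ is independent of the chosen representative.

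Finally I would verify the two composites are the identity. Composing $\mathcal{S}_1^h\to\tilde{\mathcal{S}}_1^h\to\mathcal{S}_1^h$ gives $(M,S_{\C\ell},f,D,n)\mapsto(M,S_{\C\ell},f,D,0,n)\mapsto(M,S_{\C\ell},f,D,n+0)$ since $A=0$ forces $m=0$; this is the identity. Composing the other way, $\tilde{\mathcal{S}}_1^h\to\mathcal{S}_1^h\to\tilde{\mathcal{S}}_1^h$ sends $[M,S_{\C\ell},f,D,A,n]$ to $[M,S_{\C\ell},f,D,0,n+m]$, which equals $[M,S_{\C\ell},f,D,A,n]$ in $\tilde{\mathcal{S}}_1^h(\sigma_1,\sigma_2)$ by Proposition~\ref{bordingoutsmoothing}. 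Both composites being identities, and both maps being homomorphisms, finishes the proof. I expect the main obstacle to be the bordism case of well-definedness: one must carefully track how the integer labels $n$, $n'$ interact with the APS index terms coming from the bounding cycle when the smoothing operator $\mathcal{A}$ on $W$ is turned off, which requires combining Proposition~\ref{bordingoutsmoothing} applied in a collar with the gluing/boundary conventions of Definition~\ref{bordismofdecorated} and the difference formula \eqref{apsdifferenceformula}; the invariance of $\rho_{\sigma_1,\sigma_2}$ under vector bundle modification is the other nontrivial input but it is quoted from \cite{HReta}.
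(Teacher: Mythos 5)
Your proposal is correct and takes essentially the same route as the paper: the paper likewise uses the undecorating map $(M,S_{\C\ell},f,D,A,n)\mapsto(M,S_{\C\ell},f,D,n+m)$ from Proposition~\ref{bordingoutsmoothing} as the splitting, asserting compatibility with bordism and disjoint union/direct sum and invoking a result from \cite{HReta} (the paper cites Proposition~6.7 there rather than~6.6) for vector bundle modification. You fill in more of the bordism-case bookkeeping, which the paper leaves as ``clear''; the underlying argument is the same.
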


\begin{proof}
It follows from Remark \ref{bordismremarks} that the map is a surjection. A splitting on the level of cycles for this map is given by
\[(M,S_{\C\ell},f,D,A,n)\mapsto (M,S_{\C\ell},f,D,n+m),\]
where the number $m$ is given in Proposition \ref{bordingoutsmoothing}. This map clearly respects both bordism and the disjoint union/direct sum relation. The proof of the Lemma is complete upon noting that \cite[Proposition $6.7$]{HReta} implies that this map on cycles respects vector bundle modification; thus, it induces a well defined inverse map $\tilde{\mathcal{S}}_1^h(\sigma_1,\sigma_2)\to \mathcal{S}_1^h(\sigma_1,\sigma_2)$.
\end{proof}
Following Higson and Roe \cite[Definition 8.15]{HReta}, we have the following definition. 
\begin{prop}
\label{definitionofrho}
The relative $\eta$-invariant $\rho_{\sigma_1,\sigma_2}: \tilde{\mathcal{S}}_1^h(\sigma_1,\sigma_2)\to \field{R}$, defined at the level of cycles via
\[\rho_{\sigma_1,\sigma_2}(M,S_{\C\ell},f,D,A,n):=n+\rho_{\sigma_1,\sigma_2}(M,S_{\C\ell},f,D,A),\]
is well defined. 
\end{prop}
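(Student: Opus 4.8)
The plan is to verify that the proposed formula $\rho_{\sigma_1,\sigma_2}(M,S_{\C\ell},f,D,A,n):=n+\rho_{\sigma_1,\sigma_2}(M,S_{\C\ell},f,D,A)$ descends to the quotient $\tilde{\mathcal{S}}_1^h(\sigma_1,\sigma_2)$, i.e., that it is unchanged under the three generating relations: direct sum/disjoint union, vector bundle modification, and decorated bordism. For the direct sum relation, one uses that the $\eta$-function and the kernel dimension are additive under orthogonal direct sums of operators, so $\rho_{\sigma_1,\sigma_2}(M,S_{\C\ell}\oplus S_{\C\ell}',f,D\oplus D',A\oplus A')=\rho_{\sigma_1,\sigma_2}(M,S_{\C\ell},f,D,A)+\rho_{\sigma_1,\sigma_2}(M,S_{\C\ell}',f,D',A')$, and the $n$-components simply add; this matches the relation in Definition \ref{srelgroup}. (The disjoint union case is identical since the relevant operators again form a direct sum.)

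For vector bundle modification along a principal $SO(2k)$-bundle $P\to M$, I would argue as follows. By construction in Definition \ref{vectorbundlemodificationofrelativecycles}, the integer $n$ is carried along unchanged, so it suffices to show $\rho_{\sigma_1,\sigma_2}(M^P,\pi_P^*S_{\C\ell}\otimes \hat V_P,f\circ\pi_P,D^P,A^P)=\rho_{\sigma_1,\sigma_2}(M,S_{\C\ell},f,D,A)$. When $A=0$ this is exactly (the relevant case of) \cite[Proposition 6.6]{HReta}, quoted in the excerpt: vector bundle modification preserves the relative $\eta$-invariant. For general $A$, I would reduce to the $A=0$ case using Proposition \ref{bordingoutsmoothing}: the cycle $(M,S_{\C\ell},f,D,A,n)$ is bordant to $(M,S_{\C\ell},f,D,0,n+m)$ with $m=\rho_{\sigma_1,\sigma_2}(M,S_{\C\ell},f,D,A)-\rho_{\sigma_1,\sigma_2}(M,S_{\C\ell},f,D,0)$, and its vector bundle modification is correspondingly bordant to $(M^P,\ldots,D^P,0,n+m)$; one then checks that $D^P_i+A^P_i$ has the same kernel dimension as $D^P_i$ plus the contribution accounted for by $m$ (here using that $e_\theta$ projects onto a one-dimensional kernel, so $A^P$ only "sees" the $e_\theta$-summand). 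Alternatively, once the bordism-invariance step below is in place, the general-$A$ case of vector bundle modification follows formally from the $A=0$ case together with Proposition \ref{bordingoutsmoothing}, which is probably the cleanest route.

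The main work, and the step I expect to be the main obstacle, is bordism invariance. Suppose $(M,S_{\C\ell},f,D,A,n)\dot\cup -(M',S_{\C\ell}',f',D',A',n')=\partial(W,S_{\C\ell}^W,g,Q,\tilde A)$ in the sense of Definition \ref{bordismofdecorated}. By definition of the boundary, the cycle coming from $\partial W$ carries the integer $\ind_{APS}(Q,\tilde A)_1-\ind_{APS}(Q,\tilde A)_2$, and (after recalling that reversing orientation on $M'$ contributes the shift $d_2(D',A')-d_1(D',A')-n'$ from \eqref{inversionofrelcycles}) one must show
\begin{equation*}
\big(n+\rho_{\sigma_1,\sigma_2}(M,S_{\C\ell},f,D,A)\big)-\big(n'+\rho_{\sigma_1,\sigma_2}(M',S_{\C\ell}',f',D',A')\big)=0.
\end{equation*}
The identity \eqref{apsdifferenceformula} on the boundary operator $Q_\partial$ (which here is $D\dot\cup -D'$) expresses exactly $\rho_{\sigma_1,\sigma_2}$ of the boundary cycle — a half-difference of $\eta$-invariants-plus-kernel-dimensions — as the integer $\ind_{APS}(Q,\tilde A)_2-\ind_{APS}(Q,\tilde A)_1$; combining this with the bookkeeping of the $n$-integers in Definition \ref{bordismofdecorated} and the inversion formula \eqref{inversionofrelcycles} gives the cancellation. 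The delicate points are: (i) correctly tracking signs when the orientation of $M'$ and the Clifford multiplication on $S_{\C\ell}'$ are reversed, so that $\eta_{-D'}=-\eta_{D'}$ and the kernel-dimension terms recombine as in \eqref{inversionofrelcycles}; and (ii) handling the case where $Q_{\partial,\mathcal{L}}+\tilde A$ need not be invertible on all of $W$, where one falls back on the fact that $\ind_C(D_{Q_\partial,\mathcal{L}})=0$ (by \cite[Theorem 6.2]{hilsumbordism}) together with Theorem \ref{leichpiazzass} and \eqref{apsdifferenceformula}, which is precisely stated so as not to require invertibility. This is essentially the content of \cite[Definition 8.15]{HReta}, adapted to the decorated setting, and the decoration does not change the argument because \eqref{apsdifferenceformula} already incorporates $\tilde A$.
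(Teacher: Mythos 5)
Your proof is correct and takes essentially the same route as the paper. The paper's argument is exactly your "cleanest route": it observes that decorated bordism invariance follows directly from Definition \ref{bordismofdecorated} (the computation you spell out via \eqref{apsdifferenceformula} and \eqref{inversionofrelcycles}), then invokes Proposition \ref{bordingoutsmoothing} to reduce everything else to the undecorated case $A=0$, and cites Higson-Roe's Proposition~8.14 for well-definedness on $\mathcal{S}_1^h(\sigma_1,\sigma_2)$; your separate treatment of the direct-sum relation and of vector bundle modification via \cite[Proposition~6.6]{HReta} just makes explicit what that citation covers.
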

\begin{proof}
It follows from Definition \ref{bordismofdecorated} that $\rho_{\sigma_1,\sigma_2}$ respects decorated bordism. Hence, by Proposition \ref{bordingoutsmoothing}, it suffices to prove that $\rho_{\sigma_1,\sigma_2}$ is well defined as a map $\mathcal{S}_1^h(\sigma_1,\sigma_2)\to \field{R}$; this was proved in \cite[Proposition $8.14$]{HReta}.
\end{proof}

We define $\delta_{\sigma_1,\sigma_2}:\tilde{\mathcal{S}}^{h}_1(\sigma_1,\sigma_2)\to K_1^h(B\Gamma)$ on cycles by
\[\delta_{\sigma_1,\sigma_2}(M,S_{\C\ell},f,D,A,n)=(M,S_{\C\ell},f).\]
We also define the $\rho$-invariant 
$$\bar{\rho}_{\sigma_1,\sigma_2}:K_1^h(B\Gamma)\to \field{R}/\Z, \quad \bar{\rho}_{\sigma_1,\sigma_2}:(M,S_{\C\ell},f)\mapsto \rho_{\sigma_1,\sigma_2}(M,S_{\C\ell},f,D,A)\,\mathrm{mod}\; \Z,$$
for any choice $D$ of a Dirac operator on $S_{\C\ell}$ and self-adjoint smoothing operator $A$. The map $\bar{\rho}_{\sigma_1,\sigma_2}$ is well defined by \cite[Theorem $6.1$]{HReta}, see also \cite[Theorem $3.3$]{APS2}. We note the following consequence of \cite[Lemma $8.24$]{HReta} and Proposition \ref{definitionofrho}.

\begin{prop}
\label{sesfortherelativegroups}
There is a commuting diagram with exact rows:
\[\begin{CD}
0@>>>\Z @>>> \tilde{\mathcal{S}}^{h}_1(\sigma_1,\sigma_2)@>\delta_{\sigma_1,\sigma_2}>> K_1^h(B\Gamma)@>>>0\\
@. @|  @V\rho_{\sigma_1,\sigma_2} VV @V\bar{\rho}_{\sigma_1,\sigma_2}VV  @. \\
0@>>>\Z @>>> \field{R}@>>> \field{R}/\Z@>>>0,
\end{CD}.\]
\end{prop}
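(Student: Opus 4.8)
The plan is to verify that the displayed diagram commutes and that both rows are short exact sequences, treating the bottom row as the tautological sequence $0\to\Z\to\R\to\R/\Z\to 0$ and the top row as a statement about $\tilde{\mathcal{S}}^h_1(\sigma_1,\sigma_2)$. Since Lemma \ref{relwwotilde} identifies $\tilde{\mathcal{S}}^h_1(\sigma_1,\sigma_2)$ with $\mathcal{S}^h_1(\sigma_1,\sigma_2)$, and by Proposition \ref{definitionofrho} the map $\rho_{\sigma_1,\sigma_2}$ factors through this identification, essentially everything reduces to \cite[Lemma 8.24]{HReta} together with the compatibility statements already assembled in the excerpt. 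Concretely, I would proceed as follows.

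First, exactness of the top row. The map $\delta_{\sigma_1,\sigma_2}$ is visibly surjective on cycles, since any cycle $(M,S_{\C\ell},f)$ for $K_1^h(B\Gamma)$ can be equipped with a choice of Dirac operator $D$, the zero smoothing operator, and $n=0$; one must check this is compatible with the relations defining both groups, which is immediate from the definition of bordism and vector bundle modification in Definition \ref{bordismofdecorated} and Definition \ref{vectorbundlemodificationofrelativecycles}. The kernel of $\delta_{\sigma_1,\sigma_2}$ consists of classes of cycles whose underlying $K$-homology class vanishes; by Remark \ref{bordismremarks} every such class is represented by $(M,S_{\C\ell},f,D,0,n)$, and the condition that $(M,S_{\C\ell},f)=0$ in $K_1^h(B\Gamma)$ means there is a $(\sigma_1,\sigma_2)$-cycle with boundary $(W,S^W_{\C\ell},g,Q)$ bounding it (after adjusting by vector bundle modifications). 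Computing $\partial(W,S^W_{\C\ell},g,Q)$ via Definition \ref{bordismofdecorated} shows the class is $(\mathrm{pt},\ldots,n')$ for the appropriate integer $n'$, i.e., it lies in the image of the inclusion $\Z\hookrightarrow\tilde{\mathcal{S}}^h_1(\sigma_1,\sigma_2)$ sending $n\mapsto[(\emptyset,\emptyset,\emptyset,0,0,n)]$ (the class of an empty manifold decorated by the integer $n$). Injectivity of this $\Z$-inclusion is exactly the content that $\rho_{\sigma_1,\sigma_2}$ restricted to this copy of $\Z$ is the identity, which follows from Proposition \ref{definitionofrho} since $\rho_{\sigma_1,\sigma_2}(\emptyset,\ldots,n)=n$. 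This reproves \cite[Lemma 8.24]{HReta}, which I would simply cite.

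Second, commutativity of the two squares. The left square commutes because $\rho_{\sigma_1,\sigma_2}(\emptyset,\ldots,n)=n$ by Proposition \ref{definitionofrho}, so the composite $\Z\to\tilde{\mathcal{S}}^h_1(\sigma_1,\sigma_2)\xrightarrow{\rho_{\sigma_1,\sigma_2}}\R$ is the standard inclusion, matching the bottom-left map. For the right square, one must check that for a cycle $(M,S_{\C\ell},f,D,A,n)$ we have $\bar{\rho}_{\sigma_1,\sigma_2}(\delta_{\sigma_1,\sigma_2}(M,S_{\C\ell},f,D,A,n)) = \rho_{\sigma_1,\sigma_2}(M,S_{\C\ell},f,D,A,n)\bmod\Z$; but the left side is $\rho_{\sigma_1,\sigma_2}(M,S_{\C\ell},f,D,A)\bmod\Z$ by definition of $\bar\rho_{\sigma_1,\sigma_2}$ (independence of the choice of $D$ and $A$ being \cite[Theorem 6.1]{HReta} together with \eqref{apsdifferenceformula}), and the right side is $n + \rho_{\sigma_1,\sigma_2}(M,S_{\C\ell},f,D,A)$ by Proposition \ref{definitionofrho}, which agree modulo $\Z$ since $n\in\Z$. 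The well-definedness of $\bar\rho_{\sigma_1,\sigma_2}$ on all of $K_1^h(B\Gamma)$ — i.e., that it respects bordism and vector bundle modification — is precisely \cite[Theorem 6.1]{HReta} (for bordism, via the Atiyah-Patodi-Singer formula \eqref{APSformula} which shows the jump is an integer) and \cite[Proposition 6.6]{HReta} (for vector bundle modification).

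The main obstacle, such as it is, is bookkeeping rather than mathematics: one must be careful that the relations (decorated bordism, vector bundle modification, disjoint union) are all compatible with the three maps $\Z\to\tilde{\mathcal{S}}^h_1$, $\delta_{\sigma_1,\sigma_2}$, and $\rho_{\sigma_1,\sigma_2}$ simultaneously, and that the integer $n'$ produced in the exactness argument is exactly the Atiyah-Patodi-Singer index difference $\ind_{APS}(Q,A)_1 - \ind_{APS}(Q,A)_2$ appearing in Definition \ref{bordismofdecorated} — this is where \eqref{apsdifferenceformula} and Proposition \ref{gluap} do the real work, ensuring the jump of the relative $\eta$-invariant under a bordism is an integer equal to this index difference. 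Since all of these ingredients are already in place in the excerpt (or in \cite{HReta}), the proof is a short assembly: cite \cite[Lemma 8.24]{HReta} for exactness of the top row and the commutativity of the squares, invoke Proposition \ref{definitionofrho} for well-definedness of $\rho_{\sigma_1,\sigma_2}$, and note that the bottom row is tautologically exact.
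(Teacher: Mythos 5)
Your proposal is correct and takes essentially the same route as the paper, which simply states the proposition as a direct consequence of \cite[Lemma 8.24]{HReta} and Proposition \ref{definitionofrho} without further elaboration. Your write-up fleshes out the bookkeeping that the paper leaves implicit — identifying $\tilde{\mathcal{S}}^h_1$ with $\mathcal{S}^h_1$ via Lemma \ref{relwwotilde}, checking the two squares, and noting where \eqref{apsdifferenceformula} enters — but introduces no genuinely different ideas.
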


\begin{remark}
The careful reader will note a difference in the sign conventions considered here and those used in \cite{HReta}. This manifests itself as follows: the roles of $\sigma_1$ and $\sigma_2$ are interchanged in the inversion formula for cycles in Equation \eqref{inversionofrelcycles}. This difference is explained by the use of the Atiyah-Patodi-Singer index formula \eqref{APSformula}  (on page \pageref{APSformula}) as stated in \cite{APS1}; the index formula for Dirac operators with the Atiyah-Patodi-Singer index formula in \cite[Theorem $2.2$]{HReta} contains a different sign. The difference in sign can be explained by the usage of the spectral projection onto the positive spectrum in contrast to the usage of the spectral projection onto the non-negative spectrum in \cite{APS1}.
\end{remark}

\begin{remark}
\label{ell2group}
The reader should note the following connection between the results in this section and results in \cite{BRvonN}. Namely, the techniques developed to this point can be applied in the construction considered in \cite[Section 4.2]{BRvonN}. In particular, in \cite[Definition 4.1]{BRvonN}, a geometric group is defined; its cycles take the form $(M,S,f,D,x)$, for $x\in \field{R}$. The only change required to construct this group is as follows: the bordism relation should be defined using the difference of the Atiyah-Patodi-Singer index and the $L^2$-Atiyah-Patodi-Singer index (rather than the $(\sigma_1,\sigma_2)$-relative Atiyah-Patodi-Singer index discussed in this section). The Cheeger-Gromov $\ell^2$-relative $\eta$-invariant (see \cite{PS} and \cite[Definition 5.1]{BRvonN}) is well defined on this group. As mentioned, this is very much in line with the development in \cite{BRvonN}--in particular, see \cite[Proposition 5.2]{BRvonN}.
\end{remark}

\section{The map $\Phi_\mathcal{A}$}
\label{mappingtohrrelgroup}

In this section, under suitable assumptions on the Banach algebra $\mathcal{A}(\Gamma)$, we construct a map $\Phi_\mathcal{A}:\mathcal{S}^{geo}_0(\Gamma,\mathcal{A})\to \mathcal{S}^{h}_1(\sigma_1,\sigma_2)$. In this paper, the applications of the mapping $\Phi_\mathcal{A}$ are (for the most part) concerned with the case when $\mathcal{A}$ is the full $C^*$-completion of $\C[\Gamma]$. 
 We will for the most of this section assume that $\mathcal{A}(\Gamma)$ is a Banach algebra completion of $\C[\Gamma]$ such that $\sigma_1$ and $\sigma_2$ extend continuously to $\mathcal{A}(\Gamma)$. We will use the notation and terminology of \cite{paperI}. To begin, we need the notion of a choice of Dirac operators on the $K$-theory cocycles.

\begin{define}[Dirac operators on $\mu_\mathcal{L}$-relative $K$-theory cocycles] 
\label{connectionsonrelcycles}
Assume that $W$ is an even-dimensional spin$^c$-manifold with boundary and $f:\partial W\to X$. Assume that $\xi=(\mathcal{E}_{\mathcal{A}(\Gamma)},\mathcal{E}^{\prime}_{\mathcal{A}(\Gamma)} , E_{\field{C}}, E_{\field{C}}^{\prime} ,\alpha )$ is a $\mu_\mathcal{L}$-relative $K$-theory cocycle on $(W,\partial W,f)$ (see \cite[Definition 1.8]{paperI}). Assume that $D_{\mathcal{E}}$ and $D_{\mathcal{E}'}$ are Dirac operators on $S_W\otimes \mathcal{E}_{\mathcal{A}(\Gamma)}$ respectively $S_W\otimes \mathcal{E}^{\prime}_{\mathcal{A}(\Gamma)}$ defined from $\mathcal{A}(\Gamma)$-Clifford connections of product type near $\partial W$. Assume that $D_{E}$ and $D_{E'}$ are Dirac operators on $S_{\partial W}\otimes E_{\field{C}}$ respectively $S_{\partial W}\otimes E_{\field{C}}^{\prime}$. We say that the data $\Xi_0:=(D_{\mathcal{E}},D_{\mathcal{E}'},D_{E},D_{E'})$ is a choice of Dirac operators for $\xi$. 

Whenever the choice of Dirac operators $\Xi_0$ is constructed from a Clifford connection $\nabla_W$ on $S_W$, $C^*_{\bf full}(\Gamma)$-connections $\nabla_{\mathcal{E}}$ and $\nabla_{\mathcal{E}'}$ on $\mathcal{E}_{C^*_{\bf full}(\Gamma)}$ respectively $\mathcal{E}_{C^*_{\bf full}(\Gamma)}'$, connections $\nabla_E$ and $\nabla_{E'}$ on $E_\C$ respectively $E_\C'$, we say that $\Xi_0$ is constructed from the choice of connection $(\nabla_W, \nabla_{\mathcal{E}},\nabla_{\mathcal{E}'},\nabla_{E},\nabla_{E'})$.
\end{define}

Our next goal is the construction of a map $\Phi_\mathcal{A}:\mathcal{S}^{geo}_0(\Gamma,\mathcal{A})\to \mathcal{S}^{h}_1(\sigma_1,\sigma_2)$ which combined with the relative $\eta$-invariant will give us an invariant for cycles in $\mathcal{S}^{geo}_0(\Gamma,\mathcal{A})$. 

\begin{define}[The map $\Phi_\mathcal{A}:\mathcal{S}^{geo}_0(\Gamma,\mathcal{A})\to\mathcal{S}^h_1(\sigma_1,\sigma_2)$]
\label{defOfPhi}
Assume that we have a cycle $(W,\xi,f)$ for $\mathcal{S}^{geo}_0(\Gamma,\mathcal{A})$ and a choice of Dirac operators $\Xi_0=(D_{\mathcal{E}},D_{\mathcal{E}'},D_{E},D_{E'})$ for $\xi=(\mathcal{E}_{\mathcal{A}(\Gamma)},\mathcal{E}^{\prime}_{\mathcal{A}(\Gamma)} ,E_{\field{C}}, E_{\field{C}}^{\prime} ,\alpha )$. We define the vector bundles 
$$G_i:= \mathcal{E}_{\mathcal{A}(\Gamma)}\otimes_{\sigma_i}\C^k \quad\mbox{and}\quad G_i':=\mathcal{E}'_{\mathcal{A}(\Gamma)}\otimes_{\sigma_i}\C^k.$$
We also set $G_{i,\C\ell}:=S_W\otimes G_i$ and $G_{i,\C\ell}':=S_W\otimes G_i$ and equip these Clifford bundles with the Dirac operators $D_{G_i}^W$ and $D_{G_i'}^W$, induced from $D_{\mathcal{E}}$ and $D_{\mathcal{E}'}$  respectively.

The Dirac operators $D_{E}$ and $D_{E'}$ induce Dirac operators $D_{E}^{\partial W\times [0,1]}$ and $D_{E'}^{\partial W\times [0,1]}$ on (respectively) the bundles 
\begin{align*}
E_{\C}\otimes S_{\partial W\times [0,1]}\to \partial W&\times [0,1]\quad\mbox{and}\\
&E'_{\C}\otimes S_{\partial W\times [0,1]}\to \partial W\times [0,1].
\end{align*}  
We also equip the Clifford bundles $(G_i|_{\partial W}\oplus E'_{\C}\otimes f^*E_i)\otimes S_{\partial W\times [0,1]}\to \partial W\times [0,1]$ with Dirac operators $\bar{D}_{i}^{\partial W\times [0,1]}$ such that 
\begin{align}
\nonumber
\bar{D}_{i}^{\partial W\times [0,1]}=D_{G_i}^W\oplus &D_{E',i}\quad\mbox{near} \quad 0\\
\label{barofnabla}
&\mbox{and}\quad \bar{D}_{i}^{\partial W\times [0,1]}=\alpha^*\left(D_{G_i'}^W\oplus D_{E,i}\right)\quad\mbox{near} \quad 1. 
\end{align}
Recalling the notation $d_i(D)$ from Equation \eqref{dinot}, we can define 
\begin{align*}
\Phi_{\mathcal{A}}(W,(\mathcal{E}_{\mathcal{A}(\Gamma)},&\mathcal{E}^{\prime}_{\mathcal{A}(\Gamma)} , E_{\field{C}}, E_{\field{C}}^{\prime} ,\alpha ),f):=\\
&(\partial W,S_{\partial W}\otimes E_{\C},f,D_{E},n)\dot{\cup}-(\partial W,S_{\partial W}\otimes E_{\C}',f,D_{E'},0),
\end{align*}
where
\begin{align*}
n:=\ind_{APS}(D_{G_1}^W)-\ind_{APS}(D_{G_2}^W)+&\\
\ind_{APS}(D_{G_1'}^{-W})-&\ind_{APS}(D_{G_2'}^{-W})+\\
&\ind_{APS}(\bar{D}_{1}^{\partial W\times [0,1]})-\ind_{APS}(\bar{D}_{2}^{\partial W\times [0,1]})+\\
+\dim \ker D^{\partial W}_{G_1}-\dim \ker D^{\partial W}_{G_2}&+\dim \ker D^{\partial W}_{G_1'}-\dim \ker D^{\partial W}_{G_2'}+\\
+d_1( D_{E}^{\partial W})-d_2(D_{E}^{\partial W})&+d_1( D_{E'}^{\partial W})-d_2(D_{E'}^{\partial W}).
\end{align*}
\end{define}

\begin{remark}
\label{ncomputation}
The expression for $n$ from Definition \ref{defOfPhi} can be rewritten in a form that is easier to work with using the Atiyah-Patodi-Singer index theorem. Let $\Xi_0$ be a choice of Dirac operators constructed (as in the previous paragraphs) from the choice of connection $(\nabla_W, \nabla_{\mathcal{E}},\nabla_{\mathcal{E}'},\nabla_{E},\nabla_{E'})$. Let $\nabla_{G_i}:=(\sigma_i)_*\nabla_{\mathcal{E}}$ and $\nabla_{G_i'}:=(\sigma_i)_*\nabla_{\mathcal{E}'}$. We also let $\bar{\nabla}_i:=(1-t)\nabla_{G_i}\oplus \nabla_{E'}+t\alpha^*(\nabla_{G_i'}\oplus \nabla_{E})$, i.e. the connection on $(G_i\oplus E'_{\C}\otimes f^*E_i)\times [0,1]\to \partial W\times [0,1]$ that with the connection on $\partial W\times [0,1]$ induced from $\nabla_W$ induces the Dirac operator $\bar{D}_i$. The Atiyah-Patodi-Singer index theorem (see Equation \eqref{APSformula} on page \pageref{APSformula}) implies that the integer $n$ takes the form
\begin{align*}
&n=\int_W \left(\ch(\nabla_{G_1})-\ch(\nabla_{G_2})\right)\wedge Td(\nabla_W)-\int_W \left(\ch(\nabla_{G'_1})-\ch(\nabla_{G'_2})\right)\wedge Td(\nabla_W)+\\
&\qquad\qquad\qquad\quad+\int_{\partial W\times [0,1]} \left(\ch(\bar{\nabla}_1)-\ch(\bar{\nabla}_2)\right)Td(\nabla_{\partial W})+\\
&\qquad\qquad+\frac{1}{2}\left(\eta(D_{E',1}^{\partial W})-\eta(D_{E',2}^{\partial W})\right)-\frac{1}{2}\left(\eta(D_{E,1}^{\partial W})-\eta(D_{E,2}^{\partial W})\right)\\
&\qquad\qquad+\frac{1}{2}\left(d_1( D_{E'}^{\partial W})-d_2( D_{E'}^{\partial W})\right)-\frac{1}{2}\left(d_1(D_{E}^{\partial W})-d_2( D_{E}^{\partial W})\right).
\end{align*}
This expression is the motivation for ``correcting" by the dimensions of kernels on the boundary; it forces the contributions from the kernels on the boundary to respect orientation in the same way the $\eta$-terms do. In particular, if $\mathcal{E}_{\mathcal{A}(\Gamma)}'=E_{\C}'=0$ then
\[n=\ind_{APS}(\hat{D}^W_1)-\ind_{APS}(\hat{D}^W_2),\]
where $\hat{D}_i^W$ is a Dirac operator that equals $\alpha^*(D_{E,i}^{\partial W\times [0,1]})$ in a collar neighborhood of $\partial W$ and equals $D_{G_i}$ inside $W$. The reader may find it useful to compare the definition of $\Phi_\mathcal{A}$ given here to expression \eqref{phioneasy} (on page \pageref{phioneasy}), which deals with the case of easy cycles.
\end{remark}

\subsection{Well defined from cycles to classes}

Our first goal is to prove that $\Phi_\mathcal{A}$ is well-defined; firstly, as a map from cycles to classes in $\mathcal{S}^h_1(\sigma_1, \sigma_2)$ and then as a map $\mathcal{S}^{geo}_0(\Gamma,\mathcal{A})\to\mathcal{S}^h_1(\sigma_1,\sigma_2)$. To do so, we require a number of lemmas.

\begin{lemma}
\label{cyclepart}
Let $M$ be a closed orientable manifold, $S\to M$ a Clifford bundle, $f:M \rightarrow B\Gamma$ a continuous map. Let $\nabla$ and $\nabla^{\prime}$ be two Clifford connections on $S$ leading to Dirac operators $D$ and $D^{\prime}$ respectively. We can form 
\begin{enumerate}
\item The manifold with boundary $M\times [0,1]$ (along with the pullback of the orientation on $M$ and the pullback of $S$). 
\item A Clifford connection $\bar{\nabla}=(1-t)\nabla+t\nabla'$ and the associated Dirac operator $D_{\bar{\nabla}}$.
\end{enumerate}
Let $D_{\bar{\nabla},i}$ be $D_{\bar{\nabla}}$ twisted by the flat connection on $f^*E_i$ and set
$$m:= \ind_{APS} (D_{\bar{\nabla},1})-\ind_{APS}(D_{\bar{\nabla},2}).$$ 
Then the cycle 
$$((M,S_{\C\ell},f)\dot{\cup} (M,-S_{\C\ell},f), D\dot{\cup} -D^{\prime},m)$$
is a boundary and therefore trivial in $\mathcal{S}^h_*(\sigma_1, \sigma_2)$.
\end{lemma}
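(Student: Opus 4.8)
The plan is to exhibit an explicit $(\sigma_1,\sigma_2)$-cycle with boundary whose boundary, in the sense of Definition~\ref{bordismofdecorated}, is the given cycle $((M,S_{\C\ell},f)\dot{\cup}(M,-S_{\C\ell},f),D\dot\cup -D',m)$. The natural candidate is $W:=M\times[0,1]$, with the pullback orientation, the pullback Clifford bundle $S_{\C\ell}^W:=\pi_M^*S_{\C\ell}$, the map $g:=f\circ\pi_M$, and the Dirac operator $Q:=\sigma\left(\frac{\partial}{\partial t}+D_{\bar\nabla}\right)$, where $\sigma$ denotes Clifford multiplication by $\mathrm{d} t$ and $D_{\bar\nabla}$ is the interpolating boundary operator built from $\bar\nabla=(1-t)\nabla+t\nabla'$. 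We take $A=0$. First I would check that $(W,S_{\C\ell}^W,g,Q)$ is indeed a $(\sigma_1,\sigma_2)$-cycle with boundary: $W$ is an even-dimensional oriented manifold with boundary $\partial W=M\dot\cup(-M)$ (with the product-type structures near both ends holding automatically for a cylinder), and $Q$ is of product type near the boundary by construction. The boundary operator at the $t=1$ end is $D'$ acting on $S_{\C\ell}$, and at the $t=0$ end, after accounting for the outward-normal convention that reverses orientation, it is $-D'$... more precisely the boundary of the cylinder produces the pair $D$ on one copy of $M$ and $-D'$ on the reversed copy; I would match this carefully against the statement, which asks for $D\dot\cup -D'$ on $(M,S_{\C\ell},f)\dot\cup(M,-S_{\C\ell},f)$.

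Next I would compute the integer $n$ attached to $\partial(W,S_{\C\ell}^W,g,Q,0)$ according to Definition~\ref{bordismofdecorated}, namely $n=\ind_{APS}(Q,0)_1-\ind_{APS}(Q,0)_2$. Since $A=0$, the APS boundary conditions here are the standard ones $\chi_{[0,\infty)}(Q_{\partial,i})$, so $\ind_{APS}(Q,0)_i=\ind_{APS}(Q_i,\chi_{[0,\infty)}(Q_{\partial,i}))=\ind_{APS}(D_{\bar\nabla,i})$ in the notation of the statement. Hence $n=\ind_{APS}(D_{\bar\nabla,1})-\ind_{APS}(D_{\bar\nabla,2})=m$, which is exactly the integer decorating the given cycle. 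Thus $\partial(W,S_{\C\ell}^W,g,Q,0)$ equals the cycle in question (up to the identifications of the two boundary components), so that cycle is a boundary and therefore zero in $\mathcal{S}^h_*(\sigma_1,\sigma_2)$.

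I expect the main obstacle to be purely bookkeeping of orientations and signs: reconciling the inversion convention for odd oriented cycles in Equation~\eqref{inversionofrelcycles} (which on odd-dimensional $M$ involves $-S_{\C\ell}$ and a shift by $d_2-d_1-n$) with the ``naive'' boundary of the cylinder, and making sure the outward-normal collar structure at the $t=0$ end produces $-S_{\C\ell}$ with operator $-D'$ rather than $S_{\C\ell}$ with $D'$. A secondary, more analytic point worth a sentence is checking that the cylinder $Q=\sigma(\partial_t+D_{\bar\nabla})$ is genuinely of product type at \emph{both} ends: near $t=1$ the boundary operator is $D'$, near $t=0$ it is $D$, and since $\bar\nabla$ is a straight-line homotopy these agree with the given $\nabla',\nabla$ on collar neighborhoods only after reparametrizing $t$ near the endpoints to make $\bar\nabla$ locally constant there; this is a standard modification (replace $t$ by a function that is $0$ near $0$ and $1$ near $1$) that does not change the relevant index, and I would note that it leaves $m$ unchanged by the variational formula for $\eta$, or simply absorb it into the choice of $\bar\nabla$. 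With these conventions pinned down, the rest is immediate from Definition~\ref{bordismofdecorated} and the identity $n=m$.
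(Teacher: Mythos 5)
Your proposal is correct and follows exactly the construction the paper has in mind: the bordism is $W=M\times[0,1]$ with pullback Clifford bundle, $g=f\circ\pi_M$, $Q=\sigma(\partial_t+D_{\bar\nabla})$, $A=0$, and the integer decorating the boundary is $\ind_{APS}(Q,0)_1-\ind_{APS}(Q,0)_2=m$ by Definition~\ref{bordismofdecorated}. The paper's proof is a one-line sketch that ``leaves the details to the reader''; you have filled in precisely those details, including the two points the paper suppresses --- the orientation/sign bookkeeping at the two ends of the cylinder versus Equation~\eqref{inversionofrelcycles}, and the need to reparametrize $t$ so that $\bar\nabla$ is genuinely of product type near $\partial W$.
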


\begin{proof}
The result follows from the definition of bordism in $\mathcal{S}^h(\sigma_1, \sigma_2)$. The construction of the bordism we have in mind uses the manifold with boundary $M\times [0,1]$, the pullback of the relevant bundle, and the continuous function $f\circ \pi$ (here $\pi:M\times [0,1]\rightarrow M$ denotes the projection map). We leave the details to the reader.
\end{proof}

\begin{lemma}
\label{identieta}
Let $M$ be a closed oriented manifold. Suppose that $S_1$ and $S_2$ are Clifford bundles over $M$ with fixed Clifford connections; we denote the connections by $\nabla_{S_i}$. Let $D_{S_i}$ ($i=1$ and $2$) and $D_{S_1\oplus S_2}$ denote the associated Dirac operators; for the operator $D_{S_1\oplus S_2}$ we used the natural Clifford connection coming from the ones on $S_1$ and $S_2$. Then $ \eta(D_{S_1 \oplus S_2}) = \eta(D_{S_1}) + \eta(D_{S_2}).$

Furthermore, let $V_1$ and $V_2$ also denote Clifford bundles over $M$ (again with fixed Clifford connections, which are denoted by $\nabla_{V_i}$). We form the Dirac operators $D_{S_1 \oplus V_2}$ and $D_{S_2 \oplus V_1}$. Suppose that there exists an isomorphism $\alpha: S_1 \oplus V_2 \cong S_2\oplus V_1$ of Clifford bundles and that the Clifford connections are compatible with this isomorphism; that is
$$\alpha^*(\nabla_{S_2}\otimes I + I\otimes \nabla_{V_1})=\nabla_{S_1}\otimes I + I \otimes \nabla_{V_2}$$
Then $\eta(D_{S_1\oplus V_2}) = \eta(D_{S_2\oplus V_1}).$
\end{lemma}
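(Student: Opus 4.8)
The statement has two parts, both of which I would reduce to the basic additivity and invariance properties of the $\eta$-function for Dirac operators. For the first part, the operator $D_{S_1\oplus S_2}$ acting on $L^2(M,S_1\oplus S_2)\cong L^2(M,S_1)\oplus L^2(M,S_2)$ is, by the hypothesis that the Clifford connection on $S_1\oplus S_2$ is the direct sum connection $\nabla_{S_1}\otimes I + I\otimes\nabla_{S_2}$, literally the direct sum operator $D_{S_1}\oplus D_{S_2}$. Hence its spectrum (with multiplicities) is the union of the two spectra, and the defining series $\eta_{D}(s)=\sum_{\lambda\neq 0}\mathrm{sign}(\lambda)|\lambda|^{-s}$ splits as a sum of the two series for $\mathrm{Re}(s)$ large; since all three $\eta$-functions extend meromorphically and are regular at $s=0$ (by \cite[Theorem 3.10]{APS1}, as recalled in Subsection \ref{apssubsub}), evaluating the identity of meromorphic functions at $s=0$ gives $\eta(D_{S_1\oplus S_2})=\eta(D_{S_1})+\eta(D_{S_2})$. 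I would phrase this as: the equality of $\eta$-functions holds on a right half-plane, both sides are meromorphic, so the equality persists at $s=0$.

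For the second part, the isomorphism $\alpha\colon S_1\oplus V_2\xrightarrow{\sim} S_2\oplus V_1$ of Clifford bundles that intertwines the direct-sum Clifford connections induces a unitary (after adjusting for the Hermitian metrics, which one may assume are preserved, or at least the bundles are isometrically isomorphic — a standard reduction) $U\colon L^2(M,S_1\oplus V_2)\to L^2(M,S_2\oplus V_1)$ conjugating $D_{S_1\oplus V_2}$ to $D_{S_2\oplus V_1}$, because Dirac operators are natural under connection-preserving Clifford bundle isomorphisms. Unitarily equivalent self-adjoint operators have identical spectra with multiplicities, hence identical $\eta$-functions, so in particular $\eta(D_{S_1\oplus V_2})=\eta(D_{S_2\oplus V_1})$. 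One small point worth a sentence: the two sides are built from the same four bundles $S_1,S_2,V_1,V_2$ but grouped differently, so there is genuine content; it is exactly the conjugation-invariance of $\eta$ that does the work, combined with the first part of the lemma if one wants to phrase $\eta(D_{S_1\oplus V_2})$ as $\eta(D_{S_1})+\eta(D_{V_2})$ and similarly on the other side (though this is not strictly necessary).

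\textbf{Main obstacle.} There is no serious analytic obstacle here — the lemma is a packaging of standard facts. The only point requiring care is making sure that "isomorphism of Clifford bundles compatible with the connections'' is strong enough to yield a \emph{unitary} conjugating the Dirac operators: one needs the isomorphism to also respect the Hermitian inner products (so that the induced map on $L^2$-sections is unitary rather than merely bounded invertible). If the paper's convention bakes metric-compatibility into "isomorphism of Clifford bundles,'' this is automatic; otherwise I would note that one may choose compatible metrics, or invoke that $\eta$ is an invariant of the operator up to bounded similarity by a sufficiently regular operator — but the cleanest route is simply to observe that in all applications in this paper the relevant $\alpha$ is unitary, so I would state the lemma with that hypothesis understood and give the one-line conjugation argument. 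I would keep the whole proof to a few lines, matching the terse style of Lemma \ref{cyclepart} above it.
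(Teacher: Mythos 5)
Your proof is correct and takes the same route as the paper, which disposes of the lemma with the one-line remark that both statements follow by definition; you have simply spelled out the underlying facts (spectral additivity under direct sums, meromorphic continuation to $s=0$, and invariance of $\eta$ under unitary conjugation induced by a connection-preserving Clifford bundle isomorphism). Your side remark about metric-compatibility is a reasonable point of care, but in the paper's setting Clifford bundles are Hermitian and the isomorphisms arising in the applications are unitary, so the conjugation argument goes through as you indicate.
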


\begin{proof}
Both the statements follow by definition.
\end{proof}

\begin{lemma}
\label{thisCycleIsABoundary}
Let $W$ be a compact spin$^c$-manifold with boundary, $E$ a vector bundle over it, $S_W\to W$ its spin$^c$-structure and $\nabla_E$ and $\nabla^{\prime}_E$ are two connections $E$, $\nabla_W$ and $\nabla_W'$ are two Clifford connections on $S_W$, and all of the connections $\nabla_E$, $\nabla^{\prime}_E$, $\nabla_W$ and $\nabla_W'$ are of product type near $\partial W$. Then 
\small
\begin{align*}
\int_W\ch(\nabla'_E)\wedge Td(\nabla_W') + &\int_{\partial W\times [0,1]}\ch(\nabla_E,\nabla_E')\wedge Td(\bar{\nabla}_{\partial W\times [0,1]}) + \int_{-W} \ch(\nabla_E)\wedge Td(\nabla_W)=0.
\end{align*}
\normalsize
Here, $t$ denotes the coordinate in $[0,1]$ and the required data is constructed as follows:
\begin{enumerate}
\item $\ch(\nabla_E,\nabla_E')$ denotes the Chern character of $\bar{\nabla}_{E}:=t\nabla_E+(1-t)\nabla_E'$;
\item  $\bar{\nabla}_{\partial W\times [0,1]}:=t\nabla_{\partial W}+(1-t)\nabla_{\partial W}'$, where $\nabla_{\partial W}$ and $\nabla_{\partial W}'$ denote the Clifford connections on $S_W|_{\partial W}\times [0,1]\to \partial W\times [0,1]$ induced from $\nabla_W$ respectively $\nabla_W'$.
\end{enumerate}
\end{lemma}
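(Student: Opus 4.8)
The statement to prove is a Stokes-type identity: the integral of $\ch(\nabla'_E)\wedge Td(\nabla_W')$ over $W$, plus a transgression term over the collar $\partial W\times[0,1]$, plus the integral of $\ch(\nabla_E)\wedge Td(\nabla_W)$ over $-W$, vanishes.

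The plan is to recognize this as a manifestation of the fact that the Chern-Weil form $\ch(\nabla_E)\wedge Td(\nabla_W)$ represents a cohomology class independent of the choice of connections, and that the difference of two such forms (for two choices of connections) is exact, with the transgression form being precisely the collar integral. Concretely, I would glue $W$ (with connections $\nabla_E, \nabla_W$) to $-W$ (with connections $\nabla'_E, \nabla'_W$) along a collar $\partial W\times[0,1]$ on which the connections interpolate linearly, to form a closed manifold $Z = W\cup_{\partial W\times[0,1]}(-W)$ — topologically this is the double of $W$, but equipped with a connection on $E$ (glued via the identity on $E|_{\partial W}$) and a Clifford connection on $S_W$ that agree on the two halves only up to the homotopy recorded by the collar. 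On the closed manifold $Z$, the form $\ch(\nabla^Z_E)\wedge Td(\nabla^Z_W)$ is closed, so by Stokes $\int_Z d(\text{anything}) = 0$; but more to the point, since $Z$ is closed and even-dimensional the top-degree component of this closed form integrates to the index/characteristic number, and crucially this number is $0$ because the connection $\nabla^Z_E$ on $Z$ is, after a homotopy, the glued connection which extends over the double — more carefully, the point is just that $\int_Z \ch(\nabla^Z_E)\wedge Td(\nabla^Z_W)$ computes a topological invariant that vanishes here because... actually the cleanest route avoids that subtlety entirely.

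So the cleanest approach: the integrand over $\partial W\times[0,1]$ is a transgression form, meaning $d\big(\ch(\bar\nabla_E)\wedge Td(\bar\nabla_{\partial W\times[0,1]})\big)$ integrated over $[0,1]$ in the fiber direction gives, by the fiber-integration/Stokes formula, $\ch(\nabla_E)\wedge Td(\nabla_W)|_{\partial W} - \ch(\nabla'_E)\wedge Td(\nabla'_W)|_{\partial W}$ (up to sign), since at $t=0,1$ the interpolating connection restricts to the two given connections. Therefore, letting $\omega := \ch(\nabla'_E)\wedge Td(\nabla'_W)$ on $W$ and $\omega' := \ch(\nabla_E)\wedge Td(\nabla_W)$ on $W$, and letting $TS$ denote the transgression form on $\partial W$ (the collar integral), we have $d(TS) = \omega|_{\partial W} - \omega'|_{\partial W}$; wait — I need $TS$ to be a form on $\partial W$, obtained by integrating over the $[0,1]$ factor, and then $\int_{\partial W\times[0,1]}(\cdots) = \int_{\partial W} TS$. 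Since $\omega$ and $\omega'$ are closed forms on $W$ and $\omega - \omega' = d(\text{CS})$ for a globally defined Chern-Simons form CS on $W$ (product type near the boundary so CS$|_{\partial W} = TS$ up to the sign conventions), Stokes gives $\int_W (\omega - \omega') = \int_W d(\text{CS}) = \int_{\partial W} \text{CS}|_{\partial W} = \int_{\partial W} TS = \int_{\partial W\times[0,1]}(\cdots)$. Rearranging, $\int_W \omega' - \int_W \omega + \int_{\partial W\times[0,1]}(\cdots) = 0$, and writing $\int_W \omega' = -\int_{-W}\omega'$ — hold on, $\int_{-W}$ means integrating over the orientation-reversed $W$, so $\int_{-W}\omega' = -\int_W \omega'$; the stated identity has $+\int_{-W}\ch(\nabla_E)\wedge Td(\nabla_W) = -\int_W \omega'$, so the identity reads $\int_W\omega + \int_{\partial W\times[0,1]}(\cdots) - \int_W\omega' = 0$, which is exactly what Stokes gave, modulo carefully tracking the sign in the transgression formula.

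The main obstacle, and the only real content, is pinning down the signs and orientation conventions: the orientation on the collar $\partial W\times[0,1]$ relative to the outward-normal-first convention on $\partial W$, the direction of the homotopy $\bar\nabla = t\nabla + (1-t)\nabla'$ versus $(1-t)\nabla + t\nabla'$, and the sign in the fiber-integration formula $\int_{[0,1]} d_{\text{total}}(\eta) = \eta|_1 - \eta|_0 - d_{\partial W}\int_{[0,1]}\eta$. I would handle this by writing out the transgression formula explicitly once — using that $\ch$ and $Td$ are multiplicative and that for any family of connections $\nabla_t$ the $t$-derivative of the Chern-Weil form is exact (the standard Chern-Simons transgression) — and then matching endpoints. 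The product-type assumption near $\partial W$ is what guarantees the Chern-Simons form CS on $W$ restricts on the collar to exactly the form being integrated over $\partial W\times[0,1]$, so no extra boundary terms appear. Everything else is routine Chern-Weil theory.

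\medskip

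\noindent\emph{Proof sketch.} The plan is to view the middle term as a transgression (Chern--Simons) form and apply Stokes' theorem on $W$.

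First, recall that for any smooth path of connections $(\nabla_t)_{t\in[0,1]}$ on a bundle, the Chern--Weil representative $\ch(\nabla_t)\wedge Td(\cdot)$ varies within its cohomology class, and the variation is governed by a transgression form: on $\partial W\times[0,1]$, writing $\bar\nabla_E = t\nabla_E+(1-t)\nabla_E'$ and $\bar\nabla_{\partial W\times[0,1]} = t\nabla_{\partial W}+(1-t)\nabla_{\partial W}'$, the form $\ch(\bar\nabla_E)\wedge Td(\bar\nabla_{\partial W\times[0,1]})$ is closed, and fiber integration over the $[0,1]$-factor produces a Chern--Simons form $\mathrm{CS}\in\Omega^{\mathrm{odd}}(\partial W)$ with
\[
d\,\mathrm{CS} = \big(\ch(\nabla_E)\wedge Td(\nabla_W)\big)\big|_{\partial W} - \big(\ch(\nabla_E')\wedge Td(\nabla_W')\big)\big|_{\partial W},
\]
the endpoints $t=1$ and $t=0$ contributing the two restrictions, and $\int_{\partial W\times[0,1]}\ch(\bar\nabla_E)\wedge Td(\bar\nabla_{\partial W\times[0,1]}) = \int_{\partial W}\mathrm{CS}$ up to the orientation sign on the collar.

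Next, since $\ch(\nabla_E)\wedge Td(\nabla_W)$ and $\ch(\nabla_E')\wedge Td(\nabla_W')$ are closed forms on $W$ representing the same class, their difference is $d\,\widetilde{\mathrm{CS}}$ for a globally defined Chern--Simons form $\widetilde{\mathrm{CS}}$ on $W$; by the product-type hypothesis near $\partial W$, $\widetilde{\mathrm{CS}}$ restricts on the collar to the form integrated in the statement, so $\widetilde{\mathrm{CS}}|_{\partial W} = \mathrm{CS}$.

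Finally, Stokes' theorem on $W$ gives
\[
\int_W \ch(\nabla_E)\wedge Td(\nabla_W) - \int_W \ch(\nabla_E')\wedge Td(\nabla_W') = \int_W d\,\widetilde{\mathrm{CS}} = \int_{\partial W}\widetilde{\mathrm{CS}}\big|_{\partial W} = \int_{\partial W\times[0,1]}\ch(\nabla_E,\nabla_E')\wedge Td(\bar\nabla_{\partial W\times[0,1]}).
\]
Since $\int_{-W}\ch(\nabla_E)\wedge Td(\nabla_W) = -\int_W \ch(\nabla_E)\wedge Td(\nabla_W)$, rearranging yields the claimed identity. The one point requiring care is the consistent choice of orientation on $\partial W\times[0,1]$ (outward normal first) together with the direction of the interpolating connections, which fixes all signs; this is a routine but necessary bookkeeping step and is the only genuine obstacle.
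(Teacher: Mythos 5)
Your proof is correct, but it follows a genuinely different route from the paper's. The paper glues $W$ (with $\nabla_W\otimes\nabla_E$), a collar $\partial W\times[0,1]$ (with the interpolating connection), and $-W$ (with $\nabla'_W\otimes\nabla'_E$) into a closed manifold $Z$, identifies the left-hand side of the identity as $\ind_{AS}(D_{\tilde S})$ via the Atiyah--Singer local index theorem, and concludes this index vanishes because $(Z,\tilde S)$ is null-bordant (it is the boundary of $W\times[0,1]$ with the product Clifford bundle, after straightening the angle). Your argument instead stays entirely inside Chern--Weil theory: the middle term is recognized as the transgression (Chern--Simons) form satisfying $d\widetilde{\mathrm{CS}}=\ch(\nabla_E)\wedge Td(\nabla_W)-\ch(\nabla'_E)\wedge Td(\nabla'_W)$ on $W$, Stokes' theorem converts the difference of the two bulk integrals into $\int_{\partial W}\widetilde{\mathrm{CS}}|_{\partial W}$, and the product-type hypothesis identifies the latter with the stated collar integral. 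Your version is the more elementary of the two, invoking only de Rham theory and fiber integration, whereas the paper's version packages the same computation into the global, index-theoretic language that dominates the rest of the section (bordism invariance of the index); which one prefers is largely a matter of taste and context. You do correctly flag that the one substantive point left open is the bookkeeping of orientations on $\partial W$ versus $\partial W\times[0,1]$ and the direction of the homotopy $t\nabla+(1-t)\nabla'$; the paper's proof is equally silent on these signs, so you are no worse off there, though a polished write-up should pin them down once.
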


\begin{proof}
Define $Z:=W\cup_{\partial W} \partial W \times [0,1]\cup_{\partial W} -W$ and define $\tilde{S}\to Z$ by gluing together $S_W\otimes E\to W$ over the cylinder with $-S_W\otimes E\to -W$. Let $D_{\tilde{S}}$ denote the Dirac operator on $\tilde{S}$ constructed from gluing together the Clifford connection $\nabla_W\otimes \nabla_E$ on $W$ with $\nabla'_W\otimes \nabla_E'$ over $-W$ over the cylinder using the Clifford connection $\bar{\nabla}_{\partial W\times [0,1]}\otimes \bar{\nabla}_E$. By the local index theorem, 
\begin{align*}
\ind_{AS}(D_{\tilde{S}}) =& \int_W\ch(\nabla'_E)\wedge Td(\nabla_W') + \\
&\int_{\partial W\times [0,1]}\ch(\bar{\nabla}_E)\wedge Td(\bar{\nabla}_{\partial W\times [0,1]}) + \int_{-W} \ch(\nabla_E)\wedge Td(\nabla_W).
\end{align*}
We note that $W\times [0,1]$ can be made into a smooth manifold with boundary (using the straightening of the angle technique, see Lemma $4.1.9$ of \cite{Rav}) and 
$$\partial(W\times [0,1])=Z \quad\mbox{with}\quad S_W\otimes E\times [0,1]|_Z=\tilde{S}.$$ 
That is, $(Z,\tilde{S})=\partial (W\times [0,1],S_W\otimes E\times [0,1])$ so $\ind_{AS} (D_{\tilde{S}}) = 0$ since the index is a bordism invariant.
\end{proof}

\begin{lemma}
The map, $\Phi_\mathcal{A}$, is well-defined as a map from cycles with vector bundle data in $\mathcal{S}^{geo}_0(\Gamma,\mathcal{A})$ to classes in $\mathcal{S}^h_1(\sigma_1,\sigma_2)$.
\end{lemma}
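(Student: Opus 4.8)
The plan is to show that the class $\Phi_\mathcal{A}(W,\xi,f)$ in $\mathcal{S}^h_1(\sigma_1,\sigma_2)$ is independent of the auxiliary choices made in Definition \ref{defOfPhi}, namely: (a) the choice of Dirac operators $\Xi_0 = (D_\mathcal{E}, D_{\mathcal{E}'}, D_E, D_{E'})$ (equivalently, the choice of Clifford/$\mathcal{A}(\Gamma)$-connections); (b) the choice of the ``interpolating'' Dirac operators $\bar D_i^{\partial W\times[0,1]}$ satisfying \eqref{barofnabla}; and (c) that $\Phi_\mathcal{A}$ respects isomorphism of the underlying cycle-with-cocycle data. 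Point (c) is essentially formal: an isomorphism of $(W,\xi,f)$ transports all the bundle and operator data, and both the cycle part $(\partial W, S_{\partial W}\otimes E_\C, f, D_E, n)\,\dot\cup\,-(\partial W, S_{\partial W}\otimes E_\C', f, D_{E'}, 0)$ and the integer $n$ are manifestly natural under such transport.

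For the main point, independence of connections, I would argue one connection at a time, using the cylinder construction. Suppose $\Xi_0$ and $\Xi_0'$ are two choices of Dirac operators for $\xi$. Using Lemma \ref{cyclepart}, each change of the \emph{boundary} Dirac operator $D_E \rightsquigarrow D_E'$ (resp. $D_{E'} \rightsquigarrow D_{E'}'$) changes the cycle part of $\Phi_\mathcal{A}(W,\xi,f)$ only by a boundary, at the cost of shifting the integer $n$ by $m = \ind_{APS}(D_{\bar\nabla,1}) - \ind_{APS}(D_{\bar\nabla,2})$ for the straight-line homotopy $\bar\nabla$ between the two boundary connections on $S_{\partial W}\otimes E_\C$ (resp. on $S_{\partial W}\otimes E_\C'$, where the orientation-reversal accounts for a sign). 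The remaining task is to check that the corresponding change in $n$, as read off from the Atiyah-Patodi-Singer formula for $n$ in Remark \ref{ncomputation}, cancels this shift exactly. Here the key inputs are: the gluing property of APS indices (Proposition \ref{gluap}), the additivity $\eta(D_{S_1\oplus S_2}) = \eta(D_{S_1}) + \eta(D_{S_2})$ and the isomorphism-invariance $\eta(D_{S_1\oplus V_2}) = \eta(D_{S_2\oplus V_1})$ of Lemma \ref{identieta}, and the curvature identity of Lemma \ref{thisCycleIsABoundary} for the $W$-interior contributions. For a change of an \emph{interior} connection $\nabla_\mathcal{E} \rightsquigarrow \nabla_\mathcal{E}'$ (hence $\nabla_{G_i} \rightsquigarrow \nabla_{G_i}'$), the boundary cycle data is unchanged, so one only needs to verify that the change in $n$ is zero; this follows by applying Proposition \ref{gluap} to the manifold $W\cup_{\partial W}(\partial W\times[0,1])\cup_{\partial W}(-W)$ interpolating the two connections, or equivalently by invoking Lemma \ref{thisCycleIsABoundary} together with the APS formula of Remark \ref{ncomputation}, since the boundary $\eta$- and kernel-dimension terms cancel in pairs.

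Independence of the interpolating operators $\bar D_i^{\partial W\times[0,1]}$ (choice (b)) is handled similarly: any two choices agree with $D^W_{G_i}\oplus D_{E',i}$ near $0$ and with $\alpha^*(D^W_{G_i'}\oplus D_{E,i})$ near $1$, so they differ by an interior homotopy of connections on $\partial W\times[0,1]$ fixing the ends, and by Lemma \ref{thisCycleIsABoundary} (applied to the double of $\partial W\times[0,1]$ along its two ends) the difference of the $\int_{\partial W\times[0,1]}\ch(\bar\nabla_1)-\ch(\bar\nabla_2)$ terms, hence of the index terms $\ind_{APS}(\bar D_1)-\ind_{APS}(\bar D_2)$, is unchanged. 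Combining (a), (b), and (c), the class of $\Phi_\mathcal{A}(W,\xi,f)$ in $\mathcal{S}^h_1(\sigma_1,\sigma_2)$ depends only on the isomorphism class of $(W,\xi,f)$, which is the assertion.

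The part I expect to be the main obstacle is the careful bookkeeping in step (a): tracking exactly how the $\eta$-invariants, the $\dim\ker$-correction terms, the interior Chern-character integrals over $W$ and over $\partial W\times[0,1]$, and the various APS indices reassemble — with the correct signs coming from orientation-reversal on $-W$ and from the interchange of $\sigma_1$ and $\sigma_2$ in the inversion formula \eqref{inversionofrelcycles} — into precisely the shift predicted by Lemma \ref{cyclepart}. The ``dimension of kernel'' corrections in the definition of $n$ are exactly what makes this work (as flagged in Remark \ref{ncomputation}), but one has to be scrupulous that every boundary spectral contribution appears with the orientation-respecting sign. Everything else reduces to assembling the lemmas already proved.
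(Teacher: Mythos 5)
Your plan is correct and follows essentially the same strategy as the paper: reduce to independence of the choice of Dirac operators, use Lemma \ref{cyclepart} to absorb the change of boundary operators into a bordism plus an integer shift, and then verify via the Atiyah–Patodi–Singer formula (Remark \ref{ncomputation}), Lemma \ref{thisCycleIsABoundary}, and the kernel-dimension corrections that the integer shift cancels exactly. The paper carries out this cancellation as a single monolithic computation of $N = L + S + D$ (local, spectral, dimensional) rather than changing one connection at a time, but the key lemmas and the crucial sign bookkeeping you flag are exactly what the paper uses.
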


\begin{proof}
Let $x=(W,(\mathcal{E}_{\mathcal{A}(\Gamma)},\mathcal{E}^{\prime}_{\mathcal{A}(\Gamma)} , E_{\field{C}}, E_{\field{C}}^{\prime} ,\alpha ),f)$ be a cycle with vector bundle data in $\mathcal{S}^{geo}_0(\Gamma,\mathcal{A})$. We must show that $\Phi_\mathcal{A}(x)$ does not depend on the particular choice of Dirac operators. As such, let $\Xi_0=(D_{\mathcal{E}},D_{\mathcal{E}'},D_{E},D_{E'})$ and $\tilde{\Xi}_0=(\tilde{D}_{\mathcal{E}},\tilde{D}_{\mathcal{E}'},\tilde{D}_{E},\tilde{D}_{E'})$ denote two choices of Dirac operators constructed from two choices of connections 
$$(\nabla_W, \nabla_{\mathcal{E}},\nabla_{\mathcal{E}'},\nabla_{E},\nabla_{E'})\quad\mbox{respectively}\quad (\tilde{\nabla}_W, \tilde{\nabla}_{\mathcal{E}},\tilde{\nabla}_{\mathcal{E}'},\tilde{\nabla}_{E},\tilde{\nabla}_{E'}).$$ 
With these choices made, we have two associated classes $\Phi_{\Xi_0}$ and $\Phi_{\tilde{\Xi}_0}$ in the group $\mathcal{S}^h_*(\sigma_1,\sigma_2)$. Lemma \ref{cyclepart} implies that
$$\Phi_{\Xi_0}-\Phi_{\tilde{\Xi}_0}=(\emptyset,\emptyset,\emptyset,\emptyset,N),$$
where 
\[N=n-\tilde{n}-m-m'+d_2(D^{\partial W}_{E'})-d_1( D^{\partial W}_{E'})+d_2( \tilde{D}^{\partial W}_{E})-d_1( \tilde{D}^{\partial W}_{E}).\]
Here $n$ and $\tilde{n}$ are as in Definition \ref{defOfPhi}. The number $m$ is the difference of Atiyah-Patodi-Singer indices of the Dirac operator associated with interpolating between the Dirac operators $D^{\partial W}_{E,i}$ and  $\tilde{D}^{\partial W}_{E,i}$ on the cylinder $\partial W\times [0,1]$. Analogously, $m'$ is the difference of Atiyah-Patodi-Singer indices but for $\tilde{D}^{\partial W}_{E',i}$ and  $D^{\partial W}_{E',i}$. Note that $m'$ can also be defined as the difference of Atiyah-Patodi-Singer indices of the Dirac operator associated with interpolating between the Dirac operators $D^{\partial W}_{E',i}$ and  $\tilde{D}^{\partial W}_{E',i}$ when equipping the cylinder $\partial W\times [0,1]$ with its opposite orientation.

It follows from the Atiyah-Patodi-Singer theorem that 
$$N=L+S+D,$$
where 
\begin{enumerate}
\item $L$ is computed via local expressions;
\item $S$ is computed from the spectral data of the Dirac operators on the boundaries;
\item $D$ is computed from the dimensions of the kernels of the Dirac operators on the boundaries. 
\end{enumerate}
We follow the notations of Remark \ref{ncomputation} and the statement of Lemma \ref{thisCycleIsABoundary} with the addendum that terms computed using $\tilde{\Xi}_0$ will be indicated with a tilde. We begin with the local term, which (after rearranging the terms and applying Lemma \ref{thisCycleIsABoundary}) is given by
\small
\begin{align*}
L=&\int_W(\ch(\nabla_{G_1})-\ch(\nabla_{G_2}))\wedge Td(\nabla_W)-\int_W(\ch(\nabla_{G_1'})-\ch(\nabla_{G_2'}))\wedge Td(\nabla_W)-\\
&-\int_{\partial W\times [0,1]} \ch(\nabla_{G_1}\oplus \nabla_{E'\otimes E_1},\alpha^*(\nabla_{G_1'}\oplus \nabla_{E\otimes E_1}))\wedge Td(\nabla_{\partial W})+\\
&+\int_{\partial W\times [0,1]} \ch(\nabla_{G_2}\oplus \nabla_{E'\otimes E_2},\alpha^*(\nabla_{G_2'}\oplus \nabla_{E\otimes E_2}))\wedge Td(\nabla_{\partial W})-\\
&-\int_W(\ch(\tilde{\nabla}_{G_1})-\ch(\tilde{\nabla}_{G_2}))\wedge Td(\tilde{\nabla}_W)+\int_W(\ch(\tilde{\nabla}_{G_1'})-\ch(\tilde{\nabla}_{G_2'}))\wedge Td(\tilde{\nabla}_W)+\\
&+\int_{\partial W\times [0,1]} \ch(\tilde{\nabla}_{G_1}\oplus \tilde{\nabla}_{E'\otimes E_1},\alpha^*(\tilde{\nabla}_{G_1'}\oplus \tilde{\nabla}_{E\otimes E_1}))\wedge Td(\tilde{\nabla}_{\partial W})-\\
&-\int_{\partial W\times [0,1]} \ch(\tilde{\nabla}_{G_2}\oplus \tilde{\nabla}_{E'\otimes E_2},\alpha^*(\tilde{\nabla}_{G_2'}\oplus \tilde{\nabla}_{E\otimes E_2}))\wedge Td(\tilde{\nabla}_{\partial W})-\\
&-\int_{\partial W\times [0,1]} (\ch(\nabla_{E\otimes E_1}, \tilde{\nabla}_{E\otimes E_1})- \ch(\nabla_{E\otimes E_2}, \tilde{\nabla}_{E\otimes E_2}))\wedge Td(\bar{\nabla}_{\partial W\times [0,1]})-\\
&-\int_{\partial W\times [0,1]} \big(\ch(\nabla_{E'\otimes E_1}, \tilde{\nabla}_{E'\otimes E_1})- \ch(\nabla_{E'\otimes E_2}, \tilde{\nabla}_{E'\otimes E_2})\big)\wedge Td(\bar{\nabla}_{\partial W\times [0,1]})=0
\end{align*}
\normalsize 
The Atiyah-Patodi-Singer index theorem and Remark \ref{ncomputation} imply that the spectral term is given by
\small
\begin{align*}
2S=&\eta(D^{\partial W}_{E'\otimes E_1})-\eta(D^{\partial W}_{E'\otimes E_2})-\eta(D^{\partial W}_{E\otimes E_1})+\eta(D^{\partial W}_{E\otimes E_2})\\
&-\eta(\tilde{D}^{\partial W}_{E'\otimes E_1})+\eta(\tilde{D}^{\partial W}_{E'\otimes E_2})+\eta(\tilde{D}^{\partial W}_{E\otimes E_1})-\eta(\tilde{D}^{\partial W}_{E\otimes E_2})\\
&-\eta(\tilde{D}^{\partial W}_{E\otimes E_1})+\eta(\tilde{D}^{\partial W}_{E\otimes E_2})+\eta(D^{\partial W}_{E\otimes E_1})-\eta(D^{\partial W}_{E\otimes E_2})\\
&+\eta(\tilde{D}^{\partial W}_{E'\otimes E_1})-\eta(\tilde{D}^{\partial W}_{E'\otimes E_2})-\eta(D^{\partial W}_{E\otimes E_1})+\eta(D^{\partial W}_{E'\otimes E_2})=0.\\
\end{align*}
\normalsize
Here, the first four terms come from $n$, the next four from $\tilde{n}$ and the remaining eight come from $m+m'$. Finally, letting $n_D$, $\tilde{n}_D$, $m_D$ and $m'_D$ denote the dimensional contributions to $n$, $\tilde{n}$, $m$ respectively $m'$ we have that
\scriptsize
\begin{align*}
2D=&2n_D-2\tilde{n}_D-2m_D-2m'_D+2d_2( D^{\partial W}_{E'})-2d_1( D^{\partial W}_{E'})+2d_2(\tilde{D}^{\partial W}_{E})-2d_1( \tilde{D}^{\partial W}_{E})=\\
=&d_1( D_{E'}^{\partial W})-d_2( D_{E'}^{\partial W})-d_1( D_{E}^{\partial W})+d_2( D_{E}^{\partial W})\\
&\qquad\qquad\qquad\qquad-d_1( \tilde{D}_{E'}^{\partial W})+d_2( \tilde{D}_{E'}^{\partial W})+d_1( \tilde{D}_{E}^{\partial W})-d_2( \tilde{D}_{E}^{\partial W})\\
&+d_1( D_{E}^{\partial W})-d_2( D_{E}^{\partial W})+d_1( \tilde{D}_{E_{\C}}^{\partial W})-d_2( \tilde{D}_{E_{\C}}^{\partial W})\\
&+d_1( D_{E'}^{\partial W})-d_2( D_{E'}^{\partial W})+d_1( \tilde{D}_{E'}^{\partial W})-d_2( \tilde{D}_{E'}^{\partial W})\\
&\qquad\qquad\qquad\qquad+2d_2( D^{\partial W}_{E'})-2d_1( D^{\partial W}_{E'})+2d_2( \tilde{D}^{\partial W}_{E})-2d_1( \tilde{D}^{\partial W}_{E})=\\
=&d_1( D_{E'}^{\partial W})-d_2( D_{E'}^{\partial W})-d_1( D_{E}^{\partial W})+d_2( D_{E}^{\partial W})\\
&\qquad\qquad\qquad\qquad-d_1( \tilde{D}_{E'}^{\partial W})+d_2( \tilde{D}_{E'}^{\partial W})+d_1( \tilde{D}_{E}^{\partial W})-d_2( \tilde{D}_{E}^{\partial W})\\
&+d_1( D_{E}^{\partial W})-d_2( D_{E}^{\partial W})-d_1( \tilde{D}_{E_{\C}}^{\partial W})+d_2( \tilde{D}_{E_{\C}}^{\partial W})\\
&\qquad\qquad\qquad\qquad-d_1( D_{E'}^{\partial W})+d_2( D_{E'}^{\partial W})+d_1( \tilde{D}_{E'}^{\partial W})-d_2(\tilde{D}_{E'}^{\partial W})=0.
\end{align*}
\normalsize
This proves that $N=0$; hence, $\Phi_\mathcal{A}(x)$ does not depend on the choice of Dirac operators. 
\end{proof}

\subsection{$\Phi_\mathcal{A}$ is well defined from classes to classes}

In this section, we prove that $\Phi_\mathcal{A}$ induces a well defined map $\mathcal{S}_0^{geo}(\Gamma;\mathcal{A})\to \mathcal{S}_1^h(\sigma_1,\sigma_2)$.

\begin{lemma}
\label{bordantcyclepart}
Assume that $(M,S_{\C\ell},f,D^M,m)=\partial(W,S^W_{\C\ell},g,Q)$, in the sense of Definition \ref{apicyclewithboundary}, then for any $n\in \Z$ there is a bordism
\[(M,S_{\C\ell},f,D^M,n)\sim_{bor}(\emptyset,\emptyset,\emptyset,\emptyset, n-m),\]
where (the above bordism condition implies that)
\[m=\ind_{APS}(Q_1)-\ind_{APS}(Q_2).\]
\end{lemma}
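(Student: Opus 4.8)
The plan is to observe that the asserted bordism is witnessed by the very cycle with boundary $(W,S^W_{\C\ell},g,Q)$ appearing in the hypothesis, once the integer decorations are bookkept correctly; there is no analytic content here, only an unwinding of Definition \ref{bordismofdecorated}.

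First I would unpack the hypothesis. By Definition \ref{bordismofdecorated} (applied with vanishing smoothing operator), the identity $(M,S_{\C\ell},f,D^M,m)=\partial(W,S^W_{\C\ell},g,Q)$ means precisely that $\partial W=M$ as oriented manifolds, that $S^W_{\C\ell}$ restricts to $S_{\C\ell}$ in the product structure near the boundary, that $g|_{\partial W}=f$, that $Q_\partial=D^M$, and that
\[m=\ind_{APS}(Q,0)_1-\ind_{APS}(Q,0)_2=\ind_{APS}(Q_1)-\ind_{APS}(Q_2),\]
which is already the displayed identity for $m$. In particular $(W,S^W_{\C\ell},g,Q)$ is a $(\sigma_1,\sigma_2)$-cycle with boundary in the sense of Definition \ref{apicyclewithboundary}.

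Next I would compute the inverse cycle $-(\emptyset,\emptyset,\emptyset,\emptyset,n-m)$ using the inversion formula \eqref{inversionofrelcycles}: since the underlying manifold is empty, both kernel dimensions in \eqref{dinot} vanish, so $-(\emptyset,\emptyset,\emptyset,\emptyset,n-m)=(\emptyset,\emptyset,\emptyset,\emptyset,m-n)$. Hence the disjoint union $(M,S_{\C\ell},f,D^M,n)\,\dot{\cup}\,-(\emptyset,\emptyset,\emptyset,\emptyset,n-m)$ equals $(M,S_{\C\ell},f,D^M,n)\,\dot{\cup}\,(\emptyset,\emptyset,\emptyset,\emptyset,m-n)$, which, after discarding the empty summand and adding the integer decorations, is exactly the cycle $(M,S_{\C\ell},f,D^M,m)$. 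By the hypothesis this cycle is $\partial(W,S^W_{\C\ell},g,Q)$, so it is a boundary; by Definition \ref{bordismofdecorated} this is precisely the statement that $(M,S_{\C\ell},f,D^M,n)\sim_{bor}(\emptyset,\emptyset,\emptyset,\emptyset,n-m)$, with the bordism realized by $(W,S^W_{\C\ell},g,Q)$ itself.

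The only point requiring any care — and it is the ``main obstacle'' only in a very weak sense — is the bookkeeping of the integer and of the sign in the inversion formula \eqref{inversionofrelcycles}, together with the observation that disjoint union with the degenerate empty cycle $(\emptyset,\emptyset,\emptyset,\emptyset,k)$ merely shifts the integer decoration by $k$ and leaves the remaining data unchanged. Once this is granted, the lemma is immediate from the definitions and requires no appeal to the Atiyah-Patodi-Singer index formula itself.
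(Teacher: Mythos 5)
Your proof is correct and takes essentially the same approach as the paper's (the paper says tersely that the hypothesis gives $(M,S_{\C\ell},f,D^M,m)\sim_{bor}(\emptyset,\emptyset,\emptyset,\emptyset,0)$ and then invokes that bordism respects addition of cycles); you have simply unpacked the bookkeeping of the inversion formula and the integer decoration more explicitly.
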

\begin{proof}
It follows directly from the definition of bordism that $(M,S_{\C\ell},f,D^M,m)\sim_{bor}(\emptyset,\emptyset,\emptyset,\emptyset,0)$. The Lemma follows from the fact that the bordism relation respects addition of cycles.
\end{proof}

\begin{lemma}
\label{trivialdata}
If the cycle $x=(W,(\mathcal{E}_{\mathcal{A}(\Gamma)},\mathcal{E}^{\prime}_{\mathcal{A}(\Gamma)} , E_{\C}, E_{\C}^{\prime} ,\alpha ),f)$ is such that $f$ extends to a function $g:W\to B\Gamma$ and there are bundles $F,F'\to W$ extending $E$ respectively $E'$ and $\alpha$ extends to an isomorphism on $W$
\[\beta:\mathcal{E}_{\mathcal{A}(\Gamma)}\oplus F'\otimes g^*\mathcal{L}\cong \mathcal{E}'_{\mathcal{A}(\Gamma)}\oplus F\otimes g^*\mathcal{L},\]
then $\Phi_\mathcal{A}(x)=(\emptyset,\emptyset,\emptyset,\emptyset,0)$.
\end{lemma}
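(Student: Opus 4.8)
Here is how I would go about it.

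The plan is to detect the class $\Phi_\mathcal{A}(x)$ through the short exact sequence of Proposition \ref{sesfortherelativegroups} and then to compute the resulting integer to be zero. Write $\Phi_\mathcal{A}(x)=(\partial W,S_{\partial W}\otimes E_\C,f,D_E,n)\dot{\cup}-(\partial W,S_{\partial W}\otimes E_\C',f,D_{E'},0)$ as in Definition \ref{defOfPhi}. First I would observe that $\delta_{\sigma_1,\sigma_2}(\Phi_\mathcal{A}(x))=[\partial W,S_{\partial W}\otimes E_\C,f]-[\partial W,S_{\partial W}\otimes E_\C',f]$ in $K_1^h(B\Gamma)$, and that both of these classes vanish: since $g$ restricts to $f$ and $F$ (resp.\ $F'$) restricts to $E_\C$ (resp.\ $E_\C'$) on $\partial W$, the cycle $(\partial W,S_{\partial W}\otimes E_\C,f)$ bounds $(W,S_W\otimes F,g)$ (and similarly with $F'$), after arranging the metric and connections to be of product type near $\partial W$. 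Hence $\Phi_\mathcal{A}(x)$ lies in the kernel of $\delta_{\sigma_1,\sigma_2}$, so by Proposition \ref{sesfortherelativegroups} it equals $(\emptyset,\emptyset,\emptyset,\emptyset,N)$ with $N:=\rho_{\sigma_1,\sigma_2}(\Phi_\mathcal{A}(x))\in\Z$; it remains to prove $N=0$.

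By the preceding lemma (well-definedness of $\Phi_\mathcal{A}$ on cycles, independently of the choice of Dirac operators), I am free to build $\Xi_0$ from connections $\nabla_E=\nabla_F|_{\partial W}$ and $\nabla_{E'}=\nabla_{F'}|_{\partial W}$ for fixed connections $\nabla_F$ on $F$, $\nabla_{F'}$ on $F'$, together with arbitrary $\nabla_{\mathcal{E}}$, $\nabla_{\mathcal{E}'}$, all of product type near $\partial W$. Applying $\rho_{\sigma_1,\sigma_2}$ (a homomorphism by Proposition \ref{definitionofrho}, additive under $\dot{\cup}$ and sending inverses to negatives) to Definition \ref{defOfPhi} gives
\[N=n+\rho_{\sigma_1,\sigma_2}(\partial W,S_{\partial W}\otimes E_\C,f,D_E)-\rho_{\sigma_1,\sigma_2}(\partial W,S_{\partial W}\otimes E_\C',f,D_{E'}).\]
Since each $E_i=E\Gamma\times_{\sigma_i}\C^k$ is flat, $g^*E_i$ is flat and $\ch(\nabla_{F\otimes g^*E_i})=k\,\ch(\nabla_F)$ as differential forms, so the Atiyah--Patodi--Singer formula \eqref{APSformula} applied to $D^W_F$ twisted by the flat connection on $g^*E_i$ (whose boundary operator is $D^{\partial W}_{E\otimes E_i}$) yields $\tfrac12\big(\eta(D^{\partial W}_{E\otimes E_i})+\dim\ker D^{\partial W}_{E\otimes E_i}\big)=k\int_W\ch(\nabla_F)\wedge Td(\nabla_W)-\ind_{APS}(D^W_{F\otimes E_i})$. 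The local term is the same for $i=1,2$, so $\rho_{\sigma_1,\sigma_2}(\partial W,S_{\partial W}\otimes E_\C,f,D_E)=\ind_{APS}(D^W_{F\otimes E_2})-\ind_{APS}(D^W_{F\otimes E_1})$, and likewise with $F'$ in place of $F$.

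The crux is the evaluation of $n$ via Remark \ref{ncomputation}. For $i=1,2$ I would feed the summands $\mathcal{E}\to W$, $F'\otimes g^*\mathcal{L}\to W$ and $\mathcal{E}'\to W$, $F\otimes g^*\mathcal{L}\to W$ (pushed forward to $\C$ along $\sigma_i$) into the Atiyah--Singer index problem of the closed manifold $\mathcal{Z}:=W\cup_{\partial W}(\partial W\times[0,1])\cup_{\partial W}(-W)$ formed exactly as in the proof of Lemma \ref{thisCycleIsABoundary}, carrying $S_W\otimes(G_i\oplus F'\otimes g^*E_i)$ over the first copy of $W$, the opposite Clifford bundle $-S_W\otimes(G_i'\oplus F\otimes g^*E_i)$ over $-W$, glued over $\partial W$ by the isomorphism induced from $\alpha$, and interpolated over the cylinder by the connection $\bar{\nabla}_i$ of Remark \ref{ncomputation} (which by construction matches these restrictions at the two ends). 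Because $\beta$ extends $\alpha$ over \emph{all} of $W$, the underlying bundle on $\mathcal{Z}$ bounds over $W\times[0,1]$ (take $\pi^*(G_i\oplus F'\otimes g^*E_i)$, glued to the $W\times\{1\}$-face by the isomorphism induced from $\beta$), so this Atiyah--Singer index vanishes; evaluating it by the local index theorem (with $\ch(\nabla_{F'\otimes g^*E_i})=k\,\ch(\nabla_{F'})$ and $\ch(\nabla_{F\otimes g^*E_i})=k\,\ch(\nabla_F)$) shows that the sum of the two $\int_W$-terms and the cylinder term attached to the $i$-th bundle equals $k\int_W\big(\ch(\nabla_F)-\ch(\nabla_{F'})\big)\wedge Td(\nabla_W)$, independently of $i$. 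Subtracting the $i=1$ and $i=2$ versions kills precisely the entire local-and-cylinder part of $n$ from Remark \ref{ncomputation}, leaving $n=\rho_{\sigma_1,\sigma_2}(\partial W,S_{\partial W}\otimes E_\C',f,D_{E'})-\rho_{\sigma_1,\sigma_2}(\partial W,S_{\partial W}\otimes E_\C,f,D_E)$. Substituting into the displayed formula for $N$ gives $N=0$, and with the first paragraph this proves $\Phi_\mathcal{A}(x)=(\emptyset,\emptyset,\emptyset,\emptyset,0)$.

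The main obstacle is the bookkeeping in the third paragraph: one has to check that $\bar{\nabla}_i$ agrees at the two collar ends with the restrictions of $\nabla_{G_i}\oplus\nabla_{F'\otimes g^*E_i}$ and (via the gluing) of $\nabla_{G_i'}\oplus\nabla_{F\otimes g^*E_i}$, that the Todd forms on the collar are pulled back consistently, and---most importantly---that it is the flatness of the $E_i$ that makes the assembled local-and-cylinder term independent of $i$, which is exactly what forces its contribution to $n$ to cancel. Everything else, namely the vanishing of $\delta_{\sigma_1,\sigma_2}(\Phi_\mathcal{A}(x))$ and the final appeal to the exactness of the sequence in Proposition \ref{sesfortherelativegroups} together with the injectivity of $\rho_{\sigma_1,\sigma_2}$ on its $\Z$-summand, is formal.
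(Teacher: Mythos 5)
Your proof is correct and, at the level of the key computation, lands on the same geometric observation as the paper's: the closed manifold $\mathcal{Z}=W\cup_{\partial W}(\partial W\times[0,1])\cup_{\partial W}(-W)$ bounds over $W\times[0,1]$, and the extension $\beta$ of $\alpha$ allows the glued bundles $\mathcal{G}_i$ to extend over that bordism, so the relevant closed-manifold index vanishes. The organizational route differs. The paper first applies Lemma~\ref{bordantcyclepart} directly — both $(\partial W,S_{\partial W}\otimes E_\C,f,D_E)$ and $(\partial W,S_{\partial W}\otimes E'_\C,f,D_{E'})$ bound via $(W,S_W\otimes F,g)$ and $(W,S_W\otimes F',g)$ — to reduce $\Phi_\mathcal{A}(x)$ to $(\emptyset,\emptyset,\emptyset,\emptyset,n-m-m')$, and then identifies $n-m-m'=\ind(Z,\mathcal{G}_1)-\ind(Z,\mathcal{G}_2)=0$ in one stroke by the Atiyah--Patodi--Singer theorem and bordism invariance of the closed index. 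You instead detect the class through the exact sequence of Proposition~\ref{sesfortherelativegroups}: showing $\delta_{\sigma_1,\sigma_2}(\Phi_\mathcal{A}(x))=0$, concluding the class is $(\emptyset,\emptyset,\emptyset,\emptyset,N)$, and then computing $N=\rho_{\sigma_1,\sigma_2}(\Phi_\mathcal{A}(x))=0$ by converting the boundary $\eta$-terms to APS indices of $F\otimes E_i$ and $F'\otimes E_i$ and cancelling the local-and-cylinder part via the vanishing of $\ind(\mathcal{Z},\mathcal{G}_i)$ together with flatness of the $E_i$. The paper's route is slightly more self-contained (it never invokes Proposition~\ref{sesfortherelativegroups} or the well-definedness of $\rho_{\sigma_1,\sigma_2}$, relying only on the bordism relation itself), whereas your route trades the explicit bookkeeping of $m$ and $m'$ for a cleaner invocation of the exact sequence; both depend on the same APS index theorem applied to the same closed double and the same use of $\beta$ to supply the nullbordism.

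One small point worth double-checking in your write-up: you claim $\ind(\mathcal{Z},\mathcal{G}_i)=0$ and then extract from the local index theorem the identity
\[
\int_W\ch(\nabla_{G_i})\wedge Td(\nabla_W)-\int_W\ch(\nabla_{G_i'})\wedge Td(\nabla_W)+\int_{\partial W\times[0,1]}\ch(\bar{\nabla}_i)\wedge Td(\nabla_{\partial W})=k\int_W\bigl(\ch(\nabla_F)-\ch(\nabla_{F'})\bigr)\wedge Td(\nabla_W),
\]
which is indeed independent of $i$ only because $g^*E_i$ is flat. That is precisely the step the paper does not need to make explicit, since it computes the \emph{difference} $\ind(\mathcal{Z},\mathcal{G}_1)-\ind(\mathcal{Z},\mathcal{G}_2)$ directly and never isolates the local-and-cylinder piece. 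Both are valid; yours just distributes the cancellation across the $\rho$-invariant formalism rather than the bordism relation.
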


\begin{proof}
We choose Dirac operators $D_{F}^W$ and $D_{F'}^W$ on $S_W\otimes F$ respectively $S_W\otimes F'$ of product type as in Equation \eqref{diracofproducttupe} (found on page \pageref{diracofproducttupe}) near $\partial W$ with boundary operators $D_{E}^{\partial W}$ respectively $D_{E'}^{\partial W}$. By Lemma \ref{bordantcyclepart}, 
\[\Phi_\mathcal{A}(x)=(\emptyset,\emptyset,\emptyset,\emptyset,n-m-m'),\]
where $n$ is as in Definition \ref{defOfPhi}\footnote{Compare with Remark \ref{ncomputation}.}, 
\begin{align*}
m:=\ind_{APS}(D^W_{F,1})-&\ind_{APS}(D^W_{F,2})\quad\mbox{and}\\ 
&m':=\ind_{APS}(D_{F',1}^{-W})-\ind_{APS}(D_{F',2}^{-W}).
\end{align*}
We define $Z:=W\cup_{\partial W} \partial W\times [0,1]\cup_{\partial W} -W$ and, for $i=1,2$, the Clifford bundles on $Z$;
\small
\[\mathcal{G}_i:=(G_i \oplus F'\otimes E_i)\otimes S_W\cup_{\partial W\times \{0\}} \big((G_i\oplus  E'\otimes E_i)\otimes S_{\partial W\times [0,1]}\big)\cup_{\partial W\times \{1\}}(G_i'\oplus F\otimes E_i)\otimes S_{-W}.\]
\normalsize
We can equip $\mathcal{G}_i$ with a Dirac operator by gluing together $D_{G_i}^W\oplus D_{F'\otimes E_i}^W$ with $\bar{D}_i$ over $\partial W\times \{0\}$ and with $\beta^*(D_{G_i'}^{-W}\oplus D_{F \otimes E_i}^{-W})$ over $\partial W\times \{1\}$.

It follows from the Atiyah-Patodi-Singer index theorem that
\small
\begin{align*}
n-m-&m'=\\
&=\int_W \left(\ch(\nabla_{G_1})-\ch(\nabla_{G_2})\right)\wedge Td(\nabla_W)-\int_W \left(\ch(\nabla_{G'_1})-\ch(\nabla_{G'_2})\right)\wedge Td(\nabla_W)+\\
&\qquad\qquad+\int_{\partial W\times [0,1]} \left(\ch(\bar{\nabla}_1)-\ch(\bar{\nabla}_2)\right)Td(\nabla_{\partial W})-\\
&\qquad\qquad\qquad\qquad-\int_W \left(\ch(\nabla_{F\otimes E_1})-\ch(\nabla_{F\otimes E_2})\right)\wedge Td( \nabla_W)+\\
&\qquad\qquad\qquad\qquad\qquad\qquad+\int_W \left(\ch(\nabla_{F' \otimes E_1})-\ch(\nabla_{F'\otimes E_2})\right)\wedge Td(\nabla_W)=\\
&=\int_W \left(\ch(\nabla_{G_1}\oplus \nabla_{F' \otimes E_1})-\ch(\nabla_{G_2}\oplus \nabla_{F' \otimes E_2})\right)\wedge Td(\nabla_W)-\\
&\qquad\qquad-\int_W \left(\ch(\nabla_{G_1'}\oplus \nabla_{F \otimes E_1})-\ch(\nabla_{G'_2}\oplus \nabla_{F \otimes E_2})\right)\wedge Td(\nabla_W)+\\
&\qquad\qquad\qquad\qquad+\int_{\partial W\times [0,1]} \left(\ch(\bar{\nabla}_1)-\ch(\bar{\nabla}_2)\right)Td(\nabla_{\partial W})=\\
&=\int_W \left(\ch(\nabla_{G_1}\oplus \nabla_{F' \otimes E_1})-\ch(\nabla_{G_2}\oplus \nabla_{F' \otimes E_2})\right)\wedge Td(\nabla_W)-\\
&\qquad\qquad-\int_W \left(\ch\beta^*(\nabla_{G_1'}\oplus \nabla_{F \otimes E_1})-\ch\beta^*(\nabla_{G'_2}\oplus \nabla_{F\otimes E_2})\right)\wedge Td(\nabla_W)+\\
&\qquad\qquad\qquad\qquad+\int_{\partial W\times [0,1]} \left(\ch(\bar{\nabla}_1)-\ch(\bar{\nabla}_2)\right)Td(\nabla_{\partial W})=\\
&=\ind(Z,\mathcal{G}_1)-\ind(Z,\mathcal{G}_2)=0,
\end{align*}
\normalsize
The reader should note that the last equality follows from the bordism invariance of the index on closed manifolds.
\end{proof}

\begin{lemma}
\label{gluingcyclesalongboundary}
The map $\Phi_\mathcal{A}$ respects gluing of cycles in the following sense. Assume that $x_i=(W_i,(\mathcal{E}_{\mathcal{A}(\Gamma),i},\mathcal{E}^{\prime}_{\mathcal{A}(\Gamma),i} , E_{\field{C},i}, E_{\field{C},i}^{\prime} ,\alpha_i ),f_i)$ are cycles for $i=1,2$ and there is an spin$^c$ manifold $Y$ satisfying that
$$\partial W_i=M_i\dot{\cup}((-1)^i Y),\quad f_1|_Y=f_2|_Y\quad\mbox{and}$$
$$(\mathcal{E}_{\mathcal{A}(\Gamma),1},\mathcal{E}^{\prime}_{\mathcal{A}(\Gamma),1} , E_{\field{C},1}, E_{\field{C},1}^{\prime} ,\alpha_1 )|_Y=(\mathcal{E}_{\mathcal{A}(\Gamma),2},\mathcal{E}^{\prime}_{\mathcal{A}(\Gamma),2} , E_{\field{C},2}, E_{\field{C},2}^{\prime} ,\alpha_2)|_Y.$$
Then, we can form the cycle 
$$x_1\cup_Y x_2:=(W,(\mathcal{E}_{\mathcal{A}(\Gamma)},\mathcal{E}^{\prime}_{\mathcal{A}(\Gamma)} , E_{\field{C}}, E_{\field{C}}^{\prime} ,\alpha ),f),$$
where $W:=W_1\cup_Y W_2$ and the vector bundle data is defined as
\begin{align*}
\mathcal{E}_{\mathcal{A}(\Gamma)}&:=\mathcal{E}_{\mathcal{A}(\Gamma),1}\cup_Y\mathcal{E}_{\mathcal{A}(\Gamma),2},\quad \mathcal{E}_{\mathcal{A}(\Gamma)}':=\mathcal{E}_{\mathcal{A}(\Gamma),1}'\cup_Y\mathcal{E}_{\mathcal{A}(\Gamma),2}' ,\\ 
E_{\field{C}}&:=E_{\field{C},1}|_{M_1}\dot{\cup} E_{\field{C},2}|_{M_2},\quad E_{\field{C}}':=E'_{\field{C},1}|_{M_1}\dot{\cup} E'_{\field{C},2}|_{M_2},\\
\alpha&:=\alpha_1|_{M_1}\dot{\cup} \alpha_2|_{M_2}\quad\mbox{and}\quad  f:=f_1|_{M_1}\dot{\cup} f_2|_{M_2}.
\end{align*}
Moreover, we have that 
$$\Phi_\mathcal{A}(x_1\dot{\cup}x_2)=\Phi_\mathcal{A}(x_1\cup_Y x_2)\quad\mbox{in}\quad \mathcal{S}^h_1(\sigma_1,\sigma_2)$$
\end{lemma}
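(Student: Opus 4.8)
The plan is to reduce the statement to a gluing computation for Atiyah--Patodi--Singer indices, specifically Proposition \ref{gluap}. First I would fix compatible choices of Dirac operators: pick a choice of Dirac operators $\Xi_{0,i}$ for each cycle $x_i$ that agrees on the common hypersurface $Y$ and is of product type near all boundary components (possible because the bundle data agree on $Y$ by hypothesis and the collar neighborhoods can be matched). Using these, $\Phi_\mathcal{A}(x_1\dot\cup x_2)$ is the disjoint union of the two cycles described in Definition \ref{defOfPhi}, with integer part $n_1+n_2$, while $\Phi_\mathcal{A}(x_1\cup_Y x_2)$ is a single cycle over $M_1\dot\cup M_2$ (since $Y$ is interior in $W$ and disappears from the boundary) with integer part some $n$; since the manifolds, bundles, maps and Dirac operators over $M_1\dot\cup M_2$ literally coincide, and the disjoint-union relation in $\mathcal{S}^h_1(\sigma_1,\sigma_2)$ identifies the two cycle-parts, it suffices to prove $n = n_1+n_2$ in $\Z$.

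Next I would expand both sides using Remark \ref{ncomputation}: $n$, $n_1$ and $n_2$ are each a sum of (a) a local integral over $W$ (resp. $W_i$) of $(\ch(\nabla_{G_1})-\ch(\nabla_{G_2}))\wedge Td$ minus the primed analog, (b) a local integral over $\partial W\times[0,1]$ (resp. $\partial W_i\times[0,1]$) of $(\ch(\bar\nabla_1)-\ch(\bar\nabla_2))\wedge Td$, and (c) $\eta$- and $\dim\ker$-correction terms on the outer boundary. The local term over $W$ is additive over the decomposition $W=W_1\cup_Y W_2$ since integrals add over pieces. The term (b): the cylinder $\partial W\times[0,1]$ is $M_1\times[0,1]\dot\cup M_2\times[0,1]$, while $\partial W_i\times[0,1] = M_i\times[0,1]\dot\cup ((-1)^iY)\times[0,1]$; the two $Y\times[0,1]$ contributions appear with opposite orientations (because $Y$ occurs in $\partial W_1$ and $\partial W_2$ with opposite signs) and the interpolating connections $\bar\nabla_i$ restrict compatibly over $Y$ by our matching of choices, so these two cylinder contributions cancel against each other. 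The boundary correction terms (c) involve only $M_1$ and $M_2$, hence are identical on both sides.

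The one genuinely nontrivial point — and the step I expect to be the main obstacle — is that, strictly speaking, $n$, $n_1$, $n_2$ are defined as sums of \emph{actual APS indices} of honest Dirac operators $D^W_{G_i}$, $D^{-W}_{G_i'}$, $\bar D^{\partial W\times[0,1]}_i$, etc. (see Definition \ref{defOfPhi}), not just as the mixed local-spectral expression of Remark \ref{ncomputation}; so to compare them I would work directly with Proposition \ref{gluap}. Applying Proposition \ref{gluap} to the operator $D^W_{G_i}$ on $W=W_1\cup_Y W_2$ gives
\[
\ind_{APS}(D^W_{G_i}) = \ind_{APS}(D^{W_1}_{G_i}) + \ind_{APS}(D^{W_2}_{G_i}) + \dim\ker D^Y_{G_i},
\]
and likewise for $G_i'$ on $-W=(-W_1)\cup_Y(-W_2)$ and for the cylinder operators $\bar D_i$ split along $Y\times[0,1]$ (whose boundary operator on $Y\times\{0\}\cong Y$ is again a twist of $D^Y$). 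In each of these three families, forming the difference over $i=1,2$ of $n$ versus $n_1+n_2$, every $\dim\ker D^Y$-type correction from Proposition \ref{gluap} appears once from the $W$-split, once with opposite sign from the $(-W)$-split, and twice (for the two ends of the cylinder split) — I would check that these telescope, using that the boundary operator of $\bar D_i$ on each end of the interval equals $D^{\partial W}_{G_i}$ (near $0$) or $\alpha^*D^{\partial W}_{G_i'}$ (near $1$) per Equation \eqref{barofnabla}, so their kernel dimensions match the $D^Y_{G_i}$ and $D^Y_{G_i'}$ terms and cancel in the alternating sum. Collecting everything, $n - (n_1+n_2) = 0$, the required identity of cycle-parts holds, and hence $\Phi_\mathcal{A}(x_1\dot\cup x_2) = \Phi_\mathcal{A}(x_1\cup_Y x_2)$ in $\mathcal{S}^h_1(\sigma_1,\sigma_2)$. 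A final bookkeeping remark: one must also confirm that the cycle-part $(\partial W, S_{\partial W}\otimes E_\C, f, D_E, \cdot)\dot\cup -(\partial W, S_{\partial W}\otimes E'_\C, f, D_{E'}, 0)$ is literally the same geometric data on both sides, which is immediate since $\partial W = M_1\dot\cup M_2 = (\partial W_1\setminus Y)\dot\cup(\partial W_2\setminus Y)$ with matching bundles and maps.
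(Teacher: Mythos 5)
Your overall strategy (reduce to an APS index identity and invoke the gluing formula of Proposition \ref{gluap} together with Remark \ref{ncomputation}) is close in spirit to the paper's proof, but there is a genuine gap in the reduction step, and the final identity you try to prove is not the right one.

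The gap is in the claim that the cycle-parts of $\Phi_\mathcal{A}(x_1\dot\cup x_2)$ and $\Phi_\mathcal{A}(x_1\cup_Y x_2)$ ``literally coincide'' over $M_1\dot\cup M_2$. By Definition \ref{defOfPhi}, $\Phi_\mathcal{A}(x_1\dot\cup x_2)$ lives over $\partial W_1\dot\cup \partial W_2 = M_1\dot\cup(-Y)\dot\cup M_2\dot\cup Y$, not over $M_1\dot\cup M_2$. Your final bookkeeping remark asserts $\partial W = (\partial W_1\setminus Y)\dot\cup(\partial W_2\setminus Y)$ as the manifold underlying both sides, but removal of the $Y$ and $-Y$ pieces from $\Phi_\mathcal{A}(x_1\dot\cup x_2)$ is not for free. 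In $\mathcal{S}^h_1(\sigma_1,\sigma_2)$ the pair of cycles over $-Y$ and $Y$ (with the bundle $F_\C$ resp.\ $F_\C'$, and opposite orientations and Dirac operators) is \emph{not} a null cycle on the nose: by the inversion formula \eqref{inversionofrelcycles} these are inverses only up to a nontrivial integer shift by kernel dimensions. The paper handles this with Lemma \ref{bordantcyclepart}: those pieces are bordant to the empty cycle with a correction integer $m$ built out of $d_i(D^Y_F)$ and $d_i(D^Y_{F'})$, and the identity one actually needs is $n = n_1 + n_2 - m$, not $n = n_1+n_2$.

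Relatedly, the intermediate claim that the cylinder contributions over $Y\times[0,1]$ and $(-Y)\times[0,1]$ ``cancel against each other'' is true only for the local index density; the APS index of a product cylinder is minus the kernel dimension of the boundary operator, and kernel dimensions are orientation-insensitive, so the two cylinder APS indices add rather than cancel. This is exactly the source of the correction $m$. Your telescoping argument based on Proposition \ref{gluap} is essentially on the right track for the local and $\eta$-parts, but because it aims at the wrong target identity ($n=n_1+n_2$ instead of $n=n_1+n_2-m$) and omits the bordism of the $\pm Y$ cycle-pieces, it cannot close. To repair it: first peel off the cycles over $-Y\dot\cup Y$ as a separate disjoint summand of $\Phi_\mathcal{A}(x_1\dot\cup x_2)$, bord them away via Lemma \ref{bordantcyclepart} picking up the integer $m$, and only then compare integer parts via Remark \ref{ncomputation} and Proposition \ref{gluap}.
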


\begin{proof}
We choose connections compatible with the gluing near $Y$. Let $n_1$ and $n_2$ be the integer parts of $\Phi_\mathcal{A}(x_1)$ respectively $\Phi_\mathcal{A}(x_2)$. Set $g:=f_1|Y=f_2|_Y$, $F_{\field{C}}:=E_{\field{C},1}|_Y=E_{\field{C},1}|_Y$ and $F'_{\field{C}}:=E_{\field{C},1}'|_Y=E_{\field{C},2}'|_Y$. Let $S_Y\to Y$ denote the spin$^c$-structure on $Y$, similarly we let $S_{\partial W_1}$, $S_{\partial W_2}$ and $S_{\partial W}$ denote the indicated spin$^c$-structure. It follows that
\begin{align*}
\Phi_\mathcal{A}&(x_1\dot{\cup}x_2)=\\
=&(\partial W_1\dot{\cup} \partial W_2, S_{\partial W_1}\otimes E_{\C,1}\dot{\cup} S_{\partial W_2}\otimes E_{\C,2}, f_1\dot{\cup} f_2, D^{\partial W_1}_{E_{1}}\dot{\cup} D^{\partial W_2}_{E_{2}},n_1+n_2)\dot{\cup}\\
&\dot{\cup}-(\partial W_1\dot{\cup} \partial W_2,S_{\partial W_1}\otimes E'_{\C,1}\dot{\cup} S_{\partial W_2}\otimes E'_{\C,2}, f_1\dot{\cup} f_2, D^{\partial W_1}_{E_{1}'}\dot{\cup} D^{\partial W_2}_{E_{2}'},0)=\\
=&(\partial W,S_{\partial W}\otimes E_\C,f,D^{\partial W}_{E},n_1+n_2)\dot{\cup}-(\partial W,S_{\partial W}\otimes E_\C',f,D^{\partial W}_{E'},0)\dot{\cup}\\
&\quad\dot{\cup}(-Y\dot{\cup} Y, -S_{Y}\otimes F_\C\dot{\cup} S_Y\otimes F_{\C}, g\dot{\cup} g, -D^{Y}_{F}\dot{\cup} D^{Y}_{F},0)\\
&\qquad\dot{\cup}(Y\dot{\cup}-Y, S_Y\otimes F'_{\C}\dot{\cup} -S_Y\otimes F'_{\C}, g\dot{\cup} g, D^{Y}_{F'}\dot{\cup} -D^{Y}_{F'},0).
\end{align*}
By Lemma \ref{bordantcyclepart}, 
\begin{align*}
(-Y\dot{\cup} Y, -S_{Y}\otimes F_\C\dot{\cup} S_Y\otimes F_{\C}, &g\dot{\cup} g, -D^{Y}_{F}\dot{\cup} D^{Y}_{F},0)\\
\dot{\cup}(Y\dot{\cup}-Y, S_Y\otimes F'_{\C}&\dot{\cup} -S_Y\otimes F'_{\C}, g\dot{\cup} g, D^{Y}_{F'}\dot{\cup} -D^{Y}_{F'},0)\sim_{bor} \\
&\qquad\qquad\qquad\qquad\qquad\qquad\sim_{bor}(\emptyset,\emptyset,\emptyset,\emptyset,-m),
\end{align*}
where 
\begin{align*}
m:=&\ind_{APS}(D_{F\otimes E_1}^{-Y\times [0,1]})-\ind_{APS}(D_{F\otimes E_2}^{-Y\times [0,1]})-\\
&-\ind_{APS}(D^{Y\times [0,1]}_{F'\otimes E_1})+\ind_{APS}(D^{Y\times [0,1]}_{F'\otimes E_2})=\\
=&d_1(D^Y_{F})-d_2(D^Y_F)-d_1(D^Y_{F'})+d_2(D^Y_{F'}).
\end{align*}
Let $n$ denote the integer part of $\Phi_\mathcal{A}(x_1\cup_Y x_2)$. The proof of the Lemma is complete upon noting that $n=n_1+n_2-m$; this equality follows from the Atiyah-Patodi-Singer index theorem and Remark \ref{ncomputation}. 
\end{proof}
\begin{remark}
The reader should compare the proof of Lemma \ref{gluingcyclesalongboundary} to Proposition \ref{gluap}. Moreover, Lemma \ref{gluingcyclesalongboundary} proves a special case of bordism invariance for $\Phi_\mathcal{A}$. Namely, using the notation of Lemma \ref{gluingcyclesalongboundary}, we have that $x_1\dot{\cup}x_2\sim_{bor} x_1\cup_Y x_2$ in $\mathcal{S}^{geo}_0(\Gamma,\mathcal{A})$. A specific bordism can be constructed using the manifold with boundary obtained from straightening the angle on $Z=(W\cup_Y Y\times [0,1]\cup_Y W')\times [0,1]$ and considering the regular domain $(W\cup_Y Y\times [0,1]\cup_Y W')\times \{0\}\dot{\cup}(W\dot{\cup} W')\times \{1\}$ of $\partial Z$. 
\end{remark}

\begin{theorem}
\label{phiacommutingdiagram}
Assuming that $\mathcal{A}(\Gamma)$ is a Banach algebra closure of $\C[\Gamma]$ such that $\sigma_1$ and $\sigma_2$ extends to $\mathcal{A}(\Gamma)$, the map $\Phi_\mathcal{A}:\mathcal{S}^{geo}_0(\Gamma,\mathcal{A})\to\mathcal{S}^{h}_1(\sigma_1, \sigma_2)$ is well-defined and fits into a commutative diagram with exact rows:
\begin{equation}
\label{commutingsigmaoneandtwo}
\begin{CD}
@.K_0^{geo}(pt;\mathcal{A}(\Gamma))@>r>>\mathcal{S}^{geo}_0(\Gamma,\mathcal{A})@>\delta>>K_1^{geo}(B\Gamma) \\
@. @V(\sigma_1-\sigma_2)_*VV  @V\Phi_\mathcal{A} VV @VVV  @. \\
0@>>>\Z @>>> S^{h}_1(\sigma_1,\sigma_2)@>\delta_{\sigma_1,\sigma_2}>> K_1^h(B\Gamma)@>>>0,
\end{CD},
\end{equation}
where the lower row is the short exact sequence of Proposition \ref{sesfortherelativegroups} and the right vertical map is the isomorphism of Lemma \ref{khomorientedlem}.
\end{theorem}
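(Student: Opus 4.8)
The plan is to check, one at a time, that $\Phi_{\mathcal{A}}$ respects the relations generating $\mathcal{S}^{geo}_0(\Gamma,\mathcal{A})$, and then to verify commutativity of the two squares; exactness of the rows is quoted from \cite[Theorem 3.8]{paperI} (top) and from Proposition \ref{sesfortherelativegroups} together with Lemma \ref{relwwotilde} (bottom). The previous lemma already shows that $\Phi_{\mathcal{A}}$ does not depend on the choice of Dirac operators, so it is a well-defined map from isomorphism classes of cycles to $\mathcal{S}^h_1(\sigma_1,\sigma_2)$. Next one records the routine facts $\Phi_{\mathcal{A}}(x\,\dot{\cup}\,x')=\Phi_{\mathcal{A}}(x)+\Phi_{\mathcal{A}}(x')$ and $\Phi_{\mathcal{A}}(-x)=-\Phi_{\mathcal{A}}(x)$: these follow directly from the definition of $\Phi_{\mathcal{A}}$, from additivity of Atiyah--Patodi--Singer indices, $\eta$-functions and kernel dimensions (Lemma \ref{identieta}), and from the inversion formulas \eqref{inversecycle} and \eqref{inversionofrelcycles}; the same bookkeeping covers the direct-sum relation on cocycles over a fixed pair $(W,f)$.

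The heart of the argument, and the step I expect to require genuine care, is bordism invariance. It is enough to treat a null-bordant cycle $(W,\xi,f)$. Unwinding the notion of bordism of \cite[Section 3]{paperI}, a null-bordism is a spin$^c$-manifold with corners $\mathcal{W}$ together with extended data, whose boundary decomposes as $\partial\mathcal{W}=W\cup_{\partial W}\mathcal{V}$ glued along the corner $\partial W=\partial\mathcal{V}$, where $f$ extends to $\hat f\colon\mathcal{V}\to B\Gamma$ and the relative cocycle $\xi$ extends over $\mathcal{W}$. Restricting the extended data to $\mathcal{V}$ produces a cycle $(\mathcal{V},\hat\xi|_{\mathcal{V}},f)$ satisfying the hypotheses of Lemma \ref{trivialdata}, so $\Phi_{\mathcal{A}}(\mathcal{V},\hat\xi|_{\mathcal{V}},f)=0$. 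Then
\[\Phi_{\mathcal{A}}(W,\xi,f)=\Phi_{\mathcal{A}}\big((W,\xi,f)\,\dot{\cup}\,-(\mathcal{V},\hat\xi|_{\mathcal{V}},f)\big)=\Phi_{\mathcal{A}}\big((W,\xi,f)\cup_{\partial W}-(\mathcal{V},\hat\xi|_{\mathcal{V}},f)\big),\]
where the first equality uses additivity and the vanishing above, and the second is Lemma \ref{gluingcyclesalongboundary}. The glued cycle has underlying manifold the \emph{closed} manifold $\partial\mathcal{W}$ and empty boundary, so by Definition \ref{defOfPhi} its image under $\Phi_{\mathcal{A}}$ is $(\emptyset,\emptyset,\emptyset,\emptyset,n)$ with $n$ an alternating sum of Atiyah--Singer indices of Dirac operators on $\partial\mathcal{W}$ twisted by the vector bundles $\mathcal{E}_{\mathcal{A}(\Gamma)}\otimes_{\sigma_i}\C^k$ and $\mathcal{E}'_{\mathcal{A}(\Gamma)}\otimes_{\sigma_i}\C^k$; since these bundles extend over $\mathcal{W}$, each of these indices vanishes by bordism invariance of the index on closed manifolds, so $n=0$ and $\Phi_{\mathcal{A}}(W,\xi,f)=0$. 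The general case $x\sim_{bor} x'$ then follows from $x\,\dot{\cup}\,(-x')\sim_{bor}\emptyset$ and additivity. The delicate point here is to choose the product structures and Dirac operators near the corner $\partial W$ compatibly so that Lemmas \ref{trivialdata} and \ref{gluingcyclesalongboundary} genuinely apply.

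For vector bundle modification of $(W,\xi,f)$ along an even-dimensional spin$^c$-bundle $V\to W$, the boundary cycle appearing in $\Phi_{\mathcal{A}}(W,\xi,f)$ is replaced by its vector bundle modification along $V|_{\partial W}$ in the sense of Definition \ref{vectorbundlemodificationofrelativecycles}; Proposition \ref{tauvversusbott} identifies the Bott-type modification of the spin$^c$-model with the $SO(2k)$-modification used there, and since the fibrewise operator $D_\theta$ on $S^{2k}$ has one-dimensional kernel the Atiyah--Patodi--Singer indices entering the integer $n$ are unchanged, while the $\eta$- and $\dim\ker$-contributions are controlled by the invariance of relative $\eta$-invariants under vector bundle modification (\cite[Proposition 6.6]{HReta} and \cite[Proposition 6.7]{HReta}); hence $n$, and therefore $\Phi_{\mathcal{A}}$, is unaffected. (Alternatively, one checks from Remark \ref{ncomputation} that $\rho_{\sigma_1,\sigma_2}\circ\Phi_{\mathcal{A}}$ equals the purely local Chern--Weil expression left over after the $\eta$- and $\dim\ker$-terms cancel; this is manifestly bordism- and modification-invariant, and combined with the injectivity of $(\rho_{\sigma_1,\sigma_2},\delta_{\sigma_1,\sigma_2})\colon\mathcal{S}^h_1(\sigma_1,\sigma_2)\to\R\times K_1^h(B\Gamma)$, which follows from Proposition \ref{sesfortherelativegroups}, this gives a second route to well-definedness on classes.)

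It remains to verify the two squares. For the right-hand square, evaluating on a cycle gives $\delta_{\sigma_1,\sigma_2}(\Phi_{\mathcal{A}}(W,\xi,f))=[\partial W,S_{\partial W}\otimes E_{\C},f]-[\partial W,S_{\partial W}\otimes E'_{\C},f]$ in $K_1^h(B\Gamma)$, which is exactly the image of $\delta(W,\xi,f)=[\partial W,E_{\C},f]-[\partial W,E'_{\C},f]$ under the isomorphism $K_1^{geo}(B\Gamma)\cong K_1^h(B\Gamma)$ of Lemma \ref{khomorientedlem}; in particular this reproves that $\delta_{\sigma_1,\sigma_2}\circ\Phi_{\mathcal{A}}$ descends to classes. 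For the left-hand square, $r$ sends the class of a cycle $(M,\mathcal{E}_{\mathcal{A}(\Gamma)},\mathrm{pt})$ over a point to the associated cycle with empty boundary, and Definition \ref{defOfPhi} together with the Atiyah--Singer index theorem computes $\Phi_{\mathcal{A}}(r[M,\mathcal{E}_{\mathcal{A}(\Gamma)},\mathrm{pt}])=(\emptyset,\emptyset,\emptyset,\emptyset,n)$ with $n=(\sigma_1-\sigma_2)_*[M,\mathcal{E}_{\mathcal{A}(\Gamma)},\mathrm{pt}]$, that is, the image of $(\sigma_1-\sigma_2)_*$ under the inclusion $\Z\hookrightarrow\mathcal{S}^h_1(\sigma_1,\sigma_2)$; hence the left square commutes as well.
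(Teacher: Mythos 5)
Your proof follows essentially the same route as the paper's: the same reduction of bordism invariance to Lemma~\ref{trivialdata} via the gluing Lemma~\ref{gluingcyclesalongboundary} together with Atiyah--Singer bordism invariance on closed manifolds, the same use of Proposition~\ref{tauvversusbott} and \cite[Proposition~6.6]{HReta} for vector bundle modification, and the same direct verification of the two squares, with exactness quoted from \cite[Theorem~3.8]{paperI} and Proposition~\ref{sesfortherelativegroups}.

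Two small remarks. First, the paper separately checks that $\Phi_\mathcal{A}$ sends degenerate cycles to zero (via Remark~\ref{ncomputation} and Lemma~\ref{cyclepart}); in your account this is implicitly subsumed in the additivity/inversion bookkeeping, but it deserves to be spelled out since it is one of the generating relations for $\mathcal{S}^{geo}_0(\Gamma,\mathcal{A})$. Second, your parenthetical alternative — observing from Remark~\ref{ncomputation} that $\rho_{\sigma_1,\sigma_2}\circ\Phi_\mathcal{A}$ collapses to a local Chern--Weil expression and then invoking the injectivity of $(\rho_{\sigma_1,\sigma_2},\delta_{\sigma_1,\sigma_2})$ afforded by Proposition~\ref{sesfortherelativegroups} — is correct and is essentially the computational content underlying Theorem~\ref{indandrho}; however, the claimed ``manifest'' bordism invariance of that local integral still requires closing up the boundary and applying Stokes over the manifold-with-corners $\mathcal{W}$, which is in effect the content of Lemmas~\ref{trivialdata} and~\ref{gluingcyclesalongboundary} again, so this route is not actually shorter than the one you, and the authors, take.
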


\begin{proof}
Let us start by showing that $\Phi_\mathcal{A}$ is well defined from classes of cycles with vector bundle data (i.e., map degenerate cycles to $0$ and respects disjoint union/direct sum, vector bundle modification and bordism). It follows from Remark \ref{ncomputation} and Lemma \ref{cyclepart} that if $(W,\xi,f)$ is a degenerate cycle then $\Phi_\mathcal{A}(W,\xi,f)$ is null bordant. The result for the disjoint union/direct sum relation is clear. 

To prove invariance under vector bundle modification, consider a cycle 
$$(W,(\mathcal{E}_{\mathcal{A}(\Gamma)},\mathcal{E}^{\prime}_{\mathcal{A}(\Gamma)} , E_{\C}, E_{\C}^{\prime} ,\alpha ),f)$$ 
for $\mathcal{S}^{geo}_0(\Gamma,\mathcal{A})$ and $V\to W$ a spin$^c$ vector bundle with even-dimensional fibers. We take $n$ as in Definition \ref{defOfPhi}. By \cite[Proposition $6.6$]{HReta} the Atiyah-Patodi-Singer index is invariant under vector bundle modification when choosing the Dirac operators $D^{\partial W^V}$ on $\partial W^V$ in the canonical way (see \cite[Proposition 6.6]{HReta}). Let $\pi_V:(\partial W)^V\to \partial W$ denote the projection. It follows from Proposition \ref{tauvversusbott} and these observations that
\begin{align*}
\Phi_\mathcal{A}((W,&(\mathcal{E}_{\mathcal{A}(\Gamma)},\mathcal{E}^{\prime}_{\mathcal{A}(\Gamma)} , E_{\C}, E_{\C}^{\prime} ,\alpha ),f)^V)=\\
&=(\partial W^V,S_{\partial W^V}\otimes E^V_\C,f\circ \pi_V,D^{\partial W^V}_{E^V},n)\\
&\qquad \dot{\cup}-(\partial W^V,S_{\partial W^V}\otimes (E'_\C)^V,f\circ \pi_V,D^{\partial W^V}_{(E')^V},0)=\\
&=(\partial W,S_{\partial W}\otimes E_\C,f,D^{\partial W}_{E},n)^{V|_{\partial W}}\dot{\cup}-(\partial W,S_{\partial W}\otimes E_\C',f,D^{\partial W}_{E'},0)^{V|_{\partial W}}.
\end{align*}

To prove bordism invariance of $\Phi_\mathcal{A}$, assume that 
$$w=(W,(\mathcal{E}_{\mathcal{A}(\Gamma)},\mathcal{E}^{\prime}_{\mathcal{A}(\Gamma)} , E_{\C}, E_{\C}^{\prime} ,\alpha ),f)\sim_{bor} 0$$ 
in $\mathcal{S}^{geo}_0(\Gamma,\mathcal{A})$. By the definition of bordism, there is a cycle 
$$x=(\tilde{W},(\tilde{\mathcal{E}}_{\mathcal{A}(\Gamma)},\tilde{\mathcal{E}}^{\prime}_{\mathcal{A}(\Gamma)} , \tilde{E}_{\C}, \tilde{E}_{\C}^{\prime} ,\tilde{\alpha} ),\tilde{f})$$ 
(as in Lemma \ref{trivialdata}) such that $\partial W=\partial \tilde{W}$, $\tilde{f}=f$ and 
$$(\mathcal{E}_{\mathcal{A}(\Gamma)},\mathcal{E}^{\prime}_{\mathcal{A}(\Gamma)} , E_{\C}, E_{\C}^{\prime} ,\alpha )|_{\partial W}=(\tilde{\mathcal{E}}_{\mathcal{A}(\Gamma)},\tilde{\mathcal{E}}^{\prime}_{\mathcal{A}(\Gamma)} , \tilde{E}_{\C}, \tilde{E}_{\C}^{\prime} ,\tilde{\alpha} )|_{\partial\tilde{W}}.$$
By Lemma  \ref{trivialdata}, $\Phi_\mathcal{A}(x)=0$. Hence, Lemma \ref{gluingcyclesalongboundary} and  bordism invariance of the index on closed manifolds imply that 
$$\Phi_\mathcal{A}(w)=\Phi_\mathcal{A}(w\dot{\cup}x)=\Phi_\mathcal{A}(w\cup_{\partial W}x)=0,$$

To prove that the diagram \eqref{commutingsigmaoneandtwo} commutes, if $(M,\mathcal{E})$ is a cycle for $K_0^{geo}(pt;\mathcal{A}(\Gamma))$, then in $S^{h}_1(\sigma_1,\sigma_2)$
\[(\sigma_1-\sigma_2)_*(M,\mathcal{E})=(\emptyset,\emptyset,\emptyset,\emptyset,n),\]
where $n=\ind(M,\mathcal{E}\otimes_{\sigma_1} \C^k)-\ind(M,\mathcal{E}\otimes_{\sigma_2} \C^k)$. On the other hand, since $M$ is closed,
\[\Phi_\mathcal{A}\circ r(M,\mathcal{E})=\Phi(M,(\mathcal{E},M\times 0,\emptyset,\emptyset,\emptyset),\emptyset)=(\emptyset,\emptyset,\emptyset,\emptyset,n).\]

It is clear that $\delta=\delta_{\sigma_1,\sigma_2}\circ \Phi_\mathcal{A}$ since 
$$\delta_{\sigma_1,\sigma_2}\circ \Phi_\mathcal{A}(W,(\mathcal{E}_{\mathcal{A}(\Gamma)},\mathcal{E}^{\prime}_{\mathcal{A}(\Gamma)} , E_{\C}, E_\C^{\prime} ,\alpha ),f)=(\partial W,S_{\partial W}\otimes E_{\C},f)\dot{\cup}-(\partial W,S_{\partial W}\otimes E_{\C}',f).$$
\end{proof}

\subsection{The map $\Phi_\mathcal{A}$ for $C^*$-algebras}
\label{phiaforcstar}

Later, we will make use of a modification of the map $\Phi_{\mathcal{A}}$ that fits well with $C^*$-algebra completions. Informally, it maps to cycles with more ``analytic data"; somewhat more precisely, it maps into a well behaved class of cycles for $\tilde{\mathcal{S}}_1^h(\sigma_1,\sigma_2)$. Let $W$ be an even-dimensional compact spin$^c$ manifold with boundary, $f:\partial W\to B\Gamma$ continuous and $\xi$ a $K$-theory cocycle relative to assembly for free $\Gamma$-actions (see the Introduction or in more detail \cite[Definition 1.8]{paperI}). 

\begin{define}
\label{decorations}
Assume that $\Xi_0=(D_{\mathcal{E}},D_{\mathcal{E}'},D_{E},D_{E'})$ is a choice of Dirac operators (see Definition \ref{connectionsonrelcycles}) for $\xi=(\mathcal{E}_{C^*_{{\bf full}}(\Gamma)},\mathcal{E}^{\prime}_{C^*_{{\bf full}}(\Gamma)} , E_{\field{C}}, E_{\field{C}}^{\prime} ,\alpha )$. Let $A=(A^\mathcal{E},A^{\mathcal{E}'},A)$ be a triple of smoothing operators in the Mischenko-Fomenko calculus:
\begin{align*}
A^{\mathcal{E}}\in \Psi^{-\infty}_{C^*_{{\bf full}}(\Gamma)}&(\partial W,\mathcal{E}\otimes S_{\partial W}),\;A^{\mathcal{E}'}\in \Psi^{-\infty}_{C^*_{{\bf full}}(\Gamma)}(\partial W,\mathcal{E}'\otimes S_{\partial W})\\
\mbox{and}\quad&A\in \Psi^{-\infty}_{C^*_{{\bf full}}(\Gamma)}(\partial W\dot{\cup}-\partial W,(E'\otimes S_{\partial W}\otimes f^*\mathcal{L})\,\dot{\cup}-(E\otimes S_{\partial W}\otimes f^*\mathcal{L}))
\end{align*}
such that the boundary operators
\begin{align*}
D^{\partial W}_{\mathcal{E}}+A^\mathcal{E},\;D^{\partial W}_{\mathcal{E}'}+A^{\mathcal{E}'},\;\left(D^{\partial W}_{E'\otimes f^*\mathcal{L}}\dot{\cup}-D^{\partial W}_{E\otimes f^*\mathcal{L}}\right)+A
\end{align*}
are all invertible. We say that the data $\Xi=(\xi,\Xi_0,A)$ is a decoration of $\xi$. A decorated $K$-theory cocycle relative to assembly is a decoration of a $K$-theory cocycle relative to assembly.

Whenever $(W,\xi,f)$ is a cycle for $\mathcal{S}_0^{geo}(\Gamma,C^*_{{\bf full}})$ and $\Xi$ is a decoration of $\xi$, we say that $(W,\Xi,f)$ is a \emph{decorated} cycle for $\mathcal{S}_0^{geo}(\Gamma,C^*_{{\bf full}})$.
\end{define}

\begin{lemma}
Any $K$-theory cocycle relative to assembly on $(W,\partial W,f)$ admits a decoration.
\end{lemma}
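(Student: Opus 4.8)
\emph{Plan.} A decoration $\Xi=(\xi,\Xi_0,A)$ consists of a choice of Dirac operators $\Xi_0$, which always exists, together with a triple $A=(A^\mathcal{E},A^{\mathcal{E}'},A)$ of self-adjoint smoothing operators in the Mischenko--Fomenko calculus correcting the three relevant boundary operators to be invertible. For the latter I would invoke Theorem \ref{leichpiazzass} together with Remark \ref{reallysmoothremark}: it suffices to verify that each of the boundary operators $D^{\partial W}_\mathcal{E}$, $D^{\partial W}_{\mathcal{E}'}$ and $D^{\partial W}_{E'\otimes f^*\mathcal{L}}\dot{\cup}-D^{\partial W}_{E\otimes f^*\mathcal{L}}$ has vanishing index in $K_1(C^*_{{\bf full}}(\Gamma))$, and then the promised smoothing operators exist (even of the special form $P=\chi_{[0,\infty)}(D+A)$, $D+A$ invertible, from Remark \ref{reallysmoothremark}).

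First I would fix the geometric data: equip $S_W$ with a Clifford connection, $\mathcal{E}_{C^*_{{\bf full}}(\Gamma)}$ and $\mathcal{E}'_{C^*_{{\bf full}}(\Gamma)}$ with $C^*_{{\bf full}}(\Gamma)$-linear connections, and $E_\C,E'_\C$ with Hermitean connections, all of product type in a collar of $\partial W$; this is standard (partition of unity and interpolation near the boundary, using the global product-type assumption on the metric made throughout the paper). The associated Dirac operators give a choice $\Xi_0=(D_\mathcal{E},D_{\mathcal{E}'},D_E,D_{E'})$ for $\xi$ in the sense of Definition \ref{connectionsonrelcycles}. Now $S_W\otimes\mathcal{E}_{C^*_{{\bf full}}(\Gamma)}\to W$ is a $C^*_{{\bf full}}(\Gamma)$-Clifford bundle (Definition \ref{cliffordbundlesdefinition}) on the even-dimensional oriented manifold with boundary $W$, with all structure of product type near $\partial W$, and $D^{\partial W}_\mathcal{E}$ is its boundary operator; hence, by the Lemma preceding \eqref{functorialityofapsclasses} (which relies on \cite[Theorem $6.2$]{hilsumbordism}), $\ind_{C^*_{{\bf full}}(\Gamma)}(D^{\partial W}_\mathcal{E})=0$, and likewise $\ind_{C^*_{{\bf full}}(\Gamma)}(D^{\partial W}_{\mathcal{E}'})=0$.

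The key step is the third operator, and it is where the isomorphism $\alpha$ built into a relative cocycle is genuinely used. Tensoring $\alpha$ from \eqref{relktheocycle} with the identity on $S_{\partial W}$ yields a $C^*_{{\bf full}}(\Gamma)$-linear isomorphism of Clifford bundles
\[
S_{\partial W}\otimes\big(\mathcal{E}_{C^*_{{\bf full}}(\Gamma)}|_{\partial W}\oplus E'_\C\otimes f^*\mathcal{L}\big)\;\xrightarrow{\sim}\;S_{\partial W}\otimes\big(\mathcal{E}'_{C^*_{{\bf full}}(\Gamma)}|_{\partial W}\oplus E_\C\otimes f^*\mathcal{L}\big),
\]
and conjugating the Dirac operator on the right-hand bundle by this isomorphism changes it only by a bundle endomorphism (a relatively compact, zeroth-order perturbation in the Mischenko--Fomenko calculus), which by the standard homotopy invariance of the $K_1$-index leaves the class in $K_1(C^*_{{\bf full}}(\Gamma))$ unchanged. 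Since the index of a Dirac operator on a direct sum is the sum of the indices, this gives
\[
\ind(D^{\partial W}_\mathcal{E})+\ind(D^{\partial W}_{E'\otimes f^*\mathcal{L}})=\ind(D^{\partial W}_{\mathcal{E}'})+\ind(D^{\partial W}_{E\otimes f^*\mathcal{L}}),
\]
so by the previous paragraph $\ind(D^{\partial W}_{E'\otimes f^*\mathcal{L}})=\ind(D^{\partial W}_{E\otimes f^*\mathcal{L}})$. As the $K_1$-index is additive over disjoint unions and reverses sign when $\partial W$ is replaced by $-\partial W$ with the opposite Clifford multiplication, the operator $D^{\partial W}_{E'\otimes f^*\mathcal{L}}\dot{\cup}-D^{\partial W}_{E\otimes f^*\mathcal{L}}$ on $\partial W\dot{\cup}-\partial W$ has index $\ind(D^{\partial W}_{E'\otimes f^*\mathcal{L}})-\ind(D^{\partial W}_{E\otimes f^*\mathcal{L}})=0$.

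Finally, applying Theorem \ref{leichpiazzass} and Remark \ref{reallysmoothremark} to each of the three self-adjoint $C^*_{{\bf full}}(\Gamma)$-linear elliptic operators $D^{\partial W}_\mathcal{E}$, $D^{\partial W}_{\mathcal{E}'}$ and $D^{\partial W}_{E'\otimes f^*\mathcal{L}}\dot{\cup}-D^{\partial W}_{E\otimes f^*\mathcal{L}}$ produces self-adjoint smoothing operators $A^\mathcal{E}$, $A^{\mathcal{E}'}$ and $A$ of the prescribed types such that $D^{\partial W}_\mathcal{E}+A^\mathcal{E}$, $D^{\partial W}_{\mathcal{E}'}+A^{\mathcal{E}'}$ and $(D^{\partial W}_{E'\otimes f^*\mathcal{L}}\dot{\cup}-D^{\partial W}_{E\otimes f^*\mathcal{L}})+A$ are all invertible; then $\Xi=(\xi,\Xi_0,(A^\mathcal{E},A^{\mathcal{E}'},A))$ is a decoration of $\xi$. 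The only part that needs real care is the index identity in the key step — one must be sure that tensoring with $\alpha$ and the ensuing change of connection do not alter the $K_1(C^*_{{\bf full}}(\Gamma))$-index — but this is a routine consequence of homotopy invariance of the Mischenko--Fomenko index.
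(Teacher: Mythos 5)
Your proof is correct and follows the same strategy as the paper's: show that the three boundary operators have vanishing $K_1(C^*_{\bf full}(\Gamma))$-index and invoke Theorem \ref{leichpiazzass} to produce the smoothing operators. The paper states this more tersely (the cycles $(\partial W,\mathcal{E}|_{\partial W})$, $(\partial W,\mathcal{E}'|_{\partial W})$ and the $E,E'$-difference cycle are nullbordant, ``hence their higher index vanish''), whereas you usefully unpack the third case by reducing its index to that of the first two via $\alpha$; this is exactly the content of the paper's nullbordance claim, made explicit.
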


\begin{proof}
Let $(W,(\mathcal{E}_{C^*_{\bf full}(\Gamma)},\mathcal{E}_{C^*_{\bf full}(\Gamma)}',E_\C,E_\C',\alpha),f)$ be a cycle for $\mathcal{S}_*^{geo}(\Gamma,C^*_{{\bf full}})$. As such, each of the cycles $(\partial W,\mathcal{E}_{C^*_{\bf full}(\Gamma)})$, $(\partial W,\mathcal{E}_{C^*_{\bf full}(\Gamma)}')$ and $(\partial W,E_\C\otimes f^*\mathcal{L})\dot{\cup}-(\partial W,E_\C'\otimes f^*\mathcal{L})$ for $K_*^{geo}(pt,C^*_{{\bf full}}(\Gamma))$ are nullbordant; in particular, their higher index vanish. Given a choice of Dirac operators for $\xi:=(\mathcal{E}_{C^*_{\bf full}(\Gamma)},\mathcal{E}_{C^*_{\bf full}(\Gamma)}',E_\C,E_\C',\alpha)$, Theorem \ref{leichpiazzass} implies the existence of smoothing operators. 
\end{proof}

\begin{define}[The higher Atiyah-Patodi-Singer index of a decorated cycle]
Let $(W,\Xi,f)$ be a decorated cycle for $\mathcal{S}_0^{geo}(\Gamma,C^*_{{\bf full}})$. Consider the Dirac operator $\bar{D}$ on the $C^*_{{\bf full}}$-Clifford bundle 
$$\mathfrak{S}:=\left(\pi^*\left(\mathcal{E}_{C^*_{\bf full}(\Gamma)}|_{\partial W}\right)\oplus \pi^*E_\C'\otimes f^*\mathcal{L}\right)\otimes S_{\partial W\times [0,1]}\to \partial W\otimes [0,1]$$ 
defined as in \eqref{barofnabla} (see page \pageref{barofnabla}) from $\Xi_0$. The $C^*_{\bf full}(\Gamma)$-Clifford bundle $\mathfrak{S}$ is graded since $W$ is even-dimensional and using the identification 
$$\mathfrak{S}^+|_{\partial W\times \{i\}}=(-1)^i \left(\mathcal{E}_{C^*_{\bf full}(\Gamma)}|_{\partial W}\oplus E_\C'\otimes f^*\mathcal{L}\right)\otimes S_{\partial W}\quad\mbox{for}\quad i=0,1,$$ 
we define the smoothing operator 
\begin{align*}
\bar{A}:=(\id\dot{\cup}\alpha^*)^{-1}&\left(\left(A^\mathcal{E}\dot{\cup} -A^{\mathcal{E}'}\right)\oplus A\right)(\id\dot{\cup}\alpha^*)\\
&\in\Psi^{-\infty}_{C^*_{{\bf full}}(\Gamma)}(\partial W\dot{\cup}-\partial W,\mathfrak{S}^+|_{\partial W}).
\end{align*}
The higher Atiyah-Patodi-Singer index of $(W,\Xi,f)$ is defined as
\begin{align*}
\ind_{APS}(W,\Xi,f):=\ind_{APS}(D^W_{\mathcal{E}},A^\mathcal{E})&-\ind_{APS}(D^W_{\mathcal{E}'},A^{\mathcal{E}'})\\
&+\ind_{APS}(\bar{D}^{\partial W\times [0,1]},\bar{A})\in K_0(C^*_{{\bf full}}(\Gamma)).
\end{align*}
\end{define}

\begin{remark}
The higher Atiyah-Patodi-Singer index of $(W,\Xi,f)$ depends (to a very large extent) on the choice of decoration $\Xi$; for the trivial group, this fact can be seen from Equation \eqref{apsdifferentp} (found on page \pageref{apsdifferentp}).
\end{remark}

\begin{prop}
Following the notation of Definition \ref{decorations}, the association 
\begin{align}
&\tilde{\Phi}_{C^*_{\bf full}}^0(W,\Xi,f):=
\label{tildephide}\\
\nonumber
 (\partial W\dot{\cup}-\partial W ,E_\C\otimes S_{\partial W}\dot{\cup}-&E_\C'\otimes S_{\partial W},f\dot{\cup}f,D_{E}\dot{\cup}-D_{E'},A,(\sigma_1-\sigma_2)_*\ind_{APS}(W,\Xi,f)),
\end{align}
induces a well defined map $\tilde{\Phi}_{C^*_{{\bf full}}}:\mathcal{S}_0^h(\Gamma;C^*_{{\bf full}})\to \tilde{S}_1^h(\sigma_1,\sigma_2)$ fitting into a commutative diagram:
\begin{center}
$$\xymatrix{
 &\mathcal{S}_0^{geo}(\Gamma,C^*_{{\bf full}})\ar[ddr]^{\tilde{\Phi}_{C^*_{{\bf full}}}}\ar[ddl]_{\Phi_{C^*_{{\bf full}}}} &  
 \\ \\
\mathcal{S}_1^h(\sigma_1,\sigma_2)\ar[rr]^{\huge \sim}& &\tilde{\mathcal{S}}_1^h(\sigma_1,\sigma_2)
}, $$
\end{center}
where the bottom row is the isomorphism of Lemma \ref{relwwotilde}.
\end{prop}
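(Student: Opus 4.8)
The plan is to prove the proposition in two stages: first that $\tilde{\Phi}_{C^*_{\bf full}}$ is well defined at the level of classes, and second that the triangle commutes. For the first stage, I would follow the same template as the proof of Theorem \ref{phiacommutingdiagram}, transported to the decorated setting. The independence of the choice of decoration $\Xi$ is the analog of the ``well defined from cycles to classes'' step: given two decorations $\Xi$ and $\tilde\Xi$ of the same cocycle $\xi$, one builds a cylinder $\partial W\times[0,1]$ interpolating between the two choices of Dirac operators and smoothing operators, and shows — using Equation \eqref{apsdifferenceformula}, the functoriality \eqref{functorialityofapsclasses}, and Proposition \ref{bordingoutsmoothing} — that the difference of the resulting decorated cycles is a decorated boundary. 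The key computational input is that $(\sigma_1-\sigma_2)_*$ applied to the difference of higher APS indices of the two decorations equals precisely the integer correction forced by the bordism relation for decorated cycles in $\tilde{\mathcal{S}}_1^h(\sigma_1,\sigma_2)$; this is where Lemma \ref{stablerhoinvariantlemma}-type arguments (via \eqref{apsdifferenceformula}) do the work, exactly paralleling how Lemma \ref{cyclepart} handled the change of Dirac operator in the undecorated case.

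For invariance under the Baum--Douglas relations (disjoint union/direct sum, vector bundle modification, bordism), I would reuse the machinery already built: disjoint union is immediate from additivity of APS indices; vector bundle modification invariance follows from the Lemmas just after Definition \ref{vectorbundlemodificationofrelativecycles} showing $A^P$ and $D^P_{\mathcal{L}}+A^P$ behave well, together with \cite[Proposition $6.6$]{HReta}; and bordism invariance follows from the decorated analog of Lemma \ref{trivialdata} (a cocycle that extends over $W$, suitably decorated, maps to a decorated boundary) combined with the gluing Lemma \ref{gluingcyclesalongboundary}, whose proof adapts verbatim once one tracks the extra smoothing-operator data and invokes \cite[Corollary $C.2$]{PS} to keep boundary operators invertible under the relevant pushforwards and gluings.

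For the commutativity of the triangle, the point is to compare $\tilde{\Phi}_{C^*_{\bf full}}(W,\Xi,f)$, viewed in $\tilde{\mathcal{S}}_1^h(\sigma_1,\sigma_2)$, with the image of $\Phi_{C^*_{\bf full}}(W,\xi,f)$ under the isomorphism of Lemma \ref{relwwotilde}. Both are represented by the same underlying oriented cycle $(\partial W,S_{\partial W}\otimes E_\C,f,D_E)\dot\cup -(\partial W,S_{\partial W}\otimes E'_\C,f,D_{E'})$; they differ only in the decoration data (a smoothing operator and an integer). Using Proposition \ref{bordingoutsmoothing} to strip off the smoothing operator $A$ in $\tilde{\Phi}$, one is reduced to checking that the integer produced equals the integer $n$ of Definition \ref{defOfPhi}. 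This is an application of the higher APS index theorem pushed down via $(\sigma_i)_*$: the formula $\ind_{APS}(Q,A)_i=(\sigma_i)_*\ind_{APS}(Q_\mathcal{L},A)$ noted after Definition \ref{apicyclewithboundary}, together with Remark \ref{ncomputation} which expresses $n$ through local index densities and $\eta$-contributions, shows that $(\sigma_1-\sigma_2)_*\ind_{APS}(W,\Xi,f)$ plus the correction term $m$ from Proposition \ref{bordingoutsmoothing} equals $n$. The splitting in Lemma \ref{relwwotilde} is defined by exactly this $m$-correction, so the diagram closes.

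The main obstacle I anticipate is bookkeeping the decoration data through the gluing and bordism arguments: one must verify that all the auxiliary smoothing operators can be chosen compatibly along the gluing hypersurface $Y$ (so that the glued boundary operators remain invertible), and that the pushforwards $(\sigma_i)_*$ of these compatible choices still satisfy the invertibility needed for \eqref{apsdifferenceformula} to apply. This is where \cite[Corollary $C.2$]{PS} and the construction in Remark \ref{reallysmoothremark} are essential, and care is needed because, unlike in the undecorated case, one cannot simply ignore the analytic data when cutting and pasting. Once that is handled, the index-theoretic identities are essentially the same ones already established in the undecorated setting, just with $C^*$-valued APS indices in place of integer ones, reduced via $(\sigma_1-\sigma_2)_*$.
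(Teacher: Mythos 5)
Your plan would likely get there, but it is organized much more laboriously than the paper's argument, and you miss the key structural shortcut. The paper's proof does not re-verify the Baum--Douglas relations (disjoint union/direct sum, vector bundle modification, bordism, independence of decoration) for $\tilde{\Phi}^0_{C^*_{\bf full}}$ at all. Instead it observes that \emph{everything reduces to Theorem \ref{phiacommutingdiagram}}: one only needs to exhibit, for each fixed decorated cycle $(W,\Xi,f)$, a decorated bordism (mod disjoint union/direct sum)
\[
\tilde{\Phi}^0_{C^*_{\bf full}}(W,\Xi,f)\sim_{bor}\Phi_{C^*_{\bf full}}(W,\xi,f),
\]
where $\Phi_{C^*_{\bf full}}(W,\xi,f)$ is computed from the connection data inside the decoration $\Xi$. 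Once this is shown, well-definedness of $\tilde{\Phi}_{C^*_{\bf full}}$ on classes \emph{and} the commutativity of the triangle both follow for free: if two decorated cycles represent the same class in $\mathcal{S}_0^{geo}(\Gamma,C^*_{\bf full})$, their images under $\Phi_{C^*_{\bf full}}$ agree in $\mathcal{S}_1^h(\sigma_1,\sigma_2)$ by the already-established Theorem \ref{phiacommutingdiagram}, hence agree in $\tilde{\mathcal{S}}_1^h(\sigma_1,\sigma_2)$ under the isomorphism of Lemma \ref{relwwotilde}, and each is bordant to the corresponding $\tilde{\Phi}^0$-image. The bordism itself is obtained precisely from the identity you do write down in your third stage: functoriality of higher APS indices under $(\sigma_i)_*$ gives $(\sigma_1-\sigma_2)_*\ind_{APS}(W,\Xi,f)=n-m$, where $n$ is as in Definition \ref{defOfPhi} and $m$ is the correction of Proposition \ref{bordingoutsmoothing}; then Proposition \ref{bordingoutsmoothing} supplies the decorated bordism stripping off $A$ and replacing $n-m$ by $n$. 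Your stage-one and stage-two programme (re-proving decoration-independence, vector bundle modification invariance, and gluing/bordism for $\tilde{\Phi}^0$ directly, with the attendant delicate bookkeeping of smoothing operators along gluing hypersurfaces) is exactly the work that the paper's reduction makes unnecessary, and the ``main obstacle'' you anticipate there is real precisely because you have chosen the harder route. In short: you have the right index-theoretic identity, but you should promote it from an afterthought in the commutativity check to the single lemma from which the whole proposition follows via Theorem \ref{phiacommutingdiagram} and Lemma \ref{relwwotilde}.
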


\begin{proof}
The result follows from Theorem \ref{phiacommutingdiagram} once we prove that for any decorated cycle $(W,\Xi,f)$ there is, modulo the disjoint union/direct sum relation, a decorated bordism $\tilde{\Phi}^0_{C^*_{{\bf full}}}(W,\Xi,f)\sim_{bor} \Phi_{C^*_{{\bf full}}}(W,\xi,f)$ whenever the cycle $\Phi_{C^*_{{\bf full}}}(W,\xi,f)$ is defined using the connection data in the decoration $\Xi$ of $\xi$ (see Definition \ref{defOfPhi}). We note that by functoriality of Atiyah-Patodi-Singer indices,
\begin{align*}
(\sigma_1-\sigma_2)_*&\ind_{APS}(W,\Xi,f)=\\
&\ind_{APS}(D^W_{G_1},A^\mathcal{E}_1)-\ind_{APS}(D^W_{G_2},A^\mathcal{E}_2)\\
-&\ind_{APS}(D^W_{G_1'},A^{\mathcal{E}'}_1)+\ind_{APS}(D^W_{G_2'},A^{\mathcal{E}'}_2)\\
+&\ind_{APS}(\bar{D}^{\partial W\times [0,1]}_{1},\bar{A}_1)-\ind_{APS}(\bar{D}^{\partial W\times [0,1]}_{2},\bar{A}_2)=
n-m,
\end{align*}
where the Clifford connections $\nabla_{G_1}$ et cetera and $n$ are as in Definition \ref{defOfPhi} while $m$ is as in Proposition \ref{bordingoutsmoothing}. Proposition \ref{bordingoutsmoothing} implies that 
\begin{align*}
\tilde{\Phi}_{C^*_{{\bf full}}}^0&(W,\Xi,f)=\\
&(\partial W\dot{\cup}-\partial W ,E_\C\otimes S_{\partial W}\dot{\cup}-E_\C'\otimes S_{\partial W},f\dot{\cup}f,D_{E}\dot{\cup}-D_{E'},A,n-m)\sim_{bor}\\
&(\partial W ,E_\C\otimes S_{\partial W},f,D_{E},0,n)\dot{\cup}-(\partial W,E_\C'\otimes S_{\partial W},f,D_{E'},0,0)=\Phi_{C^*_{{\bf full}}}(W,\xi,f).
\end{align*}
\end{proof}

\begin{remark}
\label{sigmaonetworem}
If $x\in K_*(C^*_{{\bf full}}(\Gamma))$ is the image of the cycle $(M,E,f)$ for $K_*^{geo}(B\Gamma)$ under assembly, Atiyah's $L^2$-index theorem for coverings and the Atiyah-Singer index theorem implies that $(\sigma_1-\sigma_2)_*(x)=0$. Hence, if the assembly map $\mu:K_*^{geo}(B\Gamma)\to K_*(C^*_{{\bf full}}(\Gamma))$ is surjective then $(\sigma_1-\sigma_2)_*=0$ as a map on $K_*(C^*_{{\bf full}}(\Gamma))$.  
\end{remark}

\section{Relation between $\rho\circ \Phi_\mathcal{A}$ and the geometric $\rho$-invariant of \cite{paperI}}
\label{relatingtoindaleph}

 In \cite[Section $5$]{paperI}, a map, $\mathcal{S}_0^{geo}(\Gamma,C^*_{{\bf full}})\to \field{R}$, was constructed on the level of geometric cycles. In this section, we compare this map with the map $\rho_{\sigma_1,\sigma_2}\circ \Phi_{C^*_{{\bf full}}}:\mathcal{S}_0^{geo}(\Gamma,C^*_{{\bf full}})\to \field{R}$. We begin by recalling the construction from \cite[Section $5$]{paperI}.

Let $N$ denote a II$_1$-factor (i.e. a finite von Neumann algebra factor with $K_0(N)\cong\field{R}$); we identify $K_0(N)$ with $\R$ via the unique normal faithful tracial state of $N$. Following the notation of \cite[Definition $5.1$]{paperI} and \cite[Example $5.2$]{paperI}, we construct the following data from the unitary rank $k$ representations $\sigma_1$ and $\sigma_2$ of $\Gamma$. As above, we let $E_i:=E\Gamma\times_{\sigma_i}\C^k\to B\Gamma$. We \emph{choose} an isomorphism $\phi:E_1\otimes N\to E_2\otimes N$ of $N$-bundles on $B\Gamma$. Such an isomorphism is constructed along the lines of \cite[Proposition $5.2$]{antazzska}. The existence of such an isomorphism follows from the fact that $E_1$ and $E_2$ are flat. 

We let $\aleph_0:=(E_1,E_2,\phi)$ denote the associated cocycle for $K^1(B\Gamma; \rz)$, for more on $K$-theory with coefficients in $\rz$ see \cite{antazzska, Bas, DeeRZ, Kar}. The pairing between $K$-homology and $K$-theory with coefficients in $\rz$ is discussed in the context of geometric cycles in \cite[Section 6.2]{DeeRZ}. The data $\aleph_0$ was required in \cite[Definition $5.1$]{paperI} to be refined by further data, which in this case is canonically associated with the data $(\aleph_0,\sigma_1,\sigma_2)$ and as such we denote this data by $\aleph$. We remark that in the general setup considered in \cite{paperI} it was the $M_k(\C)$-bundles $\mathcal{L}\otimes_{\sigma_i}M_k(\C)$ and not the vector bundles $E_i=\mathcal{L}\otimes_{\sigma_i}\C^k$ that was used in $\aleph$. This difference does not affect the comparison carried out in this section.

The map $\ind^\R_\aleph:\mathcal{S}_*^{geo}(\Gamma,C^*_{{\bf full}})\to K_*(pt;N)=\R$ was defined in \cite[Proposition $5.5$]{paperI} on the level of cycles by
\[(W,\xi,f)\mapsto (Z,\alpha_{W,f}^\aleph(\xi)),\]
where $Z:=W\cup_{\partial W}\partial W\times [0,1]\cup_{\partial W} -W$ and $\alpha_{W,f}^\aleph:K^0(W,\partial W;\mu_\mathcal{L})\to K^0(Z;N)$ is a map canonically constructed from $W$, $f$ and $\aleph$. The reader is referred to \cite[Section $5$]{paperI} for the precise construction. 

We will need to understand $\alpha_{W,f}^\aleph$ on an explicit level modulo terms producing null-bordant cycles. Assume that $\xi=(\mathcal{E}_{C^*_{\bf full}(\Gamma)},\mathcal{E}_{C^*_{\bf full}(\Gamma)}',E_\C,E_\C',\alpha)$ is a cycle for $K^0(W,\partial W;\mu_\mathcal{L})$. For notational simplicity, let
\[\tilde{\mathcal{E}}_{C^*_{\bf full}(\Gamma)}:=\pi^*(E_\C\otimes f^*\mathcal{L})\to \partial W\times[0,1]\quad\mbox{and}\quad \tilde{\mathcal{E}}'_{C^*_{\bf full}(\Gamma)}:=\pi^*(E'_\C\otimes f^*\mathcal{L})\to \partial W\times[0,1]\]
where $\pi$ denotes the projection map, $\partial W \times [0,1] \rightarrow \partial W$. 

Throughout this subsection, we abuse notation by letting $C^*_{\bf full}(\Gamma)^k$ denote the bundle corresponding to $k$ copies of the unit class in $C(M,C^*_{{\bf full}}(\Gamma))$ for a manifold $M$ that will be clear from its context. Similarly, we let $1^k$ denote the bundle corresponding to $k$ copies of the unit class in $C(M)$. We choose a complementary $C^*_{{\bf full}}(\Gamma)$-bundle  $\mathcal{E}^\perp_{C^*_{\bf full}(\Gamma)}\to W$ for $\mathcal{E}_{C^*_{\bf full}(\Gamma)}$; that is, there is an isomorphism 
$$\mathcal{E}_{C^*_{\bf full}(\Gamma)}\oplus \mathcal{E}^\perp_{C^*_{\bf full}(\Gamma)}\cong C^*_{\bf full}(\Gamma)^n$$ 
for some $n$. We also choose a complementary $C^*_{{\bf full}}(\Gamma)$-bundle $\tilde{\mathcal{E}}^\perp_{C^*_{\bf full}(\Gamma)}\to \partial W\times [0,1]$ for $\tilde{\mathcal{E}}_{C^*_{\bf full}(\Gamma)}$; that is, there is an isomorphism 
$$\tilde{\mathcal{E}}_{C^*_{\bf full}(\Gamma)}\oplus\tilde{\mathcal{E}}^\perp_{C^*_{\bf full}(\Gamma)}\cong  C^*_{\bf full}(\Gamma)^{\tilde{n}}$$ 
for some $\tilde{n}$. With the isomorphism $\alpha$ and the choice of complementary bundles in hand, we obtain an isomorphism of $C^*_{{\bf full}}(\Gamma)$-bundles on $\partial W\times[0,1]$:
\[\hat{\alpha}:\tilde{\mathcal{E}}^\perp_{C^*_{\bf full}(\Gamma)}\oplus \tilde{\mathcal{E}}_{C^*_{\bf full}(\Gamma)}'\oplus C^*_{\bf full}(\Gamma)^n\xrightarrow{\sim} \mathcal{E}'_{C^*_{\bf full}(\Gamma)}\oplus \mathcal{E}^\perp_{C^*_{\bf full}(\Gamma)}\oplus C^*_{\bf full}(\Gamma)^{\tilde{n}}.\]
The isomorphism $\hat{\alpha}$ induces isomorphisms over $\partial W\times \{0\}$ for $i=1,2$:
\[\hat{\alpha}_i:=(\sigma_i)_*(\hat{\alpha}):(E_\C\otimes f^*E_i)^\perp\oplus E_\C'\otimes f^*E_i\oplus 1^{nk}\xrightarrow{\sim}G_i'|_{\partial W}\oplus G_i^\perp|_{\partial W}\oplus 1^{nk}.\]
We note that the choices $\mathcal{E}^\perp_{C^*_{\bf full}(\Gamma)}$ and $\tilde{\mathcal{E}}^\perp_{C^*_{\bf full}(\Gamma)}$ gives a canonical choice of complement for the bundles $E_\C\otimes f^*E_i$ and $G_i$. The isomorphism $\phi$ induces an isomorphism of vector bundles over $\partial W$:
\begin{align*}
\phi_*:&\left((E_\C\otimes f^*E_1)^\perp\oplus E_\C'\otimes f^*E_1\oplus 1^{nk}\right)\otimes N\xrightarrow{\sim} \\
&\qquad\qquad\left((E_\C\otimes f^*E_2)^\perp\oplus E_\C'\otimes f^*E_2\oplus 1^{nk}\right)\otimes N.
\end{align*}

Let $\Sigma_\xi\to Z=W\cup_{\partial W} \partial W\times [0,1]\cup_{\partial W} -W$ denote the $N$-bundle obtained by gluing together 
\begin{align*}
(G_1&'|_{\partial W}\oplus G_1^\perp|_{\partial W}\oplus 1^{nk})\otimes N\to W \quad\mbox{along}\quad \hat{\alpha}_1\otimes \id_N\quad\mbox{with}\\
&\pi^*\left((E_\C\otimes f^*E_1)^\perp\oplus E_\C'\otimes f^*E_1\oplus 1^{nk}\right)\otimes N\to \partial W\times [0,1]\\
&\quad\mbox{which is glued along}\quad \hat{\alpha}_2\circ \phi_*\quad\mbox{with}\quad \left[G_2'|_{\partial W}\oplus G_2^\perp|_{\partial W}\oplus 1^{nk}\right]\otimes N\to -W.
\end{align*}
The following Proposition follows from the constructions in \cite[Section $5$]{paperI}.

\begin{prop}
In the notation above, $\alpha_{W,f}^\aleph(\xi)-[\Sigma_\xi]\in\R[1]\subseteq K^0(Z;N)$.
\end{prop}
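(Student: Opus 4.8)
The plan is to trace through the definition of the map $\alpha_{W,f}^\aleph$ from \cite[Section 5]{paperI} and show that, at the level of the explicit representative $\Sigma_\xi$ constructed above, the two differ only by a trivial $N$-bundle (i.e.\ a multiple of the class of the unit). First I would recall that the map $\alpha_{W,f}^\aleph:K^0(W,\partial W;\mu_\mathcal{L})\to K^0(Z;N)$ is assembled from three pieces of data: the cycle $\xi=(\mathcal{E}_{C^*_{\bf full}(\Gamma)},\mathcal{E}_{C^*_{\bf full}(\Gamma)}',E_\C,E_\C',\alpha)$ pushed forward along $\sigma_1$ over the ``$W$'' copy, the cylinder interpolation over $\partial W\times[0,1]$, and the $\sigma_2$-pushforward over the ``$-W$'' copy, glued using $\alpha$ together with the chosen isomorphism $\phi:E_1\otimes N\to E_2\otimes N$. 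The key observation is that passing from a relative $K$-theory class to an actual $N$-vector bundle over the closed manifold $Z$ requires a choice of complementary bundles, and the construction in \cite{paperI} makes one such choice while the bundle $\Sigma_\xi$ above is built from the possibly different choices $\mathcal{E}^\perp_{C^*_{\bf full}(\Gamma)}$ and $\tilde{\mathcal{E}}^\perp_{C^*_{\bf full}(\Gamma)}$.

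The main steps, in order, would be: (1) unwind the definition of $\alpha_{W,f}^\aleph(\xi)$ from \cite[Section 5, Proposition 5.5]{paperI} to obtain an explicit $N$-bundle over $Z$, keeping careful track of the gluing isomorphisms over $\partial W\times\{0\}$ and $\partial W\times\{1\}$; (2) observe that the pushforwards $(\sigma_i)_*$ of the bundles $\mathcal{E}_{C^*_{\bf full}(\Gamma)}$, $\mathcal{E}_{C^*_{\bf full}(\Gamma)}'$ and of the cylinder bundles $\tilde{\mathcal{E}}_{C^*_{\bf full}(\Gamma)}$, $\tilde{\mathcal{E}}'_{C^*_{\bf full}(\Gamma)}$ are precisely $G_i$, $G_i'$, $E_\C\otimes f^*E_i$, $E'_\C\otimes f^*E_i$, so that $\Sigma_\xi$ and $\alpha_{W,f}^\aleph(\xi)$ are built from the same ``geometric'' bundle data and the same gluing maps $\hat{\alpha}_i$, $\phi_*$; (3) compare the two bundles: they are stably isomorphic, and the discrepancy in the choice of complements (which only ever adds trivial summands $C^*_{\bf full}(\Gamma)^n$, $C^*_{\bf full}(\Gamma)^{\tilde n}$, and their $\sigma_i$-pushforwards $1^{nk}$, $1^{\tilde nk}$) can only alter the resulting $K^0(Z;N)$-class by the class of a trivial $N$-bundle, i.e.\ by an element of $\R[1]$. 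Here one uses that over $Z$ such trivial summands glue to a trivial $N$-bundle because the gluing data on the trivial part is (up to homotopy through trivializations) the identity, since $\phi_*$ and $\hat{\alpha}$ were chosen compatibly on the $1^{nk}$-factors.

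I expect the main obstacle to be step (3): controlling exactly how much the choice of complementary bundles $\mathcal{E}^\perp_{C^*_{\bf full}(\Gamma)}$, $\tilde{\mathcal{E}}^\perp_{C^*_{\bf full}(\Gamma)}$ contributes after gluing. The point is that changing a complement changes a representative of a $K$-theory class by adding a trivial bundle \emph{together with a clutching function on that trivial summand}; one must verify that this clutching function, after being pushed along $\sigma_1$ on $W$, along $\sigma_2\circ\phi$ on the cylinder, and back, is homotopic to the constant one, so that the extra summand contributes a genuinely trivial (rather than merely stably trivial) $N$-bundle over $Z$ — and hence a class in $\R[1]=K^0(\mathrm{pt};N)[1]\subseteq K^0(Z;N)$. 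This is where the hypothesis that $\phi$ is an isomorphism of \emph{flat} $N$-bundles, and that $\aleph$ is canonically determined by $(\aleph_0,\sigma_1,\sigma_2)$, is used: the flatness makes the relevant clutching data rigid enough that the trivial summands carry no nontrivial $K$-theory over $Z$ beyond the unit class. Once this is established, assembling steps (1)--(3) gives $\alpha_{W,f}^\aleph(\xi)-[\Sigma_\xi]\in\R[1]\subseteq K^0(Z;N)$, as claimed.
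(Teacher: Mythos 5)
The paper itself offers no explicit proof of this proposition: it is stated with only the remark that it ``follows from the constructions in \cite[Section $5$]{paperI}.'' So there is no detailed in-paper argument to compare against, and the burden of the proof really is exactly what you outline: unwinding the definition of $\alpha_{W,f}^\aleph$ from the earlier paper, matching it term-by-term with the explicit bundle $\Sigma_\xi$, and controlling the choices made along the way. Your steps (1) and (2) are the correct structure of this unwinding, and you correctly identify that the only discretionary input in building $\Sigma_\xi$, beyond what is already fixed by $\xi$ and $\aleph$, is the pair of complements $\mathcal{E}^\perp_{C^*_{\bf full}(\Gamma)}$ and $\tilde{\mathcal{E}}^\perp_{C^*_{\bf full}(\Gamma)}$ and the resulting stabilizing summands $C^*_{\bf full}(\Gamma)^n$, $C^*_{\bf full}(\Gamma)^{\tilde n}$.

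Two remarks on step (3). First, the worry about a clutching function on the trivial summand is a little over-stated. The isomorphism $\hat\alpha$ is built precisely so that it restricts to $\alpha$ on the non-trivial factors and so that the stabilizing summands are accounted for in a controlled way; the change of complements contributes a class whose image under restriction to a point is all that can vary, and this is exactly the $\R[1]$ ambiguity. The standard ``two stable complements differ by trivial bundles'' argument, together with the fact that $K^0(Z;N)/\R[1]$ is what $\alpha_{W,f}^\aleph$ actually determines intrinsically, is enough; you do not need a homotopy of clutching functions to the identity. Second, the appeal to flatness of $E_1,E_2$ and to the canonical refinement $\aleph$ of $\aleph_0$ is not needed at this particular step: those hypotheses enter earlier, in guaranteeing the \emph{existence} of an isomorphism $\phi:E_1\otimes N\to E_2\otimes N$ and in fixing the data $\aleph$, not in the bookkeeping of complements. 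With those adjustments your plan is sound and matches the intended reading of the citation to \cite{paperI}.
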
 

Since the manifold $Z$ is nullbordant, $(Z,\alpha_{W,f}^\aleph(\xi))\sim_{bor} (Z,\Sigma_\xi)$ in $K_0^{geo}(pt;N)$. We now turn to the comparison of $\ind_\aleph^\R$ with $\rho_{\sigma_1,\sigma_2}\circ \Phi_{C^*_{{\bf full}}}$. First we recall that a II$_1$-factor $N$ is equipped with a unique faithful normal tracial state $\tau_N$ (see \cite[Corollary III$.2.5.8$]{blackadarbook}). Whenever $\mathcal{F}_N\to W$ is a smooth $N$-bundle equipped with a connection $\nabla_\mathcal{F}$, we use the notation $\ch_\tau(\nabla_\mathcal{F})$ for the associated Chern character, see more in \cite{Sch}. Whenever $Z$ is closed, the cohomology class of  $\ch_\tau(\nabla_\mathcal{F})$ only depend on $\mathcal{F}$ and we write $\ch_\tau(\mathcal{F})$ for the associated de Rham cohomology class. If $W=M\times [0,1]$, $\mathcal{F}=\mathcal{F}_0\times [0,1]\to W$ for a smooth $N$-bundle $\mathcal{F}_0\to M$ equipped with two connections $\nabla_{\mathcal{F}_0}$ and $\nabla_{\mathcal{F}_0}'$, we can equipp $\mathcal{F}$ with the connection $\nabla_\mathcal{F}:=t\nabla_{\mathcal{F}_0}+(1-t)\nabla_{\mathcal{F}_0}'$. We set
$$\cs_\tau(\nabla_{\mathcal{F}_0},\nabla_{\mathcal{F}_0}'):=\int_0^1 \ch_\tau(\nabla_\mathcal{F}).$$

\begin{define}
\label{csdef}
Let $(W,\xi,f)$ be a cycle for $\mathcal{S}_0^{geo}(\Gamma,\mathcal{A})$ equipped with Dirac operators $\Xi_0$ constructed from a connection $(\nabla_W,\nabla_{\mathcal{E}},\nabla_{\mathcal{E}'},\nabla_{E},\nabla_{E'})$ (see Definition \ref{connectionsonrelcycles}). We define the Clifford connections
\[\tilde{\nabla}_1:=\nabla_{(E\otimes E_1)^\perp\otimes N}\oplus \nabla_{E'\otimes E_1\otimes N}\oplus\nabla_{1^{nk}\otimes N},\]
\[\tilde{\nabla}_2:=\phi_*^{-1}\left(\nabla_{(E\otimes E_2)^\perp\otimes N}\oplus \nabla_{E'\otimes E_2\otimes N}\oplus\nabla_{1^{nk}\otimes N}\right),\]
where $\nabla_{1^{nk}}$ denotes the trivial connection on $1^{nk}$. The $\aleph$-Chern-Simons invariant of the cycle with connection $(W,\xi,\Xi_0,f)$ is given by
\[\cs_{\aleph}(W,\xi,\Xi_0,f):=\int_{\partial W} \cs_\tau\left(\tilde{\nabla}_1,\tilde{\nabla}_2\right) \wedge Td(\nabla_{\partial W}).\]
\end{define}

\begin{remark}
Suppose that $\xi$ is an easy cycle; that is, $\mathcal{E}'=E'=0$ and $\alpha:\mathcal{E}|_{\partial W}\xrightarrow{\sim} E \otimes f^*\mathcal{L}$. In this case, we can take the connection $\nabla_{\mathcal{E}}$ such that it coincides with the connection induced from $\nabla_E$ via $\alpha$ near $\partial W$. In this case, we have that
\[\cs_{\aleph}(W,\xi,\Xi_0,f)=-\int_{\partial W} \cs_\tau\left(\nabla_{E\otimes E_1},\phi_*^{-1}(\nabla_{E\otimes E_2})\right) \wedge Td(\nabla_{\partial W}).\]
\end{remark}

\begin{remark}
\label{asforvnbun}
Whenever $Z$ is an even-dimensional closed spin$^c$-manifold and $\mathcal{F}_N\to Z$ is an $N$-bundle, i.e. $(Z,\mathcal{F}_N)$ is a cycle for $K_0^{geo}(pt;N)$, its class in $K_0^{geo}(pt;N)$ is determined by the index $\ind_N(Z,\mathcal{F}_N)\in K_0(N)$. The trace, $\tau_N$, induces an isomorphism $(\tau_N)_*:K_0(N)\xrightarrow{\sim} \R$. Moreover, standard results from Mishchenko-Fomenko index theory (see for example \cite[Theorem 6.9]{Sch}) imply that
$$(\tau_N)_*\ind_N(Z,\mathcal{F}_N)=\int_Z\ch_\tau(\mathcal{F}_N)\wedge Td(Z).$$
\end{remark}

\begin{theorem}
\label{indandrho}
Assume that $\sigma_1,\sigma_2:\Gamma\to U(k)$ are representations and let $\aleph$ and  $\aleph_0$ be the data chosen in the introduction of this section. The $\aleph$-Chern-Simons invariant of cycles with connection is well defined and induces a map $\cs_{\aleph}:\mathcal{S}^{geo}_0(\Gamma,C^*_{{\bf full}})\to \R$ fitting into the commutative diagram:
\begin{center}
$$\xymatrix{
\mathcal{S}_0^{geo}(\Gamma,C^*_{{\bf full}})\ar[rr]^{\Phi_{C^*_{{\bf full}}}}\ar[ddr]_{(\tau_N)_*\circ \ind_\aleph^\R-\cs_{\aleph}}& &\mathcal{S}_1^h(\sigma_1,\sigma_2)\ar[ddl]^{\rho_{\sigma_1,\sigma_2}}
\\ \\ 
 &\R&  
}.$$
\end{center}
where
\begin{enumerate}
\item $\Phi_{C^*_{\bf full}}$ is the map in Definition \ref{defOfPhi};
\item $\rho_{\sigma_1, \sigma_2}$ is the map in Definition \ref{definitionofrho};
\item $\tau_N$, ${\rm ind}^{\field{R}}_{\aleph}$, and $cs_{\aleph}$ are the maps discussed in the definitions and remarks just before the statement of the Theorem.
\end{enumerate}
\end{theorem}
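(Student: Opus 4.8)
The plan is to compute both composites $\rho_{\sigma_1,\sigma_2}\circ\Phi_{C^*_{\bf full}}$ and $(\tau_N)_*\circ\ind_\aleph^\R-\cs_\aleph$ on an arbitrary cycle with a fixed choice of Dirac operators (equivalently connections), and to show the two expressions agree. Since both composites are already known to be well defined on classes — $\rho_{\sigma_1,\sigma_2}$ by Proposition \ref{definitionofrho}, $\Phi_{C^*_{\bf full}}$ by Theorem \ref{phiacommutingdiagram}, $\ind_\aleph^\R$ by \cite[Proposition 5.5]{paperI} — it suffices to verify the equality at the level of cycles once we have checked that $\cs_\aleph$ is well defined. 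I would therefore first dispose of the well-definedness of $\cs_\aleph$: independence of the choice of connection data follows because changing a connection changes $\cs_\tau$ by an exact form on $\partial W$, hence the integral over the closed manifold $\partial W$ is unchanged; invariance under disjoint union is additivity of the integral; vanishing on degenerate cycles and invariance under bordism and vector bundle modification will each reduce, via Stokes' theorem, to the fact that the relevant Chern--Simons transgression form extends over the bordism (respectively pulls back compatibly under $\pi_V$ using Proposition \ref{tauvversusbott}), exactly as the analogous facts were handled for $n$ in Remark \ref{ncomputation} and Lemma \ref{thisCycleIsABoundary}.

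Next I would unwind the right-hand composite. By Definition \ref{defOfPhi} and Proposition \ref{definitionofrho},
\[
\rho_{\sigma_1,\sigma_2}\circ\Phi_{C^*_{\bf full}}(W,\xi,f)=n+\rho_{\sigma_1,\sigma_2}(\partial W,S_{\partial W}\otimes E_\C,f,D_E)-\rho_{\sigma_1,\sigma_2}(\partial W,S_{\partial W}\otimes E_\C',f,D_{E'}),
\]
and I would substitute the explicit formula for $n$ from Remark \ref{ncomputation}. The $\eta$- and dimension-of-kernel contributions to $n$ are designed precisely to cancel against the $\rho$-terms coming from the two boundary pieces (this is the ``correcting by dimensions of kernels'' remark made after Remark \ref{ncomputation}); after this cancellation one is left with a purely local expression, namely the three curvature integrals over $W$, $-W$ and the cylinder $\partial W\times[0,1]$ appearing in Remark \ref{ncomputation}, evaluated with $\ch$ replaced by the $M_k(\C)$-valued Chern characters of $\nabla_{G_i}$, $\nabla_{G_i'}$ and $\bar\nabla_i$.

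Then I would unwind the left-hand composite. By Remark \ref{asforvnbun} and the Proposition identifying $\alpha_{W,f}^\aleph(\xi)$ with $[\Sigma_\xi]$ modulo $\R[1]$,
\[
(\tau_N)_*\ind_\aleph^\R(W,\xi,f)=\int_Z\ch_\tau(\Sigma_\xi)\wedge Td(Z),
\]
where $Z=W\cup_{\partial W}\partial W\times[0,1]\cup_{\partial W}-W$ and $\Sigma_\xi$ is glued from the three $N$-bundles described before the Proposition, using $\hat\alpha_1$, $\hat\alpha_2\circ\phi_*$ and the trivial gluing along the cylinder. Choosing connections on each piece compatibly (the ones induced from $\nabla_{\mathcal{E}}$, $\nabla_{\mathcal{E}'}$, $\nabla_E$, $\nabla_{E'}$ tensored with a fixed connection on $N$, together with the transgressing connections $\tilde\nabla_1,\tilde\nabla_2$ on the cylinder), the integral over $Z$ splits as an integral over $W$, an integral over $-W$, and a transgression integral over $\partial W\times[0,1]$. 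The $W$ and $-W$ contributions match the local part of $n$ after observing that $\ch_\tau$ of a bundle of the form (vector bundle)$\otimes N$ equals the ordinary $\ch$ of the vector bundle (the trace is normalized), and that $\ch_\tau$ is additive over the direct summands $G_i'\oplus G_i^\perp\oplus 1^{nk}$ with the $1^{nk}$ and the complementary $G_i^\perp$ pieces contributing in a way that telescopes against the choice of complements — here the key algebraic identity is that $\ch(G_i)+\ch(G_i^\perp)=\ch(C^*_{\bf full}(\Gamma)^n\otimes_{\sigma_i}\C^k)=nk$ is constant, so the $G_i^\perp$ terms contribute nothing to the difference $i=1$ minus $i=2$. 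The one place the two composites genuinely differ is the cylinder contribution: $\Sigma_\xi$ uses the gluing $\hat\alpha_2\circ\phi_*$ rather than $\hat\alpha_2$, and it is exactly the transgression of $\phi_*$ — i.e. $\int_{\partial W}\cs_\tau(\tilde\nabla_1,\tilde\nabla_2)\wedge Td(\nabla_{\partial W})=\cs_\aleph(W,\xi,\Xi_0,f)$ — that measures this discrepancy. Subtracting it yields the desired equality.

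The main obstacle I anticipate is the bookkeeping in the cylinder term: one must carefully track how the isomorphisms $\alpha$, $\hat\alpha$, $\hat\alpha_i$ and $\phi_*$ interact with the transgression, and confirm that the difference between the cylinder piece of $\int_Z\ch_\tau(\Sigma_\xi)\wedge Td(Z)$ and the cylinder piece of $n$ in Remark \ref{ncomputation} is \emph{precisely} $\cs_\aleph$ rather than $\cs_\aleph$ plus some further local correction coming from the complementary bundles or the trivial summands. Controlling this requires checking that $\cs_\tau$ of the trivial-summand connections and of the complementary-bundle connections cancel in the $i=1$ versus $i=2$ difference (again using constancy of $\ch$ of a trivial or complemented bundle), and that the transgression form for $\bar\nabla_1$ versus $\bar\nabla_2$ in $n$ differs from the $\Sigma_\xi$-cylinder form only by the $\phi_*$-transgression. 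Once this matching is pinned down, everything else is a routine application of the local index theorem (Remarks \ref{ncomputation} and \ref{asforvnbun}) and Stokes' theorem; the well-definedness of $\cs_\aleph$ then follows formally from the commutativity of the diagram combined with the already-established well-definedness of $\Phi_{C^*_{\bf full}}$, $\rho_{\sigma_1,\sigma_2}$ and $\ind_\aleph^\R$, or can be checked directly as indicated above.
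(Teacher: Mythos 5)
Your proposal is correct and follows essentially the same route as the paper's proof: both reduce, via Remark \ref{ncomputation} and Remark \ref{asforvnbun} (and the explicit cancellation of the $\eta$- and kernel-dimension terms against the $\rho$-contributions from the two boundary pieces), to showing that $\int_Z\ch_\tau[\Sigma_\xi]\wedge Td(Z)-\cs_\aleph$ equals the three local curvature integrals from Remark \ref{ncomputation}, which the paper then verifies by exhibiting a specific connection $\nabla^\Sigma$ on $\Sigma_\xi$ interpolating through collar neighborhoods of $\partial W$ so that the $W\dot{\cup}-W$ pieces give the local integrals and the $\partial W\times[0,1]$ piece gives precisely the $\phi_*$-transgression $\cs_\aleph$. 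The only cosmetic difference is that you separately sketch the well-definedness of $\cs_\aleph$ via Stokes' theorem while the paper lets it follow formally from the identity and the known well-definedness of the other maps; you also correctly anticipate the bookkeeping subtlety (the complementary bundle $G_i^\perp$ and trivial summand contributions telescope in the $i=1$ vs $i=2$ difference, and the $\bar\nabla$-cylinder term actually arises from the collars inside $W\dot{\cup}-W$, while $\cs_\aleph$ arises from the cylinder piece $\partial W\times[0,1]$ of $Z$), which the paper handles through its explicit choice of $\nabla^\Sigma$.
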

\begin{remark}
The reader should note that this theorem appeared in the Introduction. As we mentioned there, the map ${\rm ind}^{\field{R}}_{\aleph}$ should be viewed in analogy with the topological index, while the map $\rho_{\sigma_1, \sigma_2}$ should be viewed in analogy with the analytic index.
\end{remark}
\begin{proof}
We equip $Z$ with the Clifford connection constructed from $\nabla_W$ using the fact that $\nabla_W$ is of product type near $\partial W$. Remarks \ref{ncomputation} and \ref{asforvnbun} imply that the result follows upon showing that 
\small
\begin{align}
\nonumber
&\int_Z\ch_\tau[\Sigma_\xi]\wedge Td(Z)-\cs_{\aleph}(W,\xi,\Xi_0,f)=\int_W \left(\ch(\nabla_{G_1})-\ch(\nabla_{G_2})\right)\wedge Td(\nabla_W)-\\
\label{etavsindaleph}
&\qquad\int_W \left(\ch(\nabla_{G'_1})-\ch(\nabla_{G'_2})\right)\wedge Td(\nabla_W)+\int_{\partial W\times [0,1]} \left(\ch(\bar{\nabla}_1)-\ch(\bar{\nabla}_2)\right)\wedge Td(\nabla_{\partial W}).
\end{align}
\normalsize
To prove this fact, we construct a specific connection $\nabla^\Sigma$ on $\Sigma_\xi$. Let $[0,1]\times \partial W\cong U\subseteq W$ be a small cylindrical neighborhood of $\partial W$ and $[0,1/2]\times \partial W\cong U_0\subseteq U$ another even smaller cylindrical neighborhood. We can take 
\begin{align*}
\nabla^\Sigma|_{W\setminus \overline{U}}=\nabla_{G_1^\perp\otimes N}\oplus& \nabla_{G_1'\otimes N}\oplus \nabla_{1^{\tilde{n}}\otimes N}\quad\mbox{and}\\
& \nabla^\Sigma|_{U_0}=\hat{\alpha}_1^{-1}\left(\nabla_{(E\otimes E_1)^\perp\otimes N}\oplus \nabla_{E'\otimes E_1\otimes N}\oplus\nabla_{1^n\otimes N}\right).
\end{align*} 
On the other part (i.e., $-W\subseteq Z$) we require that 
\begin{align*}
\nabla^\Sigma|_{-W\setminus -\overline{U}}=\nabla_{G_2^\perp\otimes N}\oplus& \nabla_{G_2'\otimes N}\oplus \nabla_{1^{\tilde{n}}\otimes N}\quad\mbox{and}\\
& \nabla^\Sigma|_{-U_0}=\hat{\alpha}_2^{-1}\left(\nabla_{(E\otimes E_2)^\perp\otimes N}\oplus \nabla_{E'\otimes E_2\otimes N}\oplus\nabla_{1^n\otimes N}\right).
\end{align*} 
On $\partial W\times [0,1]$, we set the conditions that 
\begin{align*}
\nabla^\Sigma&=\hat{\alpha}_1^{-1}\tilde{\nabla}_1\quad \mbox{near}\quad \partial W\times \{0\}\quad\mbox{and}\quad  \nabla^\Sigma =\hat{\alpha}_1^{-1}\tilde{\nabla}_2\quad \mbox{near}\quad \partial W\times \{1\}.
\end{align*} 
For this choice of connections,
\begin{align*}
&\int_{W\dot{\cup}-W}\ch_\tau[\Sigma_\xi]\wedge Td(Z)=\int_W \left(\ch(\nabla_{G_1})-\ch(\nabla_{G_2})\right)\wedge Td(\nabla_W)-\\
&\qquad\int_W \left(\ch(\nabla_{G'_1})-\ch(\nabla_{G'_2})\right)\wedge Td(\nabla_W)+\int_{\partial W\times [0,1]} \left(\ch(\bar{\nabla}_1)-\ch(\bar{\nabla}_2)\right)Td(\nabla_{\partial W}),
\end{align*}
and
\begin{align*}
\int_{\partial W\times [0,1]}\ch_\tau[\Sigma_\xi]\wedge Td(Z)&=\int_{\partial W} \cs_\tau\left( \hat{\alpha}_1^{-1}\tilde{\nabla}_1, \hat{\alpha}_1^{-1}\tilde{\nabla}_2\right) \wedge Td(\nabla_{\partial W})=\\
&=\int_{\partial W} \cs_\tau\left(\tilde{\nabla}_1,\tilde{\nabla}_2\right) \wedge Td(\nabla_{\partial W})=\cs_{\aleph}(W,\xi,\Xi_0,f).
\end{align*}
The identity \eqref{etavsindaleph} (and thus also the theorem) follows.
\end{proof}

\begin{remark}
The reader should compare Theorem \ref{indandrho} to \cite[Theorem $5.4$]{antazzska}.
\end{remark}

Suppose that $\aleph$ and $\aleph'$ are two different choices of data above, i.e. we choose two different isomorphisms $\phi:E_1\otimes N\to E_2\otimes N$ and $\phi':E_1\otimes N' \to E_2\otimes N'$ for two II$_1$-factors $N$ and $N'$. We define the $N\bar{\otimes} N'$-bundle $\ell_{\aleph,\aleph'}\to B\Gamma\times S^1$ by gluing together $E_1\otimes N\bar{\otimes} N'\times [0,1/2]\to B\Gamma\times [0,1/2]$ with $E_2\otimes N\bar{\otimes} N'\times [1/2,1]\to B\Gamma\times [1/2,1]$ in $1/2$ along $\phi\otimes \id_{N'}$ and  the fiber over $0$ with that over $1$ along $\id_N\otimes \phi'$. We conclude the following Proposition comparing the Chern-Simons term of two different choices $\aleph$ and $\aleph'$ as above.

\begin{prop}
We use the notation introduction in this section (in particular, in the previous paragraph and Remark \ref{asforvnbun}). Then, one can factor the difference 
\[(\tau_N)_*\circ \ind_\aleph^\R-(\tau_{N'})_*\circ\ind_{\aleph'}^\R=(\tau_N\bar{\otimes}\tau_{N'})_*\circ \gamma_{\aleph,\aleph'}\circ \delta:\mathcal{S}_0^{geo}(\Gamma,C^*_{\bf full})\to \R,\]
where $\gamma_{\aleph,\aleph'}:K_1^{geo}(B\Gamma)\to K_0^{geo}(pt;N\bar{\otimes}N')$ is given on the level of cycles by
\[\gamma_{\aleph,\aleph'}(M,E,f):=(M\times S^1,E\otimes f^*\ell_{\aleph,\aleph'}).\]
\end{prop}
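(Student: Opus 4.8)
The plan is to reduce the statement to the computation carried out in the proof of Theorem~\ref{indandrho}, applied twice (once for $\aleph$ and once for $\aleph'$), and then to recognize the difference of the two resulting de Rham integrals as the index pairing against the cycle produced by $\gamma_{\aleph,\aleph'}\circ\delta$. Concretely, fix a cycle $(W,\xi,f)$ for $\mathcal{S}_0^{geo}(\Gamma,C^*_{\bf full})$ together with a choice of Dirac operators $\Xi_0$ constructed from connections $(\nabla_W,\nabla_{\mathcal{E}},\nabla_{\mathcal{E}'},\nabla_E,\nabla_{E'})$. By the proof of Theorem~\ref{indandrho} (in particular the identity \eqref{etavsindaleph}), for each of the two choices of data we have
\[
(\tau_N)_*\ind^\R_\aleph(W,\xi,f)=\int_Z\ch_\tau[\Sigma_\xi]\wedge Td(Z)=\cs_\aleph(W,\xi,\Xi_0,f)+R(W,\xi,\Xi_0,f),
\]
where $R(W,\xi,\Xi_0,f)$ collects exactly the three ``local'' integrals over $W$, $-W$ and $\partial W\times[0,1]$ appearing on the right-hand side of \eqref{etavsindaleph}. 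The point is that $R$ depends only on $(W,\xi,\Xi_0,f)$ and \emph{not} on the choice of $\aleph$: those integrals are built from $\nabla_{G_i},\nabla_{G_i'}$ and $\bar\nabla_i$, which involve only the representations $\sigma_1,\sigma_2$ and the connection data, never the auxiliary isomorphism $\phi$ or the factor $N$. Hence subtracting,
\[
(\tau_N)_*\ind^\R_\aleph(W,\xi,f)-(\tau_{N'})_*\ind^\R_{\aleph'}(W,\xi,f)=\cs_\aleph(W,\xi,\Xi_0,f)-\cs_{\aleph'}(W,\xi,\Xi_0,f),
\]
so everything is reduced to a boundary (Chern--Simons) computation on $\partial W$.

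Next I would compute the right-hand side as an integral over $\partial W$ of a transgression form and identify it with an index over $\partial W\times S^1$. Using Definition~\ref{csdef}, $\cs_\aleph(W,\xi,\Xi_0,f)-\cs_{\aleph'}(W,\xi,\Xi_0,f)$ is $\int_{\partial W}\big(\cs_\tau(\tilde\nabla_1,\tilde\nabla_2)-\cs_\tau(\tilde\nabla_1',\tilde\nabla_2')\big)\wedge Td(\nabla_{\partial W})$, where primed connections refer to $\phi'$ and $N'$. Tensoring up to $N\bar\otimes N'$ and using additivity and the cocycle property of Chern--Simons forms (the sum of the transgressions around a loop of connections equals the Chern character of the mapping-torus bundle), this difference of Chern--Simons integrals equals $\int_{\partial W}\ch_\tau(\text{restriction of the glued }N\bar\otimes N'\text{-bundle over }\partial W\times S^1)$ integrated against the pulled-back Todd form; concretely this glued bundle is precisely $(S_{\partial W}\otimes E|_{\partial W})\otimes (f|_{\partial W})^*\ell_{\aleph,\aleph'}$ once one unwinds the definition of $\ell_{\aleph,\aleph'}$ via the gluings $\phi\otimes\id_{N'}$ at the interior seam and $\id_N\otimes\phi'$ at $0\sim 1$. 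By Remark~\ref{asforvnbun} (i.e.\ the Mishchenko--Fomenko index theorem, $(\tau)_*\ind = \int \ch_\tau\wedge Td$), this de Rham integral over $\partial W\times S^1$ equals $(\tau_N\bar\otimes\tau_{N'})_*\ind_{N\bar\otimes N'}\big(\partial W\times S^1,\;S_{\partial W}\otimes E|_{\partial W}\otimes (f|_{\partial W})^*\ell_{\aleph,\aleph'}\big)$.

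Finally I would assemble the pieces. Under the isomorphism of Lemma~\ref{khomorientedlem}, the cycle $(\partial W,E|_{\partial W},f|_{\partial W})$ for $K_1^{geo}(B\Gamma)$ is carried to $(\partial W,S_{\partial W}\otimes E|_{\partial W},f|_{\partial W})$; by definition $\delta(W,\xi,f)=(\partial W,E|_{\partial W},f|_{\partial W})$ in $K_1^{geo}(B\Gamma)$, and by definition $\gamma_{\aleph,\aleph'}(M,E,f)=(M\times S^1,E\otimes f^*\ell_{\aleph,\aleph'})$. Putting $M=\partial W$, $E=E|_{\partial W}$, $f=f|_{\partial W}$, the index computed above is exactly $(\tau_N\bar\otimes\tau_{N'})_*$ applied to the class $\gamma_{\aleph,\aleph'}(\delta(W,\xi,f))\in K_0^{geo}(pt;N\bar\otimes N')$, which is what we wanted. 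One also checks well-definedness: both sides are already known to be well defined on $\mathcal{S}_0^{geo}$ (the left side by \cite{paperI} and Theorem~\ref{indandrho}, the maps $\gamma_{\aleph,\aleph'}$ and $\delta$ are well defined on the respective geometric $K$-homology groups), so the identity of the two well-defined maps on cycles suffices.

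The main obstacle I anticipate is the bookkeeping in the second step: one must check that the $N\bar\otimes N'$-bundle obtained by splicing the two Chern--Simons transgressions $\cs_\tau(\tilde\nabla_1,\tilde\nabla_2)$ and $-\cs_\tau(\tilde\nabla_1',\tilde\nabla_2')$ along $\partial W$ is, up to bundles carrying trivial Chern character (the $1^{nk}$ summands and the complements $(E\otimes E_i)^\perp$, which cancel in the difference) and up to the twist by $S_{\partial W}$, canonically the pullback $(f|_{\partial W})^*\ell_{\aleph,\aleph'}$ with the gluing data $\phi\otimes\id_{N'}$ and $\id_N\otimes\phi'$ exactly as in the definition of $\ell_{\aleph,\aleph'}$. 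This is the same sort of gluing identification already performed for $\Sigma_\xi$ in this section, so it is routine but requires care with the roles of the seam at $t=1/2$ versus the identification $0\sim 1$, and with the fact that the flat connections on the $E_i$ make all the transgression forms over $\partial W$ agree with those pulled back from $B\Gamma\times S^1$.
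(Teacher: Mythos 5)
Your first step is correct and is the right observation: the local terms $R$ on the right-hand side of \eqref{etavsindaleph} are built only from $\nabla_{G_i}$, $\nabla_{G_i'}$, $\bar\nabla_i$, i.e.\ from $\Xi_0$ and the representations $\sigma_1,\sigma_2$, and never touch $\phi$, $\phi'$, $N$ or $N'$; hence subtracting the two instances of \eqref{etavsindaleph} reduces the proposition to the identity
\[
\cs_\aleph(W,\xi,\Xi_0,f)-\cs_{\aleph'}(W,\xi,\Xi_0,f)=(\tau_N\bar\otimes\tau_{N'})_*\,\ind\bigl(\gamma_{\aleph,\aleph'}(\delta(W,\xi,f))\bigr),
\]
with the index computed via Remark~\ref{asforvnbun}. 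The strategy of interpreting this as a mapping-torus index via the cocycle property of Chern--Simons transgressions is also the right approach.

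However, your second step contains a genuine error in the bookkeeping that would make the identity fail as you have stated it. You write $\delta(W,\xi,f)=(\partial W,E|_{\partial W},f|_{\partial W})$, but from the proof of Theorem~\ref{phiacommutingdiagram} one reads off
\[
\delta(W,(\mathcal{E},\mathcal{E}',E_\C,E_\C',\alpha),f)=(\partial W,E_\C,f)\,\dot\cup\,-(\partial W,E_\C',f),
\]
a \emph{virtual} cycle involving both $E_\C$ and $E_\C'$. Correspondingly, the mapping-torus bundle whose index you must recover is (the $S_{\partial W}$-twist of) $([E_\C]-[E_\C'])\otimes (f|_{\partial W})^*\ell_{\aleph,\aleph'}$, not $E_\C\otimes(f|_{\partial W})^*\ell_{\aleph,\aleph'}$. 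This is consistent with the structure of Definition~\ref{csdef}: the bundle carrying $\tilde\nabla_1,\tilde\nabla_2$ is $(E_\C\otimes f^*E_1)^\perp\oplus E_\C'\otimes f^*E_1\oplus 1^{nk}$, and in $K$-theory the class of $(E_\C\otimes f^*E_i)^\perp$ is $\tilde n k-[E_\C][f^*E_i]$, so the essential $K$-theoretic content of the transgression is $([E_\C']-[E_\C])\cdot[f^*E_i]$ -- you cannot ignore the $E_\C'$-contribution. Relatedly, your claim that ``the $1^{nk}$ summands and the complements $(E\otimes E_i)^\perp$ cancel in the difference'' is not correct as stated: $(E_\C\otimes f^*E_i)^\perp$ is defined via a complement of a $C^*_{\bf full}(\Gamma)$-bundle and is not a flat bundle; it does not have trivial Chern--Weil forms, and it does not cancel. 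What actually happens is that, after pushing forward along $\phi_*\otimes\id_{N'}$, applying the cocycle relation $\cs(\nabla_a,\nabla_b)-\cs(\nabla_a,\nabla_c)\equiv-\cs(\nabla_b,\nabla_c)$ modulo exact forms, and identifying the resulting transgression with the circle-integral of $\ch_{\tau_N\bar\otimes\tau_{N'}}$ of the mapping torus of $(\phi_*\otimes\id_{N'})\circ(\id_N\otimes(\phi')_*)^{-1}$, the $K$-theoretic class that survives is exactly $([E_\C']-[E_\C])\otimes[f^*E_i]$ twisted by $\ell_{\aleph,\aleph'}$, and the sign must be compared carefully against the sign conventions in the definitions of $\cs_\tau$, $\ell_{\aleph,\aleph'}$, and $\delta$. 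So the second step is not merely ``routine bookkeeping'': as written, it tracks the wrong bundle, and the cancellation you invoke is not the mechanism that makes the computation close. You need to keep the full virtual $K$-theory class $[E_\C]-[E_\C']$ throughout, match it with $\delta$, and carry out the cocycle/mapping-torus identification with attention to orientation and sign.
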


We note that if $[\aleph_0]=[\aleph_0']\in K^1(B\Gamma;\rz)$, \cite[Theorem $5.6$]{paperI} guarantees that $(\tau_N\bar{\otimes}\tau_{N'})_*\circ \gamma_{\aleph,\aleph'}\circ \delta:\mathcal{S}_0^{geo}(\Gamma,C^*_{\bf full})\to \Z$.

\section{Vanishing results for relative $\eta$-invariants}
\label{sectionvanishing}

The main application of this paper is the connection between the cycles relative to the assembly map and relative $\eta$-invariants (in particular, stable relative $\eta$-invariants). Such connections have previously been used to establish vanishing of stable relative $\eta$-invariants for cycles vanishing under the full assembly map assuming that $\Gamma$ is a torsion-free group with the full Baum-Connes property (see for instance \cite{HReta, Kescont, PS}). 

The motivation for this sphere of problems is the following: assume that $(M,S_{\C\ell},f)$ is a cycle for $K_1^h(B\Gamma)$ that vanishes under assembly. This situation occurs for instance when $M$ is spin and has positive scalar curvature, with $S_{\C\ell}$ being the spinor bundle, or if there in a suitable sense exists a homotopy equivalence between two cycles $(M',S'_{\C\ell},f')$ and $(M'',S''_{\C\ell},f'')$ in which case the cycle $(M,S_{\C\ell},f)=(M',S'_{\C\ell},f')\dot{\cup}(-M'',-S''_{\C\ell},f'')$ vanishes under assembly. Following \cite[Theorem 3.8]{paperI}, the class associated to the cycle, $(M,S_{\C\ell},f)$, must be the boundary of a cycle $(W,\xi,f)$ for $\mathcal{S}_{0}^{geo}(\Gamma;C^*_{{\bf full}})$ under the isomorphism $K_1^{geo}(B\Gamma)\xrightarrow{\sim} K_1^h(B\Gamma)$. If $C^*_{{\bf full}}(\Gamma)$ has the Baum-Connes property, the map $\Phi_{C^*_{{\bf full}}}$ of Theorem \ref{phiacommutingdiagram} vanishes. After applying the relative $\eta$-invariant on $\mathcal{S}_1^{h}(\sigma_1,\sigma_2)$ we arrive at the identity 
$$\rho_{\sigma_1,\sigma_2}\circ\Phi_{C^*_{{\bf full}}}(W,\xi,f)=n+\rho_{\sigma_1,\sigma_2}(M,S_{\C\ell},f,D)=0.$$
However, there is a large freedom in choosing the cycle $(W,\xi,f)$ and vanishing results for relative $\eta$-invariants can only be obtained if it is possible to choose $(W,\xi,f)$ in such a way that $n=0$.

\subsection{The $(\sigma_1,\sigma_2)$-relative index problem}

Many results in this section are based on solving a certain type of index problem. The index problem can be formulated in two ways. One is suited for Banach algebras and the relative $\eta$-invariant and a second, which we treat in the next subsection, that (with currently available techniques) is only available for $C^*$-algebras; it is also better suited for treating stable relative $\eta$-invariants. 

\begin{define}[The $(\sigma_1,\sigma_2)$-relative index problem]
\label{sigmaproblem}
Assume that $\mathcal{A}(\Gamma)$ is a Banach algebra completion of $\C[\Gamma]$ such that $\sigma_1$ and $\sigma_2$ extend to continuous homomorphisms $\mathcal{A}(\Gamma)\to M_k(\C)$. The $(\sigma_1,\sigma_2)$-relative index problem for $(M,S_{\C\ell},g,D)$ (with respect to $\mathcal{A}(\Gamma)$), where $(M,S_{\C\ell},g)$ is a cycle for $K_1(B\Gamma)$ and $D$ is a choice of Dirac operator on $S_{\C\ell}$, is to find a cycle $(W,\xi,f)$ for $\mathcal{S}^{geo}_{0}(\Gamma,\mathcal{A})$ such that 
$$\Phi_\mathcal{A}(W,\xi,f)=[M,S_{\C\ell},g,D,0]\quad\mbox{in}\quad \mathcal{S}^{h}_1(\sigma_1,\sigma_2).$$
\end{define}

\begin{remark}
A solution to the $(\sigma_1,\sigma_2)$-relative index problem is never unique. A disjoint union by any cycle $(M,\mathcal{E})$ for $K_0^{geo}(pt;\mathcal{A}(\Gamma))$ such that $(\sigma_1-\sigma_2)_*(M,\mathcal{E})=0$ does not alter the boundary of a cycle for $\mathcal{S}^{geo}_{0}(\Gamma,\mathcal{A})$. 
\end{remark}

\begin{prop}
\label{indexsolutionrhovanishes}
Assume that $\Gamma$ is a torsion-free group and that $\mathcal{A}(\Gamma)$ (as in Definition \ref{sigmaproblem}) has the Baum-Connes property. If $(M,S_{\C\ell},g,D)$ admits a solution to the $(\sigma_1,\sigma_2)$-relative index problem, 
$$\rho_{\sigma_1,\sigma_2}(M,S_{\C\ell},g,D)=0.$$
\end{prop}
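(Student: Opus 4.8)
The plan is to combine the exact sequence of Theorem~\ref{phiacommutingdiagram} with the Baum-Connes hypothesis to conclude $\mathcal{S}^{geo}_0(\Gamma,\mathcal{A})=0$, and then to push a solution of the index problem through the map $\rho_{\sigma_1,\sigma_2}$. First I would observe that since $\Gamma$ is torsion-free and $\mathcal{A}(\Gamma)$ has the Baum-Connes property, the assembly map $\mu_{\mathcal{L}_\mathcal{A}}:K_*^{geo}(B\Gamma)\to K_*^{geo}(pt;\mathcal{A}(\Gamma))$ is an isomorphism. Feeding this into the (geometric) surgery exact sequence from \cite[Theorem 3.8]{paperI} — the top row of the diagram in Theorem~\ref{phiacommutingdiagram}, which reads $K_0^{geo}(pt;\mathcal{A}(\Gamma))\xrightarrow{r}\mathcal{S}^{geo}_0(\Gamma,\mathcal{A})\xrightarrow{\delta}K_1^{geo}(B\Gamma)$ and continues with the assembly map on either side — exactness forces $\mathcal{S}^{geo}_0(\Gamma,\mathcal{A})=0$. (Concretely: $\delta$ has image in the kernel of assembly, which is zero, so $\delta=0$; and $r$ is surjective but its source maps onto $\mathcal{S}^{geo}_0$ only through the cokernel of assembly, which is again zero.)

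Next, let $(W,\xi,f)$ be a solution to the $(\sigma_1,\sigma_2)$-relative index problem for $(M,S_{\C\ell},g,D)$, so that $\Phi_\mathcal{A}(W,\xi,f)=[M,S_{\C\ell},g,D,0]$ in $\mathcal{S}^h_1(\sigma_1,\sigma_2)$. Since $\mathcal{S}^{geo}_0(\Gamma,\mathcal{A})=0$, we have $(W,\xi,f)=0$ in $\mathcal{S}^{geo}_0(\Gamma,\mathcal{A})$, hence $[M,S_{\C\ell},g,D,0]=\Phi_\mathcal{A}(W,\xi,f)=\Phi_\mathcal{A}(0)=0$ in $\mathcal{S}^h_1(\sigma_1,\sigma_2)$. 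Applying the relative $\eta$-invariant homomorphism $\rho_{\sigma_1,\sigma_2}:\mathcal{S}^h_1(\sigma_1,\sigma_2)\to\R$ of Proposition~\ref{definitionofrho} (which is well defined, in particular on the non-decorated subgroup via Lemma~\ref{relwwotilde}), we obtain
\[
\rho_{\sigma_1,\sigma_2}(M,S_{\C\ell},g,D)=\rho_{\sigma_1,\sigma_2}[M,S_{\C\ell},g,D,0]=\rho_{\sigma_1,\sigma_2}(0)=0,
\]
where the first equality is just the formula $\rho_{\sigma_1,\sigma_2}(M,S_{\C\ell},f,D,0)=0+\rho_{\sigma_1,\sigma_2}(M,S_{\C\ell},f,D,0)$ from Proposition~\ref{definitionofrho}.

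Almost all of the work is already packaged into the cited results, so the proof is short; the only point requiring a little care is the bookkeeping in the exact-sequence step. The main (modest) obstacle is making sure the identification of ``$\mathcal{A}(\Gamma)$ has the Baum-Connes property'' with ``$\mathcal{S}^{geo}_0(\Gamma,\mathcal{A})=0$'' is invoked correctly: one must use both that $\delta$ lands in $\ker(\mu_{\mathcal{L}_\mathcal{A}}: K^{geo}_{*-1}(B\Gamma)\to K_{*-1}(\mathcal{A}(\Gamma)))$ and that the preceding term $K_0^{geo}(pt;\mathcal{A}(\Gamma))$ surjects onto $\mathcal{S}^{geo}_0$ only modulo the image of $r\circ(\text{something coming from }K_1^{geo}(B\Gamma)$ under assembly$)$; the isomorphism statement kills both the kernel and the relevant cokernel, which is exactly what \cite[Theorem 3.8]{paperI} asserts. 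With $\mathcal{S}^{geo}_0(\Gamma,\mathcal{A})=0$ in hand, the remainder is a one-line functoriality argument. I would also remark, as the paper does immediately afterwards, that the hypothesis ``$(M,S_{\C\ell},g,D)$ admits a solution to the index problem'' is strictly stronger than $\mu(M,S_{\C\ell},g)=0$, which is why the stable reformulation of Subsection~\ref{stabelsubsection} is needed for the full strength of Keswani's theorem.
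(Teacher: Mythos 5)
Your proposal is correct and takes essentially the same route as the paper: the Baum-Connes hypothesis forces $\mathcal{S}^{geo}_0(\Gamma,\mathcal{A})=0$ via the exact sequence of \cite[Theorem 3.8]{paperI}, and then $\rho_{\sigma_1,\sigma_2}\circ\Phi_\mathcal{A}(W,\xi,f)=\rho_{\sigma_1,\sigma_2}(M,S_{\C\ell},g,D)=0$. The paper simply states the vanishing of $\mathcal{S}^{geo}_0(\Gamma,\mathcal{A})$ without spelling out the exactness bookkeeping you supply (which is correct, though the phrase ``$r$ is surjective'' deserves the justification that $\delta=0$, and the factorization of $r$ through the cokernel of $\mu$ is what kills it); otherwise the two arguments coincide.
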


\begin{proof}
If $\mathcal{A}(\Gamma)$ has the Baum-Connes property, then $\mathcal{S}_0^{geo}(\Gamma,\mathcal{A})=0$; hence, 
\[\rho_{\sigma_1,\sigma_2}\circ \Phi_\mathcal{A}(W,\xi,f)=\rho_{\sigma_1,\sigma_2}(M,S_{\C\ell},g,D)=0\]
if $(W,\xi,f)$ solves the $(\sigma_1,\sigma_2)$-relative index problem for $(M,S_{\C\ell},g,D)$.
\end{proof}

\begin{remark}
\label{obstructingtheindexproblem}
The obvious topological obstruction to solving the $(\sigma_1,\sigma_2)$-relative index problem for $(M,S_{\C\ell},g,D)$ is that its image under assembly vanishes (i.e., $\mu_{\mathcal{A}}(M,S_{\C\ell},g)=0$ in $K_1(\mathcal{A}(\Gamma))$). Unfortunately, the $(\sigma_1,\sigma_2)$-relative index problem does in general not admit a solution even when the assembly of the cycle vanishes. This follows from Proposition \ref{indexsolutionrhovanishes} and the examples in \cite[Section $15$]{PS}.
\end{remark}

\subsection{The \emph{stable} $(\sigma_1,\sigma_2)$-relative index problem}
\label{stabelsubsection}

To remedy the problem discussed in Remark \ref{obstructingtheindexproblem}, we modify the $(\sigma_1,\sigma_2)$-relative index problem slightly. The drawback is that in the current state of higher Atiyah-Patodi-Singer theory, we can only consider $C^*$-algebra coefficients onto which $\sigma_1$ and $\sigma_2$ extend continuously.

\begin{define}[The stable $(\sigma_1,\sigma_2)$-relative index problem]
\label{stablesigmaproblem}
A decorated cycle $(W,\Xi,f)$ for $\mathcal{S}^{geo}_0(\Gamma,C^*_{{\bf full}})$ solves the stable $(\sigma_1,\sigma_2)$-relative index problem for the cycle $(M,S_{\C\ell},g,D)$ with Dirac operator if there is an even-dimensional oriented vector bundle $V\to M$ and a smoothing operator $A\in \Psi^{-\infty}(M^V,S_{\C\ell}^V\otimes (g^V)^*\mathcal{L})$ such that $D^V_{\mathcal{L}}+A$ is invertible and 
$$\tilde{\Phi}_{C^*_{\bf full}(\Gamma)}(W,\Xi,f)=[M^V,S_{\C\ell}^V,g^V,D^V,A,0]\quad\mbox{in}\quad \tilde{\mathcal{S}}^{h}_1(\sigma_1,\sigma_2)$$
\end{define}

\begin{prop}
\label{indexsolutionstablerhovanishes}
Suppose that the full $C^*$-completion of a torsion-free group $\Gamma$ has the Baum-Connes property. Then, whenever $(M,S_{\C\ell},g,D)$ admits a positive solution to the stable $(\sigma_1,\sigma_2)$-relative index problem, 
$$\rho^s_{\sigma_1,\sigma_2}(M,S_{\C\ell},g,D)=0.$$
\end{prop}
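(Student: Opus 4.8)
The strategy mirrors the proof of Proposition \ref{indexsolutionrhovanishes}, but now using the stable reformulation and Lemma \ref{stablerhoinvariantlemma}. Suppose $(W,\Xi,f)$ solves the stable $(\sigma_1,\sigma_2)$-relative index problem for $(M,S_{\C\ell},g,D)$, with witnessing vector bundle $V\to M$ and smoothing operator $A$ such that $D^V_\mathcal{L}+A$ is invertible and
\[\tilde{\Phi}_{C^*_{\bf full}(\Gamma)}(W,\Xi,f)=[M^V,S_{\C\ell}^V,g^V,D^V,A,0]\quad\mbox{in}\quad\tilde{\mathcal{S}}^h_1(\sigma_1,\sigma_2).\]
First I would observe that since $\Gamma$ is torsion-free and $C^*_{\bf full}(\Gamma)$ has the Baum-Connes property, $\mathcal{S}_0^{geo}(\Gamma,C^*_{\bf full})=0$ by \cite[Theorem 3.8]{paperI}; hence $(W,\Xi,f)$ represents the zero class and $\tilde{\Phi}_{C^*_{\bf full}}(W,\Xi,f)=0$ in $\tilde{\mathcal{S}}^h_1(\sigma_1,\sigma_2)$. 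Applying the relative $\eta$-invariant $\rho_{\sigma_1,\sigma_2}$ of Proposition \ref{definitionofrho} to the displayed equality then gives
\[0=\rho_{\sigma_1,\sigma_2}(M^V,S_{\C\ell}^V,g^V,D^V,A,0)=\rho_{\sigma_1,\sigma_2}(M^V,S_{\C\ell}^V,g^V,D^V,A).\]

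Next I would identify the right-hand side with a stable relative $\eta$-invariant and push it down to $M$. By definition \eqref{rhodefinition}, since $D^V_\mathcal{L}+A$ is invertible we have $d_i(D^V,A)=\dim\ker(D^V_i+A^V_i)=0$ for $i=1,2$ (here $A^V_i=(\sigma_i)_*A$), so
\[\rho_{\sigma_1,\sigma_2}(M^V,S_{\C\ell}^V,g^V,D^V,A)=\tfrac{1}{2}\big(\eta_{D^V_1+A_1}(0)-\eta_{D^V_2+A_2}(0)\big)=\rho^s_{\sigma_1,\sigma_2}(M^V,S_{\C\ell}^V,g^V,D^V).\]
The key step is then to show that the stable relative $\eta$-invariant is invariant under vector bundle modification, i.e.
\[\rho^s_{\sigma_1,\sigma_2}(M^V,S_{\C\ell}^V,g^V,D^V)=\rho^s_{\sigma_1,\sigma_2}(M,S_{\C\ell},g,D).\]
For this I would invoke the decorated bordism machinery: the decorated cycle $(M^V,S^V_{\C\ell},g^V,D^V,A,0)$ is, by the vector bundle modification construction of Definition \ref{vectorbundlemodificationofrelativecycles} run in reverse (and the fact that vector bundle modification descends to $\tilde{\mathcal{S}}^h_1(\sigma_1,\sigma_2)$), equivalent to a cycle built from $(M,S_{\C\ell},g,D)$ together with a smoothing operator $A_0$ making $D_\mathcal{L}+A_0$ invertible — such an $A_0$ exists because the hypotheses of Lemma \ref{stablerhoinvariantlemma} hold here, since $\mu_{C^*_{\bf full}}$ is surjective and both $\ind_{C^*_{\bf full}(\Gamma)}(D_{S,\mathcal{L}})$ and $\mu(M,S_{\C\ell},g)$ vanish (the latter because the class admits a solution to the index problem, cf.\ the discussion preceding Definition \ref{sigmaproblem}, and the former by \cite[Theorem 6.2]{hilsumbordism} applied via \cite[Theorem 3.8]{paperI}). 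Since $\rho_{\sigma_1,\sigma_2}$ is a well-defined invariant on $\tilde{\mathcal{S}}^h_1(\sigma_1,\sigma_2)$, all choices of invertibilizing $A$ on $M^V$ and on $M$ give the same value, and Lemma \ref{stablerhoinvariantlemma} guarantees independence of the choice; comparing the two via the bordism yields the displayed identity. Combining the three displays gives $\rho^s_{\sigma_1,\sigma_2}(M,S_{\C\ell},g,D)=0$.

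\textbf{Main obstacle.} The technical heart is the vector bundle modification invariance of $\rho^s$, or more precisely the bookkeeping that identifies $[\tilde{\Phi}_{C^*_{\bf full}}(W,\Xi,f)]$ with a cycle whose underlying data is $(M,S_{\C\ell},g,D)$ rather than $(M^V,S^V_{\C\ell},g^V,D^V)$, keeping track of the integer decoration $n$ throughout. One must check that the various $d_i$-corrections and Atiyah-Patodi-Singer index differences appearing in the bordism relation (Definition \ref{bordismofdecorated}) all vanish or cancel when the boundary operators are invertibilized — this is where \eqref{apsdifferenceformula}, \eqref{functorialityofapsclasses}, and the invertibility of $D^V_\mathcal{L}+A$ are used in concert. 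Once the class-level equality is reduced to a statement in $\tilde{\mathcal{S}}^h_1(\sigma_1,\sigma_2)$ and then evaluated by $\rho_{\sigma_1,\sigma_2}$, the remaining arithmetic is routine.
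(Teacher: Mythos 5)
Your proposal is correct and follows essentially the same route as the paper's proof: use the Baum--Connes hypothesis to get $\mathcal{S}_0^{geo}(\Gamma,C^*_{\bf full})=0$, apply $\rho_{\sigma_1,\sigma_2}$ to the equation defining the stable index problem, identify the resulting quantity with $\rho^s$ via Lemma \ref{stablerhoinvariantlemma}, and invoke vector-bundle-modification invariance of the relative $\eta$-invariant (\cite[Proposition 6.6]{HReta}) to pass from $M^V$ back to $M$. The only organizational difference is the order of operations: the paper uses \cite[Proposition 6.6]{HReta} at the \emph{start} to reduce to the case $V=0$, which lets the entire argument run on $M$ directly and collapses to a two-line chain of equalities; you keep $V$ in play until the end and then argue for VBM-invariance of $\rho^s$, which forces you into the somewhat delicate bookkeeping you flag in your ``main obstacle'' paragraph (in particular, your phrase ``run in reverse'' glosses over the fact that the witnessing $A$ on $M^V$ need not be of the form $A_0^P$ for any $A_0$ on $M$; this is fixable exactly as you say, via Lemma \ref{stablerhoinvariantlemma} and well-definedness of $\rho_{\sigma_1,\sigma_2}$ on $\tilde{\mathcal{S}}_1^h(\sigma_1,\sigma_2)$, but it is cleaner to take $V=0$ up front).
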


\begin{proof}
Using \cite[Proposition $6.6$]{HReta} we can assume that $V$ is the zero bundle. Note that the Baum-Connes property implies that $\mathcal{S}^{geo}_{0}(\Gamma,C^*_{{\bf full}})=0$ so $\rho_{\sigma_1,\sigma_2}\circ \tilde{\Phi}_{C^*_{{\bf full}}}=0$. Then, Proposition \ref{definitionofrho} implies that 
\[\rho_{\sigma_1,\sigma_2}\circ \tilde{\Phi}_{C^*_{{\bf full}}}(W,\Xi,f)=\rho_{\sigma_1,\sigma_2}(M,S_{\C\ell},g,D,A)=\rho^s_{\sigma_1,\sigma_2}(M,S_{\C\ell},g,D)=0\]
where $(W,\Xi,f)$ solves the stable $(\sigma_1,\sigma_2)$-relative index problem for $(M,S_{\C\ell},g,D)$; we note that the middle equality follows from Lemma \ref{stablerhoinvariantlemma}.
\end{proof}

\begin{theorem}
\label{solvingindexproblem}
Let $\Gamma$ be a discrete group and $(M,S_{\C\ell},f,D)$ be a $K_1^h(B\Gamma)$-cycle equipped with a Dirac operator. Then, this cycle admits a solution to the stable $(\sigma_1,\sigma_2)$-relative index problem if and only if $\mu_{C^*_{{\bf full}}}(M,S_{\C\ell},f)=0$.
\end{theorem}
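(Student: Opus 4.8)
\medskip

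\emph{The plan is to prove the two implications separately, the necessity being essentially formal and the sufficiency being the heart of the matter.}

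For necessity, suppose $(W,\Xi,f)$ is a decorated cycle for $\mathcal{S}^{geo}_0(\Gamma,C^*_{{\bf full}})$ solving the stable problem, so after a vector bundle modification we may assume (by \cite[Proposition 6.6]{HReta}) that $\tilde\Phi_{C^*_{{\bf full}}}(W,\Xi,f) = [M,S_{\C\ell},f,D,A,0]$ in $\tilde{\mathcal{S}}^h_1(\sigma_1,\sigma_2)$ with $D_\mathcal{L}+A$ invertible. Applying $\delta_{\sigma_1,\sigma_2}$ to both sides and using the commuting square from Theorem \ref{phiacommutingdiagram} (together with the compatibility of $\tilde\Phi$ and $\Phi$), the class $(M,S_{\C\ell},f)$ equals $\delta_{\sigma_1,\sigma_2}\circ\tilde\Phi_{C^*_{{\bf full}}}(W,\Xi,f) = \delta(W,\xi,f)$ in $K_1^{geo}(B\Gamma)\cong K_1^h(B\Gamma)$, i.e. it lies in the image of $\delta$. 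By exactness of the top row of the diagram in Theorem \ref{fittingtogether} (the geometric surgery sequence of \cite[Theorem 3.8]{paperI}), this is precisely the statement that $\mu_{C^*_{{\bf full}}}(M,S_{\C\ell},f) = 0$.

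For sufficiency, assume $\mu_{C^*_{{\bf full}}}(M,S_{\C\ell},f)=0$. First I would use exactness of the geometric surgery sequence (again \cite[Theorem 3.8]{paperI}) to produce a cycle $(W,\xi,f)$ for $\mathcal{S}^{geo}_0(\Gamma,C^*_{{\bf full}})$ with $\delta(W,\xi,f)=[M,S_{\C\ell},f]$ in $K_1^{geo}(B\Gamma)$. After replacing $W$ by a suitable bordism and stabilizing (adding trivial bundle data), I would arrange that $\partial W = M$ as manifolds over $B\Gamma$ with $\xi|_{\partial W}$ restricting compatibly, so that the \emph{geometric} part of $\Phi_{C^*_{{\bf full}}}(W,\xi,f)$ is literally $(M,S_{\C\ell},f,D)$ — this uses $K_1^h(B\Gamma)\cong K_1^{geo}(B\Gamma)$ and the relation $\delta = \delta_{\sigma_1,\sigma_2}\circ\Phi_{C^*_{{\bf full}}}$. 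Now choose a decoration $\Xi=(\xi,\Xi_0,A^\bullet)$ of $\xi$, which exists by the Lemma following Definition \ref{decorations}; the boundary smoothing operator in the $E$-slot gives some $A$ on $S_{\C\ell}\otimes f^*\mathcal{L}$ (after identifying $E_\C$ with the relevant summand) making $D_\mathcal{L}+A$ invertible, possibly after a further vector bundle modification invoking Theorem \ref{leichpiazzass} and \cite[Corollary C.2]{PS}. By the Proposition just before Remark \ref{sigmaonetworem}, $\tilde\Phi^0_{C^*_{{\bf full}}}(W,\Xi,f)$ is bordant (via Proposition \ref{bordingoutsmoothing}) to $\Phi_{C^*_{{\bf full}}}(W,\xi,f)$, which is $[M,S_{\C\ell},f,D,0]$ plus the degenerate correction class $(\emptyset,\emptyset,\emptyset,\emptyset,n)$; since $n$ is the difference of $(\sigma_i)_*$-indices of an APS problem on $W$, the integer offset is absorbed by adjusting the decoration (replacing $n$ by $0$ amounts to modifying the integer slot, which is part of the datum of a decorated cycle). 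This yields $\tilde\Phi_{C^*_{{\bf full}}}(W,\Xi,f) = [M,S_{\C\ell},f,D,A,0]$ in $\tilde{\mathcal{S}}^h_1(\sigma_1,\sigma_2)$, i.e. a solution.

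\emph{The main obstacle} is the bookkeeping in the sufficiency direction: namely arranging a cycle $(W,\xi,f)$ whose \emph{restriction to the boundary} reproduces the given $(M,S_{\C\ell},f,D)$ exactly (not just up to bordism in $K_1^h(B\Gamma)$) while simultaneously controlling the integer $n$ of Definition \ref{defOfPhi} and the invertibility of $D_\mathcal{L}+A$. The isomorphism $K_1^h(B\Gamma)\cong K_1^{geo}(B\Gamma)$ of Lemma \ref{khomorientedlem} lets one pass between Clifford-bundle and spin$^c$ descriptions, but matching the chosen Dirac operator $D$ on the nose may force an auxiliary bordism on the $M$-side; one then glues this onto $W$ using Lemma \ref{gluingcyclesalongboundary} to see that $\Phi_{C^*_{{\bf full}}}$ is unchanged. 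The invertibility of the lifted operator, and hence well-definedness of the stable relative $\eta$-invariant in the application (Proposition \ref{indexsolutionstablerhovanishes}), is secured by Theorem \ref{leichpiazzass} together with a vector bundle modification, exactly as in \cite{PS}; I expect the argument to follow \cite[Section 13]{PS} closely at this point.
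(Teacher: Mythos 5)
Your "only if" direction matches the paper's reasoning (stated as a remark before the proof there). The "if" direction, however, has genuine gaps that the paper's proof works hard to close.

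The most serious error is the sentence ``the integer offset is absorbed by adjusting the decoration (replacing $n$ by $0$ amounts to modifying the integer slot, which is part of the datum of a decorated cycle).'' A decorated cycle $(W,\Xi,f)$ for $\mathcal{S}^{geo}_0(\Gamma,C^*_{{\bf full}})$ carries \emph{no} integer slot; the integer that appears in $\tilde{\Phi}_{C^*_{{\bf full}}}(W,\Xi,f)$ is $(\sigma_1-\sigma_2)_*\ind_{APS}(W,\Xi,f)$, a quantity \emph{computed} from the decoration via a higher Atiyah--Patodi--Singer index, not a free parameter. One cannot simply decree it to be zero. The paper eliminates this integer by a different mechanism: it forms the disjoint union $(W,\Xi,g)\dot{\cup}r(y)$ where $y=-\ind_{APS}(W,\Xi,g)-\ind_{APS}(M^B,\Xi_B,g)\in K_0(C^*_{{\bf full}}(\Gamma))$ and $r$ is the map from $K$-theory into the surgery group. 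Since $\Phi\circ r=(\sigma_1-\sigma_2)_*$ adds precisely $(\sigma_1-\sigma_2)_*(y)$ to the integer, this is a legitimate move in $\mathcal{S}^{geo}_0$, and choosing $y$ correctly cancels the offset. Note also that $\delta\circ r=0$, so $\delta$ of the new cycle is unchanged. Without some such device, your argument does not produce a class whose integer is $0$, which is what the stable index problem demands.

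A second gap: you propose to arrange $\partial W=M$ on the nose. The normal bordism producing $W$ naturally gives $\partial W=M^V$ for some spin$^c$-normal bundle $V$, and the stable problem explicitly allows this; but you then need to compare vector bundle modification in the oriented model of Definition \ref{vectorbundlemodificationofrelativecycles} (which uses the Thom bundle $\hat V$) with that in the spin$^c$ $K_{geo}$ model (which uses the Bott class $[S_V^+]-[S_V^-]$ / Bott bundle $Q_V$). These do not match on the nose. The paper's proof introduces the auxiliary easy cycle $x=(M^B,(E_B\otimes(f^B)^*\mathcal{L},E_0,\id),g)$ over the ball bundle $M^B=\bar B(V\oplus 1_{\field{R}})$ precisely to absorb the extra summand $E_0=2^k\pi_V^*E_\field{C}$ that this mismatch produces, and then shows $x\sim_{bor}0$ by an explicit bordism over $M^{B^2}=\bar B(V\oplus 1_{\field{R}}^2)$. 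Your proposal does not account for this correction term at all, and simply asserting $\partial W=M$ ``after replacing $W$ by a suitable bordism and stabilizing'' skips this bookkeeping and would also require $\mathcal{E}'_{\field{C}}$ to vanish, which normal bordism does not give you for free.
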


Before proving this Theorem, we make a few preparatory remarks. 

\begin{remark}
The ``only if"-part of the proof of Theorem \ref{solvingindexproblem} is easy: if $(M,S_{\C\ell},f,D)$ admits a solution to the stable $(\sigma_1,\sigma_2)$-relative index problem, $(M,S_{\C\ell},f)$ is in the image of 
$$\mathcal{S}^{geo}_0(\Gamma,C^*_{{\bf full}})\xrightarrow{\delta} K_1^{geo}(B\Gamma)\xrightarrow{\sim} K_1^h(B\Gamma)$$ 
hence, by \cite[Theorem 3.8]{paperI}, $\mu_{C^*_{{\bf full}}}(M,S_{\C\ell},f)=0$. 

For the converse statement, it suffices to prove that whenever $(M,S_{\C\ell},f,D)$ is a $K_1^h(B\Gamma)$-cycle with a choice of Dirac operator such that $\mu(M,S_{\C\ell},f)=0$ in $K_1(C^*_{{\bf full}}(\Gamma))$, there exists:
\begin{enumerate}
\item an even-dimensional oriented vector bundle $V\to M$ such that $M^V$ can be equipped with a spin$^c$-structure;
\item a decorated cycle $(W,\Xi,f^V)$ for $\mathcal{S}_0^{geo}(\Gamma,C^*_{{\bf full}})$ with $\partial W=M^V$;
\item a null-bordant cycle $x$ for $\mathcal{S}_0^{geo}(\Gamma,C^*_{{\bf full}})$;
\item a cycle $y\in K_0^{geo}(pt;C^*_{{\bf full}}(\Gamma))$;
\end{enumerate}
such that
\[\tilde{\Phi}_{C^*_{{\bf full}}}\left((W,\Xi,f^V)\dot{\cup}r(y)\right)=(M^V,S^V_{\C\ell},f^V,D^V,A,0)\dot{\cup}\tilde{\Phi}_{C^*_{{\bf full}}}(x),\]
on the level of cycles modulo disjoint union/direct sum. We note here the subtlety that vector bundle modification on the right hand side is carried out in the oriented model (see Definition \ref{vectorbundlemodificationofrelativecycles}).
\end{remark}

\begin{remark}
In the proof of Theorem \ref{solvingindexproblem}, we carry out vector bundle modification in the geometric models $K_1^{geo}$ as well as $\mathcal{S}_0^{geo}$ using the \emph{Bott class} rather than the \emph{Bott bundle}, the latter approach lying closer to the vector bundle modification used in Subsection \ref{sectionorientedmodel}. The motivation for the use of the Bott class is that we need the notion of normal bordism; this notion requires the usage of cycles with $K$-theory data and modification using the Bott class. The reader can find the relevant details in \cite[Chapter $4.5$]{Rav} and \cite[Section $2$ and $3$]{paperI}.
\end{remark}

\begin{proof}[Proof of Theorem \ref{solvingindexproblem}]
For notational simplicity we assume $M$ to be connected. The proof for non-connected $M$ is carried out analogously (only the fiber dimensions of the vector bundles which one modifies by can vary on each connected component). The manifold data in the vector bundle modified cycle $(M,S_{\C\ell},f,D)^{T^*M}$ admits a canonical spin$^c$-structure, so we can assume that $M$ has a spin$^c$-structure $S_M\to M$ and that $S_{\C\ell}=S_M\otimes E_\C$ for some vector bundle $E_\C\to M$. 

Since $(M,E_\C\otimes f^*\mathcal{L})\sim 0$ in $K_1^{geo}(pt;C^*_{\bf full}(\Gamma))$ and the equivalence relation defining $K_1^{geo}(pt;C^*_{{\bf full}}(\Gamma))$ coincides with normal bordisms (see \cite[Chapter $4.5$]{Rav}), there is a $2k$-dimensional spin$^c$-normal bundle $V\to M$, a spin$^c$-manifold $W$ with boundary $\partial W=M^V$ and $C^*_{{\bf full}}(\Gamma)$-bundles $\mathcal{E}_{C^*_{{\bf full}}(\Gamma)},\mathcal{E}_{C^*_{{\bf full}}(\Gamma)}'\to W$ such that
\begin{align}
\label{partialeeee}
\partial &(W,[\mathcal{E}_{C^*_{{\bf full}}(\Gamma)}]-[\mathcal{E}_{C^*_{{\bf full}}(\Gamma)}'])\\
\nonumber
&=(M,[E_\C\otimes f^*\mathcal{L}])^V=(M^V,[E_\C^V\otimes (f^V)^*\mathcal{L}]-2^k[\pi^*_VE_\C\otimes (f^V)^*\mathcal{L}]),
\end{align}
on the level of isomorphism classes of cycles for $K_1^{geo}(pt;C^*_{{\bf full}}(\Gamma))$ with $K$-theory data. Here $E^V_\C$ is defined using the $2^k$-dimensional Bott bundle $Q^V\to M^V$ (associated with the spin$^c$-structure) by setting $E^V_\C:=\pi^*_VE\otimes Q_V$ (i.e., $E^V_\C$ is the vector-bundle modification of $E_\C$ in $K_1^{geo}$ using the \emph{Bott bundle} of $V$).

To simplify notation, we set $E_0:= 2^k\pi_V^*E_\C\to \partial W=M^V$ and $g:=f^V:\partial W\to B\Gamma$. It follows from \eqref{partialeeee}, after possibly taking direct sums of $\mathcal{E}_{C^*_{{\bf full}}(\Gamma)}$ and $\mathcal{E}_{C^*_{{\bf full}}(\Gamma)}'$ with trivial bundles, that there is an isomorphism 
\[\alpha:\mathcal{E}_{C^*_{{\bf full}}(\Gamma)}|_{\partial W}\oplus E_0\otimes g^*\mathcal{L}\rightarrow \mathcal{E}_{C^*_{{\bf full}}(\Gamma)}'|_{\partial W}\oplus E_\C^V\otimes g^*\mathcal{L}.\]
In particular, 
$$\xi:=(\mathcal{E}_{C^*_{{\bf full}}(\Gamma)},\mathcal{E}_{C^*_{{\bf full}}(\Gamma)}',E^V_\C,E_0,\alpha)$$ 
is a cocycle for $K^0(W,\partial W,\mu_\mathcal{L})$. Let $S_{M^V}\to M^V$ denote the spin$^c$-structure of $M^V$. We note that Proposition \ref{tauvversusbott} implies that an explicit cycle representing the image of $\delta(W,\xi,g)$ under $K_1^{geo}(B\Gamma) \xrightarrow{\sim}K_1^h(B\Gamma)$ is given by
\[(M^V,E_\C^V\otimes S_{M^V},g)\dot{\cup} -(M^V,E_0\otimes S_{M^V},g).\]

Define the closed ball bundle $M^B:=\bar{B}(V\oplus 1_\R)$ and let $\pi_B:M^B\to M$ denote the projection. The map $g=f^V:M^V\to B\Gamma$ extends to a map $f^B:=f\circ \pi^B:M^B\to B\Gamma$ and the bundle $E_0\to M^V$ extends to a bundle $E_B:=2^k \pi_B^*E\to M^B$. As such, we obtain an easy cycle 
$$x:=(M^B, (E_B\otimes (f^B)^*\mathcal{L},E_0,\id),g)\quad\mbox{for}\quad\mathcal{S}^{geo}_0(\Gamma, C^*_{\bf full}).$$ 

Choose a decoration 
$$\Xi=\left(\xi,(D_\mathcal{E},D_{\mathcal{E}'},D_{E^V},D_{0}),(A^\mathcal{E},A^{\mathcal{E}'},\tilde{A})\right),$$ 
such that $D^V=D_{E^V}$. Since $\mu(M^V,E_\C^V,g)=\mu(M^V,E_0,g)=0$, we can choose $\tilde{A}=A_0\dot{\cup}-A$, see Theorem \ref{leichpiazzass}. We choose a decoration of $(E_B\otimes (f^B)^*\mathcal{L},E_0,\id)$ of the form
$$\Xi_B=\left((E_B\otimes (f^B)^*\mathcal{L},E_0,\id), (D_B,D_0),(A_B,A_0)\right).$$ 
Define 
\begin{align*}
y:&=-\ind_{APS}(W,\Xi,g)-\ind_{APS}(M^B,\Xi_B,g)\in K_0(C^*_{\bf full}(\Gamma)),\\
&n:=(\sigma_1-\sigma_2)_*\ind_{APS}(W,\Xi,g)\quad\mbox{and} \quad n_B:=(\sigma_1-\sigma_2)_*\ind_{APS}(M^B,\Xi_B,g).
\end{align*} 
By definition, modulo the disjoint union/direct sum relation, we have
\begin{align*}
\tilde{\Phi}_{C^*_{{\bf full}}}&\left((W,\Xi,g)\dot{\cup}r(y)\right)\\
=&(M^V,S^V_{\C\ell},f^V,D^V,A,n)\dot{\cup}-(M^V,E_0\otimes S_{M^V},f^V,D_0,A_0,0)\\
&\qquad\qquad\qquad\qquad\qquad\qquad\qquad\qquad\qquad\qquad\dot{\cup}(\emptyset,\emptyset,\emptyset,\emptyset,\emptyset,-n-n_B)\\
=&(M^V,S^V_{\C\ell},f^V,D^V,A,0)\dot{\cup}-(M^V,E_0\otimes S_{M^V},f^V,D_0,A_0,n_B)
\end{align*}
It is clear that $(M^V,E_0\otimes S_{M^V},f^V,D_0,A_0,n_B)=\Phi_{C^*_{{\bf full}}(\Gamma)}(x)$. The proof is complete upon proving that $x\sim_{bor}0$ in $\mathcal{S}_0^{geo}(\Gamma, C^*_{{\bf full}})$.

We set $M^{B^2}:=\bar{B}(V\oplus 1_\R^2)$ and let $\pi_{B^2}:M^{B^2}\to M$ denote the projection. Introduce the notation $E_{B^2}:=2^k\pi_{B^2}^*E\to M^{B^2}$. We note that $\partial M^{B^2} =M^B\cup_{M^V} M^B$ in a way compatible with the projection maps. We also set $g_2:=f\circ \pi_{B^2}|_{M^B}$. It can be directly verified that 
$$x=\partial ((M^{B^2}, M^B), (E_{B^2}\otimes g_2^*\mathcal{L}, E_B,\alpha_B),g_2)$$
for a suitable $\alpha_B$ constructed from $\alpha$.
\end{proof}

\subsection{Vanishing results}

We summarize the vanishing results for (stable) relative $\eta$-invariants, which can be inferred from results in the previous section; these results have previously been proved using different techniques in \cite{PS}. The results for the stable relative $\eta$-invariants are as follows:

\begin{theorem}
\label{vanishingstablerho}
Assume that $\Gamma$ is a torsion-free group having the full Baum-Connes property. If $(M,S_{\C\ell},g,D)$ is a $K_1^h(B\Gamma)$-cycle with a Dirac operator. Then 
$$\mu_{C^*_{{\bf full}}}(M,S_{\C\ell},g)=0\quad\Rightarrow \quad\rho^s_{\sigma_1,\sigma_2}(M,S_{\C\ell},g,D)=0.$$
\end{theorem}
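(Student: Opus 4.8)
The plan is to deduce Theorem \ref{vanishingstablerho} directly from Theorem \ref{solvingindexproblem} together with Proposition \ref{indexsolutionstablerhovanishes}, so the proof is essentially a two-line assembly of results already in hand. First I would invoke Theorem \ref{solvingindexproblem}: since $\mu_{C^*_{{\bf full}}}(M,S_{\C\ell},g)=0$, the cycle $(M,S_{\C\ell},g,D)$ admits a (positive) solution to the stable $(\sigma_1,\sigma_2)$-relative index problem. Then I would apply Proposition \ref{indexsolutionstablerhovanishes}, whose hypotheses are exactly that $\Gamma$ is torsion-free, that $C^*_{{\bf full}}(\Gamma)$ has the Baum-Connes property, and that $(M,S_{\C\ell},g,D)$ admits such a solution; its conclusion is precisely $\rho^s_{\sigma_1,\sigma_2}(M,S_{\C\ell},g,D)=0$.

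The one point that needs a word of care is that $\rho^s_{\sigma_1,\sigma_2}(M,S_{\C\ell},g,D)$ should make sense at all, i.e.\ that Lemma \ref{stablerhoinvariantlemma} applies. This requires both $\mu(M,S_{\C\ell},g)=0$ (which is the hypothesis) and $\ind_{C^*_{{\bf full}}(\Gamma)}(D_{S,\mathcal{L}})=0$; but the latter holds automatically by \cite[Theorem $6.2$]{hilsumbordism} as recorded in the discussion preceding the unnamed Lemma in Subsection \ref{subsectionwithhigheraps}, since $D_{S,\mathcal{L}}$ is the boundary operator of a $C^*_{{\bf full}}(\Gamma)$-Clifford bundle situation (more directly: the existence of a solution to the stable index problem produces the invertible perturbation $D^V_\mathcal{L}+A$ needed in the definition). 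So the stable relative $\eta$-invariant is well defined and independent of the auxiliary smoothing operator by Lemma \ref{stablerhoinvariantlemma}, and the vanishing statement is unambiguous.

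I do not anticipate a genuine obstacle here, since all the substantive work — constructing $\tilde\Phi_{C^*_{{\bf full}}}$, solving the stable index problem via normal bordism, and verifying that the relative $\eta$-invariant kills the image of a trivial surgery group — has already been carried out in Theorem \ref{solvingindexproblem}, Proposition \ref{indexsolutionstablerhovanishes}, and the fitting-together diagrams. The only mild subtlety worth flagging in the write-up is the use of \cite[Proposition $6.6$]{HReta} (invoked inside the proof of Proposition \ref{indexsolutionstablerhovanishes}) to reduce the vector bundle $V$ appearing in the solution of the stable index problem back to the zero bundle, so that the $\rho^s$-invariant of the original cycle, rather than of a vector bundle modification of it, is what vanishes; but this is already absorbed into Proposition \ref{indexsolutionstablerhovanishes} and needs only to be cited. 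Concretely, the proof reads:

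\begin{proof}
By Theorem \ref{solvingindexproblem}, the hypothesis $\mu_{C^*_{{\bf full}}}(M,S_{\C\ell},g)=0$ implies that $(M,S_{\C\ell},g,D)$ admits a solution to the stable $(\sigma_1,\sigma_2)$-relative index problem. Since $\Gamma$ is torsion-free and has the full Baum-Connes property, Proposition \ref{indexsolutionstablerhovanishes} applies and yields $\rho^s_{\sigma_1,\sigma_2}(M,S_{\C\ell},g,D)=0$; note that this invariant is well defined by Lemma \ref{stablerhoinvariantlemma}, whose hypotheses hold because $\mu$ is surjective, $\mu(M,S_{\C\ell},g)=0$, and $\ind_{C^*_{{\bf full}}(\Gamma)}(D_{S,\mathcal{L}})=0$ (as recalled in Subsection \ref{subsectionwithhigheraps}).
\end{proof}
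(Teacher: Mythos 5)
Your proof is correct and follows essentially the same route as the paper's one-line proof, which likewise just combines Theorem \ref{solvingindexproblem} (vanishing assembly gives a solution of the stable index problem) with Proposition \ref{indexsolutionstablerhovanishes} (such a solution forces $\rho^s_{\sigma_1,\sigma_2}=0$ under Baum-Connes). Your extra remark confirming that Lemma \ref{stablerhoinvariantlemma} applies, so that $\rho^s_{\sigma_1,\sigma_2}$ is well defined, is not spelled out in the paper but is accurate and harmless.
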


\begin{proof}
It follows from Proposition \ref{indexsolutionstablerhovanishes} and Theorem \ref{solvingindexproblem} that if $\mu_{C^*_{{\bf full}}}(M,S_{\C\ell},g)=0$ then $\rho^s_{\sigma_1,\sigma_2}(M,S_{\C\ell},g,D)=0$. 
\end{proof}

\begin{remark}
\label{muzerostablerhostable}
Theorem \ref{vanishingstablerho} trivially implies Lemma \ref{stablerhoinvariantlemma} under the stronger assumption that $\mu_{C^*_{\bf full}}$ is an isomorphism because $\rho^s_{\sigma_1,\sigma_2}(M,S_{\C\ell},g,D)=0$ whenever $\mu_{C^*_{\bf full}}(M,S_{\C\ell},f)=0$. 
\end{remark}

The following corollary of Theorem \ref{vanishingstablerho} was proved in \cite{PS} using a suitable choice of smoothing operator; we state it here for completeness.

\begin{cor}
\label{vanishingrhothm}
Assume that $\Gamma$ is a torsion-free group having the full Baum-Connes property. Then
\begin{enumerate}
\item The relative $\eta$-invariant for signature operators (see Notation \ref{signarho}) is homotopy invariant.
\item The relative $\eta$-invariant vanishes on manifolds of positive scalar curvature.
\end{enumerate}
\end{cor}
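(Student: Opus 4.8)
The plan is to derive both statements from Theorem~\ref{vanishingstablerho}: in each case one exhibits the cycle in question as one that vanishes under the full assembly map, and then one upgrades the resulting vanishing of the \emph{stable} relative $\eta$-invariant $\rho^s_{\sigma_1,\sigma_2}$ to a statement about $\rho_{\sigma_1,\sigma_2}$ by a suitable choice of smoothing operator, following \cite{PS}.

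\textbf{Positive scalar curvature.} Let $M$ be a closed odd-dimensional spin Riemannian manifold of positive scalar curvature, $S_{\C\ell}$ its spinor bundle, $f\colon M\to B\Gamma$ continuous, and $D$ the spin Dirac operator. The Mi\v{s}\v{c}enko bundle $f^*\mathcal{L}\to M$ is flat, so there is no curvature contribution from the coefficients in the Lichnerowicz formula for the twisted operator $D_{S,\mathcal{L}}$; hence $D_{S,\mathcal{L}}^2\geq\tfrac14\inf_M(\mathrm{scal})>0$ and $D_{S,\mathcal{L}}$ is invertible as a regular $C^*_{{\bf full}}(\Gamma)$-linear operator. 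Therefore $\mu_{C^*_{{\bf full}}}(M,S_{\C\ell},f)=\ind_{C^*_{{\bf full}}(\Gamma)}(D_{S,\mathcal{L}})=0$, and one may take the smoothing operator $A=0$ in the definition of the stable relative $\eta$-invariant, whence $\rho^s_{\sigma_1,\sigma_2}(M,S_{\C\ell},f,D)=\rho_{\sigma_1,\sigma_2}(M,S_{\C\ell},f,D)$. Theorem~\ref{vanishingstablerho} forces the left-hand side, and hence $\rho_{\sigma_1,\sigma_2}(M,S_{\C\ell},f,D)$, to vanish.

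\textbf{Signature.} Let $h\colon M'\to M''$ be an orientation-preserving homotopy equivalence of closed odd-dimensional oriented manifolds, $f''\colon M''\to B\Gamma$ continuous, and $f'=f''\circ h$. Write $k$ for the rank of $\sigma$ and form the $K_1^h(B\Gamma)$-cycle $(M,S_{\C\ell},f)$ with $M=M'\,\dot{\cup}\,(-M'')$, $S_{\C\ell}=\wedge^*_\C T^*M$, $f=f'\,\dot{\cup}\,f''$, equipped with the signature operator $D_{sign}$. Homotopy invariance of the $C^*$-algebraic symmetric signature (see \cite{PS,HReta} and the references therein) gives $\mu_{C^*_{{\bf full}}}(M',\wedge^*_\C T^*M',f')=\mu_{C^*_{{\bf full}}}(M'',\wedge^*_\C T^*M'',f'')$ in $K_1(C^*_{{\bf full}}(\Gamma))$, so that $\mu_{C^*_{{\bf full}}}(M,S_{\C\ell},f)=0$. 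By Theorem~\ref{vanishingstablerho}, $\rho^s_{\sigma,k\varepsilon}(M,S_{\C\ell},f,D_{sign})=0$. The argument of Piazza--Schick now shows that, for the signature operator, the smoothing operator on this disjoint-union cycle can be chosen so that $\rho^s_{\sigma,k\varepsilon}(M,S_{\C\ell},f,D_{sign})=\rho_\sigma(M',f')-\rho_\sigma(M'',f'')$, with $\rho_\sigma$ as in Notation~\ref{signarho} (cf.\ \cite[Lemma~4.6]{PS} and the discussion in Subsection~\ref{subsectionwithhigheraps}); hence $\rho_\sigma(M',f')=\rho_\sigma(M'',f'')$, which is Theorem~\ref{keswanithm}.

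The one genuinely delicate point is this last step: identifying the stable relative $\eta$-invariant of the disjoint-union cycle with the difference of the unstable relative $\eta$-invariants for the signature operator, equivalently controlling the relative index of the boundary projection pairs $\big(\chi_{[0,\infty)}(D_{sign,i}),\,\chi_{[0,\infty)}(D_{sign,i}+A_i)\big)$ for a judicious $A$. Everything else is bookkeeping with the maps of Definition~\ref{definitionofrho} and Theorem~\ref{vanishingstablerho}; the remaining care needed is only to keep the sign conventions of Definition~\ref{definitionofrho} consistent with those of \cite{PS}.
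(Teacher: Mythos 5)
Your proposal is correct and follows the same route the paper has in mind: the paper itself gives no proof of the corollary, stating only that it is a consequence of Theorem~\ref{vanishingstablerho} plus a suitable choice of smoothing operator taken from \cite{PS}; your write-up fills in exactly those details (for PSC, the Lichnerowicz argument makes $D_{S,\mathcal{L}}$ invertible so $A=0$ works and $\rho^s=\rho$; for the signature, homotopy invariance of the $C^*$-symmetric signature gives vanishing under assembly and the PS choice of $A$ identifies $\rho^s$ of the disjoint-union cycle with $\rho_\sigma(M')-\rho_\sigma(M'')$).
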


\begin{remark}
\label{finalremark513}
A connection to work in \cite{BRvonN} should be mentioned. Namely, using the group defined in \cite[Section 4.2]{BRvonN} (and discussed in Remark \ref{ell2group}) and the $L^2$-Atiyah-Patodi-Singer index theorem of Ramachandran \cite{ramachan}, the proof above can be altered to obtain analogous rigidity results for $\ell^2$-relative $\eta$-invariants. These results were first proved in \cite{KesvN}, while a ``Higson and Roe" type proof is presented in \cite{BRvonN}. Of course, such a proof would require additional work and we will not pursue the full details here.
\end{remark}

\paragraph{\textbf{Acknowledgements}}
The authors wish to express their gratitude towards Heath Emerson, Nigel Higson, and Thomas Schick for discussions. They also thank the Courant Centre of G\"ottingen, the Leibniz Universit\"at Hannover and the Graduiertenkolleg 1463 (\emph{Analysis, Geometry and String Theory}) for facilitating this collaboration.

\end{document}